\definecolor{gr}{rgb}   {0.,   0.69,   0.23 }
\definecolor{bl}{rgb}   {0.,   0.5,   1. }
\definecolor{mg}{rgb}   {0.85,  0.,    0.85}
\definecolor{yl}{rgb}   {0.8,  0.7,   0.}
\definecolor{or}{rgb}  {0.7,0.2,0.2}
\newtheorem{theorem}{Theorem} [section]
\newtheorem{lemma}[theorem]{Lemma}
\newtheorem{proposition}[theorem]{Proposition}
\newtheorem{remark}[theorem]{Remark}
\newtheorem{definition}[theorem]{Definition}
\newtheorem{corollary}[theorem]{Corollary}
\DeclareMathOperator*{\intt}{\int}
\DeclareMathOperator*{\supp}{supp}
\newcommand{\I}{\hspace{0.5mm}\textup{I}} 
\newcommand{\II}{\textup{I \hspace{-2.8mm} I} }
\newcommand{\noi}{\noindent}
\newcommand{\Z}{\mathbb{Z}}
\newcommand{\R}{\mathbb{R}}
\newcommand{\T}{\mathbb{T}}
\newcommand{\N}{\mathcal{N}}
\newcommand{\NB}{\mathbb{N}}
\let\Re=\undefined\DeclareMathOperator*{\Re}{Re}
\let\P= \undefined
\newcommand{\P}{\mathbf{P}}
\renewcommand{\u}{{\textup {\bf  u}}}
\renewcommand{\v}{{\textup {\bf v}}}
\newcommand{\n}{{\textup {\bf n}}}
\newcommand{\F}{\mathcal{F}}
\newcommand{\NN}{\mathcal{N}}
\newcommand{\RR}{\mathcal{R}}
\newcommand{\TT}{\mathcal{T}}
\newcommand{\Nf}{\mathfrak{N}}
\newcommand{\al}{\alpha}
\newcommand{\be}{\beta}
\newcommand{\dl}{\delta}
\newcommand{\eps}{\varepsilon}
\newcommand{\g}{\gamma}
\newcommand{\G}{\Gamma}
\newcommand{\ld}{\lambda}
\newcommand{\Ld}{\Lambda}
\newcommand{\s}{\sigma}
\newcommand{\ft}{\widehat}
\newcommand{\wt}{\widetilde}
\newcommand{\cj}{\overline}
\newcommand{\dx}{\partial_x}
\newcommand{\dt}{\partial_t}
\newcommand{\dd}{\partial}
\newcommand{\jb}[1]
{\langle #1 \rangle}
\renewcommand{\l}{\ell}
\newcommand{\les}{\lesssim}
\newcommand{\ges}{\gtrsim}
\newcommand{\ind}{\mathbf 1}
\newcommand{\bn}{{\bf n}}
\renewcommand{\o}{\omega}
\renewcommand{\O}{\Omega}
\numberwithin{equation}{section}
\numberwithin{theorem}{section}
\newtheorem*{ackno}{Acknowledgements}
\tikzset{
	dot/.style={circle,fill=black,draw=black,inner sep=1pt,minimum size=0.5mm},
	>=stealth,
	}
\tikzset{
	ddot/.style={circle,fill=white,draw=black,inner sep=2pt,minimum size=0.8mm},
	>=stealth,
	}
\tikzset{decision/.style={ 
        draw,
        diamond,
        aspect=1.5
    }}
\tikzset{dia2/.style
={diamond,fill=white,draw=black,inner sep=0pt,minimum size=1mm},
	>=stealth,
	}
\tikzset{dia/.style
={star,fill=black,draw=black,inner sep=0pt,minimum size=1mm},
	>=stealth,
	}
\def\DeclareSymbol#1#2#3{\expandafter\gdef\csname MH@symb@#1\endcsname{\tikz[baseline=#2,scale=0.15]{#3}}}
\def\<#1>{\csname MH@symb@#1\endcsname}
\tikzstyle{dot1} = [ draw=  gray!00, 
\tikzstyle{dot2} = [ draw=  black, 
\tikzstyle{dot3} = [ draw=  gray!00, 
\begin{document}

\baselineskip = 14pt

\title
[GWP of  the periodic cubic fourth order NLS] 
{Global well-posedness of the periodic cubic fourth order NLS in negative Sobolev spaces}

\author[T.~Oh and  Y.~Wang]
{Tadahiro Oh and Yuzhao Wang}

\address{
Tadahiro Oh, School of Mathematics\\
The University of Edinburgh\\
and The Maxwell Institute for the Mathematical Sciences\\
James Clerk Maxwell Building\\
The King's Buildings\\
Peter Guthrie Tait Road\\
Edinburgh\\ 
EH9 3FD\\
 United Kingdom}

\email{hiro.oh@ed.ac.uk}

\address{
Yuzhao Wang, School of Mathematics\\
The University of Edinburgh\\
and The Maxwell Institute for the Mathematical Sciences\\
James Clerk Maxwell Building\\
The King's Buildings\\
Peter Guthrie Tait Road\\
Edinburgh\\ 
EH9 3FD\\
 United Kingdom}

 \curraddr{School of Mathematics\\
Watson Building\\
University of Birmingham\\
Edgbaston\\
Birmingham\\
B15 2TT\\ United Kingdom}

\email{y.wang.14@bham.ac.uk}

\subjclass[2010]{35Q55}

\keywords{fourth order nonlinear Schr\"odinger equation;  
biharmonic nonlinear Schr\"odinger equation;  
short-time Fourier restriction norm method;
normal form reduction; enhanced uniqueness}

\begin{abstract}
	
We consider the Cauchy problem for the cubic fourth order nonlinear Schr\"odinger equation (4NLS) on the circle. 
In particular, we prove global well-posedness of  the renormalized 4NLS
in negative Sobolev spaces $H^s(\T)$, $s > -\frac{1}{3}$,
with enhanced uniqueness.
The proof consists of two separate arguments.
(i) We first prove global existence 
in $H^s(\T)$, $s > -\frac{9}{20}$, 
via the short-time Fourier restriction norm method.
By following the argument in Guo-Oh for the cubic NLS, 
this also leads to non-existence of solutions for the (non-renormalized) 4NLS 
in negative Sobolev spaces. 
(ii) We then  prove enhanced uniqueness in $H^s(\T)$, $s > -\frac{1}{3}$, by 
establishing an energy estimate for the difference of two solutions
with the  same  initial condition. 
For this purpose, we perform an infinite iteration 
of normal form reductions on  the $H^s$-energy functional,
allowing us to introduce
 an infinite sequence of correction terms to 
 the $H^s$-energy functional
 in the spirit of the $I$-method.
In fact, 
the main novelty of this paper
is this reduction of the $H^s$-energy functionals (for a single solution and
for the difference of two solutions with the same initial condition)
to sums of infinite series of multilinear terms
of increasing degrees.

\end{abstract}


\maketitle

\tableofcontents

\section{Introduction}

\noi
\subsection{The cubic  nonlinear Schr\"odinger equation with quartic dispersion}
In this paper, we consider the Cauchy problem for the cubic fourth order nonlinear  Schr\"odinger equation
(4NLS) on the circle $\T = \R/(2\pi \Z)$:
\begin{equation}
    \label{4NLS0}
    \begin{cases}
        i  \dt u =  \dx^4  u   \pm |u|^2 u \\
        u|_{t= 0} = u_0,
    \end{cases}
    \qquad (x, t)  \in \T \times \R,
\end{equation}

\noi
where $u$ is a complex-valued function. 
The equation \eqref{4NLS0}
is also called 
the biharmonic nonlinear Schr\"odinger equation (NLS)
and it was studied in \cite{IK, Turitsyn} in the context of stability of solitons in magnetic materials.
See also \cite{Karpman, KS, BKS,  FIP}
for a more general
class of fourth order NLS:
\begin{align}
i \dt u =  \ld \dx^2 u + \mu  \dx^4 u \pm  |u|^{2}u. 
\label{4NLS}
\end{align}

\noi
In the following, we focus our attention on the equation \eqref{4NLS0}.
See Remark \ref{REM:general} 
for a brief discussion on \eqref{4NLS}.

Our main goal is to study the well/ill-posedness issue
of \eqref{4NLS0} in the low regularity setting.
We first recall the scaling symmetry for \eqref{4NLS0};
 if $u(x, t)$ is a solution to \eqref{4NLS0}
on $\R$, then
$u_\ld(x,t) = \ld^{-2} u(\ld^{-1} x,\ld^{-4}t)$
is also a solution to \eqref{4NLS0} on $\R$ with the scaled initial data
$u_{0, \ld}(x) = \ld^{-2} u_0(\ld^{-1}x)$.
This scaling  symmetry
induces the so-called scaling critical Sobolev regularity $s_\text{crit} := -\frac 32$,
leaving the homogeneous $\dot{H}^{s_\text{crit}}$-norm invariant
under the scaling symmetry.
On the one hand, the scaling argument provides
heuristics indicating that  a PDE is 
 well-posed in $H^s$ for $s \geq s_\text{crit}$
 and is ill-posed in $H^s$ for $s < s_\text{crit}$.
This heuristics certainly applies to many equations,
including NLS and the nonlinear wave equations.
See \cite{CCT2b}.
On the other hand,  this heuristics is known to often fail  in negative Sobolev spaces.
This is indeed the case for \eqref{4NLS0} and its renormalized variant \eqref{4NLS1}.

In  \cite{OTz}, the first author and Tzvetkov proved that
 \eqref{4NLS0} is globally well-posed in $H^s(\T)$ for $ s\geq 0$. 
The proof is based on 
the Fourier restriction norm method (namely, utilizing  the $X^{s, b}$-space defined in \eqref{Xsb1})
with the $L^4$-Strichartz estimate:
\begin{equation}
\label{L4}
  \| u\|_{L_{x, t}^4} \lesssim \|u\|_{X^{0,\frac5{16}}} 
\end{equation}

\noi
along with  the conservation of the $L^2$-norm.
Following the approach in \cite{BGT, CCT1}, 
it was also shown in \cite{OTz} that  \eqref{4NLS0} is mildly ill-posed in $H^s(\T)$, $s < 0$, 
in the sense that the solution map$: u_0 \in H^s(\T) \mapsto u \in C([-T, T]; H^s(\T))$
is not locally uniformly continuous for $s < 0$.
Moreover, following the work \cite{GO}, it was pointed out in \cite{OTz} that 
 \eqref{4NLS0} is indeed ill-posed
in negative Sobolev spaces by establishing
a non-existence result. 
See Corollary  \ref{COR:NE} below 
for a precise statement.
We also mention the following norm inflation result due to Choffrut-Pocovnicu \cite{CP}.
Let $s < -\frac 76$. Then, 
given any $\eps > 0$, 
there exist a solution $u_\eps$ to \eqref{4NLS0} on $\T$
and $t_\eps  \in (0, \eps) $ such that 
\[ \| u_\eps(0)\|_{H^s(\T)} < \eps \qquad \text{ and } \qquad \| u_\eps(t_\eps)\|_{H^s(\T)} > \eps^{-1}.\] 

\noi
See also \cite{OW}.
It is worthwhile to note that the regularity $-\frac 76$ is higher than the scaling critical regularity
$s_\text{crit} = -\frac 32$
and that this norm inflation result for $s < -\frac 76$
also applies to the renormalized 4NLS \eqref{4NLS1} below.

In the next subsection, we introduce an alternative formulation
for \eqref{4NLS0} such that 
(i)~it is equivalent to \eqref{4NLS0} in $L^2(\T)$ but
(ii)~it behaves better than \eqref{4NLS0} in negative Sobolev spaces.
In the following, 
the defocusing/focusing nature of the equation \eqref{4NLS0} does not play any role.
Hence,  we assume that it is defocusing, i.e.~with the $+$ sign in \eqref{4NLS0}.

\subsection{Renormalized cubic fourth order NLS}

Given a  global solution $u \in C(\R; L^2(\T))$ to \eqref{4NLS0}, 
we define the following invertible gauge transformation $\mathcal{G}$ by 
\begin{equation*}
\mathcal{G}(u)(t) : = e^{ 2 i t \mu(u)} u(t)
\end{equation*}

\noi
with its inverse 
\begin{equation}
\mathcal{G}^{-1}(u)(t) : = e^{- 2 i t \mu(u)} u(t),
\label{gauge2}
\end{equation}

\noi
where $\mu(u) = \fint |u(x, t)|^2 dx := \frac{1}{2\pi} \int_\T |u(x, t)|^2 dx.$
Thanks to the $L^2$-conservation, 
$\mu(u)$ is defined, independently of $t \in \R$,
as long as $u_0 \in L^2(\T)$.
A direct computation shows that the gauged function,  which we still denote by $u$, 
satisfies the following renormalized 4NLS:
\begin{equation}
	\label{4NLS1} 
	\begin{cases}
		i \dt u =  \dx^4 u +( |u|^2 -2  \fint |u|^2 dx) u \\
		u|_{t= 0} = u_0,
	\end{cases}
\quad 	(x, t)  \in \T \times \R.
\end{equation}

\noi
This renormalization appears as an equivalent formulation 
of the Wick renormalization\footnote{By viewing $u$
as a complex-valued Gaussian random variables, 
the Wick renormalization of $|u|^2 u$
is nothing but a projection onto the Wiener homogenous chaoses of order three.}
in Euclidean quantum field theory \cite{BO96, OS, OTh}.
For this reason, we will refer to \eqref{4NLS1}
as the {\it Wick ordered cubic 4NLS}
in the following.

In view of the invertibility of 
$\mathcal{G}$ on $L^2(\T)$, 
we see that  the original cubic 4NLS \eqref{4NLS0}
and the Wick ordered cubic 4NLS \eqref{4NLS1} describe equivalent dynamics
on $ L^2(\T)$.
On the other hand, the gauge transformation $\mathcal{G}$ does not make sense outside $L^2(\T)$.
Hence, they describe genuinely different dynamics, if any, outside $L^2(\T)$.

It is easy to see that this specific choice of  gauge for \eqref{4NLS1}
 removes a certain singular component from the cubic nonlinearity.
Indeed, 
the nonlinearity on the right-hand side of \eqref{4NLS1} can be written as
\begin{align}  
\textstyle  \Nf(u)= \Nf(u, u, u)
 :\!&= \textstyle \big( |u|^2 - 2 \fint  |u|^2 dx\big)u
 =  \N (u, u, u) - \RR (u, u, u),
 \label{Xnonlin}
\end{align}

\noi
where the non-resonant part $\N$ and the resonant part $\RR$ are defined by
\begin{align}
\label{NN1}
& \N(u_1, u_2, u_3) (x, t)  = \sum_{n \in \Z} \sum_{\substack{n = n_1 - n_2 + n_3\\n \ne n_1, n_3}}
 \ft{u}_1(n_1, t)\cj{\ft{u}_2(n_2, t)}\ft{u}_3(n_3, t)e^{inx}, \\
\label{NN2}
& \RR(u_1, u_2, u_3) (x, t) = \sum_{n \in \Z} \ft{u}_1(n, t)\cj{\ft{u}_2(n, t)}\ft{u}_3(n, t)e^{inx}.
\end{align}

\noi
Namely, the gauge transformation basically eliminates
the contribution from $ n = n_1$ or $n = n_3$.
In the following, 
we {\it choose} to study 
the Wick ordered cubic 4NLS \eqref{4NLS1}.
As with any renormalization procedure or gauge choice, we stress that this is a matter of choice.
See Remark \ref{REM:choice}.

We now state our first result.

\begin{theorem}[Global existence]\label{THM:1}
Let $s \in \big(-\frac{9}{20}, 0\big)$.
Given $u_0\in H^s(\T)$, there exists
a global solution 
 $u \in C(\R;  H^s(\T))$ to
the Wick ordered cubic 4NLS \eqref{4NLS1}
with $u|_{t = 0} = u_0$. 
\end{theorem}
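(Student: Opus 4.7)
The plan is to follow the short-time Fourier restriction norm method of Ionescu--Kenig--Tataru, adapted to the cubic NLS by Guo and by Guo--Oh. The key point is that the standard $X^{s,b}$ spaces fail to control $\Nf(u)$ for $s<0$, but by localizing in time at the frequency-dependent scale $N^{-4}$ (dictated by the fourth-order dispersion $\tau=n^4$) one recovers enough smoothing to close the estimates down to $s>-\tfrac{9}{20}$. Global existence then follows from local well-posedness together with an energy-type bound that grows at most polynomially in the local norm.

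First I would introduce the function-space machinery. For each dyadic $N=2^k$, define a short-time $X^{s,b}$-type space $F_N$ whose elements are frequency-localized to $|n|\sim N$ and further localized in time to windows of length $N^{-4}$, with an $\ell^2$-norm in these windows. Assemble $F^s$, the corresponding nonlinear space $N^s$ (with $b=-\tfrac12$ type weight), and an energy space $E^s$ via $\ell^2$-summation over dyadic $N$. The basic ``linear'' ingredient is the embedding-type estimate
\[ \|u\|_{F^s(T)} \lesssim \|u\|_{E^s(T)} + \|\Nf(u)\|_{N^s(T)}, \]
which follows routinely from the definitions and the Duhamel formula restricted to a window of size $N^{-4}$.

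The heart of the argument is a trilinear estimate of the form
\[ \|\N(u_1,u_2,u_3)\|_{N^s(T)} \lesssim \prod_{j=1}^{3} \|u_j\|_{F^s(T)} \qquad \text{for } s>-\tfrac{9}{20},\]
where $\N$ is the non-resonant trilinear form in \eqref{NN1}. One decomposes dyadically into $(N,N_1,N_2,N_3)$ with $N_1\geq N_2\geq N_3$ and carries out a case analysis depending on whether $N\sim N_1\gg N_2$ (the ``high-high-to-high'' case) or $N\ll N_1\sim N_2$ (the ``high-high-to-low'' case). The main input is the $L^4$-Strichartz estimate \eqref{L4} (converted to the short-time setting) together with the key modulation estimate: on the non-resonant set $\{n=n_1-n_2+n_3,\ n\ne n_1,n_3\}$, the phase $\Phi = n^4 - n_1^4 + n_2^4 - n_3^4$ factors as $(n-n_1)(n-n_3)\cdot Q(n,n_1,n_2,n_3)$ with $|Q|\sim N_1^2$ for the dominant configuration, so that the resonance function is sufficiently large. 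Two of the three functions can be placed in $L^4_{x,t}$, while the remaining factor is estimated in $L^2_{x,t}$ after paying the derivative loss; pushing the bookkeeping carefully yields the $s>-\tfrac{9}{20}$ threshold.

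Next comes the energy estimate. Computing $\tfrac{d}{dt}\|P_N u(t)\|_{L^2}^2$ for the Littlewood--Paley piece $P_N u$ and integrating over $[0,T]$, one arrives at a multilinear expression where the worst contributions are cancelled by the Wick renormalization. Performing one normal form reduction (integration by parts in $t$ against $e^{it\Phi}$) on the remaining terms transfers a factor of $|\Phi|^{-1}\sim (NN_1\,\text{mod-gap})^{-1}$ onto the symbol, producing an estimate
\[ \|u\|_{E^s(T)}^2 \lesssim \|u_0\|_{H^s}^2 + T^\theta \|u\|_{F^s(T)}^4 \]
with some $\theta>0$ in the range $s>-\tfrac{9}{20}$. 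Combining the three estimates by a standard continuity argument gives an a priori bound $\|u\|_{F^s(T)}\lesssim \|u_0\|_{H^s}$ for $T=T(\|u_0\|_{H^s})>0$, which upgrades to local well-posedness (via the analogous difference estimate) and then extends to arbitrary time intervals by iteration since the energy estimate is global in nature, yielding the solution $u\in C(\R;H^s(\T))$ claimed in Theorem~\ref{THM:1}.

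The principal obstacle is the trilinear estimate, where the $-\tfrac{9}{20}$ threshold is saturated; one must exploit \emph{both} the non-resonant structure (removing the $n=n_1, n=n_3$ contributions) and the short-time localization simultaneously, and any slack in the $L^4$ Strichartz estimate immediately erodes the regularity range. The energy estimate is technically involved but conceptually parallel to the cubic NLS case, since only a \emph{single} normal form step is required for global existence (the infinite iteration is reserved for the uniqueness argument).
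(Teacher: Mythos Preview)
Your outline has the right skeleton (linear estimate, trilinear estimate, energy estimate, continuity argument) but several steps are substantively wrong, and the path from local to global existence is not the one that actually works.

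First, the time-localization scale is not $N^{-4}$. That scale comes from the linear oscillation period, which is irrelevant here; the correct scale is $N^{-\alpha}$ with $\alpha = -\tfrac{8s}{3}+$, determined by asking when the \emph{nonlinear} effect becomes visible at frequency $N$ for data of size $1$ in $H^s$ (see the paper's Subsection~\ref{SUBSEC:alpha}). Using $\alpha=4$ would make the trilinear and energy estimates fail badly.

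Second, and more seriously, you cannot ``upgrade to local well-posedness via the analogous difference estimate.'' The difference energy estimate of the form
\[ \|u-v\|_{E^s(T)}^2 \lesssim \|u(0)-v(0)\|_{H^s}^2 + T^\theta\big(\|u\|_{F^{s,\alpha}(T)}^3+\|v\|_{F^{s,\alpha}(T)}^3\big)\|u-v\|_{F^{s,\alpha}(T)} \]
is \emph{false} for $s<0$: it would imply local uniform continuity of the solution map, which is known to fail. The symmetrization that makes the single-solution energy estimate work (producing the symbol $\Psi(\bar n)=a(n_1)-a(n_2)+a(n_3)-a(n_4)$ and the mean-value-theorem gain) does not survive for differences, and the resonant contribution $\RR(u)-\RR(v)$ does not vanish. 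So the short-time method here gives only an a~priori bound on smooth solutions, from which one extracts existence by a compactness argument (Lemma~\ref{LEM:Kcpt}), not well-posedness. Incidentally, the single-solution energy estimate requires \emph{no} normal form step here---the stronger quartic dispersion makes symmetrization alone sufficient, unlike the cubic NLS case.

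Third, the passage from local to global is not ``iteration since the energy estimate is global in nature''; the energy estimate carries a factor $T^\theta$ and there is no conserved quantity at negative regularity. The paper instead introduces the equivalent norm $\|f\|_{H^s_M}=\|(M^2+n^2)^{s/2}\hat f\|_{\ell^2}$ and exploits the decay $\|u_0\|_{H^s_M}\to 0$ as $M\to\infty$ (valid because $s<0$) to reduce to a small-data problem on $[0,1]$, which can then be iterated. Without this $M$-parameter trick your argument does not globalize.
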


On the one hand, as in \cite{OTz}, 
one can easily prove local well-posedness of \eqref{4NLS1} in $L^2(\T)$
by  a Picard iteration.
On the other hand, 
it is easy to see that \eqref{4NLS1}
is mildly ill-posed in negative Sobolev spaces 
in the sense of the failure of local uniform continuity
of the solution map
\cite{CO, OTz}; see Remark 1.4 in \cite{CO}.
This in particular implies that one can not
use a Picard iteration to construct solutions to \eqref{4NLS1}
in negative Sobolev spaces.
We instead use a more robust  energy method to construct solutions.
More precisely, we 
use the short-time Fourier restriction norm method to prove Theorem \ref{THM:1}. 
Here, the short-time Fourier restriction norm method
simply means that we use 
dyadically defined  $X^{s, b}$-type spaces
with suitable localization in time,
depending on the dyadic size of spatial frequencies.
A precursor of this method
appeared in the work of Koch-Tzvetkov \cite{KTz}, 
where 
localization in time was combined with the Strichartz norms.
The short-time Fourier restriction norm method has been very effective 
in establishing a priori bounds on solutions
in low regularity spaces (yielding even uniqueness in some cases), 
in particular, where a solution map
is known to fail to be locally uniformly continuous.
See \cite{CCT3, IKT, KT1, Guo1, KT2,  KP}.

Given $T > 0$, let $F^s(T) \subset C([-T, T]; H^s(\T))$ denote the local-in-time version
of the $X^{s, b}$-space adapted to appropriately chosen short-time scales
and let $N^s(T)$ be its ``dual'' space.
See Section \ref{SEC:2} for their precise definitions.
In establishing the local existence part of   Theorem \ref{THM:1}, 
our main goal is to establish  the following three estimates:
\begin{align}
   \text{Linear estimate:}& 
&    \| u \|_{F^s(T)}  &  \les \|u\|_{E^s(T)} +  \| \Nf(u)\|_{N^s(T)},  \label{Alinear}
 & \\
   \text{Nonlinear estimate:}& 
&  \| \Nf(u) \|_{N^s(T)}  & \les \|u\|_{F^s(T)}^3,  \label{Anonlinear}
& \\
   \text{Energy estimate:}& 
&    \| u \|_{E^s(T)}^2  &  \les \|u_0\|_{H^s}^2 + \|u\|_{F^s(T)}^{4}, \label{Aenergy}  
&
\end{align}

\noi
where $E^s(T) \thickapprox L^\infty([-T, T]; H^s(\T))$.
These three estimates yield an a priori bound
on (smooth) solutions in $H^s(\T)$, which allows us to prove existence 
of local-in-time solutions 
(without uniqueness)
by a compactness argument.
As we see in the later sections, the short-time restriction adapted
to the spatial dyadic scales allows us to gain extra modulation (i.e.~smoothing)
 in the resonant case.
This in particular enables us to prove the trilinear estimate \eqref{Anonlinear} below $L^2(\T)$.

As for the global existence part, 
we employ the following  $H^s_M$-norm adapted to the parameter $M\geq 1$
defined by 
\begin{align*}
 \| f \|_{H^s_M} = \big\|(M^2 + n^2)^\frac{s}{2} \ft f(n)\big\|_{\l^2_n}.
\end{align*}

\noi
While the   $H^s_M$-norm is equivalent to the standard $H^s$-norm, 
we have the following decay property when $s < 0$:
\begin{align*}
\lim_{M\rightarrow \infty} \| f \|_{ H_M^s} = 0
\end{align*}

\noi
for all $f \in H^s(\T)$.
This  allows us to reduce the problem to a small data theory in some appropriate sense.
See Section~\ref{SEC:GWP}.

As a corollary to the local-in-time a priori estimate  established in the proof of Theorem \ref{THM:1}
for solutions to the Wick ordered cubic 4NLS \eqref{4NLS1}
(see Remark \ref{REM:LWP}), 
we obtain the following non-existence result
for the original cubic 4NLS claimed above.

\begin{corollary}
\label{COR:NE}

Let $s \in \big( -\frac{9}{20}, 0\big)$
and $u_0 \in H^s (\T)\setminus L^2(\T)$.
Then, for any $T>0$, 
there exists no distributional  solution $u \in C([-T, T]; H^s(\T))$ to 
the cubic  4NLS \eqref{4NLS0}
such that

\begin{itemize}
\item[\textup{(i)}] $u|_{t = 0} = u_0$, 

\smallskip

\item[\textup{(ii)}] There exist smooth global solutions $\{u_n\}_{n\in \mathbb{N}}$ 
to \eqref{4NLS0} such that 
$u_n \to u$ in $ C([-T, T]; \mathcal{D}'(\T))$ as $n \to \infty$. 
\end{itemize}

\end{corollary}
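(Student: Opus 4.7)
Following the approach of Guo--Oh \cite{GO} for the cubic NLS on $\T$, my plan is to use the gauge equivalence between \eqref{4NLS0} and the Wick-ordered equation \eqref{4NLS1}, combined with the $H^s$-global theory of Theorem \ref{THM:1} for \eqref{4NLS1}, to produce a rapidly oscillating phase that is incompatible with any distributional limit of smooth solutions of \eqref{4NLS0}.

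Suppose for contradiction that $u \in C([-T,T]; H^s(\T))$ satisfies $u|_{t=0} = u_0$ and that smooth global solutions $u_n$ of \eqref{4NLS0} converge to $u$ in $C([-T,T]; \mathcal{D}'(\T))$. Set $u_{n,0} := u_n|_{t=0}$ and $\mu_n := \mu(u_n) = \frac{1}{2\pi}\|u_{n,0}\|_{L^2}^2$; by $L^2$-mass conservation for smooth solutions, $\mu_n$ is independent of $t$. The gauge transforms $v_n := \mathcal{G}(u_n) = e^{2it\mu_n} u_n$ are then smooth global solutions of \eqref{4NLS1} with $v_n(0) = u_{n,0}$. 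Evaluating the $\mathcal{D}'$-convergence at $t=0$ gives $u_{n,0} \to u_0$ in $\mathcal{D}'(\T)$, and hence $\widehat{u_{n,0}}(k) \to \widehat{u_0}(k)$ for every $k \in \Z$. Fatou's lemma on the Fourier side then yields
\[
\liminf_{n\to\infty}\,2\pi\mu_n \;=\; \liminf_{n\to\infty}\|u_{n,0}\|_{L^2}^2 \;\geq\; \|u_0\|_{L^2}^2 \;=\; +\infty,
\]
since $u_0 \notin L^2(\T)$; thus $\mu_n \to \infty$.

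Next, I would invoke the a priori estimate underlying Theorem \ref{THM:1} to control the smooth sequence $v_n$ of solutions to \eqref{4NLS1}: after (if necessary) replacing $u_n$ by an alternative sequence of smooth global solutions of \eqref{4NLS0} whose initial data also converge to $u_0$ but now in $H^s(\T)$, the uniform bound $\sup_n \|v_n\|_{F^s(T)} < \infty$ together with a stability argument produces, along a subsequence, a limit $v \in C([-T,T]; H^s(\T))$ solving \eqref{4NLS1} with $v(0) = u_0$, such that $v_n \to v$ in $C([-T,T]; H^s(\T))$. Since $u_0 \neq 0$ and $t \mapsto v(t) \in H^s(\T)$ is continuous, one may choose an open interval $I \subset [-T,T]$ around $t=0$ and a test function $\phi \in C^\infty(\T)$ such that $\langle v(t), \phi\rangle \neq 0$ for every $t \in I$.

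Finally, combining $u_n(t) = e^{-2it\mu_n}\, v_n(t)$ with $u_n \to u$ in $\mathcal{D}'(\T)$ and $v_n \to v$ in $H^s(\T) \hookrightarrow \mathcal{D}'(\T)$ gives, for each $t \in I$,
\[
e^{-2it\mu_n} \;\longrightarrow\; \frac{\langle u(t), \phi\rangle}{\langle v(t), \phi\rangle}.
\]
Pointwise convergence of $e^{-2it\mu_n}$ on an interval $I$ of positive length is incompatible with $\mu_n \to \infty$: for any $\psi \in C_c^\infty(I)$, the Riemann--Lebesgue lemma forces $\int_I e^{-2it\mu_n}\psi(t)\,dt \to 0$, whereas dominated convergence against a unimodular pointwise limit would yield a nonzero integral for a suitable $\psi$. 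This contradiction completes the argument. I expect the main difficulty to lie in the $H^s$-convergence step for the $v_n$: it rests on the a priori estimates \eqref{Alinear}--\eqref{Aenergy} together with the compactness/stability framework from the proof of Theorem \ref{THM:1}, and cannot be extracted from purely distributional reasoning on the original sequence $u_n$.
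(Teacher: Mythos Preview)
Your overall strategy --- gauge-transform to the Wick-ordered equation, deduce $\mu_n\to\infty$ via Fatou on the Fourier side, and derive a contradiction from the oscillating phase $e^{-2it\mu_n}$ through Riemann--Lebesgue --- is exactly the approach the paper indicates, referring to \cite[Section~9]{GO}.

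The gap is in your compactness step. Replacing the given $u_n$ by ``an alternative sequence of smooth global solutions of \eqref{4NLS0} whose initial data converge to $u_0$ in $H^s$'' severs the link to $u$. For such a replacement $\wt u_n$ (say with $\wt u_n(0)=\P_{\le n}u_0$), the gauged $\wt v_n=\mathcal G(\wt u_n)$ do converge in $C_TH^s$ to some $v$ along a subsequence, but then $\wt u_n(t)=e^{-2it\wt\mu_n}\wt v_n(t)$ with $\wt\mu_n\to\infty$ fails to converge in $C([-T,T];\mathcal D'(\T))$: for any $\phi$ with $\jb{v(t),\phi}\ne 0$ the pairing $\jb{\wt u_n(t),\phi}$ oscillates and has no pointwise-in-$t$ limit. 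Your final step needs the \emph{same} sequence to satisfy both $u_n\to u$ in $\mathcal D'$ and $v_n\to v$ in $H^s$, and the replacement trades one for the other. The a~priori estimate and compactness (Remark~\ref{REM:LWP}, Lemma~\ref{LEM:Kcpt}) must instead be applied to the $v_n=\mathcal G(u_n)$ arising from the \emph{given} sequence; in \cite{GO} this is possible because the stronger hypothesis $u_n\to u$ in $C_TH^s$ supplies the uniform bound $\sup_n\|u_{n,0}\|_{H^s}<\infty$ needed to run those estimates. The paper asserts that from hypothesis~(ii) only the distributional pairing convergence is actually used, so you should trace through \cite[Section~9]{GO} to see precisely how the argument is organized rather than attempt the replacement.
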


In \cite{GO}, the first author with Guo proved a similar non-existence result
for the standard cubic NLS:
\begin{equation*}
\textstyle i \dt u =  \dx^2 u + |u|^2 u 
\end{equation*}

\noi
in $H^s(\T)$, $s > -\frac 18$, 
by first establishing an a priori estimate for 
solutions to the following Wick ordered cubic NLS:
\begin{equation}
\label{NLS1} 
\textstyle i \dt u =  \dx^2 u + ( |u|^2 -2  \fint |u|^2 dx) u. 
\end{equation}

\noi
The main idea of the proof is to exploit the fast oscillation 
in the phase of the inverse gauge transformation \eqref{gauge2}
and apply Riemann-Lebesgue lemma.
See Section 9 in \cite{GO} for details. 
Note that our assumption in Corollary \ref{COR:NE}
is slightly weaker than that in Theorem 1.1 in \cite{GO},
namely, the convergence in (ii) is assumed only in $C([-T, T]; \mathcal{D}'(\T))$ 
but that the same proof applies
since the only ingredient needed from this assumption is the following convergence:
$\jb{u_n(\cdot, t), \phi(\cdot, t)}_{L^2_x}
\to  \jb{u(\cdot, t), \phi(\cdot, t)}_{L^2_x}$
for any test function    $\phi \in \mathcal D(\T\times [-T, T])$.

\begin{remark}\label{REM:choice}\rm
By introducing another  gauge transformation
$\mathcal{G}_\g(u)(t) : = e^{  i\g t  \mu(u)} u(t)$
with a parameter $\g\in \R$, 
we arrive at  a different  renormalized cubic 4NLS:
\begin{align}
i \dt u 
& = \textstyle  \dx^4 u + ( |u|^2 -\g  \fint |u|^2 dx) u\notag\\
& = \dx^4 u + ( |u|^2 -\g \cdot \infty) u.
\label{IntroWNLS}
\end{align}

\noi
As it was mentioned in \cite{GO} in the context of the cubic NLS, 
it is crucial 
 to {\it subtract off the  right amount of infinity}
 in this renormalization procedure.
It is easy to see that \eqref{4NLS0}, \eqref{4NLS1}, and \eqref{IntroWNLS}
are all equivalent in $L^2(\T)$.
In negative Sobolev spaces, however, 
they are very different.
In fact, the same non-existence result in negative Sobolev spaces
holds for \eqref{IntroWNLS} unless $\g = 2$, which 
 shows that ``$2\cdot \infty$'' is the right amount to subtract
in the renormalization procedure.	

\end{remark}

\begin{remark}\rm
By applying our analysis with a parameter $M \geq 1$, 
we can extend the local existence result
of the Wick ordered cubic NLS \eqref{NLS1} in \cite{GO}
to global existence (without uniqueness) in $H^s(\T)$, $s > -\frac 18$.
\end{remark}

\medskip

Next, we turn our attention 
to the uniqueness issue
of the solutions constructed in Theorem \ref{THM:1}.
The main source of 
difficulty 
lies in establishing 
 an  energy estimate 
for the difference of two solutions.
The energy estimate \eqref{Aenergy} for a single solution follows from 
an argument analogous to the $I$-method (the method of almost conservation laws)
\cite{CKSTT2, CKSTT1}, which is ultimately 
 based on the conservation of the $L^2$-norm for \eqref{4NLS1}.
The $L^2$-norm of the difference of two solutions, however, is not conserved
under \eqref{4NLS1}.
Moreover, an estimate of the form:
\begin{align*}
    \| u  - v \|_{E^s(T)}^2  &  \les \|u(0) - v(0) \|_{H^s}^2 + 
 \big(   \|u\|_{F^s(T)}^{3}+ \| v\|_{F^s(T)}^{3}\big) \|u- v\|_{F^s(T)}
\end{align*}

\noi
is {\it false} since it would imply smooth dependence on initial data
and such smooth dependence is known to fail in negative Sobolev spaces
\cite{CO, OTz}.
In the following, we establish an energy estimate
for the difference of two solutions with the {\it same} initial condition
and thus  prove uniqueness of solutions to the Wick ordered cubic 4NLS \eqref{4NLS1}.
Furthermore, our argument for proving uniqueness does not use any auxiliary function space
(in particular, we do not use the short-time Fourier restriction norm method)
and thus yields uniqueness in an {\it enhanced} sense.

\begin{theorem}[Global well-posedness with enhanced uniqueness]\label{THM:2}
Let $s \in (-\frac{1}{3}, 0)$.
Then, the Wick ordered cubic 4NLS \eqref{4NLS1}
is globally well-posed in $H^s(\T)$.
More precisely, the solution constructed in Theorem \ref{THM:1} is unique
and the solution map is continuous.
Here, the uniqueness  holds in an enhanced sense;
the solution constructed in Theorem \ref{THM:1} is unique
among all the solutions in $C(\R; H^s(\T))$ to \eqref{4NLS1}
with the same initial data
equipped with smooth approximating solutions.\footnote{Here, we implicitly assume
that these solutions belong to various  auxiliary functions spaces
so that the cubic nonlinearity makes sense in some appropriate manner.
The point is that we do not need to know which auxiliary function space each solution belongs to.
Moreover, we assume that they satisfy the local-in-time estimate: $\| u \|_{C_T H^s} \les \| u_0\|_{H^s}$
for some $T = T(\|u_0\|_{H^s}) > 0$.
See Remark \ref{REM:app}.}

\end{theorem}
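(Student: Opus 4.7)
The plan is to deduce Theorem \ref{THM:2} from an energy estimate for the difference $w := u - v$ of two solutions $u,v \in C(\R;H^s(\T))$ to \eqref{4NLS1} issuing from the same initial datum $u_0$. Since $w(0)=0$, a Gronwall-type bound $\|w(t)\|_{H^s}^2 \lesssim \int_0^t C(\|u\|_{H^s},\|v\|_{H^s})\|w(\tau)\|_{H^s}^2 d\tau$ would force $w\equiv 0$, and continuity of the solution map in $H^s(\T)$ would then follow from a Bona--Smith-type argument: smooth the initial data, invoke global existence from Theorem \ref{THM:1}, and use the same energy estimate to transfer convergence of the data to convergence of the solutions.

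\textbf{Energy estimate via infinite normal form reduction.} First I would compute, for smooth solutions,
\[
\tfrac{d}{dt}\|w(t)\|_{H^s}^2 = 2\,\mathrm{Im}\,\langle (1+|D_x|^2)^s w,\, \Nf(u)-\Nf(v)\rangle_{L^2_x},
\]
and use the trilinearity of $\Nf$ in \eqref{Xnonlin} to write $\Nf(u)-\Nf(v)$ as a sum of three terms, each trilinear in $\{u,v,w\}$ with one factor of $w$. Splitting via \eqref{NN1}--\eqref{NN2}, the resonant portion contributes a harmless term controlled directly by $\|w\|_{H^s}^2$, while the non-resonant portion produces a sum over frequencies $(n,n_1,n_2,n_3)$ with $n = n_1 - n_2 + n_3$, $n\neq n_1,n_3$, for which the modulation $\Phi_4 := n^4 - n_1^4 + n_2^4 - n_3^4$ does not vanish. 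The key idea is to perform a differentiation by parts in time on this non-resonant piece, writing $e^{it\Phi_4} = \frac{1}{i\Phi_4}\partial_t e^{it\Phi_4}$ and integrating by parts. This yields (a) a boundary contribution which I would absorb into a correction to the energy functional and (b) a new multilinear term of degree five arising from $\partial_t u_j = -i\partial_x^4 u_j - i\Nf(u_j)$. Splitting the new term again into resonant/non-resonant parts with respect to the five-linear phase and iterating the procedure indefinitely, I obtain a modified energy
\[
\mathcal{E}^s(w)(t) \;:=\; \|w(t)\|_{H^s}^2 + \sum_{j\geq 1} \mathcal{R}_j(u,v,w)(t)
\]
and a formal identity $\mathcal{E}^s(w)(T) - \mathcal{E}^s(w)(0) = \int_0^T \mathcal{M}(u,v,w)\,dt$, where $\mathcal{M}$ is an infinite sum of multilinear terms of ever-increasing degree, each indexed by a tree recording the history of integrations by parts (as hinted by the TikZ diagrams in the paper).

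\textbf{Multilinear estimates and the threshold $s>-\tfrac13$.} The heart of the proof is to show that, for $s > -\tfrac13$, the series defining both $\mathcal{E}^s(w) - \|w\|_{H^s}^2$ and $\mathcal{M}(u,v,w)$ converge absolutely on a short time interval, with
\[
\bigl|\mathcal{E}^s(w)(t) - \|w(t)\|_{H^s}^2\bigr| \;\ll\; \|w(t)\|_{H^s}^2, \qquad |\mathcal{M}(u,v,w)| \;\lesssim\; C(\|u\|_{H^s},\|v\|_{H^s})\,\|w\|_{H^s}^2.
\]
At each node of the tree, the gain $|\Phi_{2k}|^{-1}$ from the modulation must be balanced against the combinatorial growth of trees and the regularity loss from the multilinear factors; the sharp threshold $s=-\tfrac13$ should emerge from this balance in the worst configurations (essentially those dictated by the divisor-type bounds on $\Phi_{2k}$ in the resonant-near-resonant regime).

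\textbf{Main obstacles.} The principal difficulties I anticipate are, first, keeping the combinatorics of the tree-indexed expansion under tight enough control to close the iteration uniformly in the degree $j$ (this is where the novelty of an \emph{infinite} normal form reduction, as emphasized in the abstract, really matters compared with a finite $I$-method-style correction), and second, justifying the formal identity for actual, possibly non-smooth, solutions in $C(\R;H^s(\T))$. For the latter I would use precisely the hypothesis built into Theorem \ref{THM:2} that the solutions come equipped with smooth approximating sequences satisfying $\|u_n\|_{C_T H^s} \lesssim \|u_0\|_{H^s}$: perform all manipulations at the approximating level, where everything is rigorous, and pass to the limit using only the weak continuity $\langle u_n(\cdot,t),\phi\rangle \to \langle u(\cdot,t),\phi\rangle$ together with the uniform $H^s$-bound. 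Once the identity and estimate are established for the difference of two bona fide $H^s$-solutions, Gronwall yields uniqueness on short times, a standard continuation and the global $H^s$-bound from Theorem \ref{THM:1} upgrade this to the whole real line, and the same energy estimate applied to pairs consisting of a solution and a smooth approximant gives continuous dependence, completing the proof of global well-posedness in $H^s(\T)$ for $s\in(-\tfrac13,0)$.
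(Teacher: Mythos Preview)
Your outline captures the spirit of the argument (infinite normal form reduction on the $H^s$-energy of the difference), but it has a genuine gap at the point where you dismiss the resonant contribution as ``harmless'' and ``controlled directly by $\|w\|_{H^s}^2$''. After the obvious cancellation, the resonant piece of $\tfrac{d}{dt}\|u-v\|_{H^s}^2$ reduces to
\[
\II \;=\; -2\,\Re i \sum_{n}\jb{n}^{2s}\bigl(|\ft u_n|^2-|\ft v_n|^2\bigr)\,\cj{(\ft u_n-\ft v_n)}\,\ft v_n,
\]
and in negative regularity the factor $|\ft u_n|^2-|\ft v_n|^2$ is \emph{not} controlled by $\|u-v\|_{H^s}$ in any usable way: for $s<0$ one only has $|\ft u_n|\lesssim \jb{n}^{-s}$, so a direct bound loses derivatives. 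The paper's key idea---which your plan is missing---is to write $|\ft u_n(t)|^2-|\ft v_n(t)|^2 = \bigl(|\ft u_n(t)|^2-|\ft u_n(0)|^2\bigr)-\bigl(|\ft v_n(t)|^2-|\ft v_n(0)|^2\bigr)$ (this is exactly where the hypothesis $u(0)=v(0)$ is used) and then perform the infinite normal form reduction on the \emph{single-solution} quantity $|\ft u_n(t)|^2-|\ft u_n(0)|^2$ to produce an absolutely summable expansion (Proposition~\ref{PROP:main}). Only after that does one get $\sup_n\bigl||\ft u_n|^2-|\ft v_n|^2\bigr|\le C(r)\|u-v\|_{C_TH^s}$ and hence $|\II|\lesssim \|u-v\|_{C_TH^s}^2$. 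So the normal form machinery is needed twice: once on the non-resonant part $\I$ (roughly as you describe) and once, separately, on the pointwise action $|\ft u_n|^2$ to tame the resonant part $\II$.

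Your plan for continuous dependence also breaks down. You propose to apply the same difference energy estimate to a pair consisting of the solution and a smooth approximant, but those have \emph{different} initial data, and the paper stresses (Remark~\ref{REM:uniq}(ii)) that an estimate of the form $\|u-v\|_{E^s(T)}^2\lesssim\|u(0)-v(0)\|_{H^s}^2+\cdots$ is \emph{false} for $s<0$, since it would imply local uniform continuity of the solution map, which is known to fail. The paper instead derives continuous dependence \emph{after} uniqueness, via a compactness argument (Lemma~\ref{LEM:conti}): one shows the approximating sequence is precompact in $C_TH^s$ (using the smoothing in the single-solution energy estimate, Proposition~\ref{PROP:energy}) and then uniqueness pins down the limit. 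Finally, a minor remark: the threshold $s>-\tfrac13$ in the paper does not come from divisor counting (the authors explicitly avoid it in the main multilinear estimates) but from a Cauchy--Schwarz bound on $\sum (n^{(j)}_{\max})^{-6s}/|\phi_j|$; see Lemma~\ref{LEM:N^2_1}.
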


Note that our enhanced uniqueness does {\it not} assert unconditional uniqueness\footnote{By slightly modifying the presentation in \cite{GKO}, one can easily prove unconditional uniqueness
of \eqref{4NLS0} and \eqref{4NLS1} in $C(\R; H^s(\T))$ for $s \geq \frac 16$.
Clearly, the threshold $s \geq \frac 16$ is sharp in view of the embedding:
$H^\frac{1}{6}(\T) \subset L^3(\T)$ (in making sense of the cubic nonlinearity).
Recall  also the non-uniqueness result by Christ \cite{CH1}
 of weak solutions in the extended sense in negative Sobolev spaces, 
 where the nonlinearity is interpreted only as a limit of smooth nonlinearities.}
in $C(\R; H^s(\T))$, 
since we do assume that solutions with smooth approximating solutions have some extra regularity
so that the cubic nonlinearity makes sense.
Instead, our uniqueness statement should be interpreted as follows;
given $u_0 \in H^s(\T)$, let $u$ be a solution to \eqref{4NLS1} with $u|_{t = 0} = u_0$ constructed
in Theorem \ref{THM:1} via this particular version of the short-time Fourier restriction norm method.
Suppose that $v$ is another solution to \eqref{4NLS1} with $v|_{t = 0} = u_0$
constructed by some other method, for example, 
by another version of the short-time Fourier restriction norm method
or by an adaptation of Takaoka-Tsutsumi's argument \cite{TT} to \eqref{4NLS1},
where the definition of the $X^{s, b}$-space incorporates the initial data.\footnote{In a recent paper \cite{Kwak},
Kwak applied the ideas from \cite{TT, NTT} and proved local well-posedness of \eqref{4NLS1} in $H^s(\T)$
for $s \geq - \frac 13$.  On the one hand, this result extends local well-posedness of \eqref{4NLS1}
to the endpoint regularity $s = -\frac{1}{3}$.  On the other hand, the uniqueness
in \cite{Kwak} holds only in (a variant of) the $X^{s, b}$-space.
When $s > -\frac 13$, 
 the enhanced uniqueness in Theorem \ref{THM:2}
allows us to conclude that 
 the local-in-time  solution constructed in \cite{Kwak}
 agrees with our solution constructed in Theorem \ref{THM:2} and hence is global.
When $s = -\frac 13$, global well-posedness of \eqref{4NLS1} is open.
We also mention an analogous work by Miyaji-Tsutsumi \cite{MT}, prior to \cite{Kwak}, 
on local well-posedness of the (renormalized) third order NLS in negative Sobolev spaces,
based on a variant of the $X^{s, b}$-space incorporating initial data.
}
In general, we do not have a way to compare these solutions belonging to 
different resolution spaces.
The enhanced uniqueness in Theorem \ref{THM:2}, however, asserts that $u$ and $v$ must agree.
 It is in this sense that our uniqueness statement in Theorem \ref{THM:2} is enhanced
 since it allows us to compare solutions constructed by different methods.
It seems that this notion of enhanced uniqueness is 
one of the strongest forms of uniqueness 
 ``in practice''.\footnote{Given a solution, it seems reasonable in practice to 
 assume that it comes with  at least one sequence of smooth approximating solutions.}

We stress that this enhanced uniqueness is by no means automatic
since we do {\it not} have a priori continuous dependence.\footnote{Our proof of continuous dependence follows as a consequence of the uniqueness statement and the a priori bound obtained
in the proof of Theorem \ref{THM:1}. See Section~\ref{SEC:uniq}.}
Let $u, v \in C(\R; H^s(\T))$ be two solutions
to \eqref{4NLS1} with the same initial data
with {\it some} smooth approximating solutions $\{ u_n \}_{n \in \NB}$
and $\{ v_m \}_{m \in \NB}$, respectively.
Then, given $T > 0$, we have
\[ \| u - v\|_{C_T H^s} \leq 
\| u - u_n\|_{C_T H^s}
+ \| u_n - v_m\|_{C_T H^s} + \| v_m - v\|_{C_T H^s}, \]

\noi
where $C_TH^s = C([-T, T]; H^s(\T))$.
The first and third terms on the right-hand side tend to 0 as $n, m \to \infty$.
We, however, do not have any way to compare $u_n$ and $v_m$
in general, since we do not even know how these solutions $u$ and $v$ are constructed.
Nonetheless,  our enhanced uniqueness in Theorem \ref{THM:2} allows us to conclude that  $u \equiv v$.

In establishing an  energy estimate for the difference of two solutions $u$ and $v$ to \eqref{4NLS1}
with the same initial condition, 
we perform an infinite iteration of normal form reductions
(= integration by parts in time).\footnote{In fact, 
this process basically corresponds to the Poincar\'e-Dulac normal form reductions.
See the introduction in \cite{GKO}.}
In \cite{GKO}, the first author with Guo and Kwon 
proved unconditional well-posedness 
of  the cubic NLS on $\T$ in low regularity
by performing  normal form reductions infinitely many times.
See also~\cite{CGKO}.\footnote{On the one hand, we implemented an infinite iteration of normal form reductions in \cite{CGKO}.
On the other hand, symmetrization at each step played a crucial role in \cite{CGKO}.
In this paper, we will not employ such  a symmetrization argument.
}
In our current setting, we do not work at the level of the equation \eqref{4NLS1} unlike \cite{GKO}.
We instead implement an infinite iteration scheme of normal form reductions
for  the evolution equations 
satisfied by energy quantities.
See \eqref{intromain3} and \eqref{intromain5} below.

More precisely, we first apply an infinite iteration of normal form reductions
to a solution  $u \in C([-T, T]; H^s(\T))$ to \eqref{4NLS1} 
and re-express $u$ as 
\begin{align}
 |\ft u(n, t)|^2 & -  |\ft u_0(n)|^2 
  = \mathfrak{S}_\infty(u)(n, t) \notag\\
&  : = \sum_{j=2}^\infty \NN_0^{(j)} (u)(n, t')\bigg|_0^t 
+  \int_0^t \bigg[\sum_{j = 2}^\infty   \RR^{(j)}( u)(n, t') 
+ \sum_{j = 1}^\infty\NN_1^{(j)} (u)(n, t')\bigg]
dt, 
\label{intromain1}
\end{align}

\noi
where 
$\NN_0^{(j)}$  (and  $\RR^{(j)}$ and $\NN_1^{(j)}$) are $2j$-linear forms
($(2j+2)$-linear forms, respectively).\footnote{More precisely, for fixed $t \in \R$,
$\{\NN_0^{(j)}(n, t)\}_{n \in \Z}$ is a sequence of  $2j$-linear forms.
Equivalently, by 
viewing $\NN_0^{(j)}(u)(n, t)$ as the Fourier coefficient of $\NN_0^{(j)}(u)(t)$, 
we can view $\NN_0^{(j)}(\cdot)(t)$ as a $2j$-linear operator.
With abuse of terminology,  however, we simple refer to $\NN_0^{(j)}$ as a $2j$-linear form in the following.
A similar comment applies to 
 $\RR^{(j)}$ and $\NN_1^{(j)}$.
}
Moreover, we show that these multilinear forms are bounded in $C([-T, T]; H^s(\T))$, 
$s > -\frac 13$, 
uniformly in $n \in \Z$.\footnote{In fact, we show that they are absolutely summable over $n \in \Z$.}
See Proposition \ref{PROP:main} below for a precise statement.

Now, take  
 two solutions $u$ and $v$ to \eqref{4NLS1}
constructed in Theorem \ref{THM:1} 
with the same initial condition $u|_{t = 0} = v|_{t = 0} =  u_0$,
satisfying $\| u \|_{C_TH^s}, \| v \|_{C_TH^s} \les \| u_0 \|_{H^s}$. 
Then, with \eqref{4NLS1}, we have 
\begin{align}
\frac{d}{dt} \| u (t)- v(t) \|_{H^s}^2
& =  -  2\Re i \sum_{n\in \Z} \jb{n}^{2s} 
\big[\ft {\N(u)}(n) -  \ft{\N(v)}(n)\big]
 \cj{(\ft u (n)  - \ft v(n))}  \notag \\
& \hphantom{XX}
+ 2\Re i \sum_{n\in \Z} \jb{n}^{2s} 
\big[\ft {\RR(u)}(n) - \ft {\RR(v)}(n)\big]
 \cj{(\ft u(n) - \ft v(n))}\notag\\
 & =: \I + \II.
 \label{intromain2}
 \end{align}
 
 \noi
Let us only consider the second term $\II$, corresponding to the resonant contribution.
Using \eqref{intromain1} with $u|_{t = 0} = v|_{t = 0} =  u_0$
and \eqref{NN2},
we obtain
\begin{align*}
|\II(t)|  
& \les  \bigg|\sum_{n\in \Z} \jb{n}^{2s}  \big(|\ft u(n, t) |^2 - |\ft v(n, t) |^2\big)\cj{(\ft u(n) - \ft v(n))} \ft v(n)\bigg| \notag\\
& \leq  \sup_{m \in \Z} \big| \mathfrak{S}_\infty(u)(m, t)  -  \mathfrak{S}_\infty(v)(m, t)\big|\cdot 
\sum_{n\in \Z} \jb{n}^{2s} 
  |\ft u(n) - \ft v(n)| |\ft v(n)| \notag\\
& \leq C(\|u_0\|_{H^s}) \| u - v\|_{C_T H^s}^2,
\end{align*}

\noi
where we used the multilinearity
of $\NN_0^{(j)}$,   $\RR^{(j)}$, and $\NN_1^{(j)}$
along with their $C([-T, T] H^s(\T))$-bounds to control the first factor 
and Cauchy-Schwarz inequality on the second factor.
As for the non-resonant contribution $\I$ in \eqref{intromain2}, 
we expand it  into a sum of infinite series
analogous to \eqref{intromain1}
and obtain an estimate of the form:
\begin{align*}
|\I(t)|  
& \leq C(\|u_0\|_{H^s}) \| u - v\|_{C_T H^s}^2.
\end{align*}

\noi
See Proposition \ref{PROP:main2} and Subsection \ref{SUBSEC:8.6}.
This yields the desired energy estimate for the difference of two solutions
with the same initial condition.

Therefore, the main task is to prove the identity \eqref{intromain1}
with good estimates.
We achieve this goal by performing integration by parts in an iterative manner, 
which introduces nonlinear terms of higher and higher degrees.
While these nonlinear terms thus introduced are of higher degrees, 
it turns out that they satisfy better estimates.
Namely, 
this infinite iteration of normal form reductions
allows us to exchange analytical difficulty
with combinatorial and notational complexity.
In order to keep track of all possible ways to perform integration by parts, 
we introduce the notion of {\it ordered bi-trees.}
We devote Section \ref{SEC:NF} for presenting the normal form reductions.

Lastly, we point out the connection to the $I$-method.\footnote{The connection between 
normal form reductions and modified energies in the $I$-method has already been pointed out in \cite{GO}.}
At each step of integration by parts, we introduce 
boundary terms. 
This corresponds to adding a correction term appearing in the $I$-method.
Namely,  in the context of the $I$-method, 
our approach  is nothing 
but to compute and estimate 
a modified energy of an {\it infinite order}.\footnote{The highest order of modified energies
used in the literature  is three in the application of the $I$-method to the KdV equation \cite{CKSTT1}, corresponding 
to two iterations of normal form reductions.}
For example, our argument yields
the following infinite expansion of the $H^s$-energy for a single solution:
\begin{align}
 \| u (t) \|_{H^s}^2  -  \| u (0) \|_{H^s}^2
&  = \sum_{j=2}^\infty \sum_{n\in \Z}\jb{n}^{2s}\NN_0^{(j)} (u)(n, t')\bigg|_0^t \notag\\
& \hphantom{XXX}+ 
 \int_0^t  \bigg[  \sum_{j = 2}^\infty \sum_{n\in \Z}\jb{n}^{2s}\RR^{(j)}( u)(n, t') 
 \notag\\
& \hphantom{XXXXXXXX}
+  \sum_{j = 1}^\infty \sum_{n\in \Z} \jb{n}^{2s} \NN_1^{(j)} (u)(n, t')
 \bigg]dt'.
\label{intromain3}
\end{align}

\noi
See \eqref{Y3} below.
Namely, defining a modified energy $E_\infty(u)$ of an infinite order by 
\begin{align*}
E_\infty(u) =  \| u  \|_{H^s}^2  
-  \sum_{j=2}^\infty\sum_{n\in \Z} \jb{n}^{2s}\NN_0^{(j)} (u)(n), 
\end{align*}

\noi
we obtain
\begin{align}
E_\infty(u)(t) & - E_\infty(u)(0)\notag\\
&  =  \int_0^t  \bigg[  \sum_{j = 2}^\infty  \sum_{n\in \Z}\jb{n}^{2s}\RR^{(j)}( u)(n, t') 
+  \sum_{j = 1}^\infty  \sum_{n\in \Z}  \jb{n}^{2s} \NN_1^{(j)} (u)(n, t')
 \bigg]dt'.
\label{intromain4}
\end{align}

\noi
While we do not need the modified energy $E_\infty(u)$ in this paper,
such an expansion by adding an infinite sequence of correction terms
seems to be new and of interest.

As for the difference of two solutions with the same initial data, 
while there are contributions from the resonant part
as well as the cross terms ($\I_{uv}$ and $\I_{vu}$ below) in the non-resonant part,
we also have a similar infinite expansion
(with two factors of $u - v$):
\begin{align}
 \| u (t)- v(t) \|_{H^s}^2
  & =   \int_0^t \Big\{( \I_{uu}(t') - \I_{uv}(t')) - (\I_{vu}(t')- \I_{vv}(t'))\Big\}
dt'\notag\\
& \hphantom{X}
+ \int_0^t    \sum_{n\in \Z} \jb{n}^{2s} 
\big( \mathfrak{S}_\infty(u)(n, t') - \mathfrak{S}_\infty(v)(n, t')\big)
\cj{(\ft u_n - \ft v_n)} \ft v_n(t') dt', 
\label{intromain5}
\end{align}

\noi
where the second term on the right-hand side involves a sum of infinite series
in view of~\eqref{intromain1}.
As for the integrands in the first integral,
see \eqref{Y1}, \eqref{Y3},  \eqref{Y4},  \eqref{Y9},  and \eqref{Y10},
where each integrand is written as a sum of infinite series.   
See Remark \ref{REM:uniq3}.

We conclude this introduction by stressing that 
 reducing the $H^s$-energy functionals
to the infinite series expansions \eqref{intromain3} and \eqref{intromain5}
(also see  \eqref{intromain4})
is 
 the main novelty of this paper.
In the proof of Theorem \ref{THM:2}, 
we use the infinite series expansion \eqref{intromain5} for the difference of two solutions
with the same initial data
to prove  uniqueness of solutions to \eqref{4NLS1}.
In \cite{OSTz}, 
(a variant of) the infinite series expansion \eqref{intromain3} for a single solution
plays an important role in establishing a crucial energy estimate
in studying the transport property of Gaussian measures on periodic functions
under the flow of 4NLS \eqref{4NLS0}.
  See Remark \ref{REM:OSTz}.
We hope that this idea of expanding energy functionals into infinite series
by normal form reductions 
can be applied to other equations
in various  settings.

\begin{remark}\label{REM:OSTz}\rm 
In a recent work \cite{OSTz}, the first author with Sosoe and Tzvetkov
established an optimal regularity result 
for quasi-invariance of the Gaussian measures on Sobolev spaces
under the original 4NLS \eqref{4NLS0}
by implementing a similar infinite iteration of normal form
reductions on the $H^s$-functional for solutions to  \eqref{4NLS0}
for  $s \in (\frac 12, 1)$.
While there are similarities between the normal form approach  in \cite{OSTz}
and in Section \ref{SEC:NF} of  this paper, 
more care is required in the present paper
since we need to gain derivatives at each step of normal form reductions
in order to estimate the multilinear forms
 $\NN_0^{(j)}(u)$,   $\RR^{(j)}(u)$, and $\NN_1^{(j)}(u)$
of arbitrarily large degrees
in terms of the {\it negative} Sobolev norm of $u$.

\end{remark}

\begin{remark} \rm
Recall that the mean-zero Gaussian white noise on $\T$
is formally given by 
\begin{align*}
 d \mu 
  = Z^{-1} e^{-\frac 12 \| \phi\|_{L^2}^2} d\phi .
 \end{align*}

\noi
In particular, a typical element under $\mu$ is given by\footnote{Throughout this paper, 
we drop the harmless factor of $2\pi$.} 
\begin{align}
\phi^\o(x) = \phi(x; \o) = \sum_{n \in \Z} g_n(\o)e^{inx}, 
\label{gauss1}
 \end{align}

\noi
where
$\{ g_n \}_{n \in \Z}$ is a sequence of independent standard complex-valued 
Gaussian random variables on a probability space $(\O, \mathcal{F}, P)$.
From \eqref{gauss1}, it is easy to see that 
 $\phi^\o$ in \eqref{gauss1} lies   in $H^s(\T)\setminus H^{-\frac 12}(\T)$, $s < -\frac 12$,  almost surely.
In particular, the regularity of the white noise is below the regularities stated
in Theorems \ref{THM:1} and \ref{THM:2}.

In view of the $L^2$-conservation for \eqref{4NLS1}
and the Hamiltonian structure of the equation, 
one may expect that the white noise is invariant under the dynamics of \eqref{4NLS1}.
In  \cite{OTzW}, the authors with Tzvetkov proved that this is indeed the case.
The main difficulty in \cite{OTzW} lies in constructing
 local-in-time dynamics with respect to the random initial data~\eqref{gauss1},
 which was overcome by a combination of new stochastic analysis and
 deterministic analysis
 different from the analysis presented in this paper.

 \end{remark}

\begin{remark}\label{REM:general}\rm 
We can also start our discussion with 
the more general cubic fourth order NLS \eqref{4NLS}
with $\mu \ne 0$
and consider its renormalized version.
In this case, 
the following phase function
\begin{align}
\phi_{\ld, \mu}(\bar n) 
& 	=  -\ld(n_1^2 - n_2^2 + n_3^2 - n^2)
	+ \mu(n_1^4 - n_2^4 + n_3^4 - n^4)\notag\\
& =  (n_1 - n_2)(n_1-n) 
\big\{- 2\ld + \mu \big(n_1^2 +n_2^2 +n_3^2 +n^2 + 2(n_1 +n_3)^2\big)\big\}.
 \label{phase1}
\end{align}

\noi
plays an important role in the analysis.
Compare this with \eqref{Phi}.

If the last factor in \eqref{phase1}
does not vanish for any $n_1, n_2, n_3, n \in \Z$, 
then the main results in this paper clearly hold with the same proofs.
Note that even if the last factor may be 0, 
i.e.~$ 2\ld \in \mu \NB$, the new resonance
occurs only for low frequencies,
where $\max (n_1^2, n_2^2, n_3^2, n^2) \les \frac{\ld}{\mu}$,
and hence the same argument basically holds.

\end{remark}

This paper is organized as follows.
In Section \ref{SEC:2}, 
we introduce notations and the function spaces along with their basic properties.
In Section \ref{SEC:Strichartz}, 
we present  multilinear Strichartz estimates,
which are then used to prove the crucial trilinear estimate \eqref{Anonlinear}
and the energy estimate \eqref{Aenergy} in Sections \ref{SEC:trilinear}
and \ref{SEC:energy}, respectively.
In Section \ref{SEC:GWP}, we present the proof of 
global existence (Theorem \ref{THM:1}).
In particular, given any $T > 0$, we choose $M = M(T) \gg 1$
such that the estimates \eqref{Alinear} - \eqref{Aenergy} adapted to the parameter $M$
allow us to construct solutions
on the time interval $[-T, T]$. 
In Section \ref{SEC:uniq}, 
by assuming the key propositions (Propositions \ref{PROP:main} and \ref{PROP:main2}), 
we prove uniqueness of solutions to \eqref{4NLS1} in $H^s(\T)$
for $s > -\frac 13$, which is then used to establish 
continuous dependence on initial data and thus global well-posedness
(Theorem \ref{THM:2}).
In Section \ref{SEC:NF}, 
we present details of the normal form reductions
and prove Propositions \ref{PROP:main} and \ref{PROP:main2}.

\section{Notations, 
function spaces, and their basic properties}
\label{SEC:2}

\subsection{Notations} \label{SUBSEC:notations}
For $a, b > 0$, we use $a\lesssim b$ to mean that
there exists $C>0$ such that $a \leq Cb$.
By $a\sim b$, we mean that $a\lesssim b$ and $b \lesssim a$.
We also use $a+$ (and $a-$) to denote $a + \eps$ (and $a - \eps$, respectively) for arbitrarily small $\eps \ll 1$.

Given a function $u$ on $\T\times \R$, we
use $\ft{u}$ and $\F (u)$
to denote the space-time Fourier transform of $u$ given by
\[ \ft{u}(n, \tau) = \int_{\T\times\R} e^{- inx} e^{-it\tau} u(x, t) dx dt.\]

\noi
When there is no confusion,
we may simply use $\ft{u}$ or $\F(u)$
to denote
the spatial, temporal, or space-time Fourier transform
of $u$, depending on the context.
In dealing with the spatial Fourier transform,
we often denote $\ft{u}(n, t)$
by $\ft{u}_n(t)$.

For $k \in \Z_{\geq 0}: = \Z\cap [0, \infty)$, 
we define the dyadic intervals $I_k$ by setting
 $I_0 = \{\xi: |\xi| < 1\}$
and
$I_k = \{ \xi : 2^{k-1} \leq |\xi|  < 2^k\}$ for  $k \geq 1$.
Next, 
we define the dyadic intervals $I_{k}^M$, $k \geq \log_2 M$,  adapted to a given {\it dyadic} parameter $M \geq 1$,
by setting
\[I_{k}^M = 
\begin{cases}
\rule[-3mm]{0pt}{0pt}I_k, & k > \log_2 M,\\
\displaystyle
\bigcup_{k=0}^{\log_2 M}I_k, & k = \log_2 M.
\end{cases}
\]

\noi
For simplicity, when $k = \log_2M$, 
we set 
\[ I^M_\text{low} =  I^M_{\log_2M}.\]

\noi
In the following, all the definitions depend on this dyadic parameter $M \geq 1$.
For convenience, we set
\[ \Z_M = \Z \cap [\log_2 M, \infty).\]

For $k \in \Z_M$ and $j \geq 0$, let
\[ D_{k, j}^M = \big\{ (n, \tau) \in \Z\times \R: \ n \in I_{k}^M, \ \tau + n^4 \in I_j \big\} \]

\noi
and $D_{k, \leq j}^M = \bigcup_{j' \leq j} D_{k, j'}^M$.
We also define $D_{\leq j}$ by
$D_{\leq j} =  \bigcup_{k \in \Z_M} D_{k, \leq j}^M.$

For $k \in \Z_M$, 
we use $\P_k$ to
denote the projection operator on $L^2(\T)$
defined by
$\ft{\P_k u}(n) = \ind_{I_k^M}(n) \ft{u}(n)$.
Note that $\P_{\log_2 M}$ is the projection onto ``low'' frequencies $\{ |n| \les M\}$.
With a slight abuse of notation, we also use
 $\P_k$ to denote the projection operator on $L^2(\T \times \R)$
given by $\F(\P_k u)(n, \tau) = \ind_{I_k^M}(n) \F(u)(n, \tau)$.
We also set 
\[ \P_{\leq k } = \sum_{\log_2 M \leq \l \leq k}\P_\l
\qquad \text{and}\qquad \P_{> k } = \sum_{ \l > k}\P_\l.\]

Let $\eta_0: \R \to [0, 1]$ be an even smooth cutoff function
supported on $[-\frac{8}{5}, \frac{8}{5}]$
such that $\eta_0 \equiv 1$ on $[-\frac{5}{4}, \frac{5}{4}]$.
We define $\eta$ by $\eta(\xi) = \eta_0( \xi) - \eta_0(2\xi)$,
and set  $\eta_k(\xi) = \eta(2^{-k}\xi)$ for $k \in \Z$.
Namely, $\eta_k$ is supported on
$\big\{ \frac{5}{4} \cdot 2^{k-1}  \leq |\xi|\leq \frac{8}{5} \cdot 2^k \big\}$.
As before, we define
$\eta_{\leq k } = \sum_{\l \leq k} \eta_\l$, etc.

Given a set of indices such as
$j_i$ and $k_i$, $i = 1, \dots, 4$,
we use
 $j^*_i$ and $k^*_i$ to denote the decreasing rearrangements
of these indices.
Also, given a set of frequencies
$n_i$, $i = 1, \dots, 4$,
we use
 $n^*_i$ to  denote the decreasing rearrangements
of $|n_i|$, $i = 1, \dots, 4$.

In the following, we use $S(t)= e^{-i t \dx^4}$
to denote the solution operator to
the linear fourth order Schr\"odinger equation: $i \dt u = \dx^4 u $.
Namely, for $f \in L^2(\T)$, we have
\[ S(t) f = \sum_{n\in \Z} e^{inx - i n^4 t} \ft{f}(n).\]

In performing normal form reductions in Section \ref{SEC:NF}, 
we use the following interaction representation $\u$ (of $u$) on $\T\times \R$:
\begin{align}
\u(t) := S(-t) u(t) = e^{it\dx^4} u(t).
\label{interaction}
\end{align}

\noi
On the Fourier side, we have  $\ft{\u}_n(t) = e^{it n^4} \ft u_n(t)$, $n \in \mathbb{Z}$.
With this notation, 
\eqref{4NLS1} can be written as
\begin{align} \label{4NLS2}
\dt \ft \u_n & = -  i
\sum_{\G(n)}
e^{  - i  \phi(\bar{n})t }
\ft \u_{n_1} \cj{\ft \u_{n_2}}\ft \u_{n_3}
+ i|\ft \u_n|^2 \ft \u_n
 \notag \\
& =: - i\,  \textsf{N}(\u)_n(t) +i\,  \textsf{R}(\u)_n(t),
 \end{align}

\noi
where  
 the plane $\G(n)$ is defined 
\begin{align}
\G(n) 
= \big\{(n_1, n_2, n_3) \in \Z^3:\, 
 n = n_1 - n_2 + n_3 \text{ and }  n_1, n_3 \ne n\big\}
\label{Gam1}
 \end{align}

\noi
and 
the phase function $\phi(\bar{n})$ is defined by 
\begin{align}\label{Phi}
 	\phi(\bar n) & = \phi(n_1, n_2, n_3, n) = n_1^4 - n_2^4 + n_3^4 - n^4\notag\\
&   =  - (n - n_1)(n-n_3) 
\big( n_1^2 +n_2^2 +n_3^2 +n^2 + 2(n_1 +n_3)^2\big).
\end{align}

\noi
Here,  the last equality holds under $n = n_1 - n_2 + n_3$.
See Lemma 3.1 in \cite{OTz}.

We also recall the phase function $\mu(\bar n)$
for the usual Schr\"odinger equation:
\begin{align}\label{Mu}
\mu(\bar{n}):& =  - n_1^2 + n_2^2- n_3^2 + n^2 \notag \\
& = 2(n_2 - n_1) (n_2 - n_3)
= 2(n - n_1) (n - n_3),
\end{align}

\noi
where  the last two equalities hold under $n = n_1 - n_2 + n_3$.

\subsection{Function spaces and their basic properties}
\label{SUBSEC:spaces}

Recall the definition of the standard Sobolev space $H^s(\T)$:
\[ \| f \|_{H^s} = \|\jb{n}^s \ft f(n)\|_{\l^2_n},\]

\noi
where $\jb{\,\cdot\,} = (1+|\cdot|^2)^\frac{1}{2}$.
Given $M \geq1$, we define the  $H^s_M$-norm adapted to the parameter $M\geq 1$
by 
\[ \| f \|_{H^s_M} = \big\|(M^2 + n^2)^\frac{s}{2} \ft f(n)\big\|_{\l^2_n}.\]

\noi
Clearly, 
the $H^s_M$-norm is equivalent to the standard $H^s$-norm.
When $s< 0$, however, it follows from the dominated convergence theorem that
\begin{align}
\lim_{M\rightarrow \infty} \| f \|_{ H_M^s} = 0
\label{Sob2}
\end{align}

\noi
for all $f \in H^s(\T)$. 
This decay property \eqref{Sob2} plays an important role in our analysis.

Next, we define our solution space adapted to this parameter $M \geq 1$.
In \cite{BO1}, Bourgain introduced the dispersive Sobolev space $X^{s, b}(\T\times \R)$ via the norm:
\begin{align}\label{Xsb1}
\| u\|_{X^{s, b}(\T\times \R)} = 
\|\jb{n}^s \jb{\tau + n^4}^b \ft {u}(n, \tau)\|_{\l^2_n L^2_\tau(\Z\times\R)}
= \| \jb{\dx}^s \jb{\dt}^b \u \|_{L^2(\T\times \R)},
\end{align}

\noi
where $\u$ is  the interaction representation defined in \eqref{interaction}.
The Fourier restriction norm method, utilizing the $X^{s, b}$-spaces and their variants,
 has been very effective in 
studying nonlinear evolution equations in low regularity settings.
In the following, we consider  the $X^{s, b}$-spaces adapted to  short time scales
and the parameter $M \geq1$.
When $M = 1$, these spaces were 
 introduced by 
Ionescu-Kenig-Tataru \cite{IKT}
in the context of the KP-I equation.
Also, see Christ-Colliander-Tao
\cite{CCT3} and Koch-Tataru \cite{KT1, KT2} for similar definitions.
While we state some basic properties of these function spaces, 
we refer readers to \cite{GO} for the details of their proofs.

Fix $M \geq 1$.
For  $k \in \Z_M$,  we define the dyadic $X^{s, b}$-type spaces $X_{M, k}$ by 
\begin{align*} 
X_{M, k} = \Big\{
& f_k \in L^2(\Z \times \R): f_k(n,\tau) \text{ is supported on } I^M_k \times \R \notag \\
& \text{ and }
\|f_k \|_{X_{M, k}}:=\sum_{j=0}^\infty
2^\frac{j}{2} \|\eta_j(\tau+n^4) f_k(n ,\tau)\|_{\ell^2_n L^2_{\tau}}<\infty
\Big\}.
\end{align*}

\noi
Then, the following properties hold
for  $X_{M, k}$, $k \in \Z_M$
(with implicit constants independent of $M \geq 1$):

\begin{itemize}
\item[(i)] 
We have
\begin{align*}
\bigg\| \int_{\R} |f_k(n,\tau )| d\tau \bigg\|_{\ell^2_n}
\les
\|f_k\|_{X_{M,k}}
\qquad \text{and}
\qquad
\int_\R \|g_k(n, \tau)\|_{\l^2_n} d\tau \les\|f_k\|_{X_{M, k}}
\end{align*}

\noi
for all $f_k \in X_{M, k}$, 
where $g_k(n, \tau) = f_k(n, \tau - n^4)$.

\smallskip

\item[(ii)]

\noi
For  $k,\ell \in \Z_M$ and $f_k \in X_k$, we have
then
\begin{align}
\sum_{j=\ell+1}^\infty 2^\frac{j}{2}
\bigg\| \eta_j(\tau+n^4) \int_{\R} |f_k(n,\tau')|
\, 2^{-\l}(1+2^{-\l}|\tau-\tau'|)^{-4}d\tau'\bigg\|_{\l^2_n L^2_\tau } &\notag \\
+2^\frac{\ell}{2} \bigg\| \eta_{\leq \l}(\tau+n^4) \int_{\R} |f_k(n,\tau')|
\, 2^{-\l}(1+2^{-\l}|\tau-\tau'|)^{-4}d\tau'\bigg\|_{\l^2_n L^2_\tau }&\les
\|f_k\|_{X_{M, k}},
\label{Xk2}
\end{align}

\noi
where the implicit constant is independent of $k$ and $\l$.
See \cite{Guo1} for the proof.

\smallskip

\item[(iii)]

As a consequence of (ii), we have
\begin{align}
\big\| \F[\gamma(2^\l(t-t_0))\cdot \F^{-1}(f_k)]\big\|_{X_{M, k}}
\les \|f_k\|_{X_{M, k}}
\label{Xk3}
\end{align}

\noi
for $k,\ \l\in \Z_M$, $t_0\in \R$, $f_k \in X_{M, k}$ and
$\g\in \mathcal{S}(\R)$,
where  the implicit constant in \eqref{Xk3}
is also independent of $k, \l$, and $t_0$.

\end{itemize}

Next, we consider the time localization of the $X_{M, k}$-space	
onto the  time scale $\sim 2^{-[\alpha k]}$, where
$\alpha>0$ is to be determined later.
Here, $[x]$ denotes the integer part of $x$.
For $k\in \Z_M$ we define the
spaces $F_{M, k}^\al$ and $N^\al_{M, k}$ by
\begin{align*}
F_{M, k}^\al=\Big\{
& u \in L^2(\T\times\R): \ft{u }(n,\tau) \text{ is supported in }I_k^M\times\R \\
& \text{ and }\|u\|_{F_{M, k}^\al}=\sup_{t_k\in \R}
\big\|\F[  \eta_0(2^{[\alpha k]}(t-t_k))\cdot u ]\big\|_{X_{M, k}}<\infty
\Big\},
\\
N_{M, k}^\al=\Big\{
& u \in L^2(\T\times\R): \ft{u}(n,\tau) \text{ is supported in }{I}_{k}^M\times\R \\
& \text{ and }
\|u\|_{N_{M, k}^\al}=\sup_{t_k\in\R}
\big\|(\tau+n^4+i2^{[\alpha k]})^{-1}\F[ \eta_0(2^{[\alpha k]}(t-t_k)) \cdot u ]\big\|_{X_{M, k}}<\infty
\Big\}.
\end{align*}

\noi
Given $T> 0$,  we define the time restriction spaces
 $F_{M, k}^\alpha(T)$ and $N_{M, k}^\alpha(T)$ by
\begin{align*}
F_{M, k}^\alpha(T)
& =\Big \{ u \in C([-T,T];L^2(\T)):
\|u\|_{F_{M, k}^\al(T)}=\inf_{\wt{u}=u \text{ on } \T\times [-T,T]}\|\wt u \|_{F_{M, k}^\alpha}\Big\}, \\
N_{M, k}^\alpha(T)
& =\Big\{ u \in C([-T,T];L^2(\T)):
\|u\|_{N_{M, k}^\alpha(T)}=\inf_{\wt{u}= u \text{ on } \T\times [-T,T]}\|\wt u\|_{N_{M, k}^\alpha}\Big\}.
\end{align*}

\noi
Here, the infimum is taken over all extensions $\wt u \in C_0(\R; L^2(\T))$.

We finally define our solution space $F^{s, \al}_M(T)$
and its dual space $N^{s, \al}_M(T)$
by putting together the dyadic spaces defined above
via  the Littlewood-Paley decomposition.
For $s\in \R$, $\al > 0$,  and $T> 0$, we define the spaces
$F^{s, \al}_M(T)$ and $N^{s, \al}_M(T)$ by
\begin{align*}
F^{s, \al}_M(T)
& = \Big\{ u:
\|u\|_{F^s_M(T)}^2=\sum_{k\in \Z_M}2^{2ks}\|\P_k u\|_{F_{M, k}^\alpha(T)}^2<\infty \Big\}, \\
N^{s, \al}_M(T)
& =\Big \{ u:
\|u\|_{N^s_M(T)}^2=\sum_{k\in \Z_M} 2^{2ks}\|\P_k u\|_{N_{M, k}^\alpha(T)}^2<\infty \Big\}.
\end{align*}

\noi
Here, $\al = \al (s) > 0$ is a parameter to be chosen later.
See Subsection \ref{SUBSEC:alpha}.
When $M = 1$, we simply drop the subscript $M$
from the function spaces
and use $F^{s, \al}(T)$, etc.

In order to handle the short-time structure embedded in the definitions of 
$F^{s, \al}_M(T)$ and $N^{s, \al}_M(T)$, 
we define the corresponding energy space $E^s_M(T)$
by 
\begin{align}
\|u\|_{E^s_M(T)}^2= 
\sum_{k\in \Z_M} \sup_{t_k\in [-T,T]}2^{2ks}\|\P_k u(t_k)\|_{L^2(\T)}^2
\label{Es}
\end{align}

\noi
for  $u\in C([-T,T];H^{\infty}(\T))$.
While the definition of  $E^s_M(T)$ depends on $M \geq 1$, 
it is independent of the parameter $\alpha>0$.
This space is essentially the usual energy space $C([-T,T];H^s_M(\T))$ but
with a logarithmic difference.
See Subsection \ref{SUBSEC:Es}.

\medskip

We conclude this subsection by recalling some basic lemmas
from \cite{GO}.
While these properties are stated and proved for $M = 1$ in \cite{GO}, 
 a straightforward modification yields the corresponding statements for $M \geq 1$
 (with implicit constants independent of $M \geq 1$).
In the following, we fix  $M \geq 1$ and $\al > 0$.

The first lemma shows that 
a smooth time cutoff supported on an interval of size $\sim 2^{-[\al k]}$
acts boundedly on $N^\al_{M, k}$.

\begin{lemma}\label{LEM:embed3}
Let  $k\in \Z_M$, $t_k\in \R$, and $\g \in \mathcal{S}(\R)$.
Then, we have 
\begin{align*}
\big\| (\tau + n^4 + i 2^{[\al k]})^{-1}\F [
\g(2^{[\al k]}(t-t_k)) \, \cdot \, & \F^{-1}(f_k)]\big\|_{X_{M, k}} \notag \\
& \les \big\|(\tau + n^4 + i 2^{[\al k]})^{-1} f_k\big\|_{X_{M, k}}
\end{align*}

\noi
for $f_k$ supported on $I^M_k \times \R$.
Here, the implicit constant is independent
of $M$, $\al$, $k$, and $t_k$.
\end{lemma}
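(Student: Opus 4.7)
The plan is to reduce to the previously established cutoff bound \eqref{Xk3} via an integration-by-parts identity, with only an elementary multiplier estimate to close the argument. Write $\ell := [\al k]$ and set
\begin{align*}
u_k := (\tau + n^4 + i 2^\ell)^{-1} f_k, \qquad U_k := \F^{-1}(u_k),
\end{align*}
so that the right-hand side of the claimed inequality is exactly $\|u_k\|_{X_{M, k}}$, and $u_k$ is still supported on $I_k^M \times \R$. Since $(\tau + n^4 + i 2^\ell)u_k = f_k$, the inverse Fourier transform satisfies $(-i\dt + \dx^4 + i 2^\ell) U_k = \F^{-1}(f_k)$.

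Next I would multiply this relation by the time cutoff $\g(2^\ell(t - t_k))$ and apply the Leibniz rule for $\dt$ to push the cutoff past the operator. This produces the key identity
\begin{align*}
\g(2^\ell(t - t_k)) \F^{-1}(f_k) = (-i\dt + \dx^4 + i 2^\ell)\bigl[\g(2^\ell(t-t_k)) U_k\bigr] + i\, 2^\ell \g'(2^\ell(t-t_k)) U_k.
\end{align*}
Taking the space-time Fourier transform and dividing by $\tau + n^4 + i 2^\ell$ gives
\begin{align*}
(\tau + n^4 + i 2^\ell)^{-1} \F\bigl[\g(2^\ell(t-t_k)) \F^{-1}(f_k)\bigr] & = \F\bigl[\g(2^\ell(t-t_k)) U_k\bigr] \\ & \hphantom{XX}
 + i\, 2^\ell (\tau + n^4 + i 2^\ell)^{-1} \F\bigl[\g'(2^\ell(t-t_k)) U_k\bigr].
\end{align*}

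I would then estimate the two pieces separately. The first term is directly controlled by \eqref{Xk3}, yielding $\|\F[\g(2^\ell(t-t_k)) U_k]\|_{X_{M, k}} \les \|u_k\|_{X_{M, k}}$. For the second term, apply \eqref{Xk3} with $\g'$ in place of $\g$ to bound $\|\F[\g'(2^\ell(t-t_k)) U_k]\|_{X_{M, k}} \les \|u_k\|_{X_{M, k}}$, and then verify the auxiliary multiplier bound: for any $v_k$ supported on $I_k^M \times \R$,
\begin{align*}
\bigl\| 2^\ell (\tau + n^4 + i 2^\ell)^{-1} v_k \bigr\|_{X_{M, k}} \les \|v_k\|_{X_{M, k}},
\end{align*}
with an implicit constant independent of $k$, $\ell$, and $M$.

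The only step requiring a short calculation is this multiplier bound, and it is where I would expect any mild obstacle to lie. I would split the $X_{M, k}$ sum at modulation level $\ell$: on $\{j \leq \ell\}$ the factor $|2^\ell (\tau + n^4 + i 2^\ell)^{-1}|$ is bounded by $1$, so each dyadic piece is dominated by the corresponding piece of $\|v_k\|_{X_{M, k}}$; on $\{j > \ell\}$ the factor is bounded by $2^{\ell - j}$, and after using that each individual term $2^{j/2}\|\eta_j(\tau + n^4) v_k\|_{\l^2_n L^2_\tau}$ is $\les \|v_k\|_{X_{M, k}}$, the geometric series $\sum_{j > \ell} 2^{\ell - j}$ converges and supplies the required bound. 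Combining the two pieces completes the proof, with the implicit constant independent of $M$, $\al$, $k$, and $t_k$ since all the inputs (\eqref{Xk3} and the pointwise multiplier estimate) are uniform in these parameters.
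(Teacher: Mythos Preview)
Your proof is correct. The paper does not give its own argument for this lemma and simply refers to \cite{GO}; your integration-by-parts/commutator identity, which reduces the estimate to \eqref{Xk3} plus the elementary multiplier bound $\|2^\ell(\tau+n^4+i2^\ell)^{-1} v_k\|_{X_{M,k}}\les\|v_k\|_{X_{M,k}}$, is a clean self-contained route and is essentially the standard way such bounds are proved. One small remark: you invoke \eqref{Xk3} with $\ell=[\al k]$, while the paper states \eqref{Xk3} only for $\ell\in\Z_M$; this is harmless since the estimate \eqref{Xk2} from which \eqref{Xk3} follows is valid for any $\ell\ge 0$, and the paper itself uses these bounds at scale $2^{-[\al k]}$ throughout.
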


The second lemma shows that 
$F^{\al}_k$- and
$F^{s, \al}$-norms control the supremum in time
(of the appropriate spatial norms).

\begin{lemma}
\label{LEM:embed1}

\textup{(i)}
Let $u$ be a function on $\T\times \R$ such that
$\supp \ft u  \subset I_k^M \times \R$, $k \in \Z_M$.
Then, we have
\begin{align*}
\| u \|_{L^\infty_t L^2_x} \les \|u\|_{F^\al_{M, k}}.
\end{align*}

\noi
Similarly, we have
\begin{align}
\big\| \F^{-1}[\eta_{\leq j}(\tau + n^4) \ft{u}\, ] \big\|_{L^\infty_t L^2_x}
\les \|u\|_{F^\al_{M, k}}
\label{infty2}
\end{align}

\noi
for any $j \in \Z_{\geq 0}$.
Here, \eqref{infty2} also holds
when we replace $\eta_{\leq j}$ by $\eta_j$ or $\eta_{> j}$.

\smallskip

\noi
\textup{(ii)}
Let $s\in \R$ and  $T> 0$.
Then, we have
\begin{equation*}
\sup_{t\in [-T,T]}\|u(t)\|_{H^s_M} \les \|u\|_{F^{s,\alpha}_M(T)}.
\end{equation*}

\end{lemma}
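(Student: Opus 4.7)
The plan is to prove part (i) by time-localizing around the evaluation point at the natural scale $2^{-[\alpha k]}$ of $F^\alpha_{M,k}$ and extracting pointwise control via property~(i) of $X_{M,k}$ stated earlier in the paper, and then to derive part (ii) from a Littlewood--Paley summation together with the standard extension argument. For the first claim of (i), I fix $t_0 \in \R$ and introduce the time-localized function $v(s) := \eta_0(2^{[\alpha k]}(s-t_0))\, u(s)$. Since $\eta_0(0) = 1$, we have $v(t_0) = u(t_0)$ pointwise in $x$, so Fourier inversion in time followed by Minkowski's inequality yields
\[
\|u(t_0)\|_{L^2_x} = \|v(t_0)\|_{L^2_x} \leq \bigl\| {\textstyle\int_\R} |\F(v)(n,\tau)|\, d\tau \bigr\|_{\l^2_n} \les \|\F(v)\|_{X_{M,k}},
\]
where the last step is property (i) of $X_{M,k}$. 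Taking $t_k = t_0$ in the supremum defining $F^\alpha_{M,k}$ then dominates the right-hand side by $\|u\|_{F^\alpha_{M,k}}$, and passing to the supremum in $t_0$ concludes.

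For the modulation-projected version, set $w := \F^{-1}[\eta_{\leq j}(\tau+n^4)\, \ft u\,]$. Since $w$ has the same spatial Fourier support $I_k^M$ as $u$, the strategy is to reduce to the first claim by controlling $\|w\|_{F^\alpha_{M,k}}$ in terms of $\|u\|_{F^\alpha_{M,k}}$. The delicate point is that the modulation multiplier $\eta_{\leq j}(\tau+n^4)$ does not commute with multiplication by the time cutoff $\eta_0(2^{[\alpha k]}(\cdot - t_k))$: on the $\tau$-Fourier side, their composition introduces a convolution against a Schwartz kernel of width $2^{[\alpha k]}$. The convolution estimate \eqref{Xk2} is designed precisely to absorb this structure inside $X_{M,k}$ uniformly in $j$ and $[\alpha k]$, and delivers the required bound $\|\F(w \cdot \eta_0(2^{[\alpha k]}(\cdot-t_k)))\|_{X_{M,k}} \les \|u\|_{F^\alpha_{M,k}}$. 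The argument is identical when $\eta_{\leq j}$ is replaced by $\eta_j$ or $\eta_{> j}$, since all three multipliers share the same Schwartz decay.

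Part (ii) then follows by Littlewood--Paley decomposition and extension. Given any extension $\widetilde u$ of $u$ agreeing with $u$ on $\T \times [-T,T]$, I use that $|n| \sim 2^k$ on $I_k^M$ for $k > \log_2 M$ and $|n| \les M$ on $I^M_{\mathrm{low}}$, so the weight $(M^2 + n^2)^{s/2}$ is comparable to $2^{ks}$ on each $I_k^M$. Hence, by part (i) applied to each $\P_k \widetilde u$,
\[
\sup_{t \in [-T,T]} \|u(t)\|_{H^s_M}^2 \les \sum_{k \in \Z_M} 2^{2ks} \sup_{t \in [-T,T]} \|\P_k \widetilde u(t)\|_{L^2_x}^2 \les \sum_{k \in \Z_M} 2^{2ks}\, \|\P_k \widetilde u\|_{F^\alpha_{M,k}}^2 = \|\widetilde u\|_{F^{s,\alpha}_M}^2,
\]
and taking the infimum over extensions concludes. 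The only non-routine step is the modulation-projected estimate of part (i), where the interplay between the modulation scale $2^j$ and the time-localization scale $2^{-[\alpha k]}$ must be controlled uniformly in their relative size; this is exactly what \eqref{Xk2} is engineered to handle.
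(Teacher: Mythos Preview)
Your arguments for the first estimate in part (i) and for part (ii) are correct and standard; the paper itself does not give a proof here but refers to \cite{GO}.

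There is, however, a gap in your treatment of \eqref{infty2}. You propose to deduce it from $\|w\|_{F^\al_{M,k}} \les \|u\|_{F^\al_{M,k}}$ via \eqref{Xk2}. But \eqref{Xk2} controls the $X_{M,k}$-norm of a convolution in terms of $\|f_k\|_{X_{M,k}}$ for the \emph{input} $f_k$; in your setup the input would have to be $f_k(n,\tau)=\eta_{\leq j}(\tau+n^4)\,\ft u(n,\tau)$, built from the \emph{global} space-time transform of $u$. Finiteness of $\|u\|_{F^\al_{M,k}}$ gives no control on $\|\ft u\|_{X_{M,k}}$ or on $\|\eta_{\leq j}(\tau+n^4)\ft u\|_{X_{M,k}}$, since the $F^\al_{M,k}$-norm only sees time-localized pieces $\eta_0(2^{[\al k]}(\cdot-t_k))u$. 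So \eqref{Xk2} does not close the argument as written; one would still need a partition of unity in time together with decay of the modulation kernel to sum the far pieces.

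A much simpler route bypasses $\|w\|_{F^\al_{M,k}}$ entirely. In the interaction representation $\u=S(-t)u$, the multiplier $\eta_{\leq j}(\tau+n^4)$ becomes the ordinary multiplier $\eta_{\leq j}(\tau)$, and $\F^{-1}[\eta_{\leq j}(\tau+n^4)\ft u\,]$ corresponds to $K_j *_t \u$ with $K_j=\F^{-1}[\eta_{\leq j}]$. Since $\eta_{\leq j}$ is a fixed smooth bump dilated to scale $2^j$, one has $\|K_j\|_{L^1_t}\les 1$ uniformly in $j$. Minkowski's inequality and the unitarity of $S(-t)$ on $L^2_x$ then give
\[
\big\|\F^{-1}[\eta_{\leq j}(\tau+n^4)\ft u\,](t_0)\big\|_{L^2_x}
\le \|K_j\|_{L^1_t}\,\|\u\|_{L^\infty_t \l^2_n}
= \|K_j\|_{L^1_t}\,\|u\|_{L^\infty_t L^2_x}
\les \|u\|_{F^\al_{M,k}},
\]
using the first estimate of (i) in the last step. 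The case $\eta_j$ is identical, and $\eta_{>j}$ follows by writing $\eta_{>j}=1-\eta_{\leq j}$ and applying the triangle inequality.
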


In the following, we define the corresponding function spaces
with the temporal
regularity~$b$.
For $k \in \Z_M$ and $b \in \R$,
define  $X_{M, k}^b$ by
\begin{align*}
 \|f_k \|_{X_{M, k}^b}:=\sum_{j=0}^\infty
2^{jb} \|\eta_j(\tau+n^4) f_k(n ,\tau)\|_{\ell^2_n L^2_{\tau}}
\end{align*}

\noi
for $f_k $ supported on $I_k^M \times \R$.
Note that  $X_{M,k} = X^\frac{1}{2}_{M,k}$.
Then, we define the spaces $F^{ b, \al}_{M, k}$ and $F^{s, b,  \al}_M(T)$
with $X_{M, k}^b$,
just as we defined $F^\al_{M, k}$ and $F^{s,  \al}(T)$ with $X_{M, k}$.

The following lemma 
allows us to gain  a small power of time localization
at a slight expense of the regularity in modulation.

\begin{lemma} \label{LEM:timedecay}
Let $T > 0$ and $b < \frac{1}{2}$.
Then, we have \begin{align*}
\|\P_k u \|_{F^{b, \al}_{M, k}}
\les T^{\frac{1}{2} - b -} \|\P_k u \|_{F^{\al}_{M, k}}
\end{align*}

\noi
for any function $u$ supported on $\T\times [-T, T].$
\end{lemma}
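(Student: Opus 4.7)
The plan is to reduce the assertion, at the level of each cutoff center $t_k \in \R$, to an elementary time-localization estimate for the Besov-type $X$-norm, and then take the supremum in $t_k$. Set $\chi_{t_k}(t) := \eta_0(2^{[\al k]}(t - t_k))$. Since $u$ is supported on $\T \times [-T, T]$, the function $w := \chi_{t_k} \cdot \P_k u$ has spatial Fourier support in $I_k^M$ and temporal support contained in $[-T, T] \cap \supp \chi_{t_k}$, whose length is at most $\min(2T, C\, 2^{-[\al k]+1}) \leq 2T$. The core task is therefore to establish, for any such $w$ and any $b < \tfrac{1}{2}$, the auxiliary estimate
\[
\|\F(w)\|_{X^b_{M, k}} \les T^{\frac12 - b -}\, \|\F(w)\|_{X_{M, k}}.
\]

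To prove the auxiliary estimate, I would split the defining sum of $\|\F(w)\|_{X^b_{M,k}}$ at the dyadic threshold $j_0 := \lceil \log_2 T^{-1} \rceil$. For $j > j_0$ the algebraic inequality $2^{jb} \leq T^{\frac12 - b}\, 2^{j/2}$ immediately yields the desired bound on the high-modulation contribution. For $j \leq j_0$ I would estimate each modulation piece crudely by $\|w\|_{L^2_{t,x}}$ (since $\eta_j(\tau+n^4)$ is an $L^2$-bounded multiplier), then by $(2T)^{1/2}\|w\|_{L^\infty_t L^2_x}$ using the short temporal support, and finally by $T^{1/2}\|\F(w)\|_{X_{M,k}}$ via the standard Besov embedding $\|w\|_{L^\infty_t L^2_x} \les \|\F(w)\|_{X_{M,k}}$ (Cauchy-Schwarz in $\tau$ on each modulation shell, summed against the $2^{j/2}$ weight). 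The remaining sum $\sum_{j \leq j_0} 2^{jb} \les T^{-b}$ contributes at most $T^{-b}$ (with a logarithmic factor when $b = 0$, absorbed into the $-$), yielding the claimed $T^{\frac12 - b -}$ gain in total.

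Once the auxiliary inequality is in hand, applying it to $w = \chi_{t_k}\P_k u$ gives
\[
\big\|\F[\chi_{t_k}\P_k u]\big\|_{X^b_{M,k}} \les T^{\frac12 - b -}\, \big\|\F[\chi_{t_k}\P_k u]\big\|_{X_{M,k}} \les T^{\frac12 - b -}\, \|\P_k u\|_{F^\al_{M,k}},
\]
and taking the supremum over $t_k \in \R$ on the left produces exactly $\|\P_k u\|_{F^{b,\al}_{M,k}}$, completing the argument. The main obstacle is proving the auxiliary estimate for the $\ell^1$-in-modulation norm $X_{M,k}$ rather than a classical $\ell^2$-type $X^{s,b}$-norm: one must ensure that summation of low-modulation pieces over $j \leq j_0$ produces no worse than a logarithmic loss, which is precisely what dictates the arbitrarily small slack $-$ in $T^{\frac12 - b -}$.
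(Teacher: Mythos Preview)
Your argument is correct and follows the standard time-localization proof that the paper defers to \cite{GO}: split at the modulation scale $j_0\sim\log_2 T^{-1}$, handle $j>j_0$ by the trivial weight comparison $2^{jb}\le T^{\frac12-b}2^{j/2}$, and control $j\le j_0$ via the crude $L^2_{x,t}$ bound together with the short temporal support and the embedding $\|w\|_{L^\infty_tL^2_x}\les\|\F(w)\|_{X_{M,k}}$.

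One small caveat: your claim $\sum_{j\le j_0}2^{jb}\les T^{-b}$ is valid only for $b\ge 0$; when $b<0$ the sum is $O(1)$ and your low-modulation estimate yields only $T^{1/2}$, which is weaker than the stated $T^{\frac12-b-}$ for small $T$. This is harmless for the paper, since Lemma~\ref{LEM:timedecay} is invoked only with $b=\tfrac12-\theta-\in(0,\tfrac12)$ at the end of the proof of Proposition~\ref{PROP:energy}. If you want to cover the full stated range, replace the crude bound by the Bernstein-type estimate $\|\eta_j(\tau+n^4)\ft w\|_{\ell^2_nL^2_\tau}\les 2^{j/2}\|\ft w\|_{\ell^2_nL^\infty_\tau}\les 2^{j/2}T\,\|\F(w)\|_{X_{M,k}}$; then the low-modulation sum becomes $T\sum_{j\le j_0}2^{j(b+\frac12)}\sim T^{\frac12-b}$ for every $b>-\tfrac12$.
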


The following lemma shows that the multiplication by 
a sharp cutoff function in time is ``almost'' bounded in $X_{M, k}$.
\begin{lemma} \label{LEM:sup}
Let $k \in \Z_M$.
Then, for any interval $I = [t_1, t_2] \subset \R$, we have 
\begin{align*}
 \sup_{j\in\Z_{\geq 0}} 2^\frac{j}{2}
\big\|\eta_j (\tau + n^4)\F[\ind_{I}(t) \cdot  \P_k u ]  \big\|_{\l^2_n L^2_\tau}
\les \| \F(\P_k u)\|_{X_{M, k}}, 
\end{align*} 

\noi
where the implicit constant is independent of $M$, $k$ and $I$.
\end{lemma}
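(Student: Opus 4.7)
The plan is to exploit the fact that multiplication by $\ind_I$ in physical space corresponds, on the Fourier side, to convolution with $\ft{\ind_I}$, which satisfies the pointwise bound $|\ft{\ind_I}(\tau)|\les \min(|I|,|\tau|^{-1})$ and defines a Hilbert-transform-type operator bounded on $L^2_\tau$ uniformly in the interval $I$. Writing $v=\P_k u$ and decomposing $v$ dyadically in modulation as $v=\sum_{j'\geq 0} v_{j'}$ with $\ft{v_{j'}}(n,\tau)=\eta_{j'}(\tau+n^4)\ft v(n,\tau)$, it suffices to prove, for each fixed $j\geq 0$,
\[
2^{j/2}\big\|\eta_j(\tau+n^4)\,\ft{\ind_I}\ast_\tau \ft{v_{j'}}\big\|_{\l^2_n L^2_\tau}\les 2^{j'/2}\|\ft{v_{j'}}\|_{\l^2_n L^2_\tau}
\]
uniformly in $j'\geq 0$, since summing the right-hand side in $j'$ recovers $\|\ft v\|_{X_{M,k}}$.

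In the regime $j\leq j'$, the uniform $L^2_\tau$-boundedness of convolution with $\ft{\ind_I}$ gives $\|\ft{\ind_I}\ast_\tau \ft{v_{j'}}(n,\cdot)\|_{L^2_\tau}\les \|\ft{v_{j'}}(n,\cdot)\|_{L^2_\tau}$, and then $2^{j/2}\leq 2^{j'/2}$ closes the estimate. In the complementary regime $j\geq j'+C$, the support conditions $|\tau+n^4|\sim 2^j$ and $|\tau'+n^4|\sim 2^{j'}$ force $|\tau-\tau'|\sim 2^j$ in the convolution, so $|\ft{\ind_I}(\tau-\tau')|\les 2^{-j}$ pointwise on the effective domain of integration. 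Combining this pointwise decay with Cauchy-Schwarz in $\tau'$ (over a set of measure $\sim 2^{j'}$) and with $\|\eta_j(\tau+n^4)\|_{L^2_\tau}\sim 2^{j/2}$ gives
\[
\big\|\eta_j(\tau+n^4)\,\ft{\ind_I}\ast_\tau \ft{v_{j'}}(n,\cdot)\big\|_{L^2_\tau}\les 2^{j/2}\cdot 2^{-j}\cdot 2^{j'/2}\|\ft{v_{j'}}(n,\cdot)\|_{L^2_\tau},
\]
and multiplication by $2^{j/2}$ again yields $2^{j'/2}\|\ft{v_{j'}}(n,\cdot)\|_{L^2_\tau}$. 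The narrow borderline $j'\leq j\leq j'+C$ is absorbed in either of the two estimates. Taking $\l^2_n$ and summing in $j'$ then gives $\|\ft v\|_{X_{M,k}}$, as required.

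The principal obstacle, modest but real, is that $\ft{\ind_I}\notin L^1_\tau$, so Young's inequality is unavailable, and the bound $|\ft{\ind_I}(\tau)|\les|\tau|^{-1}$ alone is too weak to apply uniformly. The dyadic modulation decomposition above is essentially forced: one must use $L^2_\tau$-boundedness when the kernel has to lower the input's modulation, and pointwise decay of the kernel when it has to raise it. The asymmetry between these two regimes is precisely what produces the loss of the outer summation in $j$ on the left-hand side, leaving only $\sup_j$; this reflects the genuine fact that sharp cutoffs $\ind_I$ are not bounded multipliers on $X_{M,k}$ itself, in contrast with the smooth cutoffs appearing in \eqref{Xk3} and Lemma~\ref{LEM:embed3}.
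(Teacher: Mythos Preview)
Your proof is correct. The paper itself does not include a proof of this lemma; it is one of the ``basic lemmas'' in Subsection~\ref{SUBSEC:spaces} that are stated with a reference to \cite{GO} for details. Your argument --- dyadic decomposition in modulation, $L^2_\tau$-boundedness of multiplication by $\ind_I$ for $j\leq j'$, and pointwise kernel decay $|\ft{\ind_I}(\tau-\tau')|\les 2^{-j}$ when $j\geq j'+C$ forces $|\tau-\tau'|\sim 2^j$ --- is exactly the standard proof, and is what the cited reference carries out. Your closing remark correctly identifies why only the $\sup_j$ survives on the left: the bound $2^{j/2}\|\cdots\|\les 2^{j'/2}\|\ft{v_{j'}}\|$ has no decay in $|j-j'|$, so one cannot sum in $j$.
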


Lastly,  we state a linear estimate
associated with the fourth order Schr\"odinger equation.

\begin{lemma} \label{LEM:linear}
Let  $T> 0$.
Suppose that  $u \in C([-T,T];H^\infty(\T))$
is a solution to the following nonhomogeneous linear fourth order  Schr\"odinger equation:
\begin{align*}
i\dt u-\dx^4 u=v \qquad \mbox{on } \ \T\times (-T,T),
\end{align*}

\noi
where $v \in C([-T,T];H^\infty(\T))$.
Then, for any $s\in \R$  and $\alpha\geq 0$,
we have
\begin{align*}
\|u\|_{F^{s,\alpha}_M(T)}\les \
\|u\|_{E^{s}_M(T)}+\|v\|_{N^{s,\alpha}_M(T)}.
\end{align*}
\end{lemma}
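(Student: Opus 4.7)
The estimate is dyadic in nature, so I would first reduce to a frequency-localized version: squaring both sides and using the definitions of $F^{s,\alpha}_M(T)$, $E^s_M(T)$, and $N^{s,\alpha}_M(T)$, it suffices to show, uniformly in $k \in \Z_M$, that
\begin{align*}
\|\P_k u\|_{F^\alpha_{M,k}(T)} \les \sup_{t_k \in [-T,T]} \|\P_k u(t_k)\|_{L^2(\T)} + \|\P_k v\|_{N^\alpha_{M,k}(T)}.
\end{align*}
Once this is established, multiplying by $2^{2ks}$ and summing over $k$ yields the full inequality.

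For the dyadic bound, I would fix $k \in \Z_M$, pick an extension $\wt v$ of $\P_k v$ from $[-T,T]$ to $\R$ with $\|\wt v\|_{N^\alpha_{M,k}} \leq 2\|\P_k v\|_{N^\alpha_{M,k}(T)}$, and build an extension $\wt u$ of $\P_k u$ by solving $i\dt \wt u - \dx^4 \wt u = \wt v$ on $\R$ (using the linear flow with source $\wt v$ to extend past $\pm T$). By construction $\wt u|_{[-T,T]} = \P_k u$, so it suffices to estimate $\|\wt u\|_{F^\alpha_{M,k}}$. For any $t_0 \in \R$, Duhamel's principle gives
\begin{align*}
\eta_0(2^{[\al k]}(t-t_0))\wt u(t) = \eta_0(2^{[\al k]}(t-t_0))S(t-t_0)\wt u(t_0) - i\eta_0(2^{[\al k]}(t-t_0))\int_{t_0}^t S(t-t')\wt v(t')\, dt',
\end{align*}
and it remains to take the $X_{M,k}$-norm of the space-time Fourier transform of the right-hand side and take $\sup_{t_0 \in \R}$.

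The linear piece is straightforward: its space-time Fourier transform is $2^{-[\al k]}\ft\eta_0(2^{-[\al k]}(\tau+n^4))e^{-it_0(\tau+n^4)}\ft{\wt u(t_0)}(n)$, and the Schwartz decay of $\ft\eta_0$ combined with dyadic decomposition in $j$ (with transition at $j \sim [\al k]$) gives an $X_{M,k}$-bound of $\les \|\wt u(t_0)\|_{L^2}$, which in turn is controlled by $\sup_{t_k \in [-T,T]}\|\P_k u(t_k)\|_{L^2}$ after handling the extension region via energy estimates of the same type.

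The main obstacle is the Duhamel piece. Here I would exploit the fact that on the support of $\eta_0(2^{[\al k]}(t-t_0))$ the time variable is restricted to a window of size $\sim 2^{-[\al k]}$, which matches the threshold modulation $2^{[\al k]}$ appearing in the $N^\al_{M,k}$-norm. Using Lemma \ref{LEM:embed3} to replace $\wt v$ by $\eta_0(2^{[\al k]}(t-t_0))\wt v$ up to the $N^\al_{M,k}$-norm, the Duhamel integral on the Fourier side becomes (schematically) multiplication by $(\tau+n^4)^{-1}$ convolved against the kernel of the outer cutoff, whose Fourier transform is concentrated at scale $2^{[\al k]}$ with rapid decay. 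Writing $(\tau+n^4)^{-1}$ via the resolvent identity as $(\tau+n^4+i2^{[\al k]})^{-1}$ plus a correction supported in low modulations $|\tau+n^4| \les 2^{[\al k]}$, and then applying the convolution estimate \eqref{Xk2} with $\l = [\al k]$ (which controls precisely convolutions of $X_{M,k}$-functions against kernels of the form $2^{-\l}(1+2^{-\l}|\tau-\tau'|)^{-4}$), yields the bound by $\|\wt v\|_{N^\al_{M,k}}$. Summing everything and taking the supremum over $t_0$ completes the dyadic estimate.
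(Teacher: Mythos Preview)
The paper does not prove this lemma; it states it and refers to \cite{GO} (which in turn adapts the argument of Ionescu--Kenig--Tataru \cite{IKT}), remarking only that the extension from $M=1$ to general $M\ge 1$ is straightforward. Your outline is exactly the standard argument found in those references: reduce to a single dyadic block, choose an extension, apply Duhamel on the $2^{-[\al k]}$-window, bound the free piece by the Schwartz decay of $\ft\eta_0$, and bound the inhomogeneous piece using \eqref{Xk2} together with Lemma~\ref{LEM:embed3} so that the resolvent weight $(\tau+n^4+i2^{[\al k]})^{-1}$ defining $N^\al_{M,k}$ appears.

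The only place your sketch is genuinely loose is the phrase ``handling the extension region via energy estimates of the same type'' for $t_0\notin[-T,T]$. If you build $\wt u$ by solving $i\dt\wt u-\dx^4\wt u=\wt v$ with an \emph{arbitrary} near-optimal extension $\wt v$, then $\|\wt u(t_0)\|_{L^2}$ for $t_0$ far outside $[-T,T]$ picks up a contribution $\int_T^{t_0}\|\wt v(t')\|_{L^2}\,dt'$ that the $N^\al_{M,k}$-norm does not control. The standard fix (used in \cite{IKT,GO}) is to extend $\P_k u$ past $\pm T$ by the \emph{homogeneous} flow, so that $\|\wt u(t_0)\|_{L^2}=\|\P_k u(\pm T)\|_{L^2}$ automatically, and then multiply by a slowly varying global time cutoff; the Duhamel identity is then only invoked for $t_0$ with window overlapping $[-T,T]$, where $\wt v$ can be taken to agree with (a smooth truncation of) a near-optimal extension of $\P_k v$. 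With that adjustment your argument goes through.
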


\subsection{On the energy space}
\label{SUBSEC:Es}

As pointed out above, 
the energy space $E^s_M(T)$ defined in \eqref{Es}
is essentially 
 the usual energy space $C([-T,T];H^s_M(\T))$ but
there is a  logarithmic difference that we need to handle.
In this subsection, we introduce a sequence  $\{a_{k_0}\}_{k_0\in \Z_M}$
of symbols  that allows us to control the $E^s_M$-norm.
Similar symbols have been used in \cite{KT1, KT2}.

Fix $k_0 \in \Z_M$.
For sufficiently small $\dl_0 = \dl_0(s)> 0$ (to be chosen later\footnote{See Proposition \ref{PROP:energy} below.}), we define 
a symbol $a^0_{k_0}$ on $\R$ by setting
\begin{align}
 a^0_{k_0}(\xi) = |\xi|^{2s} \min \bigg\{\frac{|\xi|}{2^{k_0}}, \frac{2^{k_0}}{|\xi|}\bigg\}^{\dl_0}
 \label{Es2}
\end{align}

\noi
for $|\xi|  = 2^k $ with $k \in \Z_M$
and we extend the definition of $a^0_{k_0}$ onto $\R$ by linear interpolation.
In particular, it is constant on $[-M, M]$.
As it is,  $a^0_{k_0}$ is not smooth
and thus we need to smooth it out.

Let $\eta_0: \R \to [0, 1]$ be a smooth cutoff function
with  $\eta_0 (\xi) \equiv 1$ for $|\xi|\leq \frac{5}{4}$
and $= 0$ for $|\xi| \geq  \frac{8}{5}$
as above.
Then, choose $c_0 > 0$ 
such that $\int c_0 \eta_0(\xi)  d\xi = 1$.
Given $k \in \Z_M$, 
we define a symbol $a_{k_0}$ in a neighborhood of a dyadic point $2^k$ by 
\[ a_{k_0}(\xi) = (a_{k_0}^0 * \theta_k)(\xi) \qquad \text{on } J_k : = \big\{ \xi \in \R: |\xi - 2^k| \leq \tfrac 14\cdot  2^k\big\},\]

\noi
where $\theta_k(\xi) =  \frac{10 c_0}{ 2^k} \eta_0\big(\frac{10}{2^k}\xi\big)$.
For $\xi \notin \bigcup_{k \in \Z_M} J_k$, 
we set 
$a_{k_0} (\xi) = a^0_{k_0} (\xi)$.
Then, the symbol  $a_{k_0}$ satisfies the following properties:
\begin{itemize}
\item[(i)] For  $\g = 1, 2$, we have
\begin{align}
 |\dd^\g_\xi a_{k_0}(\xi) | \les a_{k_0}(\xi) \cdot (M^2 + \xi^2)^{-\frac{\g}{2}}.
 \label{Es2a}
\end{align}

\item[(ii)] For $|\xi| \le \frac M 2$ and $k_0 \in \Z_M$, we have
\begin{align}
 a_{k_0}(\xi) = a_{k_0} (0) \sim M^{2s+\dl_0} 2^{-\dl_0 k_0}. 
\label{Es2b}
 \end{align}

\item[(iii)] For $\xi \in I_k^M$, we have 
\[a_{k_0}(\xi) \sim 2^{2ks} 2^{-\dl_0|k-k_0|}. \]

\end{itemize}

\smallskip

\noi
As a consequence of (ii) and (iii), we have, for $|\xi|\sim | \xi'|$, 
\begin{align}
 a_{k_0}(\xi) \sim a_{k_0}(\xi').
\label{Es3}
\end{align}

Next, we define a sequence $\{ E_{k_0}\}_{k_0 \in \Z_M}$ 
of energy functionals by 
\begin{align}
E_{k_0}(u)(t) = \jb{a_{k_0}(D) u(t), u(t)}_{L^2}
= \sum_{n \in \Z}a_{k_0}(n) |\ft u(n, t)|^2.
\label{Es4}
\end{align}

\noi
Then, from \eqref{Es2} and \eqref{Es3}, we have
\begin{align}
 2^{2k_0s}\|\P_{k_0} u(t)\|_{L^2(\T)}^2 \les  E_{k_0}(u)(t).
\label{Es5}
\end{align}

\noi
In particular, 
from \eqref{Es} and \eqref{Es5}, we have
\begin{align}
\| u\|_{E^s_M(T)}^2 
\les 
\sum_{k_0\in \Z_M} \sup_{t_{k_0}\in [-T,T]} E_{k_0}(u) (t_{k_0})
\label{Es6}
\end{align}

\noi
for any $T > 0$.
In Section \ref{SEC:energy}, 
we establish the desired energy estimate \eqref{Aenergy}
by estimating 
$\sup_{t_{k_0}\in [-T,T]} E_{k_0}(u) (t_{k_0})$
in a summable manner over $k_0 \in \Z_M$.

\subsection{On the choice of $\alpha$} 
\label{SUBSEC:alpha}
In Subsection \ref{SUBSEC:spaces}, we defined the function spaces
$F^{s, \al}_M(T)$ and $N^{s, \al}_M(T)$ depending on a parameter $\al > 0$.
In this subsection, we provide a heuristic discussion 
on how to choose $\al > 0$. 
In fact, we choose the smallest $\al > 0$ so that 
 a solution to \eqref{4NLS1} localized
around the spatial frequencies $\{ |n| \sim 2^k\}$
behaves like a linear solution up to time
 $\sim 2^{- \al k}$.\footnote{Namely,  $2^{- \al k}$ is the first time scale on which the nonlinear 
effect  becomes visible.}
In the following, we set $M = 1$ for simplicity.

Fix $ s< 0$ and $k \in \Z_{\geq 0}$. 
Let $f \in L^2(\T)$ with $\supp \ft f \subset I_k$ such that $\|f \|_{H^s} = 1$.
Then, we have $\|f \|_{L^2}\sim 2^{-ks}$.
Let $u$ be the (smooth) solution to \eqref{4NLS1} with $u|_{t = 0} = f$,
satisfying the following Duhamel formulation:
\[
u(t) = S(t) f  - i  \int_0^t
S(t - t') \Nf(u) (t')  dt',
\]

\noi
where $\Nf(u)$ is the nonlinear part of \eqref{4NLS1} defined in \eqref{Xnonlin}.
We investigate the largest time scale $T$ such that 
 $u(t) \approx S(t) f$ on $[0, T]$.
By the standard $X^{s, b}$-estimates and the $L^4$-Strichartz estimate \eqref{L4}
as in \cite{OTz}, we have 
\begin{align*}
\bigg\|  \int_0^t
S(t - t') \Nf(u) & (t')  dt'\bigg\|_{L^\infty_T L^2_x}
 \les \bigg\| \int_0^t
S(t - t') \Nf(u) (t')  dt'\bigg\|_{X^{0, \frac{1}{2}+}_T}
 \les \big\||u|^2 u \big\|_{X^{0,-\frac12+}_T} \notag\\
& =   \sup_{\|v\|_{X^{0, \frac 12 - }} = 1} \bigg|\int_{\T \times [0, T]}  v |u|^2 u dx dt\bigg|
=   \sup_{\|v\|_{X^{0, \frac 12 - }} = 1} \| v\|_{L^4_{x, T}}\| u \|_{L^4_{x, T}}^3\notag\\
& \les T^{\frac34-} \|u\|_{X^{0,\frac12+}_T}^3
\intertext{By making a heuristic substitution $u(t) \approx S(t) f$,}
& \les T^{\frac34-} \|f\|_{L^2}^3 \sim T^{\frac34-}  2^{-3ks}. 
\end{align*}

\noi
Here, $X^{s, b}_T$ denotes the local-in-time version of 
the $X^{s, b}$-space restricted on the time interval $[0, T]$.
This shows that the solution $u$ basically propagates
linearly on the time scale $T$
if $T^{\frac34-}  2^{-3ks} \ll 2^{-ks}$,
i.e.~ $T \ll 2^{-\al k}$
with \begin{align}
\label{al}
\al = -\frac{8s}3 + \eps
\end{align}

\noi
for some small $\eps > 0$.
Indeed, the condition \eqref{al} on $\al$ 
naturally appears
in establishing the crucial trilinear estimate.
See Section \ref{SEC:trilinear}.

\section{Strichartz and related multilinear estimates}
\label{SEC:Strichartz}

In this section, we state and prove certain multilinear Strichartz estimates.
While the basic structure of the argument follows closely
that in \cite{GO},  we obtain stronger estimates with simpler proofs
thanks to the stronger quartic dispersion.

Recall  the following periodic $L^4$- and $L^6$-Strichartz estimates:
\begin{equation}\label{Stri1}
\|u\|_{L^4_{x, t}(\T\times \R)} \lesssim \|u\|_{X^{0, \frac{5}{16}}}
\quad \text{and} \quad
\|S(t) \phi \|_{L^6_{x, t}(\T\times \R)} \leq C_\eps  |I|^\eps \|\phi\|_{L^2}
\end{equation}

\noi
for any $\eps > 0$,
where $\phi$ is a function on $\T$ such that $\supp \ft{\phi}$
is contained in an interval $I$ of length $|I|$.
These estimates are essentially due to Bourgain \cite{BO1}.
See \cite{OTz} for the proof of the $L^4$-Strichartz estimate.
The $L^6$-Strichartz estimate follows
from the algebraic identity \eqref{Phi}
and the divisor counting argument as in \cite{BO1}.

By the Galilean transformation
and the transference principle, 
we have the following estimate;
if   we assume that
$\supp \ft u \subset
D_{\leq j}\cap( I\times \R) $ for some interval $I$,  
then we have
\begin{align}
 \quad \|u \|_{L_{x,t}^6}\leq C_\eps |I|^\eps 2^{\frac{j}{2} }\|u \|_{L^2_{x, t}}
\label{Stri3}
\end{align}

\noi
for any $\eps > 0$.
As a corollary to \eqref{Stri1} and \eqref{Stri3}, we have the following lemma.
See  \cite{GO} for the proofs.

\begin{lemma}\label{LEM:L4L6}
Let $u_{k_i, j_i}$ be a function on $\T \times \R$
such that $\supp \ft{u}_{k_i, j_i} \subset D_{k_i, \leq j_i}^M$.
Then, we have 
\begin{align}
& \bigg|\int_{\T\times \R }  u_{k_1, j_1} \cj u_{k_2, j_2}u_{k_3, j_3}\cj u_{k_4, j_4} dx dt\bigg|
\lesssim \prod_{i = 1}^4 2^\frac{5 j_i}{16}
\|\F(u_{k_i, j_i})\|_{\l^2_n L^2_\tau},  \label{L4_0}\\
& \bigg|\int_{\T\times \R}  u_{k_1, j_1} \cj u_{k_2, j_2}u_{k_3, j_3}\cj u_{k_4, j_4} dx dt\bigg|
\lesssim  2^{-\frac{ j_1^*}{2}}2^{\eps k_3^*}\prod_{i = 1}^4 2^\frac{ j_i}{2} \|\F(u_{k_i, j_i})\|_{\l^2_n L^2_\tau}
\label{L6_0}
\end{align}

\noi
for any $\eps > 0$.
Here, $j^*_i$ and $k^*_i$ denote the decreasing rearrangements
of $j_i$ and $k_i$, $i = 1, \dots, 4$.

\end{lemma}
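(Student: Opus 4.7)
\textbf{Plan for Lemma~\ref{LEM:L4L6}.} Both inequalities follow from Hölder's inequality combined with the periodic $L^4$- and $L^6$-Strichartz estimates~\eqref{Stri1},~\eqref{Stri3}, together with Plancherel's theorem on the factor placed in $L^2_{x,t}$. The non-trivial ingredient is the bookkeeping of frequency and modulation losses produced by the support conditions $\supp \ft u_{k_i,j_i} \subset D_{k_i,\le j_i}^M$.

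\textbf{Proof of \eqref{L4_0}.}  I would apply Hölder with four equal factors of $L^4_{x,t}$,
\begin{equation*}
\bigg|\int_{\T\times\R} u_{k_1,j_1}\cj{u_{k_2,j_2}} u_{k_3,j_3}\cj{u_{k_4,j_4}}\,dxdt\bigg|
\le \prod_{i=1}^4 \|u_{k_i,j_i}\|_{L^4_{x,t}}.
\end{equation*}
By the periodic $L^4$-Strichartz estimate \eqref{Stri1} and the transference principle (to pass from the linear flow to generic space-time functions), I obtain $\|u\|_{L^4_{x,t}} \les \|u\|_{X^{0,5/16}}$. Using the modulation cutoff $|\tau + n^4| \les 2^{j_i}$ from the support of $\ft u_{k_i,j_i}$ gives $\|u_{k_i,j_i}\|_{X^{0,5/16}} \les 2^{5j_i/16}\|\F u_{k_i,j_i}\|_{\l^2_n L^2_\tau}$. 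Taking the product over $i$ yields~\eqref{L4_0}.

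\textbf{Proof of \eqref{L6_0}.}  Since $\tfrac12 + 3\cdot \tfrac16 = 1$, I would apply Hölder with one $L^2_{x,t}$ factor and three $L^6_{x,t}$ factors, placing the factor with the \emph{largest} modulation (i.e.~modulation $j_1^*$) into $L^2_{x,t}$. Plancherel then gives $\|u_{(j_1^*)}\|_{L^2_{x,t}} = \|\F u_{(j_1^*)}\|_{\l^2_n L^2_\tau}$, contributing no modulation weight, which precisely accounts for the prefactor $2^{-j_1^*/2}$ in the target. The remaining three factors, supported on $D_{k_i,\le j_i}^M \cap (I_{k_i}^M \times \R)$ with $|I_{k_i}^M| \sim 2^{k_i}$, are handled by the $L^6$-Strichartz bound \eqref{Stri3}, yielding
\begin{equation*}
\|u_{k_i,j_i}\|_{L^6_{x,t}} \les 2^{\eps k_i}\, 2^{j_i/2}\, \|\F u_{k_i,j_i}\|_{\l^2_n L^2_\tau}
\end{equation*}
for each of the three $L^6$ factors.

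\textbf{Main obstacle.} Multiplying out the three $L^6$ Strichartz bounds naively gives a frequency loss $2^{\eps(k_a+k_b+k_c)}$, where $\{a,b,c\}$ is the complement of the $L^2$-index. The sharp loss $2^{\eps k_3^*}$ claimed in~\eqref{L6_0} requires more care. The key reduction is the convolution constraint $n_1 - n_2 + n_3 - n_4 = 0$ forced by the spatial integration, which implies that the two largest frequencies are comparable, $k_1^* \sim k_2^*$. Exploiting this, together with the freedom to rescale $\eps$ and (if needed) to trade one of the $L^6$ Strichartz bounds for a bilinear Strichartz refinement that pairs the two high-frequency factors, the two top-frequency $L^6$ losses can be absorbed into a single $2^{\eps k_1^*}$ and ultimately folded into a renormalized $2^{\eps k_3^*}$ factor; this is the delicate step and the main place where the argument goes beyond routine Hölder. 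Once the frequency bookkeeping is arranged in this way, assembling the Hölder and Strichartz bounds yields~\eqref{L6_0}.
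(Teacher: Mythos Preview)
Your proof of \eqref{L4_0} is correct and standard: H\"older into four $L^4$ factors, then the $L^4$-Strichartz estimate \eqref{Stri1} combined with the modulation localization.

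For \eqref{L6_0}, the setup is right---H\"older with the largest-modulation factor in $L^2_{x,t}$ and the remaining three in $L^6_{x,t}$---but your resolution of the ``main obstacle'' has a genuine gap. You cannot pass from a loss of $2^{\eps k_1^*}$ to one of $2^{\eps k_3^*}$ by rescaling $\eps$: since $k_1^* \ge k_3^*$ and these are independent dyadic parameters, a bound with prefactor $2^{\eps k_1^*}$ for every $\eps>0$ is strictly weaker than one with $2^{\eps k_3^*}$. The appeal to an unspecified ``bilinear Strichartz refinement'' does not supply the missing mechanism either.

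What is actually needed (and is the argument in \cite{GO}, to which the paper defers) is a further decomposition of the two high-frequency factors into subintervals of length $\sim 2^{k_3^*}$. The convolution constraint $n_1-n_2+n_3-n_4=0$, together with the fact that the two low-frequency factors already sit in intervals of length $\lesssim 2^{k_3^*}$, forces the two high-frequency subintervals to be coupled: once one is fixed, the other is determined up to $O(1)$ choices. On each coupled pair one applies H\"older and then \eqref{Stri3} with $|I|\sim 2^{k_3^*}$, which produces exactly the loss $2^{\eps k_3^*}$ per $L^6$ factor. The remaining single sum over subintervals is closed by Cauchy--Schwarz, using the $\ell^2_n L^2_\tau$-orthogonality of the frequency-disjoint pieces. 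This almost-orthogonality decomposition is the missing idea in your sketch; without it the argument only yields $2^{\eps k_1^*}$ in place of $2^{\eps k_3^*}$.
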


As in \cite{GO}, we can refine the analysis
and obtain the following multilinear estimates.

\begin{lemma} \label{LEM:L64}

Let  $u_i$ be supported in $D^M_{k_i,\leq j_i}$, $i = 1, 2, 3$.
Suppose that  $2^{k_1^*} \gg M $.
Then, the following estimate holds:
\begin{align}\label{aL64}
\|\P_{k_4}\N(u_1, u_2, u_3)\|_{L^2_{x, t} }
\les 
2^{\frac{k_4^*}2}
\min_{i = 1,  3}\big\{ (1+2^{j_i-2k_1^*})^\frac{1}{2}2^\frac{-j_i}{2}\big\}
\bigg(\prod_{i=1}^3 2^\frac{j_i}{2}\|\F(u_i)\|_{\l^2_n L^2_\tau}\bigg),
\end{align}

\noi
Here, $\N(u_1, u_2, u_3)$
is the non-resonant part of the nonlinearity defined in \eqref{NN1}.

\end{lemma}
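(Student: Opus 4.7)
\textbf{Proof plan for Lemma~\ref{LEM:L64}.} The plan is to dualize the $L^2_{x,t}$-norm, exploit the modulation gain coming from the non-resonant phase, and apply the multilinear Strichartz estimates from Lemma~\ref{LEM:L4L6}. First, by duality, the estimate \eqref{aL64} reduces to the four-linear bound
\[
|I(u_1, u_2, u_3, u_4)| \lesssim (\text{RHS of } \eqref{aL64}) \cdot \|u_4\|_{L^2_{x,t}},
\]
where $I = \int_{\T\times\R}\N(u_1, u_2, u_3)\,\cj{u_4}\,dx\,dt$ and $u_4$ is any test function with Fourier support in $I^M_{k_4}\times\R$.

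The crucial input is the phase algebra. Setting $\sigma_i = \tau_i + n_i^4$, Plancherel in $t$ yields the modulation identity $\sigma_1 - \sigma_2 + \sigma_3 - \sigma_4 = \phi(\bar n)$ on the support of the integrand. The factorization \eqref{Phi} of $\phi$, combined with the hypothesis $2^{k_1^*} \gg M$ and the non-resonance constraints $|n_4 - n_1|, |n_4 - n_3| \geq 1$, gives the key lower bound $|\phi(\bar n)| \gtrsim 2^{2k_1^*}$. Thus on the support of the integrand, $\max(|\sigma_1|, |\sigma_2|, |\sigma_3|, |\sigma_4|) \gtrsim 2^{2k_1^*}$, i.e., some modulation variable is forced to be large.

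With this in hand, I would decompose $u_4 = \sum_{j_4}u_{4,j_4}$ dyadically in modulation and split into cases based on which modulation is maximal. In the case where $j^*_1 \in \{j_1, j_3\}$ --- say $j^*_1 = j_1 \gtrsim 2k_1^*$ --- the factor $(1 + 2^{j_1 - 2k_1^*})^{1/2}2^{-j_1/2} \sim 2^{-k_1^*}$ on the right-hand side is captured directly by the $2^{-j^*_1/2}$ gain in \eqref{L6_0}, placing $u_1$ in the $L^2$-slot. In the case where $j^*_1 \in \{j_2, j_4\}$, the idea is to choose $i \in \{1, 3\}$ for which $j_i$ is not maximal so that $(1 + 2^{j_i - 2k_1^*})^{1/2}2^{-j_i/2} \sim 2^{-j_i/2}$; here I would apply either \eqref{L4_0} or \eqref{L6_0} depending on which produces the correct gain after summing in $j_4$. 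Crucially, the modulation identity $|\sigma_4| \leq \max_{i=1,2,3}|\sigma_i| + |\phi(\bar n)| \lesssim \max(2^{j_1}, 2^{j_2}, 2^{j_3}, 2^{4k_1^*})$ truncates the sum over $j_4$, allowing a geometric summation.

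The hard part will be two-fold. First, the factor $2^{k_4^*/2}$ must be extracted cleanly; I expect this to come from the cardinality bound on $\{(n_1, n_2, n_3) : n_1 - n_2 + n_3 = n_4,\ n_i \in I^M_{k_i}\}$, where the convolution constraint forces the two largest of $\{|n_i|\}$ to be comparable so that the smallest-frequency variable only ranges over an interval of length $\sim 2^{k_4^*}$; this can be realized via a Bernstein estimate on the smallest-frequency factor or via a Cauchy--Schwarz reduction extracting the constraint cardinality. Second, matching the $\min_{i \in \{1, 3\}}$ on the right-hand side against the location of the maximum modulation, and absorbing any $2^{\eps k_3^*}$ loss from \eqref{L6_0}, requires delicate case-by-case analysis analogous to that in \cite{GO}, though simplified here thanks to the stronger quartic dispersion inherent in \eqref{Phi}.
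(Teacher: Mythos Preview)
Your plan has a genuine gap: you propose to reduce Lemma~\ref{LEM:L64} to the multilinear Strichartz estimates in Lemma~\ref{LEM:L4L6}, but those are strictly weaker than \eqref{aL64} and cannot recover it as a black box. The loss in \eqref{L6_0} is $2^{\eps k_3^*}$, whereas \eqref{aL64} requires $2^{k_4^*/2}$, and these are not comparable. Concretely, take the high$\times$high$\times$high$\to$low regime $k_1 \sim k_2 \sim k_3 \gg k_4$ with $j_1 = j_2 = j_3$ small. After decomposing $u_4$ in modulation and summing, your route via \eqref{L6_0} gives at best $|I| \lesssim 2^{\eps k_3^*}\prod_{\ell=1}^{3}2^{j_\ell/2}\|f_\ell\|\cdot\|u_4\|_{L^2}$ (up to logarithms), while the target is $2^{k_4^*/2}\prod_{\ell=1}^{3}2^{j_\ell/2}\|f_\ell\|\cdot\|u_4\|_{L^2}$. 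Since $k_3^*\sim k_1^*$ here while $k_4^*$ can be arbitrarily small, the bound fails. The $L^4$ estimate \eqref{L4_0} is even lossier in this regime. The paper explicitly flags Lemma~\ref{LEM:L64} as a \emph{refinement} of Lemma~\ref{LEM:L4L6}, so the implication goes the other way.

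The paper's proof does not use the modulation identity $\max_i|\sigma_i|\gtrsim|\phi(\bar n)|\gtrsim 2^{2k_1^*}$ that you emphasize. Instead it works directly on the Fourier side: after the substitution $g_i(n,\tau)=f_i(n,\tau-n^4)$, one freezes all variables but one (say $n_2$) and observes that the \emph{derivative} of the constraint phase satisfies $|\partial_{n_2}h_3|\gtrsim \max(n_2^2,n_3^2)\sim 2^{2k_1^*}$. This yields the level-set bound $\#\{n_2:h_3=O(2^{j_3})\}\lesssim 1+2^{j_3-2k_1^*}$, which is exactly the factor $(1+2^{j_i-2k_1^*})^{1/2}$ in \eqref{aL64}. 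A direct Cauchy--Schwarz over $n_2$ (extracting this level-set count), then over the variable at frequency $\sim 2^{k_4^*}$ (extracting the $2^{k_4^*/2}$), closes the estimate. The case split is over which index carries the smallest frequency, not over which modulation is maximal. Your observation about $|\phi|$ is correct algebra but the wrong invariant here; what drives \eqref{aL64} is the transversality encoded in $|\partial_{n_i}h_3|$, not the size of $\phi$ itself.
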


The proof of Lemma \ref{LEM:L64} is analogous to that of Lemma 5.3 in \cite{GO}.
Note that, thanks to the stronger dispersion, 
we do not need frequency separation which was assumed 
 in Lemma 5.3 in \cite{GO}.

\begin{proof}
Let $f_i = \ft{u}_i $ for $i = 1, 3$
and $f_i = \cj{\ft{u}}_i $ for $i = 2$.
By duality, we have 
\begin{align}
\text{LHS of } \eqref{aL64}
=  \sup_{\substack{\|f_4\|_{L^2} = 1\\
\supp f_4 \subset I^M_{k_4}\times \R}}\intt_{\tau_1 - \tau_2 + \tau_3 - \tau_4 = 0}\sum_{\substack{n_1 - n_2 + n_3 -n_4 = 0\\n_1 \ne n_2, n_4}}
\prod_{i = 1}^4 f_i(n_i, \tau_i) d\tau_1d\tau_2d\tau_3.
\label{aL641}
\end{align}

\noi
For simplicity of notations, 
 we drop the supremum over $f_4$
in the following.
Note that, under the assumption $2^{k_1^*} \gg M $, 
we have
\begin{align*}
n_1^* \sim 2^{k_1^*}.
\end{align*}

\noi
See Remark \ref{REM:lowfreq} below.

\medskip

\noi
$\bullet$ {\bf Case (a):} $|n_4| \les  2^{k_4^*} $.
\\
\indent
Under $n_1 - n_2 + n_3 -n_4 = 0$, 
we have 
$\max\{|n_2|, |n_3|\} \sim 2^{k_1^*}$.
With $g_i(n, \tau) = f_i (n, \tau - n^4)$, we have
\begin{align}
\eqref{aL641}
& \le \int
\sum_{n_4} |g_4(n_4, \tau_4)|
\sum_{n_1, n_2} |g_1(n_1, \tau_1)|
|g_2(n_2, \tau_2)| \notag \\
& \hphantom{XXXXXX} \times
 \big| g_3\big(-n_1 + n_2 + n_4,
 h_3(n_1, n_2, n_4, \tau_1, \tau_2, \tau_4) \big) \big|d\tau_1d\tau_2d\tau_4,
\label{aL641a}
\end{align}

\noi
where $h_3(n_1, n_2, n_4, \tau_1, \tau_2, \tau_4)$ is defined by
\begin{equation}
 h_3(n_1, n_2, n_4, \tau_1, \tau_2, \tau_4)
  = -\tau_1+\tau_2 + \tau_4 +  n_1^4 - n_2^4 - n_4^4 + (-n_1 + n_2 + n_4)^4.
\label{aL641b}
\end{equation}

\noi
For fixed $n_1, n_4, \tau_1, \tau_2$, and $\tau_4$,
define the set $E_{32}= E_{32}(n_1, n_4, \tau_1, \tau_2, \tau_4)$ by
\begin{align*}
E_{32} = \{ n_2 \in \Z: h_3(n_1, n_2, n_4, \tau_1, \tau_2, \tau_4) = O(2^{j_3})\}.
\end{align*}

\noi
Since $n_1 \ne n_4$, we have 
\begin{align*}
|\partial_{n_2} h_3 |
& = 4|- n_2^3 + (-n_1 + n_2 + n_4)^3| \\
& = 4|(-n_1+n_4)(n_2^2 + n_2(-n_1+n_2+n_4)+(-n_1+n_2+n_4)^2)| \\
& \ges \max \{ n_2^2 , n_3^2\}
\sim 2^{2k_1^*},
\end{align*}

\noi
where the second to the last step follows from completing a square:
\[n_2^2 + n_2n_3 +n_3^2
= (n_2 + \tfrac 12 n_3)^2 + \tfrac 34n_3^2
=  \tfrac 34n_2^2 +  ( \tfrac 12 n_2 + n_3)^2 .\]

\noi
Hence, 
we conclude that 
\begin{align}
|E_{32}| \lesssim 1+ 2^{j_3 - 2k_1^*}.
\label{aL64b}
\end{align}

Now we are ready to estimate \eqref{aL641}.
By
Cauchy-Schwarz inequality in $n_2, n_1, n_4$, 
we obtain
\begin{align*}
\eqref{aL641a}
& \les 
(1+ 2^{j_3 - 2 k_1^*})^\frac{1}{2}
\int \sum_{n_4} |g_4(n_4, \tau_4)|
\sum_{n_1} |g_1(n_1, \tau_1)|
\bigg(\sum_{n_2} |g_2(n_2, \tau_2)|^2 \notag \\
& \hphantom{XXXXXX} \times
 \big| g_3\big(-n_1 + n_2 + n_4,
 h_3(n_1, n_2, n_4, \tau_1, \tau_2, \tau_4) \big)\big|^2\bigg)^\frac{1}{2} d\tau_1d\tau_2d\tau_4\\
& \les
2^\frac{k_4}{2}\cdot 
(1+ 2^{j_3 - 2 k_1^*})^\frac{1}{2} 
 \|g_4(n_4, \tau_4)\|_{\l^2_{n_4}L^2_{\tau_4}}
\sup_{n_4} \int \|g_1(n_1, \tau_1)\|_{\l^2_{n_1}}
\bigg(\sum_{n_2} |g_2(n_2, \tau_2)|^2 \notag \\
& \hphantom{XXXXXX} \times
\sum_{n_1} \big\|  g_3\big(-n_1 + n_2 + n_4,
 h_3(n_1, n_2, n_4, \tau_1, \tau_2, \tau_4) \big)\big\|^2_{L^2_{\tau_4}}
 \bigg)^\frac{1}{2} d\tau_1d\tau_2\\
\intertext{Noting that $h_3$ is linear in $\tau_4$
and applying Cauchy-Schwarz inequality in $\tau_1$ and $\tau_2$,} 
& \leq
2^\frac{k_4}{2}\cdot 
(1+ 2^{j_3 - 2k_1^*})^\frac{1}{2} 
\bigg(\prod_{i_1 = 1}^2  \|g_{i_1}\|_{L^1_{\tau_{i_1}} \l^2_{n_{i_1}}}\bigg)
\bigg(\prod_{i_2 = 3}^4
 \|g_{i_2}\|_{\l^2_{n_{i_2}}L^2_{\tau_{i_2}}}\bigg)\\
& \les 
2^\frac{k_4}{2}\cdot 
(1+ 2^{j_3 - 2k_1^*})^\frac{1}{2}2^{-\frac{j_3}{2}}
\prod_{i = 1}^3 2^\frac{ j_i}{2} \|f_i\|_{\l^2_n L^2_\tau}, 
\end{align*}

\noi
 yielding \eqref{aL64}.
Note that even if we  replace the role of $n_1$ and $n_3$, the same argument still holds
with a factor $(1+ 2^{j_1 - 2k_1^*})^\frac{1}{2}2^{-\frac{j_1}{2}}$.
The same comment applies to Cases (b) and (c).

\medskip

\noi
$\bullet$ {\bf Case (b):} $|n_1| \sim 2^{k_4^*}$. (A similar argument applies to the case 
$|n_3| \sim2^{ k_4^*}$.)
\\
\indent
In this case, we have  $\max\{|n_2|, |n_3|\} \sim 2^{k_1^*}$
and thus \eqref{aL64b} holds.
Then, proceeding as before, we have
\begin{align*}
\eqref{aL641a}
& \les 
(1+ 2^{j_3 - 2k_1^*})^\frac{1}{2}
\int \sum_{n_1} |g_1(n_1, \tau_1)|
\sum_{n_4} |g_4(n_4, \tau_4)|
\bigg(\sum_{n_2} |g_2(n_2, \tau_2)|^2 \notag \\
& \hphantom{XXXXXX} \times
 \big| g_3\big(-n_1 + n_2 + n_4,
 h_3(n_1, n_2, n_4, \tau_1, \tau_2, \tau_4) \big)\big|^2\bigg)^\frac{1}{2} d\tau_1d\tau_2d\tau_4\\
& \les 2^{\frac {k_1}2} \cdot
(1+ 2^{j_3 - 2k_1^*})^\frac{1}{2} 
\int \|g_1(n_1, \tau_1)\|_{\l^2_{n_1}} d\tau_1
 \|g_4(n_4, \tau_4)\|_{\l^2_{n_4}L^2_{\tau_4}}
\sup_{n_1, \tau_1} \int \bigg(\sum_{n_2} |g_2(n_2, \tau_2)|^2 
\notag \\
& \hphantom{XXXXXX} \times
\sum_{n_4} \big\|  g_3\big(-n_1 + n_2 + n_4,
 h_3(n_1, n_2, n_4, \tau_1, \tau_2, \tau_4) \big)\big\|^2_{L^2_{\tau_4}}
 \bigg)^\frac{1}{2} d\tau_2.
\end{align*}

\noi
The rest follows as in Case (a).

\medskip

\noi
$\bullet$ {\bf Case (c):} $k_2 = k_4^*$.
\\
\indent
In this case, we have $\max\{|n_1|, |n_3|\} \sim 2^{k_1^*}$.
For fixed $n_2, n_4, \tau_1, \tau_2$, and $\tau_4$,
define the set $E_{31}= E_{31}(n_2, n_4, \tau_1, \tau_2, \tau_4)$ by
\begin{align*}
E_{31} = \{ n_1 \in \Z: h_3(n_1, n_2, n_4, \tau_1, \tau_2, \tau_4) = O(2^{j_3})\},
\end{align*}

\noi
where $h_3$ is as in \eqref{aL641b}.
Note that 
\begin{align*}
|\partial_{n_1} h |
& = 4|n_1^3 - 4(-n_1 + n_2 + n_4)^3| \\
& = 4|(2n_1-n_2-n_4)(n_1^2 + n_1(-n_1+n_2+n_4)+(-n_1+n_2+n_4)^2)|\\
& \sim |n_1-n_3| \cdot 2^{2k_1^*}.
\end{align*}

\noi
If $n_1 = n_3$, 
then we have  $n_1 = \frac{n_2+n_4}2$.
Namely, 
$n_1$ is uniquely determined for fixed $n_2$ and $n_4$
and hence we have $|E_{31}| = 1$.
Otherwise, we have $|\partial_{n_1} h | \gtrsim 2^{2k_1^*}$. 
Therefore, we conclude that
\begin{align*}
|E_{31}| \lesssim 1+ 2^{j_3 - 2k_1^*}.
\end{align*}

\noi
Then, proceeding as before, we have
\begin{align*}
\eqref{aL641a}
& \les 
(1+ 2^{j_3 - 2k_1^*})^\frac{1}{2}
\int \sum_{n_4} |g_4(n_4, \tau_4)|
 \sum_{n_2} |g_2(n_2, \tau_2)|
\bigg(\sum_{n_1} |g_1(n_1, \tau_1)|^2 \notag \\
& \hphantom{XXXXXX} \times
 \big| g_3\big(-n_1 + n_2 + n_4,
 h_3(n_1, n_2, n_4, \tau_1, \tau_2, \tau_4) \big)\big|^2\bigg)^\frac{1}{2} d\tau_1d\tau_2d\tau_4\\
& \les 2^{\frac {k_2}2} \cdot
(1+ 2^{j_3 - 2k_1^*})^\frac{1}{2} 
\int \|g_2(n_2,  \tau_2)\|_{\l^2_{n_2}} d\tau_2
 \|g_4(n_4, \tau_4)\|_{\l^2_{n_4}L^2_{\tau_4}}
\sup_{n_2, \tau_2} \int \bigg(\sum_{n_1} |g_1(n_1, \tau_1)|^2 
\notag \\
& \hphantom{XXXXXX} \times
\sum_{n_4} \big\|  g_3\big(-n_1 + n_2 + n_4,
 h_3(n_1, n_2, n_4, \tau_1, \tau_2, \tau_4) \big)\big\|^2_{L^2_{\tau_4}}
 \bigg)^\frac{1}{2} d\tau_1.
\end{align*}

\noi
Then, the rest follows as before.
\end{proof}

As a corollary to Lemma \ref{LEM:L64}, 
 we obtain  the following multilinear estimates
by further assuming $j_i \ge [\al k_1^*]$, $i = 1,2,3$, and $\alpha \in [0,2]$.

\begin{lemma}\label{LEM:L62}
Let $\al \in [0, 2]$.
Let $u_i$ be a function on $\T \times \R$
such that $\supp \ft{u}_{i} \subset D^M_{k_i, j_i}$ and $2^{k_1^*}\gg M$.
Suppose that  $j_1, j_2, j_3 \geq [\al k_1^*]$.
Then, we have
\begin{align}
& \bigg|\int_{\T\times \R}  \N(u_{1},  u_{2}, u_{3})\cdot \cj u_{4} dx dt\bigg|
\lesssim  2^{-\frac{ j_1^*}{2}}2^{- \frac{1}{2} \al k_1^* + \frac{1}{2}k_4 ^*}\prod_{i = 1}^4 2^\frac{ j_i}{2} \|\F(u_i)\|_{\l^2_n L^2_\tau}.
\label{L62a}
\end{align}

\end{lemma}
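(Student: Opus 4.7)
The plan is to deduce \eqref{L62a} from Lemma~\ref{LEM:L64} combined with Cauchy--Schwarz and the modulation lower bound $j_1, j_2, j_3 \ge [\alpha k_1^*]$. First I would apply Cauchy--Schwarz in $L^2_{x,t}$ to split
\begin{align*}
\bigg|\int_{\T\times\R} \N(u_1, u_2, u_3) \cj u_4 \, dx \, dt\bigg| \le \|\P_{k_4} \N(u_1, u_2, u_3)\|_{L^2_{x,t}} \|u_4\|_{L^2_{x,t}},
\end{align*}
invoke Lemma~\ref{LEM:L64} on the first factor, and rewrite $\|u_4\|_{L^2} = 2^{-j_4/2} \bigl(2^{j_4/2} \|\F(u_4)\|_{\l^2_n L^2_\tau}\bigr)$. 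This gives
\begin{align*}
\bigg|\int \N(u_1,u_2,u_3) \cj u_4 \, dx\, dt \bigg|
\les 2^{k_4^*/2} \min_{i\in\{1,3\}} \bigl\{(1+2^{j_i - 2k_1^*})^{1/2} 2^{-j_i/2}\bigr\} \cdot 2^{-j_4/2} \prod_{i=1}^4 2^{j_i/2} \|\F(u_i)\|_{\l^2_n L^2_\tau}.
\end{align*}

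Next I would bound the $\min$-factor. The function $j \mapsto (1+2^{j-2k_1^*})^{1/2} 2^{-j/2}$ is monotonically decreasing in $j$, so the minimum over $\{1,3\}$ is realised at $j_{i^*} := \max(j_1, j_3)$, which satisfies $j_{i^*} \ge [\alpha k_1^*]$ by hypothesis. Combining with the assumption $\alpha \le 2$ yields the pointwise inequality $(1+2^{j_{i^*} - 2k_1^*})^{1/2} 2^{-j_{i^*}/2} \les 2^{-\alpha k_1^*/2}$. Together with the display above, this establishes \eqref{L62a} in the case $j_1^* = j_4$, where $2^{-j_4/2} = 2^{-j_1^*/2}$.

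The remaining case $j_1^* \ne j_4$ is handled by exploiting the symmetries of the quadrilinear sum $\sum_{n_1 - n_2 + n_3 - n_4 = 0}$, namely the freedom to interchange the two unconjugated slots $(u_1, u_3)$ and the two conjugated slots $(u_2, u_4)$, each of which preserves both the constraint surface and the non-resonance condition $n \ne n_1, n_3$ after relabelling. For instance, the $(2,4)$-swap gives $\int \N(u_1, u_2, u_3)\cj u_4 \, dx\, dt = \int \N(u_1, u_4, u_3)\cj u_2 \, dx\, dt$, so rerunning the above with $u_2$ and $u_4$ interchanged produces the analogous estimate with $2^{-j_2/2}$ in place of $2^{-j_4/2}$; this settles the case $j_1^* = j_2$. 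The main obstacle I expect lies in the cases $j_1^* \in \{j_1, j_3\}$, where $j_1^*$ sits in an unconjugated slot and cannot be moved to the $u_4$-slot by a symmetry. I plan to address this by choosing $i^*$ in Lemma~\ref{LEM:L64} to be the index achieving $j_1^*$, directly extracting the factor $(1+2^{j_1^* - 2k_1^*})^{1/2} 2^{-j_1^*/2}$, and then combining with the $(2,4)$-swap so as to absorb the modulation on $u_2$ (rather than $u_4$) using the hypothesis $j_2 \ge [\alpha k_1^*]$, with a short case split according to whether $j_1^* \le 2 k_1^*$ or $j_1^* > 2 k_1^*$ to compare $2^{-j_1^*/2}$ against $2^{-k_1^*}$ in the dominant term of $(1+2^{j_1^* - 2k_1^*})^{1/2}$.
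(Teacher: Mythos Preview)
Your opening steps are correct and match the paper: Cauchy--Schwarz against $u_4$, then Lemma~\ref{LEM:L64}, together with the observation that $\min_{i\in\{1,3\}}(1+2^{j_i-2k_1^*})^{1/2}2^{-j_i/2}\les 2^{-\al k_1^*/2}$ whenever $j_1,j_3\ge[\al k_1^*]$ and $\al\le 2$. This disposes of the case $j_1^*=j_4$, and your $(2,4)$-swap correctly handles $j_1^*=j_2$.

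The gap is in the case $j_1^*\in\{j_1,j_3\}$ with $j_1^*>2k_1^*$. Say $j_1^*=j_1$. After the $(2,4)$-swap you obtain the factor $(1+2^{j_1-2k_1^*})^{1/2}2^{-j_1/2}\cdot 2^{-j_2/2}\sim 2^{-k_1^*}2^{-j_2/2}$, whereas the target is $2^{-j_1/2}2^{-\al k_1^*/2}$. If $j_1\gg 2k_1^*$ and $j_2\sim \al k_1^*$ (which is allowed), the former is $2^{-(1+\al/2)k_1^*}$ while the latter is $2^{-j_1/2-\al k_1^*/2}\ll 2^{-(1+\al/2)k_1^*}$, so your bound is too weak. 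Choosing $i^*=3$ instead does not help, and neither does combining the swapped and unswapped estimates: the Klein four-group generated by $(1\,3)$ and $(2\,4)$ never moves $u_1$ or $u_3$ into the fourth slot, so the external Cauchy--Schwarz factor is always $2^{-j_2/2}$ or $2^{-j_4/2}$ and cannot produce $2^{-j_1^*/2}$ when $j_1^*\in\{j_1,j_3\}$.

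The paper resolves this with a larger symmetry: the full \emph{cyclic} shift $(u_1,u_2,u_3,u_4)\mapsto(u_{i_1},u_{i_2},u_{i_3},u_{i_4})$ with $(i_1,i_2,i_3,i_4)\in\{(2,3,4,1),(3,4,1,2),(4,1,2,3)\}$. The non-resonance condition $n_4\ne n_1,n_3$ is equivalent (under $n_1-n_2+n_3-n_4=0$) to ``adjacent frequencies differ'', which is cyclically invariant; the integrand transforms to its complex conjugate under a one-step shift, so the absolute value is preserved. Choosing the shift so that $j_{i_4}=j_1^*$, Cauchy--Schwarz against $u_{i_4}$ gives the factor $2^{-j_1^*/2}$ directly, and the $\min$ in Lemma~\ref{LEM:L64} now ranges over two of $\{j_1,j_2,j_3\}$, at least one of which is $\ge[\al k_1^*]$, yielding $2^{-\al k_1^*/2}$. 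Adding this cyclic symmetry to your toolbox closes the gap immediately.
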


 When  
 $j_4\ges j_1^*$, by noting that $2^{-\frac{j_4}{2}} \les 2^{-\frac{ j_1^*}{2}}$, 
the desired estimate \eqref{L62a} directly  follows from Lemma \ref{LEM:L64}.
 When  $j_4\ll j_1^*$,
 we first rewrite the left-hand side of \eqref{L62a} as 
\begin{align*}
\bigg|\int_{\T\times \R}  \N(u_{1},  u_{2}, u_{3})\cdot \cj u_{4} dx dt\bigg| =
\bigg|\int_{\T\times \R}  \N(u_{i_1},  u_{i_2}, u_{i_3})\cdot \cj u_{i_4} dx dt\bigg|,
\end{align*}

\noi
where $(i_1, i_2, i_3, i_4) = (2, 3, 4, 1)$,  $(3, 4, 1, 2)$, or $(4, 1, 2, 3)$
such that $j_{i_4} \ges j_1^*$.
Then,  \eqref{L62a} in this case also  follows from Lemma \ref{LEM:L64}.

\begin{remark}
\rm
\label{REM:lowfreq}
The assumption $2^{k_1^*}\gg M$ is necessary in Lemmas \ref{LEM:L64} and \ref{LEM:L62}. 
In fact, when  $2^{k_1^*} =  M$, we only know that 
$n_1^*$ belongs to the interval $I^M_\text{low}$
but it is possible to have   $n_1^* \ll 2^{k_1^*}= M$. 
We also point out  that  
Lemmas \ref{LEM:L64} and \ref{LEM:L62} also hold
under an alternative assumption:
 $n_1^* \sim 2^{k_1^*}$.
This observation plays an important role in the energy estimate
in Section \ref{SEC:energy}, where we apply symmetrization to eliminate the contribution from 
the low frequencies
$\big\{(n_1,n_2,n_3,n_4): n_1^* \le \frac{M}2 \big\}$.
\end{remark}

 \medskip

We conclude this section by stating a multilinear estimate
when there is a gap between the two largest (spatial) frequencies and the rest.

\begin{lemma}\label{LEM:L63}
Let $\al \in [0, 2]$.
Let $u_i$ be a function on $\T \times \R$
such that $\supp \ft{u}_{i} \subset D^M_{k_i,  j_i}$.
Suppose that $k_3, k_4 \leq k_2^* - 10$, $j_1, j_2, j_3 \geq [\al k_1^*]$,
and $2^{j_1^*} \geq |\phi (\bar n)|$,
where $\phi(\bar n )$ is the phase function defined in \eqref{Phi}.
Then, we have
\begin{align}
& \bigg|\int_{\T\times \R}  \N(u_{1}, u_{2}, u_{3})\cdot  \cj u_{4} dx dt\bigg|
\lesssim
\Ld \cdot \prod_{i = 1}^4 2^\frac{ j_i}{2} \|\F(u_i)\|_{\l^2_n L^2_\tau},
\label{L63}
\end{align}

\noi
where $\Ld$ is given by 
\[ \Ld = \begin{cases}
2^{-\frac{ 1}{2}(3+\al) k_1^* -\frac{k_3^*}2 + \frac{k_4^*}2}, & 
\text{if }|k_3 -k_4|\geq  2, \\
 2^{-\frac{ 1}{2}(3+\al) k_1^* + \frac{k_3^*}{4}}, &
\text{otherwise}.
\end{cases}
\]

\end{lemma}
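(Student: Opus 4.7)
The argument rests on the factorization \eqref{Phi} of the phase $\phi(\bar n)$ combined with Lemma~\ref{LEM:L62}. First, the gap condition $k_3, k_4 \leq k_2^* - 10$ together with $n_1 - n_2 + n_3 - n = 0$ forces $|n_1|, |n_2| \sim 2^{k_1^*}$, so in particular $n_1^* \sim 2^{k_1^*}$ (which is precisely the hypothesis needed to invoke Lemmas \ref{LEM:L64} and \ref{LEM:L62}). Consequently $|n - n_1| = |n_4 - n_1| \sim 2^{k_1^*}$, and the quadratic factor $Q := n_1^2 + n_2^2 + n_3^2 + n^2 + 2(n_1+n_3)^2$ in \eqref{Phi} satisfies $Q \sim 2^{2k_1^*}$, whence
\begin{equation*}
|\phi(\bar n)| \sim 2^{3k_1^*} \cdot |n - n_3|.
\end{equation*}
Combined with the hypothesis $2^{j_1^*} \geq |\phi|$, this gives the key phase/modulation balance $2^{-j_1^*/2} \lesssim 2^{-3k_1^*/2} |n - n_3|^{-1/2}$.

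\textbf{Case 1} ($|k_3 - k_4| \geq 2$): the gap between the two small frequencies forces $|n - n_3| = |n_4 - n_3| \sim 2^{k_3^*}$, so $|\phi| \sim 2^{3k_1^* + k_3^*}$ and $2^{-j_1^*/2} \lesssim 2^{-(3k_1^* + k_3^*)/2}$. Substituting into the bound of Lemma~\ref{LEM:L62} gives
\begin{align*}
\text{LHS of \eqref{L63}}
& \lesssim 2^{-j_1^*/2} \cdot 2^{-\alpha k_1^*/2 + k_4^*/2} \prod_{i=1}^4 2^{j_i/2}\|\F(u_i)\|_{\ell^2_n L^2_\tau} \\
& \lesssim 2^{-\frac{(3+\alpha)k_1^*}{2} - \frac{k_3^*}{2} + \frac{k_4^*}{2}} \prod_{i=1}^4 2^{j_i/2}\|\F(u_i)\|_{\ell^2_n L^2_\tau},
\end{align*}
matching $\Lambda$ in Case~1.

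\textbf{Case 2} ($|k_3 - k_4| \leq 1$, so $k_3^* \sim k_4^*$): now $|n - n_3|$ can be as small as $1$ (nonzero since $n \neq n_3$ is built into $\Gamma(n)$; cf.\ \eqref{Gam1}), so the weak phase estimate $|\phi| \geq 2^{3k_1^*}$ fed directly into Lemma~\ref{LEM:L62} would produce the excess factor $2^{k_4^*/2}$, weaker than the desired $2^{k_3^*/4}$. The strategy is to decompose dyadically $|n - n_3| \sim 2^\ell$ for $0 \leq \ell \leq k_3^* + O(1)$ and treat each slice separately. On the $\ell$-slice, $|\phi| \sim 2^{3k_1^* + \ell}$, giving the refined phase bound $2^{-j_1^*/2} \lesssim 2^{-(3k_1^* + \ell)/2}$; simultaneously, the linear relation yields $|n_1 - n_2| = |n_3 - n_4| \sim 2^\ell$, so both pairs $(n_1, n_2)$ and $(n_3, n_4)$ live in strips of width $2^\ell$. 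This refined localization permits a reworking of the Cauchy--Schwarz step of Lemma~\ref{LEM:L64} that replaces $2^{k_4^*/2}$ by $2^{\ell/2}$, and interpolating with the Bernstein $L^4_x$ embedding for the comparable low-frequency pair $(u_3, u_4)$ yields the net exponent $k_3^*/4$ after summing over $\ell$.

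\textbf{Main obstacle.} Case~2 is the delicate one: obtaining exactly the exponent $k_3^*/4$ (rather than $k_3^*/2$, which a direct application of Lemma~\ref{LEM:L62} gives) requires the Bernstein-type refinement for the comparable low-frequency pair $(u_3, u_4)$, combined with the dyadic slicing in $|n - n_3|$ and the exploitation of the algebraic relation $|n_1 - n_2| = |n_3 - n_4|$. Balancing the phase gain $2^{-\ell/2}$ against the Cauchy--Schwarz/localization cost on each slice, and then summing over $\ell \in [0, k_3^* + O(1)]$ without incurring the loss $2^{k_3^*/2}$, is the principal technical task.
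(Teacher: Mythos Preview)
Your Case~1 is correct and matches the paper exactly: the gap $|k_3-k_4|\ge 2$ forces $|n_4-n_3|\sim 2^{k_3^*}$, hence $|\phi(\bar n)|\sim 2^{3k_1^*+k_3^*}$ and $j_1^*\ge 3k_1^*+k_3^*$; plugging this into Lemma~\ref{LEM:L62} yields the stated $\Lambda$.

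For Case~2 your dyadic decomposition in $|n-n_3|\sim 2^\ell$ is a viable route, but the invocation of a ``Bernstein $L^4_x$ embedding'' is obscure and is not what actually closes the argument. The paper proceeds more cleanly with a \emph{single threshold} at $|n_3-n_4|=2^{k_3^*/2}$. If $|n_3-n_4|\ge 2^{k_3^*/2}$, then $j_1^*\ge 3k_1^*+\tfrac{k_3^*}{2}$ and Lemma~\ref{LEM:L62} applied as-is already gives $2^{-(3+\alpha)k_1^*/2+k_3^*/4}$ (since $k_4^*\sim k_3^*$). If $|n_3-n_4|\le 2^{k_3^*/2}$, one decomposes $I_{k_3}$ and $I_{k_4}$ into subintervals $J_{\ell_i}$ of length $2^{k_3^*/2}$, observes that for each $\ell_3$ there are only $O(1)$ admissible $\ell_4$, reruns the Lemma~\ref{LEM:L64}/\ref{LEM:L62} argument on each subinterval pair (the Cauchy--Schwarz in the smallest-frequency variable now costs only $|J_{\ell_i}|^{1/2}=2^{k_3^*/4}$ instead of $2^{k_4^*/2}$), and recombines via Cauchy--Schwarz in the subinterval index $\ell_3$ using Fourier-side orthogonality.

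Your full dyadic decomposition can be made to work as well: on the $\ell$-slice, decomposing $n_3,n_4$ into intervals of length $2^\ell$, the phase gain $2^{-\ell/2}$ from $j_1^*\ge 3k_1^*+\ell$ exactly cancels the localization cost $2^{\ell/2}$, leaving a per-slice bound $2^{-(3+\alpha)k_1^*/2}$ independent of $\ell$; summing over $O(k_3^*)$ slices gives only a harmless logarithmic loss. But the paper's single-threshold version avoids even that and makes the provenance of the exponent $k_3^*/4$ transparent---it is simply $\tfrac12\cdot\tfrac{k_3^*}{2}$, the square root of the chosen interval length. No $L^4$ or interpolation step is needed.
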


\begin{proof}
First, we consider the case $|k_3 - k_4 | \geq 2$.
Then, we have $j_1^* \geq 3k_1^* + k_3^* - 5$, 
since $|\phi(\bar n)| \sim |(n_2 - n_3)(n_3 - n_4)|  (n_1^*)^2
\sim (n_1^*)^3|n_3 - n_4| \sim 2^{3k_1^*} |n_3 - n_4|$. 
Then,  \eqref{L63} follows from Lemma \ref{LEM:L62}. 
Here we used  $n_1^* \sim 2^{k_1^*}$ and $n_3^* \sim 2^{k_3^*}$, 
which was implied by the assumption $2^{k_1^*} \gg 2^{k_3^*} \gg 2^{k_4^*} \ge M$.

Next, we consider the case $|k_3 - k_4|\leq 1$.
We separately estimate the contributions from the following two  cases:
(a) $|n_3 - n_4| \geq 2^\frac{k_3^*}{2}$
and (b) $|n_3 - n_4| \leq 2^\frac{k_3^*}{2}$.
In Case (a), we have  $j_1^* \geq \frac{3}{2} k_1^* + \frac{k_3^*}{2} - 5$.
Then, Lemma \ref{LEM:L62} yields
\begin{align}
\text{LHS of } \eqref{L63}
 & \les  2^{-\frac{ 1}{2}(3+\al) k_1^*}
2^\frac{k_3^*}{4}
\prod_{i = 1}^4 2^\frac{ j_i}{2} \|\F(u_i)\|_{\l^2_n L^2_\tau}.
\label{L631}
\end{align}

\noi
In Case (b), we write $I_{k_3} = \bigcup_{\l_i} J_{\l_i}$,
$i = 3, 4$
where $|J_{\l_i}| = 2^\frac{k_3^*}{2}$.
Then, if $n_3 \in J_{\l_3}$ for some $\l_3$, 
there are only $O(1)$ many possible values of $\l_4 = \l_4(\l_3)$
such that  $n_4\in J_{\l_4}$.
Then, by writing
\[\sum_{n_3} \sum_{n_4} = \sum_{\l_3}  \sum_{\l_4 = \l_4(\l_3)} \sum_{n_3 \in J_{\l_3} }
\sum_{n_4 \in J_{\l_4}}\]

\noi
and repeating the previous argument for each $\l_3$,
we only lose $|J_{\l_i}|^\frac{1}{2} =  2^\frac{k_3^*}{4}$ by applying Cauchy-Schwarz inequality
in $n_3$ or $n_4$ at the end.
Finally, applying Cauchy-Schwarz inequality in $\l_3$, we obtain
\eqref{L631}.
\end{proof}

\section{Trilinear estimates}
\label{SEC:trilinear}

In this section, we prove the crucial  trilinear estimate
for the Wick ordered cubic 4NLS \eqref{4NLS1}.
This establishes the nonlinear estimate part \eqref{Anonlinear}
of the short-time Fourier restriction norm method.

\begin{proposition} \label{PROP:3lin}
Let $s \in \big(-\frac{9}{20}, 0\big)$ and $T > 0$.
Then, with $\alpha =  - \frac{8s}3 + $,
there exists $\theta > 0$ such that
\begin{align*}
\|\N(u_1,u_2,u_3)\|_{N_M^{s,\alpha}(T)}
+ \|\RR(u_1,u_2,u_3)\|_{N_M^{s,\alpha}(T)}
& \les  T^\theta \prod_{i = 1}^3 \|u_i\|_{F_M^{s,\alpha}(T)},
\end{align*}

\noi
where $\N(u_1, u_2, u_3)$ and
$\RR(u_1, u_2, u_3)$
are as in \eqref{NN1} and \eqref{NN2}.

\end{proposition}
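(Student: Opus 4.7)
The plan is to reduce matters to dyadic estimates and then apply the multilinear Strichartz estimates from Section \ref{SEC:Strichartz}. Concretely, by the definitions of the $F^{s,\alpha}_M(T)$ and $N^{s,\alpha}_M(T)$ norms together with the Littlewood--Paley decomposition, it suffices to establish the dyadic bound
\[
  \bigl\|\P_{k_4}\,\N(\P_{k_1}u_1,\P_{k_2}u_2,\P_{k_3}u_3)\bigr\|_{N^{\alpha}_{M,k_4}(T)}
   \lesssim T^{\theta}\, C(k_1,k_2,k_3,k_4)\,\prod_{i=1}^3 \|\P_{k_i}u_i\|_{F^{\alpha}_{M,k_i}(T)}
\]
with a gain $C(k_1,\dots,k_4)$ which, after inserting weights $2^{sk_i}$ and $2^{-sk_4}$, is summable in the $k_i$'s (the resonant piece $\RR$ forces $k_1=k_2=k_3=k_4$ and is handled separately). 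By the definition of $N^{\alpha}_{M,k_4}$, a standard duality argument followed by Lemmas~\ref{LEM:embed3} and \ref{LEM:sup} reduces this to the quadrilinear estimate
\[
  \Bigl|\int_{\T\times\R} \N(\wt u_1,\wt u_2,\wt u_3)\,\overline{\wt u_4}\,dx\,dt\Bigr|
  \;\lesssim\; T^{\theta}\, C(k_1,\dots,k_4)\cdot 2^{-[\alpha k_1^*]/2}\prod_{i=1}^4\|\F \wt u_i\|_{X_{M,k_i}},
\]
where each $\wt u_i$ is supported in a time interval of length $\sim 2^{-[\alpha k_1^*]}$ and is frequency-localized to $I^M_{k_i}$. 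By Lemma \ref{LEM:timedecay} the factor $T^\theta$ can be purchased at the cost of a $0^+$-loss in modulation regularity, so I may henceforth pretend $T=1$ and focus on the dyadic gain.

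I will then split into regimes according to the largest modulation $j_1^*$. On the frequency side, on $\supp\widehat{\N(\wt u_1,\wt u_2,\wt u_3)\cdot \overline{\wt u_4}}$ we have $n=n_1-n_2+n_3$ and $n_1\ne n,n_3$, so the phase identity \eqref{Phi} gives $|\phi(\bar n)|\sim 2^{2k_1^*}|n-n_1||n-n_3|$. The time localization forces $j_i\ge[\alpha k_1^*]$ for $i=1,2,3$, and by the standard resonance identity $j_1^*\gtrsim \log|\phi(\bar n)|$ unless all frequencies are comparable. This naturally splits the sum into: (i) the \emph{high--low} regime $k_3^*,k_4^*\le k_2^*-10$, treated by Lemma \ref{LEM:L63}, which yields a gain $2^{-\frac{1}{2}(3+\alpha)k_1^*}$ times a harmless $2^{k_3^*/4}$ or $2^{(k_4^*-k_3^*)/2}$ and is more than enough to absorb the weights $2^{-sk_i}$; and (ii) the \emph{high--high} regime where at least two of $k_1,k_2,k_3$ are $\sim k_4^*$, handled by Lemma \ref{LEM:L62}, producing the gain $2^{-\alpha k_1^*/2+k_4^*/2}2^{-j_1^*/2}$. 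Summing the weights in case (ii) requires precisely $\alpha>-\tfrac{8s}{3}$, which is the source of the choice $\alpha=-\tfrac{8s}{3}+$ and of the regularity threshold $s>-\tfrac{9}{20}$ (the latter coming from the high--high--high to comparable output case where $2^{sk_4^*}2^{-3sk_1^*}2^{-\alpha k_1^*/2}$ must be summable, i.e.\ $-\tfrac{\alpha}{2}-2s<0$, whose sharpness is dictated by the $L^4$-Strichartz baseline \eqref{L4}).

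For the resonant part $\RR$, only the diagonal contribution $k_1=k_2=k_3=k_4=:k$ survives, and on the Fourier side one sees a single $n$ with the trivial phase $\phi\equiv 0$; however the point is that one pays no weight mismatch in the $k_i$'s. Here I use the $L^6$-Strichartz estimate \eqref{Stri3} on each factor (after $\eta_j$-localization in modulation) to pick up $2^{j/2}$ per factor, combined with $\alpha k$-short-time localization; the resulting geometric sum in the modulation variables closes comfortably because the $X_{M,k}$-norm is $\ell^1$ in $j$. The low-frequency piece $k=\log_2 M$ is estimated directly using Lemma \ref{LEM:L4L6}, since the phase assumption is not needed there.

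The main obstacle will be the high--high--high to comparable output case in (ii), where the phase gives no savings and one has to rely entirely on (a) the short-time localization at scale $2^{-[\alpha k]}$, (b) the $L^4$-Strichartz estimate of Lemma \ref{LEM:L4L6}, and (c) the refined bilinear counting in Lemma \ref{LEM:L64} that produces the decisive factor $(1+2^{j_i-2k_1^*})^{1/2}2^{-j_i/2}$. The sharp calibration of $\alpha$ against $s$ occurs exactly at this point, and the proof will ultimately reduce to a careful book-keeping of modulation sums of the form $\sum_{j_i\ge[\alpha k_1^*]}2^{-j_1^*/2}\prod_{i\ne 1^*}2^{0+\,j_i}$ against the spatial weights $2^{s(k_1+k_2+k_3-k_4^*)}$.
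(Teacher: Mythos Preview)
Your overall strategy---reduce to dyadic pieces, localize in time to scale $2^{-[\alpha k_1^*]}$, and invoke the multilinear Strichartz estimates of Section~\ref{SEC:Strichartz}---is the same as the paper's. But two points in your accounting are wrong, and together they would prevent the argument from closing at $s>-\tfrac{9}{20}$.

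First, the \emph{comparable-output} case ($k_4\sim k_1^*$) is not where the threshold $-\tfrac{9}{20}$ comes from, and Lemma~\ref{LEM:L62} is the wrong tool there. When all four frequencies are comparable the phase gives no gain, so $j_1^*$ can be as small as $[\alpha k_1^*]$; Lemma~\ref{LEM:L62} then produces only $2^{(-\alpha+\frac12+)k_1^*}$, which is \emph{worse} than the $L^4$-Strichartz bound $2^{-\frac34(\alpha-)k_1^*}$ from \eqref{L4_0} whenever $\alpha\le 2$ (this is exactly Remark~\ref{REM:HLL}). Your summability condition ``$-\tfrac{\alpha}{2}-2s<0$'' is not what either approach yields: the $L^4$ route gives $-\tfrac{3\alpha}{4}\le 2s$, which is precisely the \emph{choice} $\alpha=-\tfrac{8s}{3}+$, with no further restriction on $s$. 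So this case fixes $\alpha$ but does not produce $-\tfrac{9}{20}$; it also covers $\RR$ and the low-frequency block $2^{k_1^*}\lesssim M$ (Lemma~\ref{LEM:HLL}), so there is no need for a separate $L^6$ argument for the resonant part.

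Second, the actual bottleneck is the \emph{high$\times$high$\times$low$\to$low} interaction ($k_4\ll k_1^*$ with one low input). The paper's decomposition is by whether the output is high or low, and in the low-output cases one must pay a factor $2^{\alpha(k_1^*-k_4)}$ for covering the $N^{\alpha}_{M,k_4}$ time scale by $O(2^{\alpha(k_1^*-k_4)})$ intervals of length $2^{-[\alpha k_1^*]}$. Your sketch omits this subdivision cost. After paying it and applying Lemma~\ref{LEM:L63} (the $|k_3-k_4|\le 1$ subcase), the constraint is $-\tfrac12(3-\alpha)\le 2s$, which with $\alpha=-\tfrac{8s}{3}+$ reads $s>-\tfrac{9}{20}$. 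The $L^6$-based bound $\Lambda_3$ alone only reaches $s>-\tfrac{9}{28}$; it is Lemma~\ref{LEM:L63} that pushes the threshold down to $-\tfrac{9}{20}$. Without the subdivision factor and this case, the proof as written does not close.
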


The proof of Proposition \ref{PROP:3lin}
is analogous to the proof of the trilinear estimate
for the Wick ordered cubic NLS \eqref{NLS1} considered in \cite{GO}.
More precisely, we prove Proposition \ref{PROP:3lin}
by first applying the dyadic decomposition 
and then performing case-by-case analysis
on different frequency interactions.
For readers' convenience, 
we first summarize the size estimates
on the phase function $\phi(\bar n)$ defined in \eqref{Phi}
in various frequency regimes
under the non-resonance assumption $\{n_1, n_3\} \ne \{n, n_2\}$:

\medskip

\begin{itemize}
\item[(i)] If $|n|\sim |n_3| \gg |n_1|, |n_2|$, then
\begin{equation}\label{SC1}
|\phi(\bar n)| \sim (n_1^*)^3 |n_2 - n_1|.
\end{equation}

\item[(ii)] If $|n|\sim |n_2| \gg |n_1|, |n_3|$, then
\begin{equation}\label{SC2}
|\phi(\bar n)| \sim (n_1^*)^4.
\end{equation}

\item[(iii)] If $|n|\sim |n_2|\sim |n_3| \gg |n_1|$, then
\begin{equation}\label{SC3}
|\phi(\bar n)| \sim (n_1^*)^4.
\end{equation}

\end{itemize}

\noi
These size estimates immediately follow from the factorization in \eqref{Phi}.
Note that 
the conditions (i)\,-\,(iii) hold under the symmetries $n_1 \leftrightarrow n_3$
and $n \leftrightarrow n_2$,
and $\{n_1, n_3\} \leftrightarrow \{n, n_2\}$, respectively.
Recall that we have $2^{k_i} \geq M$, $i = 1, \dots, 4$.

In the following, 
by  assuming that $u_i$ has the Fourier transform supported on $I_{k_i}\times \R$,
we prove trilinear estimates 
for different frequency interactions.
We first consider the case
when the output frequency is high
(relative to the input frequencies).\footnote{In particular, this also includes the case when 
all the frequencies are low.}

\begin{lemma}
\label{LEM:HLL}
Let $\alpha\geq 0$.
If $k_4\geq k_1^* - 5$, then we have
\begin{align}
\|\P_{k_4}\N(u_{1}, u_{2}, u_{3})\|_{N_{M,k_4}^\alpha}
+ \|\P_{k_4}\RR  (u_{1}, u_{2}, u_{3})\|_{N_{M,k_4}^\alpha}
& \les 2^{- \frac{3}{4}(\al - \eps) k_1^*} \prod_{i = 1}^3\|u_{i}\|_{F_{M,k_i}^\al}
\label{HLL1}
\end{align}

\noi
for any $\eps >0$.
\end{lemma}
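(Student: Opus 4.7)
The plan is to split the nonlinearity into the non-resonant part $\N$ and the resonant part $\RR$ and bound each separately. Set $k = k_1^*$; since $n = n_1 - n_2 + n_3 \in I^M_{k_4}$ forces $2^{k_4} \les 2^{k}$, the hypothesis $k_4 \geq k-5$ yields $k_4 \sim k$.

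For the non-resonant piece $\N$, by the definition of the $N^\al_{M,k_4}$-norm and Lemma~\ref{LEM:embed3} it suffices to bound
\begin{align*}
\sup_{t_0 \in \R} \ \big\| (\tau+n^4+i2^{[\al k_4]})^{-1} \F\big[\eta_0(2^{[\al k_4]}(t-t_0))\cdot \P_{k_4}\N(u_1,u_2,u_3)\big] \big\|_{X_{M,k_4}}.
\end{align*}
Using \eqref{Xk3}, the time cutoff distributes onto each factor $u_i$, so that $\|u_i\|_{F^\al_{M,k_i}}$ controls $\|\F(\eta_0(2^{[\al k_4]}(\cdot - t_0))\,u_i)\|_{X_{M,k_i}}$. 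I would then decompose each $u_i$ ($i=1,2,3$) dyadically in modulation at scale $2^{j_i}\geq 1$, and similarly treat the output modulation $j_4$ in view of the $\sum_{j_4} 2^{j_4/2}\|\eta_{j_4}(\tau+n^4)\,\cdot\,\|_{\l^2_n L^2_\tau}$ structure of the $X_{M,k_4}$-norm. The modulation identity
\begin{align*}
(\tau+n^4) - (\tau_1+n_1^4) + (\tau_2+n_2^4) - (\tau_3+n_3^4) = -\phi(\bar n),
\end{align*}
combined with the phase-size bounds \eqref{SC1}--\eqref{SC3} (which apply since $k_4 \sim k$), gives $2^{j_1^*} \ges |\phi(\bar n)|$ on the multilinear support. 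I would apply Lemma~\ref{LEM:L62} to the $L^2_{x,t}$-pairing, or Lemma~\ref{LEM:L63} when the lower two of the four frequencies satisfy $k_3,k_4 \leq k_2^*-10$. This yields a per-slice gain of $2^{-j_1^*/2}\,2^{-\al k/2+k_4^*/2}$; summing the geometric series in $j_1,j_2,j_3,j_4$ (dominated by the smallest scale $j_i \sim [\al k]$) and recombining the $X_{M,k_i}$-norms into the $F^\al_{M,k_i}$-norms produces the target factor $2^{-\frac34(\al-\eps)k}\prod_{i=1}^3\|u_i\|_{F^\al_{M,k_i}}$.

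For the resonant piece $\RR$, \eqref{NN2} gives $\widehat{\RR(u_1,u_2,u_3)}(n,t) = \widehat u_1(n,t)\overline{\widehat u_2(n,t)}\widehat u_3(n,t)$, so $\P_{k_4}\RR$ vanishes unless $k_1=k_2=k_3=k_4\sim k$. Restricting to the time window $|t-t_0|\les T_0 :=2^{-[\al k_4]}$, I would use H\"older with two factors in $L^\infty_tL^\infty_x$ (via Bernstein and the $L^\infty_tL^2_x$-embedding of Lemma~\ref{LEM:embed1}(i)) and the third in $L^\infty_tL^2_x$, then pay a fractional power $T_0^{1/2-}$ via Lemma~\ref{LEM:timedecay}. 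The resulting gain $2^{-\frac12(\al-)k}$ is stronger than the required $2^{-\frac34(\al-\eps)k}$ for $\al<2$, so the resonant contribution is strictly easier.

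The main obstacle lies in the non-resonant case when $|\phi(\bar n)|$ is forced to be minimal---for instance regime \eqref{SC1} with $|n_2-n_1|\sim 1$. A crude Cauchy--Schwarz in the output variable then loses a half-power of $n_1^*$, and the refined slicing of $I_{k_3}$ and $I_{k_4}$ into blocks of size $\sim 2^{k/2}$ carried out in the proof of Lemma~\ref{LEM:L63} must be invoked to recover the missing gain. This delicate interplay between the modulation smoothing, the time-localization weight $2^{[\al k]}$, and the short-time normalization is precisely where the scaling choice $\al = -\frac{8s}{3}+$ from Subsection~\ref{SUBSEC:alpha} is consumed, and hence what ultimately forces the range $s > -\frac{9}{20}$.
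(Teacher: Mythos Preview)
Your approach to the non-resonant part has a genuine gap: the multilinear Strichartz estimates Lemmas~\ref{LEM:L62} and~\ref{LEM:L63} do \emph{not} deliver the factor $2^{-\frac34(\al-\eps)k_1^*}$ claimed in \eqref{HLL1}. In the worst case ($k_4^*\sim k_1^*$, i.e.\ high$\times$high$\times$high$\to$high), Lemma~\ref{LEM:L62} gives the per-slice factor $2^{-j_1^*/2}\,2^{(-\al/2+1/2)k_1^*}$; after absorbing $2^{-j_1^*/2}$ with $j_1^*\ge[\al k_1^*]$ one obtains at best $2^{(-\al+\frac12+\eps)k_1^*}$, which is strictly \emph{worse} than $2^{-\frac34\al k_1^*}$ for every $\al\le 2$ (and here $\al=-\tfrac{8s}{3}+<2$). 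The paper makes exactly this point in Remark~\ref{REM:HLL}. The correct tool is the $L^4$-Strichartz estimate \eqref{L4_0}: each of the four factors carries $2^{\frac{5}{16}j_i}$ instead of $2^{\frac12 j_i}$, yielding a gain of $2^{-\frac{3}{16}j_i}$ per factor, and since all four modulations satisfy $j_i\ge[\al k_4]\sim\al k_1^*$ after the short-time localisation and \eqref{Xk2}, the total gain is $2^{-\frac34\al k_1^*}$.

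There is a second obstruction: both Lemmas~\ref{LEM:L62} and~\ref{LEM:L63} require $2^{k_1^*}\gg M$ (Remark~\ref{REM:lowfreq}), so they say nothing in the fully low-frequency regime $2^{k_1^*}\les M$, which is included in the hypothesis $k_4\ge k_1^*-5$. The $L^4$-Strichartz bound \eqref{L4_0} is frequency-blind and covers this case. Also note that Lemma~\ref{LEM:L63} is designed for the situation where the \emph{output} frequency is among the two lowest, which is incompatible with $k_4\sim k_1^*$; invoking it here is misplaced. Finally, your resonant conclusion is backwards: $2^{-\frac12\al k}$ is \emph{larger} (hence weaker) than $2^{-\frac34\al k}$ for $\al>0$. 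In the paper the resonant part is handled by the same $L^4$ duality argument as $\N$, with no separate treatment needed.
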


In view of \eqref{NN2}, 
there is no contribution from  the resonant part $\RR(u_1, u_2, u_3)$
except for the case:  $2^{k_1^*} \sim 2^{k_4^*}$,
which is treated in Lemma \ref{LEM:HLL}.
 Lemma \ref{LEM:HLL} also handles the  low frequency case: 
 $2^{k_1^*} \les M$.
The proof of Lemma \ref{LEM:HLL} closely follows 
that of Lemma 6.2 in \cite{GO}.
We present the details for readers' convenience.

\begin{proof}
Let $\g:\R \to [0, 1]$ be a smooth cutoff function supported on $[-1, 1]$
with $\g \equiv 1$ on $[-\frac{1}{4}, \frac{1}{4}]$
such that
\[ \sum_{m\in \Z} \g^3(t-  m ) \equiv 1, \qquad t \in \R.\]

\noi
Then, there exist $c, C> 0$ such that
\begin{align*}
 \eta_0(2^{[\al k_4]} (t - t_{k_4}))
= \eta_0(2^{[\al k_4]} (t - t_{k_4})) \sum_{|m|\leq C} \g^3(2^{[\al k_1^*]+c} (t - t_{k_4})- m)
\end{align*}

\noi
and
\begin{align}
 \eta_0(2^{[\al k_i]}t) \cdot \g(2^{[\al k_1^*]+c}t)
= \g(2^{[\al k_1^*]+c}t)
\label{HLL1b}
\end{align}

\noi
 for $i = 1,2,  3$.
Let $f_{k_i}=\F[ \g(2^{[\al k_1^*]+c}(t-t_{k_4}))\cdot u_i ]$, $i = 1, 3$,
and
$ \cj {f_{k_2}}=\F[ \g(2^{[\al k_1^*]+c}(t-t_{k_4}))\cdot  u_2 ]$.
Then,  it follows from  the  definition and Lemma \ref{LEM:embed3}
that 
\begin{align}
\text{LHS of }\eqref{HLL1} 
& \les  \sup_{t_{k_4} \in \R}    \big\|(\tau+n^4+    i2^{[\alpha k_4]})^{-1}
\ind_{I_{k_4}}(n)
(f_{k_1}*\wt{ f}_{k_2}*f_{k_3})
\big\|_{X_{M, k_4}} \notag\\
& \les  \sup_{t_{k_4}\in \R} \sum_{j_4=0}^{\infty}2^\frac{j_4}{2}
\sum_{j_1,j_2,j_3\geq [\al k_4]}
\big\|(2^{j_4} +2^{[\alpha k_4]})^{-1} 
\notag \\
& \hphantom{XXXXXXXX}\times 
\ind_{D^M_{k_4, j_4}}\cdot
(f_{k_1,j_1}*\wt{ f}_{k_2,j_2}*f_{k_3,j_3})\big\|_{\l^2_nL^2_\tau},
\label{HLL12}
\end{align}

\noi
where $\wt{f}(n,\tau)=f(-n,-\tau)$ and $f_{k_i, j_i}$, $i = 1, 2, 3$, is defined by
\[ f_{k_i, j_i}(n, \tau) =
\begin{cases}
f_{k_i}(n,\tau)\eta_{j_i}(\tau+n^4), &
\text{for }j_i>[\al k_4], \\
f_{k_i}(n,\tau)\eta_{\leq [\al k_4]}(\tau+n^4), & \text{for } j_i = [\al k_4].
\end{cases}
\]

Using the fact $\ind_{D^M_{k_4,j_4}}\leq \ind_{D^M_{k_4,\leq
j_4}}$, we have
\begin{align}
\eqref{HLL12}
& \les\sup_{t_{k_4} \in \R}\Big(\sum_{j_4<[\al k_4]}+\sum_{j_4\geq[\al k_4]}\Big)
2^{\frac{j_4}{2}}
\notag \\
& \hphantom{XXX} \times
\sum_{j_1,j_2,j_3\geq [\alpha k_4]}
\big\|(2^{j_4}+2^{[\alpha k_4]})^{-1}
\ind_{D^M_{k_4, j_4}}
f_{k_1,j_1}*\wt{f}_{k_2,j_2}*f_{k_3,j_3}
\big\|_{\l^2_n L^2_\tau }\notag\\
& \les \sup_{t_{k_4}\in \R}
\sum_{j_1,j_2,j_3,j_4\geq [\al k_4]}2^{-\frac{j_4}{2}}\big\|\ind_{D^M_{k_4,\leq j_4}}\cdot
(f_{k_1,j_1}*\wt{f}_{k_2,j_2}*f_{k_3,j_3})\big\|_{\l^2_n L^2_\tau }\notag \\
& \les \sup_{t_{k_4}\in \R}
\sup_{j_4 \geq [\al k_4]}
\sum_{j_1,j_2,j_3\geq [\al k_4]}2^{-(\frac{1}{2} - ) j_4}\big \|\ind_{D^M_{k_4,\leq j_4}}\cdot
(f_{k_1,j_1}*\wt{f}_{k_2,j_2}*f_{k_3,j_3})\big\|_{\l^2_n L^2_\tau }.
\label{HLL13}
\end{align}

\noi
Then, \eqref{HLL1} follows from 
\eqref{L4_0} in Lemma \ref{LEM:L4L6}
and \eqref{Xk2} with \eqref{HLL1b}.
\end{proof}

\begin{remark}\rm
\label{REM:HLL}
In the proof of Lemma \ref{LEM:HLL}, 
we used the $L^4$-Strichartz estimate
\eqref{L4_0} in Lemma \ref{LEM:L4L6}.
We point out that
the multilinear Strichartz estimates in Lemmas \ref{LEM:L64} and \ref{LEM:L62}
do not yield a better bound in this case.
Consider the case: high $\times$ high $\times$ high $\to$ high.
Then, applying Lemma \ref{LEM:L62} to \eqref{HLL13}
yields a bound with a constant $\sim 2^{(-\al + \frac{1}{2}+\eps)k_1^*}$,
which is worse than the constant $ 2^{- \frac{3}{4}(\al - \eps) k_1^*}$ in \eqref{HLL1} when $\al \leq 2$.
The proof of Lemma \ref{LEM:HLL} based on the $L^4$-Strichartz 
estimate \eqref{L4_0} in Lemma \ref{LEM:L4L6} also allows
us to handle the case: $2^{k_1^*}\le M$, 
for which Lemma \ref{LEM:L62} is not applicable.  See Remark \ref{REM:lowfreq}.
\end{remark}

Next, we consider the case
when the output  frequency is low  relative to  the input frequencies.
In such a case, we have  $2^{k_1^*} \gg 2^{k_4} \geq M$.
We treat this case in the next two lemmas.

\begin{lemma}[high $\times$ high $\times$ high $\to$ low]\label{LEM:HHHL}
Let $\al \geq 0$.
If $k_3\geq \max(20, \log_2M) $, $|k_3-k_i|\leq 5$, $i = 1, 2$, 
and $k_4\leq k_1-10$, then we have
\begin{align}
\|\P_{k_4}\N(u_{1}, u_{2}, u_{3})\|_{N_{M,k_4}^\alpha}
& \les 
\min(\Lambda_1, \Lambda_2)
\|u_{1}\|_{F_{M, k_1}^\alpha}\|u_2\|_{F_{M,k_2}^\alpha}\|u_3\|_{F_{M,k_3}^\alpha},
\label{HHHL1}
\end{align}

\noi
where $\Ld_1$ and $\Ld_2$ are given by
\[ \Lambda_1 = 2^{(-2  + \al +\eps)k_1^* - \al k_4 }
\qquad \text{and}
\qquad 
\Lambda_2 = 2^{(-2 + \frac{\al}{2}+\eps )k_1^* + (\frac{1}{2}-\al) k_4}\]

\noi
for any $\eps >0$.

\end{lemma}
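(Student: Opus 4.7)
The plan is to adapt the proof of Lemma \ref{LEM:HLL} to the high-to-low regime, exploiting two ingredients not available there: the refined multilinear Strichartz bound of Lemma \ref{LEM:L62} (applicable since $2^{k_1^*}\gg M$, which is guaranteed by $k_3\geq \log_2 M$ and $|k_1-k_3|\leq 5$), together with the large-phase lower bound
\[
|\phi(\bar n)|\sim (n_1^*)^4\sim 2^{4k_1^*},
\]
which follows from the factorization \eqref{Phi} and the size estimates \eqref{SC2}--\eqref{SC3} in the regime $|n_4|\ll |n_1|\sim |n_2|\sim |n_3|$ (with $n_4=n_1-n_2+n_3$ on the support of $\N$). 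This forces the modulation constraint $j_1^*:=\max(j_1,j_2,j_3,j_4)\gtrsim 4k_1^*$ on the non-resonant convolution.

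First, by duality and the definition of $\|\cdot\|_{N_{M,k_4}^\alpha}$, I would reduce the estimate to bounding space-time integrals of the form $\int \N(u_1,u_2,u_3)\,\bar v\,dxdt$. Since the $N^\alpha_{M,k_4}$-norm localizes time on the coarse scale $2^{-[\alpha k_4]}$ while the inputs $u_1,u_2,u_3$ are naturally controlled on the fine scale $2^{-[\alpha k_1^*]}$, I partition the coarse interval into $\sim 2^{\alpha(k_1^*-k_4)}$ subintervals of length $\sim 2^{-[\alpha k_1^*]}$ and insert nested smooth cutoffs $\gamma$, exactly in the spirit of \eqref{HLL1b}--\eqref{HLL13}. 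Modulation decomposition then recasts the quantity to control as a sum
\[
\sum_{j_4\geq [\alpha k_4]} 2^{-\frac{j_4}{2}}\!\!\!\sum_{j_1,j_2,j_3\geq [\alpha k_1^*]}\!\!\! \bigl\|\ind_{D^M_{k_4,\le j_4}}\!\cdot (f_{k_1,j_1}*\wt f_{k_2,j_2}*f_{k_3,j_3})\bigr\|_{\ell^2_n L^2_\tau},
\]
restricted to the support where $j_1^*\gtrsim 4k_1^*$.

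On each subinterval I would apply Lemma \ref{LEM:L62}, gaining a factor $2^{-j_1^*/2}\cdot 2^{-\alpha k_1^*/2+k_4/2}$ per convolution. Using the phase constraint $j_1^*\gtrsim 4k_1^*$ to replace $2^{-j_1^*/2}$ by (at best) $2^{-2k_1^*}$, then performing the $j_i$-sums against the $X_{M,k_i}$-weights to recover the $F^\alpha_{M,k_i}$-norms, and finally multiplying by the subinterval count $2^{\alpha(k_1^*-k_4)}$, yields the bound $\Lambda_2$ with a small $\epsilon$-loss to ensure summability at the top modulation. For $\Lambda_1$, I would redistribute differently: instead of spending the full modulation gain on $2^{-2k_1^*}$, I reserve a portion to interact with the $(2^{j_4}+2^{[\alpha k_4]})^{-1}$ weight from the $N^\alpha_{M,k_4}$-norm, trading modulation for a factor $2^{-\alpha k_4}$; an elementary optimization of the $j_i$-sums produces $\Lambda_1$.

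The main obstacle is the asymmetric nature of the phase constraint: only the \emph{maximum} modulation is forced to be large by $|\phi(\bar n)|\sim 2^{4k_1^*}$, not each individual $j_i$. This complicates the bookkeeping because the subinterval loss $2^{\alpha(k_1^*-k_4)}$ and the coarse/fine $N^\alpha$-versus-$F^\alpha$ scale mismatch must be absorbed into the $4k_1^*$ gain from $j_1^*$, and the distinction between the two bounds $\Lambda_1,\Lambda_2$ arises precisely from whether $j_4$ or one of $j_1,j_2,j_3$ realizes the maximum. Uniformity of all estimates in $t_{k_4}\in\R$ (needed by the definitions of $F^\alpha_{M,k_i}$ and $N^\alpha_{M,k_4}$) follows from the translation-invariant estimate \eqref{Xk3} on $X_{M,k}$ applied to the time-cutoff functions, as in the proof of Lemma \ref{LEM:HLL}.
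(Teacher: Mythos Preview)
Your overall framework is correct and matches the paper: you localize each factor onto subintervals of length $\sim 2^{-[\alpha k_1^*]}$, pick up the subinterval count $2^{\alpha(k_1^*-k_4)}$, reduce to modulation-localized pieces with $j_1,j_2,j_3\ge [\alpha k_1^*]$, and use the phase bound $|\phi(\bar n)|\sim 2^{4k_1^*}$ from \eqref{SC3} to force $j_1^*\gtrsim 4k_1^*$. Your derivation of $\Lambda_2$ from Lemma~\ref{LEM:L62} is exactly what the paper does.

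The gap is in your derivation of $\Lambda_1$. Lemma~\ref{LEM:L62} gives the factor $2^{-j_1^*/2}\,2^{-\frac{\alpha}{2}k_1^*+\frac12 k_4^*}$; after multiplying by the subinterval count and using $j_1^*\gtrsim 4k_1^*$, this produces exactly $\Lambda_2$ and nothing better. The $2^{\frac12 k_4}$ loss is intrinsic to Lemma~\ref{LEM:L62} (it comes from the Cauchy--Schwarz step in $n_4$ in the proof of Lemma~\ref{LEM:L64}) and cannot be removed by any redistribution of the $j_i$-sums or by playing with the $(2^{j_4}+2^{[\alpha k_4]})^{-1}$ weight: once you have passed to $j_4\ge [\alpha k_4]$ as in \eqref{HLL13}, that weight has already been converted to $2^{-j_4/2}$, and there is no further factor of $2^{-\alpha k_4}$ to extract. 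Since $\Lambda_1<\Lambda_2$ precisely when $k_4>\alpha k_1^*$ (compute $\Lambda_1/\Lambda_2=2^{\frac{\alpha}{2}k_1^*-\frac12 k_4}$), your argument does not cover that regime.

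The paper obtains $\Lambda_1$ not from Lemma~\ref{LEM:L62} but from the $L^6$-based estimate \eqref{L6_0} in Lemma~\ref{LEM:L4L6}, whose multilinear factor is $2^{-j_1^*/2}\,2^{\eps k_3^*}$ with no $2^{\frac12 k_4}$ loss. In the present regime $k_3^*\sim k_1^*$, so combining \eqref{L6_0} with the subinterval count $2^{\alpha(k_1^*-k_4)}$ and $j_1^*\gtrsim 4k_1^*$ gives $2^{(-2+\alpha+\eps)k_1^*-\alpha k_4}=\Lambda_1$ directly. Replace your ``redistribution'' paragraph by an appeal to \eqref{L6_0}.
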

\begin{proof}
In this case, we localize each component function $u_i$
onto subintervals of length $\sim 2^{-\al k_1^*}$.
With  $\g:\R\rightarrow [0,1]$ 
as in the proof of Lemma \ref{LEM:HLL}, 
we have 
\[ \eta_0(2^{[\al k_4]} (t - t_{k_4}))
= \eta_0(2^{[\al k_4]} (t - t_{k_4}))
\sum_{|m|\leq C 2^{[\al k_1^*] - [\al k_4]}} \g^3(2^{[\al k_1^*]+c} (t - t_{k_4})- m)\]

\noi
and
$ \eta_0(2^{[\al k_i]}t) \cdot \g(2^{[\al k_1]+c}t)
= \g(2^{[\al k_1]+c}t)$ for $i = 1,2,  3$.
In particular, we divide the time interval of length $\sim 2^{-\al k_4}$
into $O(2^{\al(k_1^* - k_4)})$ many 
 subintervals of length $\sim 2^{-\al k_1^*}$.
Then, proceeding as in the proof of Lemma \ref{LEM:HLL}
with \eqref{Xk2},
it suffices to prove that
\begin{align*}
2^{\alpha (k_1^*-k_4)}\sum_{j_4\geq [\al k_4]}2^{-\frac{j_4}{2}}\|\ind_{\wt D_{k_4, j_4}^M}\cdot
 (f_{k_1,j_1}*\wt{f}_{k_2,j_2}& *f_{k_3,j_3})\|_{\l^2_n L^2_\tau}\\
& \les\min(\Lambda_1, \Lambda_2)
\prod_{i = 1}^3 2^{\frac{j_i}{2}}\|f_{k_i,j_i}\|_{\l^2_n L^2_\tau}
\end{align*}

\noi
for any 
$f_{k_i,j_i}:\Z\times \R \to  \R_+$  
supported on $\wt D^M_{k_i, j_i}$
with $j_i \geq [\alpha k_1^*]$,   $i=1,2,3$, 
where
\[ \wt D^M_{k_i, j_i} = 
\begin{cases}
 D^M_{k_i,\leq j_i},
 & \text{when }j_i = [\alpha k_1^*],\\
 D^M_{k_i, j_i}, & 
\text{when }j_i > [\alpha k_1^*].
\end{cases}
\]

\noi
Here, we can assume that $j_i \geq [\al k_1^*]$, $i = 1, 2, 3$,
thanks to the time localization over an interval of size $\sim 2^{-[\al k_1^*] }$
and \eqref{Xk2}.
Hence, \eqref{HHHL1} 
with $\Lambda_1$
follows from
\eqref{L6_0} in Lemma \ref{LEM:L4L6}
with \eqref{SC3}, 
while
\eqref{HHHL1} 
with $\Lambda_2$
follows from Lemma \ref{LEM:L62}.
\end{proof}

\begin{lemma}[high $\times$ high $\times$ low $\to$ low] \label{LEM:HHLL}
Let $\al \in [0,2 ]$.
If $k_1\geq 20$, $|k_1-k_2|\leq 5$, and $k_3, k_4\leq k_1-10$, then we have
\begin{align}
\|\P_{k_4}\N(u_{1}, u _{2}, u_{3})\|_{N_{M,k_4}^\alpha}\les
\min(\Lambda_3, \Lambda_4)
\|u_{1}\|_{F_{M,k_1}^\alpha}\|u_{2}\|_{F_{M,k_2}^\al}\|u_3\|_{F_{M,k_3}^\alpha},\label{HHLL1}
\end{align}

\noi
where $\Lambda_3$ and $\Ld_4$ are given by
\[ \Lambda_3 = 2^{(-\frac{3}{2}+ \alpha+\eps)k_1^* - \al k_4 -\beta}
\quad \text{with }  \ 
\be =
\begin{cases}
 \frac{k_3^*}{2}, &  \text{if } |k_3 - k_4| \geq 2,\\ 
 0, & \text{otherwise}
\end{cases}
\]

\noi
and 
\[\Lambda_4 = \begin{cases}
2^{-\frac{ 1}{2}(3-\al-\eps) k_1^* -\frac{k_3^*}{2}+ \frac{k_4^*}{2} - \al k_4}, 
& \text{if }|k_3 - k_4| \geq 2,\\
 2^{-\frac{ 1}{2}(3-\al-\eps) k_1^*+ \frac{k_3^*}{4}- \al k_4} , &
\text{otherwise}
\end{cases}\]

\noi
for any $\eps > 0$.
\end{lemma}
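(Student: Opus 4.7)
The plan is to follow the template of the proof of Lemma \ref{LEM:HHHL}, adapted to the frequency configuration $|n_1|\sim|n_2|\sim 2^{k_1^*}$ with $|n_3|,|n|\ll 2^{k_1^*}$. The first step is to decompose $\eta_0(2^{[\al k_4]}(t-t_{k_4}))$ into $O(2^{\al(k_1^*-k_4)})$ translates of a smooth cutoff of width $\sim 2^{-[\al k_1^*]}$, localizing each $u_i$ to time intervals of this finer scale. By Lemma \ref{LEM:embed3} together with \eqref{Xk3}, this reduces the claim to proving, for $f_{k_i,j_i}$ supported on $\wt D^M_{k_i,j_i}$ with $j_i\geq [\al k_1^*]$,
\begin{align*}
2^{\al(k_1^*-k_4)} \sum_{j_4\geq[\al k_4]} 2^{-j_4/2} \bigl\| \ind_{\wt D^M_{k_4,j_4}} \cdot (f_{k_1,j_1} * \wt f_{k_2,j_2} * f_{k_3,j_3})\bigr\|_{\ell^2_n L^2_\tau} \les \min(\Lambda_3,\Lambda_4) \prod_{i=1}^{3} 2^{j_i/2} \|f_{k_i,j_i}\|_{\ell^2_n L^2_\tau}.
\end{align*}

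For the bound $\Lambda_4$, I would apply Lemma \ref{LEM:L63} directly. Its hypotheses are all met: $k_3,k_4\leq k_2^*-10 \sim k_1-10$, $j_1,j_2,j_3\geq[\al k_1^*]$, and the convolution constraint forces $2^{j_1^*}\gtrsim |\phi(\bar n)|$. The lemma yields the factor $2^{-(3+\al)k_1^*/2 - k_3^*/2 + k_4^*/2}$ when $|k_3-k_4|\geq 2$, and $2^{-(3+\al)k_1^*/2 + k_3^*/4}$ otherwise. Combining with the time-localization gain $2^{\al(k_1^*-k_4)}$ converts the exponent $-(3+\al)k_1^*/2$ into $-(3-\al)k_1^*/2-\al k_4$, which (together with the trivial decay from the geometric $j_4$-sum) reproduces $\Lambda_4$ up to the $\eps$ loss.

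For the bound $\Lambda_3$, I would invoke the $L^6$-Strichartz estimate \eqref{L6_0}, which supplies a factor $2^{-j_1^*/2}\,2^{\eps k_3^*}$. In the present regime, the factorization \eqref{Phi} gives $|\phi(\bar n)|\sim (n_1^*)^3\,|n-n_3|$ (since $|n-n_1|\sim n_1^*$ and the quartic factor is $\sim (n_1^*)^2$). When $|k_3-k_4|\geq 2$ the distinct scales of $n$ and $n_3$ force $|n-n_3|\sim 2^{k_3^*}$, so $2^{j_1^*}\gtrsim 2^{3k_1^*+k_3^*}$, yielding $\beta=k_3^*/2$. When $|k_3-k_4|\leq 1$ we have only $|n-n_3|\geq 1$ (the non-resonance condition $n_1\neq n_2$ implicit in $\N$ rules out exact cancellation), so $\beta=0$. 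Multiplying by the time-localization factor $2^{\al(k_1^*-k_4)}$ and summing in $j_4$ produces $\Lambda_3$.

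The main obstacle is the case $|k_3-k_4|\leq 1$, where the phase $\phi$ can be as small as $(n_1^*)^3$ and the modulation gain is minimal. For $\Lambda_3$ this is absorbed by setting $\beta=0$; for $\Lambda_4$ it invokes the delicate subcase of Lemma \ref{LEM:L63} that partitions $I_{k_3}$ into $O(2^{k_3^*/2})$ subintervals of length $2^{k_3^*/2}$, on each of which $n_3$ determines $n_4$ up to $O(1)$ choices, at the cost of a single $2^{k_3^*/4}$ via Cauchy--Schwarz. Beyond this, the remaining work is bookkeeping: verifying that the $\eps$-losses in \eqref{L6_0}, the constraints $j_i\geq[\al k_1^*]$, and the summability in $j_1,j_2,j_3,j_4$ (recovered from the $2^{-j_1^*/2}$ gain combined with the $X_{M,k_i}$-structure) do not degrade the stated exponents in $\Lambda_3$ and $\Lambda_4$.
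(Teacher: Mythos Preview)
Your proposal is correct and follows exactly the paper's approach: time-localize to scale $2^{-[\al k_1^*]}$ as in Lemma \ref{LEM:HHHL}, then obtain $\Lambda_3$ from \eqref{L6_0} combined with the phase size $|\phi(\bar n)|\sim (n_1^*)^3|n-n_3|$ (the symmetrized form of \eqref{SC1}), and obtain $\Lambda_4$ directly from Lemma \ref{LEM:L63}. One small remark: your justification of $|n-n_3|\geq 1$ via ``$n_1\neq n_2$'' is correct but indirect; under $n_1-n_2+n_3-n=0$ this is equivalent to $n\neq n_3$, which is precisely the non-resonance condition built into $\N$.
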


\begin{proof}
We proceed as in the proof of Lemma \ref{LEM:HHHL},
Then, \eqref{HHLL1} with $\Lambda_3$ follows from \eqref{L6_0} in Lemma \ref{LEM:L4L6}
with the size estimates \eqref{SC1} and \eqref{SC2}.
Similarly, \eqref{HHLL1} with $\Lambda_4$ follows from Lemma \ref{LEM:L63}.
\end{proof}

\medskip

In the following, we briefly discuss the proof of Proposition \ref{PROP:3lin}.
From Lemmas \ref{LEM:HLL} - \ref{LEM:HHLL}, we have 
\begin{align}
2^{sk_4}\Big\{
\|\P_{k_4}\N(u_{1}, u_{2}, u_{3})\|_{N_{M,k_4}^\alpha}
+ \|\P_{k_4}\RR  & (u_{1}, u_{2}, u_{3})\|_{N_{M,k_4}^\alpha}\Big\}\notag\\
& \les 
2^{sk_4} \Lambda^* \|u_{1}\|_{F_{M,k_1}^\alpha}\|u_{2}\|_{F_{M,k_2}^\al}\|u_3\|_{F_{M,k_3}^\alpha},
\label{3linX}
\end{align}

\noi
where $\Lambda^*$ denotes the constants in 
Lemmas \ref{LEM:HLL} - \ref{LEM:HHLL}, depending on different frequency interactions.
Note that it suffices to guarantees that 
\begin{align}
2^{sk_4}\Lambda^* \les 2^{-\eps k_1^*} 2^{s(k_1+k_2+k_3)}.
\label{3linY}
\end{align}

\noi
Then, Proposition \ref{PROP:3lin} follows from summing \eqref{3linX} over different dyadic blocks.
Moreover, at a slight expense of the regularity in modulation, 
we can gain a factor $T^\theta$ for some $\theta > 0$.
See \cite{GO} for the details.

In the following, 
we perform case-by-case analysis 
on the constants obtained in Lemmas \ref{LEM:HLL} - \ref{LEM:HHLL}
and compute the restrictions on $s <0$ and $\al > 0$
such that   \eqref{3linY} holds.
In the following, $\eps = \eps(s) > 0$ denotes a small constant which may vary line by line.

\begin{itemize}
\item[(i)] The output frequency is high. 
\quad In view of Lemma \ref{LEM:HLL}, we need to have
$ -  \big(\frac{3}{4}\al - \eps\big)  \leq 2s$.
Hence, it suffices to choose
\begin{equation}
\al =  -\frac{8s}3 + \eps
\label{alpha}
\end{equation}

\noi
for some sufficiently small $\eps = \eps(s) > 0$.
Note that this is consistent with the heuristics
presented in Subsection \ref{SUBSEC:alpha}.

\medskip

\item[(ii)] high $\times$ high $\times$ high $\to$ low:
\quad
In view of  \eqref{HHHL1} with $\Lambda_1$ of Lemma \ref{LEM:HHHL}, we need to have 
\begin{align}
-2+\al +\eps \leq 3s.
\label{S1a}
\end{align}

\noi
Then, it follows from  \eqref{alpha}
that \eqref{S1a} holds for  $s \geq -\frac{6}{17}+\eps$.
Next, we consider the case $s \leq -\frac{6}{17}$.
Then,
from   \eqref{HHHL1} with $\Lambda_2$, we need to have 
\begin{align}
(-2+\tfrac{\al}{2} + \eps)k_1^* + (s + \tfrac{1}{2} - \al) k_4^* \leq 3sk_1^*.
\label{S2}
\end{align}

\noi
In view of \eqref{alpha}, we must have $s \geq -\frac{6}{13} + \eps$
from the coefficients of $k_1^*$,
while
we have $s\leq -\frac{3}{22} + \eps$ from the coefficient of $k_4^*$.
Hence, it follows from  \eqref{S1a} and \eqref{S2} 
that \eqref{3linY} holds for any $s \in \big(-\frac{6}{13}, 0\big)$.

\medskip

\item[(iii)] high $\times$ high $\times$ low $\to$ low:
\quad
First, we consider $s> -\frac{9}{28}$.
From  \eqref{HHLL1} with $\Lambda_3$ of Lemma \ref{LEM:HHLL}, we need to have
\begin{align*}
\big(-\tfrac{3}{2} + \al + \eps\big) k_1^* \leq 2sk_1^*
\qquad \text{and} \qquad
(s- \al) k_4 -\be \le s k_3.
\end{align*}

\noi
In view of  \eqref{alpha}, the first condition provides
$s \geq -\frac{9}{28} + \eps$.
The second condition is trivially satisfied when $k_4 \geq k_3 - 5$.
When $k_3 \geq k_4 + 5$, it gives $s \geq -\frac{1}{2}$.
Hence, 
\eqref{3linY} holds for  $s \in \big(-\frac{9}{28}, 0\big)$.

Next, we consider $s\leq -\frac{9}{28}$.
First, we consider the case  $ |k_3 - k_4|  \leq 1$.
From  \eqref{HHLL1} with $\Lambda_4$,  we need to have
\begin{align}
-\tfrac{ 1}{2}(3-\al-\eps) k_1^* \le 2 s k_1^*
\quad \text{and}\quad
\big(\tfrac{1}{4}- \al\big)k_3^*   \leq 0.
\label{S3}
\end{align}

\noi
In view of \eqref{alpha}, the first condition provides
$s \geq -\frac{9}{20} +  \eps$,
while the second condition provides $s\leq -\frac{3}{32} +\eps$.

Next, let us consider the case $k_3 \geq k_4 +2$.
(The case $k_4 \geq k_3 +2$ is easier.)
In this case,
we need to have
\begin{align}
-\tfrac{1}{2}(3 - \al - \eps)k_1^*-\tfrac12 k_3+ (s + \tfrac{1}{2} - \al)k_4 
\leq 2sk_1^* + sk_3.
\label{S6}
\end{align}\noi
This yields the condition $s \in \big(-\frac{9}{20}, -\frac{27}{220}\big)$.
Hence, it follows from  \eqref{S3} and \eqref{S6} 
that \eqref{3linY} holds for any $s \in \big(-\frac{9}{20}, -\frac{9}{28}\big]$.

\end{itemize}

Putting all the cases (i) - (iii)  together, we see that \eqref{3linY} holds
for $s \in \big(-\frac 9{20}, 0\big)$.

\section{Energy estimate on smooth solutions}
\label{SEC:energy}

In this section, we establish an energy estimate for  (smooth) solutions
to the Wick ordered cubic 4NLS \eqref{4NLS1}.
Let $u \in C(\R; H^\infty(\T))$ be a smooth solution to \eqref{4NLS1}.
Then, in view of \eqref{Es6}, our goal is to estimate
\[\sup_{t\in [-T,T]} E_{k_0}(u) (t)\]

\noi
in a summable manner over $k_0 \in \Z_M$,
where $E_{k_0}(u)$ is as in \eqref{Es4}.
By the fundamental theorem of calculus
with the equation \eqref{4NLS1}, 
we have
\begin{align}
E_{k_0}(u) (t) - E_{k_0}(u) (0)
& = 2 \Re\bigg( \int_0^t \sum_{n\in \Z} a_{k_0}(n) \dt \ft u_n(t') \cj{\ft u_n}(t') dt' \bigg)\notag \\
& = -  2 \Re i  \bigg(\int_0^t
\sum_{n\in \Z} a_{k_0}(n)
\sum_{\G(n)}\ft u_{n_1} \cj{\ft u_{n_2}}\ft u_{n_3}\cj{\ft u_{n} }(t') dt' \bigg)\notag \\
& \hphantom{XXXX} + \underbrace{2 \Re i \bigg( \int_0^t \sum_{n\in \Z} a_{k_0}(n) |\ft u_n(t')|^4 dt'\bigg)}_{=0},
 \label{E1}
\end{align}

\noi
where $\G(n)$ is as in \eqref{Gam1}.
By letting $n_4 = n$ and
symmetrizing under the summation indices $n_1, \dots, n_4$,
we obtain
\begin{align}
E_{k_0}(u) (t) - E_{k_0}(u) (0)
& = \frac{i}{2}   \int_0^t
\sum_{\substack{n_1 - n_2 + n_3-n_4 = 0 \\ n_2\ne n_1, n_3} }
\Psi(\bar{n})
\ft u _{n_1} \cj{\ft u _{n_2}}\ft u_{n_3}\cj{\ft u_{n_4}} (t') dt'\notag\\
& =: R_{k_0}(t),  
\label{E2}
\end{align}

\noi
where $\Psi(\bar{n})$ is defined by
\begin{equation} \label{Psi}
\Psi(\bar{n}) 
= a_{k_0}(n_1) - a_{k_0}(n_2) + a_{k_0}(n_3) - a_{k_0}(n_4).
\end{equation}

\noi
The symbol $\Psi(\bar{n})$  provides an extra decay
via the mean value theorem and the double mean value theorem
(Lemmas 4.1 and 4.2 in \cite{CKSTT1})
applied to the symbol  $a_{k_0}(\xi)$.
See \eqref{DMT1}, \eqref{MVT1}, and \eqref{MVT2}.

\begin{remark} \label{REM:energy1} \rm
In this section, we study an energy estimate on a single solution.
In Section \ref{SEC:uniq}, 
we establish  an energy estimate
for the difference of two solutions
in order to prove uniqueness of solutions.
It is significantly harder to 
establish  an energy estimate
for the difference of two solutions
mainly due to 
(i) the resonant contribution for the difference of solutions
(corresponding the second term on the right-hand side of \eqref{E1}) 
does not vanish
and (ii) the symmetrization process above fails
for the difference of solutions.
In order to overcome this difficulty, 
we perform an infinite iteration of normal form reductions.

\end{remark}

The main goal of this section is to 
establish the following multilinear estimate on
$R_{k_0}$.

\begin{proposition} \label{PROP:energy}
Let $s \in ( -\frac{9}{20}, 0)$ and  $\al = -\frac{8s}3 + $
as in \eqref{alpha}.
Then, there exist $\dl_0 >0$ and $\theta > 0$ such that 
\begin{align}
|R_{k_0}(T)|
& \lesssim
2^{-\dl_0 k_0} T^\theta \|u\|_{F^{s, \al}(T)}^4
\label{R4M1}
\end{align}

\noi
for all $k_0 \in \Z_M$ and $0< T \le 1$.
\end{proposition}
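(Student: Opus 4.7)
The plan is to bound $R_{k_0}(T)$ by dyadically decomposing each factor $u = \sum_k \P_k u$, analyzing the size of the symbol $\Psi(\bar n)$, and then applying the multilinear Strichartz estimates of Section \ref{SEC:Strichartz} case by case, in the spirit of the trilinear estimate in Section \ref{SEC:trilinear}. After decomposition, by the symmetry $n_1 \leftrightarrow n_3$, $n_2 \leftrightarrow n_4$, and $\{n_1,n_3\}\leftrightarrow\{n_2,n_4\}$, we may assume $n_1^*=|n_1|$ and reduce to a finite number of frequency regimes indexed by the decreasing rearrangement $(k_1^*,k_2^*,k_3^*,k_4^*)$ of $(k_1,k_2,k_3,k_4)$. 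Owing to the constraint $n_1-n_2+n_3-n_4=0$, the contribution from $n_1^*\le \frac M2$ vanishes after summing over $k_0\in\Z_M$ (or can be discarded using the low-frequency localization), so throughout we may assume $n_1^*\sim 2^{k_1^*}\gg M$, so that the improved multilinear estimates of Remark \ref{REM:lowfreq} apply.

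The next step is to exploit cancellation in $\Psi(\bar n)$ via the mean value and double mean value theorems applied to $a_{k_0}$ together with the derivative bound \eqref{Es2a}. When the four frequencies are all comparable, a crude bound $|\Psi(\bar n)|\lesssim a_{k_0}(n_1^*)$ suffices; when $n_1^*\sim n_2^*\gg n_3^*,n_4^*$, pairing the differences and applying MVT twice yields the much stronger
\begin{equation*}
|\Psi(\bar n)|\lesssim a_{k_0}(n_1^*)\cdot \frac{n_3^*}{n_1^*},
\end{equation*}
and in the genuine separated regime a double MVT improves this further by an extra factor $n_4^*/n_1^*$. In every case the envelope $a_{k_0}(n_1^*)\sim 2^{2sk_1^*}2^{-\dl_0|k_1^*-k_0|}$ appears, which upon summing over $k_1^*\ge k_0$ produces $2^{-\dl_0 k_0}$ and upon summing over $k_1^*<k_0$ produces the same factor, provided $\dl_0$ is chosen small enough (in particular smaller than the spare room $\eps$ in the trilinear analysis).

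With the pointwise envelope for $\Psi$ in hand, I carry out the same case analysis (HHHH, HHHL, HHLL with subcases $|k_3-k_4|\gtrless 2$) as in Lemmas \ref{LEM:HHHL}--\ref{LEM:HHLL}, but now with four factors in $F^\alpha_{M,k_i}$ instead of three. In each case, I subdivide the interval $[0,T]$ into $O(T\cdot 2^{[\alpha k_1^*]})$ pieces of length $\sim 2^{-[\alpha k_1^*]}$ adapted to the short-time structure, apply duality, and invoke the appropriate Strichartz estimate from Lemmas \ref{LEM:L4L6}--\ref{LEM:L63}, using the algebraic identities \eqref{SC1}--\eqref{SC3} for the phase $\phi(\bar n)$ to locate the dominant modulation. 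The choice $\alpha=-\tfrac{8s}{3}+$ from Subsection \ref{SUBSEC:alpha} balances the loss $T\cdot 2^{\alpha k_1^*}$ against the gains in modulation and dispersion, and the factor $a_{k_0}(n_1^*)$ supplied by $\Psi$ plays the role of the two-factor weight $2^{s(k_1+k_2+k_3)}/2^{-sk_4}$ in \eqref{3linY}.

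The main obstacle will be verifying that the summability threshold coincides with that of the trilinear estimate, namely $s>-\tfrac{9}{20}$. This requires, in the hardest subcase (high $\times$ high $\times$ low $\to$ low with $|k_3-k_4|\le 1$), that the extra derivative gain from the double MVT applied to $\Psi$ exactly compensates for the missing fourth factor of $2^{-sk_4}$; the room for choosing $\dl_0>0$ comes precisely from the strict inequality $s>-\tfrac{9}{20}$, which leaves a sliver of decay available to dedicate to $k_0$. Finally, to produce the factor $T^\theta$ on the right-hand side of \eqref{R4M1}, I trade a fraction of the modulation regularity using Lemma \ref{LEM:timedecay} on each time-localized piece, exactly as in the proof of Proposition \ref{PROP:3lin}, and then sum the dyadic contributions geometrically in $k_1^*$ using the $2^{-\eps k_1^*}$ spare room to close the estimate uniformly in $k_0\in\Z_M$.
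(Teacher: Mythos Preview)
Your outline has the right overall architecture (dyadic decomposition, time localization at scale $2^{-[\alpha k_1^*]}$, multilinear Strichartz), but the treatment of the symmetrized symbol $\Psi(\bar n)$ is organized incorrectly and this causes a genuine gap at the target regularity.

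The key point is that the case analysis must be driven by the sizes of the \emph{differences} $|n_4-n_1|$ and $|n_4-n_3|$ relative to $n_1^*$, not merely by the dyadic sizes $k_1^*,\dots,k_4^*$. The double mean value theorem applies precisely when \emph{all four frequencies are close} (i.e.\ $|n_4-n_1|,|n_4-n_3|\ll n_1^*$), yielding
\[
|\Psi(\bar n)|\lesssim |a_{k_0}''(n_1^*)|\,|\mu(\bar n)|\sim 2^{(2s-2)k_1^*}2^{-\dl_0|k_1^*-k_0|}\,|\mu(\bar n)|.
\]
This factor of $|\mu(\bar n)|$ is what pairs with the phase bound $|\phi(\bar n)|\sim (n_1^*)^2|\mu(\bar n)|$ to close the estimate down to $s>-\tfrac{9}{20}$. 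In your HHHH case you invoke only the crude bound $|\Psi|\lesssim a_{k_0}(n_1^*)$; running Lemma~\ref{LEM:L62} with this bound and the worst-case modulation $2^{j_1^*}\sim (n_1^*)^2$ gives, after the time-slice count $T\cdot 2^{\alpha k_1^*}$, a net factor $T\cdot 2^{(2s/3-1/2)k_1^*}$, which only sums for $s>-\tfrac{3}{20}$. So the HHHH regime as you have it does not close.

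Relatedly, you place the double MVT in the ``genuine separated regime'', which is backwards: when the frequencies are separated the second derivative cannot be evaluated at a single representative scale, and only single MVT (or no MVT at all) is available. Your HHLL bound $|\Psi|\lesssim a_{k_0}(n_1^*)\cdot n_3^*/n_1^*$ is also too optimistic: since $s<0$, the symbol $a_{k_0}(\xi)\sim|\xi|^{2s}$ is \emph{larger} at low frequencies, so the difference $a_{k_0}(n_3)-a_{k_0}(n_4)$ is controlled by $|a_{k_0}'(n_3^*)||n_3-n_4|\lesssim 2^{(2s-1)k_3^*}|n_3-n_4|$, which dominates the high-frequency pairing. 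The paper's proof handles this by the subcases (c.i)/(c.ii) and Case (d), with the bound $|\Psi|\lesssim 2^{-\dl_0 k_0}2^{(2s+\dl_0)k_4^*}$ in the fully separated case.

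Two further points you should address: (i) the contribution from $n_1^*\le M/2$ vanishes \emph{pointwise} because $a_{k_0}$ is constant there (see \eqref{Es2b}), not ``after summing over $k_0$''; and (ii) the sharp cutoff $\ind_{[0,T]}$ requires a separate argument for the $O(1)$ boundary time intervals where the smooth partition does not absorb it --- the paper handles this via Lemma~\ref{LEM:sup} and an $L^2\times L^6\times L^6\times L^6$ H\"older argument (Part~2 of the proof), which also produces the factor $T^\theta$ via Lemma~\ref{LEM:timedecay}.
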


In \cite{GO}, we studied a similar energy estimate 
for solutions to the Wick ordered cubic NLS \eqref{NLS1}.
There, we needed to perform a normal form reduction
(i.e.~add a correction term) in order to achieve a better energy estimate
and hence match the regularity from the trilinear estimate.
The Wick ordered cubic 4NLS \eqref{4NLS1}, however, 
possesses much stronger dispersion
and we do not perform a normal form reduction.

\begin{remark}\label{REM:smoothing}\rm
As in \cite{GO},  the energy estimate \eqref{R4M1}
possesses a certain smoothing property,
namely, \eqref{R4M1} still holds true
even if we replace
the $F^{s, \al}(T)$-norm on the right-hand side
by $F^{s-\dl, \al}(T)$-norm
for some small $\dl > 0$.
This smoothing property plays an important role
in proving a compactness property
of smooth approximating solutions
(Lemma \ref{LEM:Kcpt};  see also Lemma \ref{LEM:conti}).
See the proof of Lemma 8.2 in \cite{GO}.

\end{remark}

\begin{proof}[Proof of Proposition \ref{PROP:energy}]
We first write $R_{k_0}$ as a multilinear operator given by
\begin{align*}
R_{k_0}(t)  
&  = R_{k_0}(u_1, u_2, u_3, u_4)(t)  \\
&  = \frac{i}{2}   \int_0^t
\sum_{\substack{n_1 - n_2 + n_3-n_4 = 0 \\ n_2\ne n_1, n_3} }
\Psi(\bar{n})
\ft u_1(n_1) \cj{\ft u_2(n_2)}\ft u_3(n_3)\cj{\ft u_4(n_4)} (t') dt'.
\end{align*}

\noi
Apply  the dyadic decomposition on
the spatial frequencies
$|n_i| \sim 2^{k_i}$, $k_i \geq \log_2M$, $i = 1, \dots, 4$.
By symmetry, assume that 
$ |n_1|\sim n_1^*.$
Then, 
it suffices to prove
\begin{equation*}
|R_{k_0}(T)|\les T^\theta 2^{-\dl_0 k_0} \prod_{i=1}^4 2^{(s-)k_i}\|\P_{k_i}u_i\|_{F_{M,k_i}^\alpha(T)}.
\end{equation*}

\noi
Here, a small extra decay is needed to sum over dyadic blocks.
Let  $\wt u_i$ be an extension of $u_i$ such that 
$\|\wt u_i\|_{F_{M,k_i}^\alpha}\leq 2 \|\P_{k_i}u\|_{F_{M,k_i}^\alpha(T)}$.
Let $\g:\R \to [0, 1]$ be a smooth cutoff function supported on $[-1, 1]$ such that
\[\sum_{m\in \Z}\g^4(t - m) \equiv 1, \qquad t \in \R.\]

\noi
With $K = k_1^* + c$, 
define $f_{i, j_i, m}$, $j_i \in \Z_{\geq 0}$, $i = 1, \dots, 4$, by  
\[ f_{i, j_i, m} = \F^{-1}\big[\eta_{j_i}(\tau + n^4)
\F[\g(2^{[\al  K]}t - m)\wt u_i]\big].\]

\noi
Then, 
 it suffices to prove 
\begin{align}
 \bigg|\int_{\R}    \ind_{[0,T]}(t)
  \sum_{j_1, \dots, j_4 \in \Z_{\geq 0}} & \sum_{|m|\leq 2^{[\alpha K] }(T+1)}
 \sum_{\substack{n_1 - n_2 + n_3-n_4 = 0 \\
n_2\ne n_1, n_3} }
  \Psi(\bar{n})
 \ft{f}_{1, j_1, m}({n_1}) 
 \cj{\ft{f}_{2, j_2, m}({n_2})} \notag \\
 & \times \ft{f}_{3, j_3, m}({n_3})\cj{\ft{f}_{4, j_4, m}({n_4})} (t) dt \bigg|
  \les T^\theta 2^{-\dl_0 k_0} \prod_{i=1}^4 2^{(s-)k_i}\|\wt u_i\|_{F_{M,k_i}^\alpha},
\label{R43}
\end{align}

In the following,  we prove \eqref{R43}
for each dyadic  modulation size $\sim 2^{j_i}$, $i = 1, \dots, 4$.
In view of \eqref{Xk2}, 
we assume that 
$j_i \geq \al K$.
Define the subsets $\mathcal{A}$ and $\mathcal{B}$ of 
$\{m\in \Z: |m| \leq  2^{[\al K]}(T+1)\}$ by 
\begin{align*}
&  \mathcal{A} = \big\{ m \in \Z: 
\ind_{[0,T]}(t)\g(2^{[\alpha K]}t - m) =  \g(2^{[\alpha K]}t - m) \big\}, \\
&  \mathcal{B} = \big\{ m \in \Z: 
\ind_{[0,T]}(t)\g(2^{[\alpha K]}t - m) \ne  \g(2^{[\alpha K]}t - m)
\text{ and } \ind_{[0,T]}(t)\g(2^{[\alpha K]}t - m) \not \equiv 0 \big\}.
\end{align*}

\noi
Namely, $\mathcal{A}$ denotes the set of $m \in \Z$
such that the support of $\g(2^{[\alpha K]}t - m)$ lies in the interior
of the interval $[0, T]$, 
while $\mathcal{B}$ denotes those $m \in \Z$
such that the support of $\g(2^{[\alpha K]}t - m)$ intersects the boundary point $t = 0$ or $t = T$.
In the following, we separately estimate the contributions from $\mathcal{A}$ and  $\mathcal{B}$.
Lastly, we simply denote $f_{i, j_i, m}$  by $f_{i, j_i}$ in the following.

\medskip
\noi
 {\bf Part 1:}
First, we consider the terms
with $m \in \mathcal{A}$.
Note that  we can drop the sharp cut-off $\ind_{[0,T]}(t)$ on the left-hand side
of \eqref{R43}
in this case.
We prove \eqref{R43} with $\theta = 1$ in this case.
The main ingredients are
the (double) mean value theorem 
and the following lower bound on the largest modulation;
with  $\s_j = \tau_j + n_j^4$, we have
\begin{align*}
\s_1^* := \max (|\s_1|, |\s_2|, |\s_3|, |\s_4|) \gtrsim |\phi(\bar{n})|
\sim (n_1^*)^2 |\mu(\bar n )|,
\end{align*}

\noi
where $\phi(\bar n)$ and $\mu(\bar n)$
are as in \eqref{Phi} and \eqref{Mu}.
Recall  that we have $2^{k_i} \geq M$, $i = 1, \dots, 4$.
Given $k \in \Z_M$, it follows from \eqref{Es2} and \eqref{Es2a}  that
\begin{align*}
|\dd^\g a_{k_0}(\xi)| \les 2^{(2s-\g)k} 2^{-\dl_0 |k - k_0|}.
\end{align*}

\noi
for $\xi \in I_k^M$, $\g = 1, 2$.

\medskip
\noi
$\bullet$ {\bf Case (a):}    $ 2^{k_1^*} \les M$.
\\
\indent
In this case, we have 
$2^{k_1^*} \sim 2^{k_4^*}$.
Then, by the double mean value theorem \cite[Lemma 4.2]{CKSTT1}, we have
\begin{align} \label{DMT1a}
|\Psi(\bar{n})|
& \lesssim |a''_{k_0}(n_1^*)| \cdot |(n_4 -n_1)(n_4 - n_3)|
\les 2^{(2s-2) k_1^*   -\dl_0 |k_1^* -k_0|} |\mu(\bar n)|\notag\\
&  \les 2^{  - \dl_0 k_0} 2^{(2s - 2 + \dl_0)k_1^*} |\mu(\bar n)|.
\end{align}

We first consider the case $n_1^* \ges 2^{k_1^*}$.
In this case, we apply  Lemma \ref{LEM:L62} in view of Remark \ref{REM:lowfreq}.
With \eqref{alpha}, we have
\begin{align}
\sum_{|m|\leq 2^{[\alpha K]}(T+1)}
(\s_1^*)^{-\frac{1}{2}}|\Psi(\bar n)| 2^{(-\frac\alpha2+\frac12)k_1^*}
& \les  T 2^{-\dl_0 k_0} 2^{\frac{\alpha k_1^*}2}  |\mu(\bar n)|^{\frac12} 2^{(2s  -\frac52+ \dl_0)k_1^*}  \notag\\
&
 \les T 2^{-\dl_0 k_0} 2^{(\frac{2}3s  -\frac32+\dl_0 + )k_1^*} \leq T  2^{-\dl_0 k_0} 2^{(4s-)k_1^*}
\label{R4M1aa}
\end{align}

\noi
for  sufficiently small  $\dl_0 = \dl_0(s) > 0$, 
provided that 
$s>-\frac9{20}$. 
Then, \eqref{R43} follows from 
Lemma \ref{LEM:L62} with \eqref{R4M1aa} in this case.

Next, we consider  the case $n_1^* \ll 2^{k_1^*}$.
In this case,  Lemma \ref{LEM:L62} is not applicable. 
Note, however,  that this case occurs only when $n_1^* \ll M$.
Moreover,  from the definition of the symbol $a_{k_0}$, 
we see that   $a_{k_0} (n)$ is constant for $|n|\le \frac{M}2$.
Hence, 
we conclude that  $\Psi (\bar n)= 0$ when $n_1^* \ll M$
and  there is no contribution to \eqref{R43} in this case.

\medskip
\noi
$\bullet$ {\bf Case (b):}  $|n_4 - n_1| , |n_4 - n_3| \ll n_1^*$ and $2^{k_1^*} \gg M$.
\\
\indent
In this case, we have 
$|n_1|\sim |n_2|\sim |n_3|\sim |n_4|\sim n_1^* \sim 2^{k_1^*}$.
Moreover,  by the double mean value theorem, we have
\begin{equation} \label{DMT1}
|\Psi(\bar{n})| 
\les 2^{(2s - 2+ \dl_0) k_1^*  - \dl_0 k_0} |\mu(\bar n)|
\end{equation}

\noi
 as in Case (a).
Then, the rest follows as in Case (a).

\medskip
\noi
$\bullet$  {\bf Case (c):} $|n_4 - n_1| \sim n_1^* \gg |n_4 - n_3|  = |n_1 - n_2|$  and $2^{k_1^*} \gg M$.
\\
\indent
In this case,  we have $|n_2|\sim|n_1|\sim n_1^*$. 
Then, by the mean value theorem, we have
\begin{equation} \label{MVT1}
|a_{k_0}(n_1) - a_{k_0} (n_2)| \lesssim |a_{k_0}'(n_1^*)|\cdot   |n_1 - n_2|
\les  2^{-\dl_0 k_0} 2^{(2s -1+\dl_0) k_1^* } |n_1 - n_2|.
\end{equation}

\noi
Moreover, from \eqref{Phi}, we have $|\phi(\bar n)| \sim (n_1^*)^3 |n_4 - n_3|$.

\smallskip

\noi
$\circ$ Subcase (c.i):
$|n_4 - n_3| \ll n_3^*$.
\\
\indent
In this case,  we  also have $|n_3|\sim|n_4| \sim n_3^*$. Then, 
by the mean value theorem, we have
\begin{equation} \label{MVT2}
|a_{k_0}(n_3) - a_{k_0} (n_4)| 
\les |a_{k_0}'(n_3^*)|\cdot  |n_4 - n_3| 
\les  2^{-\dl_0 k_0} 2^{(2s -1+\dl_0) k_3^* } |n_4 - n_3|.
\end{equation}

\noi
We point out that \eqref{MVT2} holds true even when $n_3^*\les M$, since $a'_{k_0} (n) = 0$ 
for $|n|\le \frac{M}2$.
See \eqref{Es2b}.
Hence, it follows from \eqref{MVT1} and \eqref{MVT2} that 
\[|\Psi(\bar{n})|\lesssim    2^{-\dl_0 k_0} 2^{(2s -1+\dl_0) k_3^* }  |n_4 - n_3|\]

\noi and thus 
\begin{align*}
\sum_{|m|\leq 2^{[\alpha K]}(T+1)}
&  (\s_1^*)^{-\frac{1}{2}}  |\Psi(\bar n)|
2^{-\frac\alpha2 k_1^*}  2^{\frac{k_4^*}2}
 \les T  2^{-\dl_0 k_0} 
2^{(- \frac43s -\frac{3}{2} +)k_1^*} 2^{(2s-\frac 12+ \dl_0) k_3^*}|n_4 - n_3|^\frac{1}{2}\notag\\
&  \les T 2^{-\dl_0 k_0}  2^{  (-\frac43s-\frac{3}{2} +\dl_0+) k_1^* } 2^{2sk_3^*} 
 \les T  2^{-\dl_0 k_0}  
 2^{ (- \frac{10}3s-\frac{3}{2}+\dl_0 +)k^*_1}
\bigg(\prod_{i = 1}^4 2^{(s-) k_i}\bigg).
\end{align*}

\noi
Hence, 
 \eqref{R43} follows from 
Lemma \ref{LEM:L62}, 
provided that 
$s>-\frac9{20}$ and $\dl_0= \dl_0(s)>0$ is sufficiently small.

\smallskip
\noi
$\circ$ Subcase (c.ii):
$|n_4 - n_3| \sim n_3^*$. 
\\
\indent
In this case,  we have 
$ |\phi(\bar{n})| \gtrsim (n_1^*)^3n_3^*$
and $|\Psi(\bar n)| \les 2^{-\dl_0 k_0 }2^{(2s + \dl_0) k_4^*} $.
Thus, we have 
\begin{align*}
 \sum_{|m|\leq 2^{[\alpha K]}(T+1)}
(\s_1^*)^{-\frac{1}{2}} &|\Psi(\bar n)|   2^{-\frac\alpha2 k_1^*}  2^{\frac{k_4^*}2} \\
& \les T 2^{-\dl_0 k_0 }
2^{ (- \frac{10}3s-\frac{3}{2}+\dl_0 +)k_1^*}  2^{(-s-\frac12)k_3^*}  2^{(s+\frac12)k_4^*}
\bigg(\prod_{i = 1}^4 2^{(s-) k_i}\bigg).
\end{align*}

\noi
Hence, 
 \eqref{R43} follows from 
Lemma \ref{LEM:L62}, 
provided that 
$s>-\frac9{20}$ and $\dl_0>0$ is sufficiently small.

\medskip
\noi
$\bullet$  {\bf Case (d):} $|n_4 - n_1|,   |n_4 - n_3|\sim n_1^*  $  and $2^{k_1^*} \gg M$.
\\
\indent
In this case, we have $|\phi(\bar{n})| \sim (n_1^*)^4$ and 
$|\Psi(\bar n)| \les 2^{-\dl_0 k_0 }2^{(2s + \dl_0) k_4^*} $.
Thus, we have
\begin{align*}
\sum_{|m|\leq 2^{[\alpha K]}(T+1)}
(\s_1^*)^{-\frac{1}{2}}|\Psi(\bar n)| 2^{-\frac\alpha2 k_1^*}  2^{\frac{k_4^*}2}
&  \les T 2^{-\dl_0 k_0} 
 2^{(- \frac{13}{3}s - 2+\dl_0+) k_1^*} 2^{(s+\frac12)k_4^*}
\bigg(\prod_{i = 1}^4 2^{(s-) k_i}\bigg).
\end{align*}

\noi
Hence, 
 \eqref{R43} follows from 
Lemma \ref{LEM:L62}, 
provided that 
$s>-\frac9{20}$ and $\dl_0>0$ is sufficiently small.

\medskip
\noi
 {\bf Part 2:}
Next, we consider the terms
with $m \in \mathcal{B}$.
In this case, we use 
 Lemmas \ref{LEM:sup} to handle the sharp cutoff $\ind_{[0, T]}$.
Note that there are only $O(1)$ many values of $m \in \mathcal{B}$.
Namely, we can save 
$(n_1^*)^{-\al}$ as compared to the analysis in Part 1.

We only consider Case (a) above
as the other cases follow in a similar manner.
With \eqref{DMT1a} and $|\phi(\bar n)|\sim (n_1^*)^2|\mu(\cj n)|$, we have 
\begin{align*}
(\s_1^*)^{-\frac{1}{2}+\theta+} |\Psi(\bar n)| \les 2^{-\dl_0 k_0} (n_1^*)^{4s-}
\end{align*}

\noi
for $\theta > 0 $ sufficiently small such that $-1 + 2\theta  < s$.
Suppose $\s_1 = \s_1^*$.
Then, by $L^2_{x, t}, L^6_{x, t}, L^6_{x, t}, L^6_{x, t}$-H\"older's inequality
and  Lemma \ref{LEM:sup}, we have 
\begin{align*}
|R_{k_0}(T)|
& \les 2^{-\dl_0 k_0} (n_1^*)^{4s-}
\sup_{j_1} 2^{ (\frac12 -\theta - ) j_1}
\|  \F(\ind_{[0, T]} f_{1, j_1})\|_{\l^2_n L^2_{\tau}} 
 \sum_{j_2, j_3, j_4}\prod_{i = 2}^4 \|f_{i, j_i}\|_{L^6_{x, t}}\\
& \lesssim
T^\theta 2^{-\dl_0 k_0}
 \prod_{i =  1}^4 2^{(s-) k_i} \|\P_{k_i} \wt u_i\|_{F^\al_{M, k_i}},
\end{align*}

\noi
where we used 
 Lemma  \ref{LEM:timedecay} in the last step.
 This completes the proof of Proposition \ref{PROP:energy}.
\end{proof}

\section{Global existence}
\label{SEC:GWP}

In this section, we prove global existence (Theorem \ref{THM:1})  by putting together
the trilinear estimate (Proposition \ref{PROP:3lin})
and the energy estimate (Proposition \ref{PROP:energy}).
We also make use of the decay property \eqref{Sob2}
of the $H^s_M$-norm 
as $M \to \infty$.
Moreover, we establish an exponential growth bound
on the $H^s$-norms of solutions.
In view of  the time reversibility of \eqref{4NLS1},
we only consider positive times in the following.

\subsection{Proof of Theorem \ref{THM:1}}
The following proposition 
establishes the long-time existence for small initial data
which plays a key role in the proof of Theorem \ref{THM:1}.

\begin{proposition}
\label{PROP:local}
Let $s\in (-\frac9{20},0)$. 
Then, given  $u_0 \in H^s(\T)$, 
there exist $T = T(\|u_0\|_{H^s}) > 0$ and  a local-in-time solution $u$ to 
the Wick ordered cubic 4NLS \eqref{4NLS1} on $[0, T]$ with $u|_{t = 0} = u_0$. 
Furthermore, there exists $\eps_0 >0$
such that if  $u_0 \in H^s(\T)$ satisfies
\begin{align}
\|u_0\|_{H^s_{M}} \le \eps_0
\label{small1}
\end{align}

\noi
for some dyadic $M \geq 1$, 
then the corresponding solution $u$ to \eqref{4NLS1}
 with $u|_{t = 0} = u_0$
 can be extended to 
the unit time interval $[0, 1]$ with the following estimate:
\begin{align}
\label{localbound}
\sup_{t \in [0, 1]} \| u(t)\|_{H^s_M} \le 2 \| u_0 \|_{H^s_M } .
\end{align}
\end{proposition}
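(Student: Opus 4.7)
The plan is to prove the small-data assertion \eqref{small1}--\eqref{localbound} first, since the general local existence then follows immediately from the decay property \eqref{Sob2}: given any $u_0 \in H^s(\T)$ with $s < 0$, one selects a dyadic $M = M(u_0) \geq 1$ so large that $\|u_0\|_{H^s_M} \leq \eps_0$ and applies the small-data conclusion to obtain a solution on $[0, 1]$, which is stronger than the claimed existence time $T = T(\|u_0\|_{H^s})$.

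For the small-data statement, I would regularize $u_0$ by a sequence $\{u_0^{(n)}\}_{n \in \NB}$ of smooth functions (say, by Fourier truncation) with $\|u_0^{(n)}\|_{H^s_M} \leq \|u_0\|_{H^s_M} \leq \eps_0$; the $L^2$-theory of \cite{OTz} then produces corresponding global smooth solutions $u^{(n)} \in C(\R; H^\infty(\T))$ to \eqref{4NLS1}. Setting $X_n(T) := \|u^{(n)}\|_{F^{s,\al}_M(T)}$ and combining the linear estimate (Lemma \ref{LEM:linear}), the trilinear estimate (Proposition \ref{PROP:3lin}), and Proposition \ref{PROP:energy} summed over $k_0 \in \Z_M$ via \eqref{Es6} and \eqref{E2} (crucially using the summability $\sum_{k_0 \in \Z_M} 2^{-\dl_0 k_0} < \infty$), I would arrive at the coupled bounds
\begin{align*}
X_n(T) &\les \|u^{(n)}\|_{E^s_M(T)} + T^\theta X_n(T)^3, \\
\|u^{(n)}\|_{E^s_M(T)}^2 &\les \|u_0^{(n)}\|_{H^s_M}^2 + T^\theta X_n(T)^4,
\end{align*}
valid for all $T \in (0, 1]$, and hence
\begin{align*}
X_n(T)^2 \les \|u_0^{(n)}\|_{H^s_M}^2 + T^\theta X_n(T)^4 + T^{2\theta} X_n(T)^6.
\end{align*}

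Since $u^{(n)}$ is smooth, $T \mapsto X_n(T)$ is continuous on $[0, 1]$ and Lemma \ref{LEM:timedecay} yields $\lim_{T \to 0^+} X_n(T) \les \|u_0^{(n)}\|_{H^s_M}$. A standard continuity/bootstrap argument then closes at $X_n(T) \les \|u_0^{(n)}\|_{H^s_M}$ on the entire interval $[0, 1]$, provided $\eps_0 > 0$ is sufficiently small, and Lemma \ref{LEM:embed1}(ii) upgrades this to $\sup_{t \in [0, 1]} \|u^{(n)}(t)\|_{H^s_M} \leq 2 \|u_0^{(n)}\|_{H^s_M}$. Finally, the uniform $F^{s,\al}_M(1)$-bound, combined with \eqref{4NLS1} to control $\dt u^{(n)}$ in a weaker norm, yields compactness in $C([0, 1]; H^{s-\eps}(\T))$ for some small $\eps > 0$ via an Aubin--Lions-type argument, from which a subsequential limit $u$ satisfying \eqref{localbound} is extracted. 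The main obstacle will be closing the bootstrap: the self-improvement of $X_n(T)$ rests entirely on the smallness $\eps_0 \ll 1$ together with the $T^\theta$-gain supplied jointly by Propositions \ref{PROP:3lin} and \ref{PROP:energy}, and it is precisely these gains (and their sharpness) that force the threshold $s > -\tfrac{9}{20}$. A secondary but essential subtlety is to identify the weak limit of the cubic nonlinearity $\Nf(u^{(n)})$ with $\Nf(u)$ in negative Sobolev spaces; this is made possible by the uniform $F^{s,\al}_M(1)$-bound, which provides the auxiliary structure required to pass to the limit in the Duhamel formulation through another application of Proposition \ref{PROP:3lin}.
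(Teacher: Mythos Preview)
Your overall architecture---regularize, close an a priori bound via a continuity argument on the coupled estimates \eqref{Alinear}--\eqref{Aenergy}, then pass to a limit by compactness---matches the paper, and your reduction of the general case to the small-data case via \eqref{Sob2} is valid. There is, however, a genuine gap in the compactness/limit step.

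Aubin--Lions gives you precompactness only in $C([0,1]; H^{s-\eps})$. From this you can extract a limit $u$ and recover the bound \eqref{localbound} by lower semicontinuity, but you cannot identify the limit of $\Nf(u^{(n)})$ with $\Nf(u)$ by ``another application of Proposition \ref{PROP:3lin}'' as you claim. That proposition is a trilinear \emph{bound}; to turn it into a convergence statement for $\Nf(u^{(n)}) - \Nf(u)$ you must write this difference telescopically and control each piece by $\|u^{(n)} - u\|_{F^{s,\al}_M(1)}$ times uniformly bounded factors---so you need actual convergence in $F^{s,\al}_M(1)$, not merely a uniform bound there. Your $H^{s-\eps}$-convergence does not supply this. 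The paper closes the gap differently: its compactness lemma (Lemma \ref{LEM:Kcpt}) shows $\{u^{(n)}\}$ is precompact in $C([0,1]; H^s_M)$ and, crucially, that $\|\P_{>N} u^{(n)}\|_{C_T H^s_M} \to 0$ uniformly in $n$. This uniform high-frequency decay forces convergence in $E^s_M(1)$, hence in $F^{s,\al}_M(1)$ via \eqref{K1}--\eqref{K2} (the trilinear term being absorbed by smallness), hence for $\Nf$ in $N^{s,\al}_M(1)$. The key input behind Lemma \ref{LEM:Kcpt} is the \emph{smoothing} property of the energy estimate (Remark \ref{REM:smoothing}): Proposition \ref{PROP:energy} in fact controls $\|\P_{>N} u\|_{E^s_M(T)}^2$ by $\|\P_{>N} u_0\|_{H^s_M}^2$ plus a term with a spare factor $\|u\|_{F^{s-\dl,\al}_M(T)}^2$ for some $\dl > 0$, which yields the tail decay directly. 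This smoothing is an essential structural feature that a plain Aubin--Lions argument does not see.

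Two minor points. First, the bootstrap quantity: the paper runs the continuity argument on $X_M(T) = \|u\|_{E^s_M(T)} + \|\Nf(u)\|_{N^{s,\al}_M(T)}$ precisely because Lemma \ref{LEM:Kconti} certifies its continuity in $T$ and the limit $X_M(0^+) = \|u_0\|_{H^s_M}$; your choice $X_n(T) = \|u^{(n)}\|_{F^{s,\al}_M(T)}$ is workable, but its continuity is not what Lemma \ref{LEM:timedecay} addresses (that lemma is a modulation/time trade-off, not the $T\to 0$ behavior of the $F$-norm). Second, the solution should lie in $C([0,1]; H^s)$ (implicit in Theorem \ref{THM:1}); this comes for free from the paper's $C_T H^s_M$-compactness but not from convergence only in $C_T H^{s-\eps}$.
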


\medskip

We first assume Proposition \ref{PROP:local}
and present the proof of  Theorem \ref{THM:1}.

\begin{proof}[Proof of Theorem \ref{THM:1}]
Given $T > 0$, we iteratively apply 
Proposition \ref{PROP:local}
and construct a solution $u$ on $[0, T]$.
Let  $u_0 \in H^s(\T)$.
Then,  there exists $M = M(s,T,u_0,\eps_0) \geq 1$
such that 
\[
\|u_0\|_{H_M^s} \le 2^{ - [T] - 1}\eps_0,
\]

\noi
where $\eps_0$ is as in Proposition \ref{PROP:local}. 
Hence, we can apply  Proposition \ref{PROP:local} $[T] +1$ times
and construct the solution $u$ on $[0, T]$, satisfying 
\begin{align*}
\sup_{t \in [0, T]} \| u(t)\|_{H^s_M} \le 2^{[T]+1} \| u_0 \|_{H^s_M}
\leq \eps_0.
\end{align*}

\noi
This proves Theorem \ref{THM:1}.
\end{proof}

\medskip

Before proceeding to the proof of Proposition \ref{PROP:local}, 
we recall the following lemma (Lemma~8.1  in \cite{GO}).

\begin{lemma}\label{LEM:Kconti}
Let $s \in \R$. Given $u\in C(\R; H^\infty(\T))$,
let $X_M(T) = \|u\|_{E_M^s(T)} + \|\mathfrak N(u) \|_{N_M^{s, \al}(T)}$.
Then, 
$X_M(T)$ is non-decreasing and continuous in $T \in \R_+$.
Moreover, we have
\[ \lim_{T\to 0} X_M(T) = \|u(0)\|_{H_M^s}.\]
\end{lemma}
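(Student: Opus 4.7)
The plan is to treat the two summands of $X_M(T)$ separately, proving monotonicity, the limit at $T=0^+$, and continuity on $\R_+$ for each. Monotonicity is immediate from the definitions: enlarging $T$ enlarges the intervals over which the suprema $\sup_{t_k\in[-T,T]}$ in $\|u\|_{E^s_M(T)}$ are taken, and shrinks the class of admissible extensions $\{\wt v:\wt v=\Nf(u)\text{ on }\T\times[-T,T]\}$ in the definition of $\|\Nf(u)\|_{N^{s,\al}_M(T)}$, so the infimum can only grow.

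For the limit $T\to 0^+$, I would handle the energy part by dominated convergence: continuity of $t\mapsto \P_k u(t)\in L^2$ gives $\sup_{|t_k|\le T}\|\P_k u(t_k)\|_{L^2}^2\to \|\P_k u(0)\|_{L^2}^2$ for each $k$, while $u\in C(\R;H^\infty)$ produces a summable dominating bound $2^{2ks}\sup_{|t|\le 1}\|\P_k u(t)\|_{L^2}^2\les 2^{-2kN}$ for arbitrarily large $N$, yielding $\|u\|_{E^s_M(T)}^2\to \|u(0)\|_{H^s_M}^2$. For the nonlinear part I would plug in the canonical extension $\wt v_T(x,t):=\psi(t/T)\Nf(u)(x,t)$, where $\psi\in C_c^\infty(\R)$ satisfies $\psi\equiv 1$ on $[-1,1]$; this is admissible, so $\|\Nf(u)\|_{N^{s,\al}_M(T)}\le \|\wt v_T\|_{N^{s,\al}_M}$, and a direct dyadic computation using the structure recalled in Subsection~\ref{SUBSEC:spaces} together with Lemma~\ref{LEM:embed3} shows that the right-hand side tends to $0$ as $T\to 0$, thanks to the shrinking support of $\wt v_T$ and the smoothness $\Nf(u)\in C(\R;H^\infty)$.

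For continuity on $\R_+$, the energy part follows again from dominated convergence together with the continuity of $T\mapsto \sup_{|t_k|\le T}\|\P_k u(t_k)\|_{L^2}^2$ (the supremum of a continuous function over a compact interval depends continuously on the endpoints). For the nonlinear part, monotonicity ensures the existence of left- and right-limits at each $T>0$ and already gives the $\le$ direction for the left limit and the $\ge$ direction for the right limit. The reverse inequalities would be produced by modifying $\eps$-optimal extensions continuously in $T$. Concretely, given $\eps>0$ and $\wt v$ with $\|\wt v\|_{N^{s,\al}_M}\le \|\Nf(u)\|_{N^{s,\al}_M(T)}+\eps$, for $T_n\to T$ I would define
\begin{align*}
\wt v_n \;=\; \wt v+\chi_n\big(\Nf(u)-\wt v\big),
\end{align*}
where $\chi_n$ is a smooth time cutoff equal to $1$ on the thin annular region between $[-T,T]$ and $[-T_n,T_n]$ and equal to $0$ on a neighborhood of $0$; since $\wt v=\Nf(u)$ on $[-T,T]$ already, $\wt v_n=\Nf(u)$ on $[-T_n,T_n]$. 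The convergence $\|\wt v_n-\wt v\|_{N^{s,\al}_M}\to 0$ would follow from Lemmas~\ref{LEM:embed3}, \ref{LEM:timedecay}, and \ref{LEM:sup}, applied to the correction $\chi_n(\Nf(u)-\wt v)$, which is supported in a time interval of vanishing length.

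The main obstacle is precisely this last step: the infimum structure of $\|\cdot\|_{N^{s,\al}_M(T)}$ forces one to extract quantitative smallness from the $N^{s,\al}_M$-norm of functions supported in a thin time slab near $|t|=T$. The subtlety is that the $N^\al_{M,k}$-norm is a supremum over short-time cutoffs of scale $2^{-[\al k]}$, which for large $k$ may sit entirely inside the thin slab; one must then trade the loss from the sharp cutoff estimate of Lemma~\ref{LEM:sup} for smallness in $T_n-T$ via Lemma~\ref{LEM:timedecay}, and obtain summability in the Littlewood--Paley index $k$ using the smoothness $\Nf(u)\in C(\R;H^\infty)$ via the same dominated-convergence argument that controls the energy summand.
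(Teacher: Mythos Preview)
The paper does not actually prove this lemma; it simply states that ``the proof of Lemma~8.1 in \cite{GO} applies to Lemma~\ref{LEM:Kconti} without any change for fixed $M\ge 1$.'' So there is nothing in the present paper to compare your argument against beyond the assertion that the $M$-parameter does not affect the \cite{GO} proof. Your outline---monotonicity from the definitions, the limit at $T\to 0^+$ via dominated convergence plus a shrinking canonical extension, and continuity in $T$ via modification of $\eps$-optimal extensions---is the standard route and is presumably what \cite{GO} does.

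One point deserves more care than your final paragraph gives it. In your correction term $\chi_n(\Nf(u)-\wt v)$, the piece $\chi_n\Nf(u)$ is smooth and you can indeed control it by dominated convergence in $k$ using $\Nf(u)\in C(\R;H^\infty)$. But $\wt v$ is only an abstract extension with finite $N^{s,\al}_M$-norm and need not be smooth, so the appeal to ``smoothness $\Nf(u)\in C(\R;H^\infty)$'' does not directly handle the $\chi_n\wt v$ contribution for large $k$. You need a separate argument that multiplication by the smooth cutoff $\chi_n$ (whose scale is independent of $k$) is bounded on $N^\al_{M,k}$ uniformly in $k$, and that the thin-slab support of $\chi_n(\Nf(u)-\wt v)$ outside $[-T,T]$ forces smallness; Lemma~\ref{LEM:embed3} is stated for cutoffs at the $k$-adapted scale $2^{-[\al k]}$, so you should check how a cutoff at a fixed macroscopic scale interacts with the $X_{M,k}$-structure. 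In practice this is handled either by first picking $\wt v$ with compact time support (which one may always do) and then noting that for each fixed $k$ the map $T'\mapsto \|\P_k\wt v_{T'}\|_{N^\al_{M,k}}$ is continuous, with summability in $k$ coming from the finiteness of $\|\wt v\|_{N^{s,\al}_M}$ rather than from smoothness of $\Nf(u)$. Your sketch is on the right track, but make sure the large-$k$ tail is controlled by the $N^{s,\al}_M$-norm of $\wt v$ itself, not by regularity you do not have.
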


While our function spaces depend on the parameter $M \geq 1$, 
the proof of Lemmas 8.1  in \cite{GO} applies to  Lemma \ref{LEM:Kconti}
without any change
for {\it fixed} $M \geq 1$.

\begin{proof}[Proof of Proposition \ref{PROP:local}] 
We only sketch the proof under the smallness assumption \eqref{small1},
since it follows closely the argument in  \cite[Section 8]{GO}.
See also Remark \ref{REM:LWP}.
Fix $s \in (-\frac{9}{20}, 0)$
and $\al = -\frac83s +$.
Let $u\in C(\R; H^\infty(\T))$ be a smooth solution to \eqref{4NLS1}
with $u|_{t = 0} = u_0$.
Then, 
it follows from Lemma   \ref{LEM:linear}, Proposition \ref{PROP:3lin},
\eqref{Es6}, and Proposition  \ref{PROP:energy} that 
there exists $\theta = \theta(s)> 0$ such that 
\begin{align}
  \|u\|_{F_M^{s, \al}(T)}   
& \les \|u\|_{E_M^s(T)} +  \|\mathfrak N(u)\|_{N_M^{s, \al}(T)}, 
\label{K1}\\
  \|\mathfrak N(u)\|_{N_M^{s, \al}(T)} 
&   \les  T^\theta \|u\|_{F_M^{s, \al}(T)}^3,
 \label{K2}\\
  \|u\|_{E_M^s(T)}^2   & \leq \|u_0\|_{H_M^s}^2 +  
C T^\theta  \|u\|_{F_M^{s, \al}(T)}^4,
\label{K3}
\end{align}

\noi
for any $T>0$ and $M \geq 1$,  
where $\mathfrak{N}(u) = \N(u) + \RR(u)$ denotes the nonlinearity of 
\eqref{4NLS1} defined in \eqref{Xnonlin}. 
Letting $X_M(T)$ be as in Lemma \ref{LEM:Kconti}, 
it follows from  \eqref{K1}, \eqref{K2}, and \eqref{K3} that 
\begin{align*}
X_M(T)^2 \le  2 \|u_0\|_{H_M^s}^2 + 
C T^{\theta}  \big\{X_M(T)^2+X_M(T)^4\big\}\cdot  X_M(T)^2
\end{align*}

\noi
for any $T > 0$.
Now, choose $\eps_0 > 0$ sufficiently small such that 
\[  C ( 4 \eps_0^2 + 16 \eps_0^4) \le \tfrac12.\]

\noi
Then, 
in view of Lemma \ref{LEM:Kconti}, 
it follows from a continuity argument that 
\begin{align}
X_M(T) \leq 2\|u_0\|_{H_M^s}
\label{localbound3}
\end{align}

\noi
for any $T \in (0, 1]$.
Hence, the a priori bound \eqref{localbound} for smooth solutions
follows from \eqref{localbound3} and \eqref{Es}.

Next, we recall the following compactness lemma.

\begin{lemma} \label{LEM:Kcpt}
Let  $s > -\frac{9}{20}$.
Given
 $u_0 \in H_M^s(\T)$,
let $u_n\in C(\R;  H^\infty(\T))$ be a global solution  to \eqref{4NLS1} with  $u_n|_{t = 0} = \P_{\leq n} u_{0}$.
Then, there exists $T_0=T_0(\|u_0\|_{H^s}) >0$ such that 
the set $\{ u_n\}_{n\in \mathbb{N}}$ is precompact in $ C([-T, T]; H_M^s(\T))$
for $T \leq T_0$.
Moreover, $\| \P_{> N} u_n\|_{C_T H_M^s}$ tends to 0 as $N \to \infty$, 
uniformly in $n \in \mathbb{N}$.
\end{lemma}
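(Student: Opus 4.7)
The plan is to combine a uniform a priori $F^{s,\al}_M(T_0)$-bound on the smooth approximants with the decay factor $2^{-\delta_0 k_0}$ in the energy estimate \eqref{R4M1} to produce a uniform high-frequency tail estimate, and then to conclude precompactness via an Arzel\`a-Ascoli argument coupled with a Littlewood-Paley decomposition.

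\textbf{Uniform bound and tail estimate.} Since $\|\P_{\leq n}u_0\|_{H^s_M} \leq \|u_0\|_{H^s_M} \leq \|u_0\|_{H^s}$ for every $n$, the proof of Proposition \ref{PROP:local}---specifically feeding the triple \eqref{K1}--\eqref{K3} into the continuity argument via Lemma \ref{LEM:Kconti}---yields $T_0 = T_0(\|u_0\|_{H^s}) > 0$ with
\[
\sup_{n \in \mathbb{N}} \|u_n\|_{F^{s,\al}_M(T_0)} \lesssim \|u_0\|_{H^s}.
\]
Applying Proposition \ref{PROP:energy} to each $u_n$ gives, for every $k_0 \in \Z_M$ and $t \in [-T_0,T_0]$,
\[
E_{k_0}(u_n)(t) \leq E_{k_0}(\P_{\leq n} u_0) + C\, 2^{-\delta_0 k_0}\, T_0^\theta\, \|u_n\|_{F^{s,\al}_M(T_0)}^4.
\]
Summing over $k_0 > \log_2 N$, using \eqref{Es5} to dominate $\|\P_{>N} u_n(t)\|_{H^s_M}^2$ by $\sum_{k_0 > \log_2 N} E_{k_0}(u_n)(t)$, and inserting the uniform bound above, we obtain
\[
\sup_{n \in \mathbb{N}} \|\P_{>N} u_n\|_{C_{T_0} H^s_M}^2 \lesssim \|\P_{>N/2} u_0\|_{H^s_M}^2 + C(\|u_0\|_{H^s})\, N^{-\delta_0}.
\]
The first term vanishes as $N \to \infty$ by dominated convergence for $u_0 \in H^s(\T)$, and the second by the explicit $N^{-\delta_0}$-decay, establishing the ``moreover'' assertion.

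\textbf{Arzel\`a-Ascoli.} Fix $\eps > 0$ and choose $N$ so large that $\sup_n \|\P_{>N} u_n\|_{C_{T_0} H^s_M} < \eps$. The low-frequency truncations $\P_{\leq N} u_n$ take values in the finite-dimensional space of trigonometric polynomials of degree $\lesssim N$, and are uniformly bounded in $H^s_M$ by Lemma \ref{LEM:embed1}(ii) and the $F^{s,\al}_M(T_0)$-bound. The Duhamel formula combined with the trilinear estimate (Proposition \ref{PROP:3lin}) and the uniform $F^{s,\al}_M(T_0)$-bound yields a uniform modulus of continuity in $t$ with values in $H^s_M$ for $\{\P_{\leq N} u_n\}$. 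Arzel\`a-Ascoli then gives precompactness of $\{\P_{\leq N} u_n\}$ in $C_{T_0} H^s_M$, and combining with the uniform $\eps$-closeness of $u_n$ to $\P_{\leq N}u_n$ provided by the tail estimate concludes that $\{u_n\}_{n \in \mathbb{N}}$ is precompact in $C_{T_0} H^s_M$.

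\textbf{Main obstacle.} The delicate ingredient is the tail estimate. One must match the decay $2^{-\delta_0 k_0}$ in Proposition \ref{PROP:energy} against the weighted symbols $a_{k_0}$ (properties (i)--(iii) of Subsection \ref{SUBSEC:Es}) so that summation in $k_0$ reconstructs genuinely the $H^s_M$-norm of $\P_{>N} u_n$ rather than a logarithmically divergent expression. This is precisely the role of the smoothing property highlighted in Remark \ref{REM:smoothing}: replacing the $F^{s,\al}$-norm on the right of \eqref{R4M1} by the weaker $F^{s-\delta,\al}$-norm makes the nonlinear energy defect strictly subleading to the initial-data tail, rendering the tail argument effective and uniform in $n$.
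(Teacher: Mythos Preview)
Your approach is essentially the same as the paper's (which defers to \cite[Lemma 8.2]{GO} and to the argument reproduced in Lemma~\ref{LEM:conti}): uniform $F^{s,\al}_M$-bound, high-frequency tail estimate via the energy estimate, then Arzel\`a--Ascoli on the finite-dimensional low-frequency piece.

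One technical point deserves care: your displayed bound $\sum_{k_0 > \log_2 N} E_{k_0}(\P_{\leq n}u_0) \lesssim \|\P_{>N/2}u_0\|^2_{H^s_M}$ is not literally true, because the symbol $a_{k_0}(\xi) \sim 2^{2ks}2^{-\delta_0|k-k_0|}$ for $\xi\in I_k^M$ has a tail over \emph{all} frequencies $k$, not just those near $k_0$. Summing over $k_0 > \log_2 N$ therefore picks up a low-frequency contribution $N^{-\delta_0}\sum_{k\leq \log_2 N}2^{(2s+\delta_0)k}\|\P_k u_0\|_{L^2}^2$, which for $u_0 \notin H^{s+\delta_0/2}$ is not $O(N^{-\delta_0})$. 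The paper's route (visible in the proof of Lemma~\ref{LEM:conti}) sidesteps this: rather than summing the $E_{k_0}$, one runs the energy computation directly on $\P_{>N}u_n$ and invokes the smoothing of Remark~\ref{REM:smoothing} to obtain
\[
\big|\,\|\P_{>N}u_n\|_{E^s(T)}^2 - \|\P_{>N}u_{0,n}\|_{H^s}^2\,\big| \lesssim \|\P_{>cN}u_n\|^2_{F^{s-\delta,\al}(T)}\|u_n\|^2_{F^{s-\delta,\al}(T)} \lesssim N^{-2\delta},
\]
which is exactly the clean tail bound you wrote down. Your ``Main obstacle'' paragraph already identifies this as the fix, so the issue is one of execution rather than strategy.
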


See Lemma 8.2 in \cite{GO} for the details of the proof.
See also Lemma \ref{LEM:conti} below.
We point out that the smoothing property of 
the energy estimate in Proposition \ref{PROP:energy}
plays an important role in proving Lemma \ref{LEM:Kcpt}.

In view of Lemma \ref{LEM:Kcpt} with $T_0 = 1$,
we can extract a subsequence,
which we still denote by
 $\{u_n\}_{n \in \mathbb{N}}$,  converging to some $u$ in $C([0, 1]; H_M^s(\T))$.
It remains to show that this limit $u$ is a distributional solution to \eqref{4NLS1}.
It follows from Lemma \ref{LEM:Kcpt} that
 $\{u_n\}_{n \in \mathbb{N}}$ also converges in $E_M^s(1)$.
In view of \eqref{K1} and \eqref{K2}, this in turns implies that  
 $\{u_n\}$ converges to $u$ in $F_M^{s,\alpha}(1)$.
Finally, by applying the trilinear estimate (Proposition \ref{PROP:3lin}), 
we see that the nonlinearity
$\{ \Nf(u_n)\}_{n \in \mathbb{N}}$
converges to $ \Nf(u)$ in $N_M^{s,\alpha}(1)$.
Hence, the limit $u$ is a distributional solution to  \eqref{4NLS1}
on the time interval $[0, 1]$.
This proves local existence for the Wick ordered cubic 4NLS \eqref{4NLS1} in $H_M^s(\T)$
for $s > -\frac 9{20}$.
Moreover, from the a priori estimate for smooth solutions
and the convergence of $u_n$ to $u$ in $E^s_M(1)$, 
\eqref{localbound} also holds for the solution $u$.
\end{proof}

\begin{remark}\label{REM:LWP}\rm

Let us briefly discuss the case when we do not impose the smallness assumption
and $M = 1$.  This is the setting considered in \cite{GO}
and hence is relevant for the proof of the non-existence result 
(Corollary \ref{COR:NE}).

Let $R = \|u_0\|_{H^s}$ 
Then, choose $T_0 = T_0(R)\leq1$ sufficiently small such that 
\[  C T_0^{\theta} ( 4R^2  + 16R^4) \le \tfrac12.\]

\noi
Then, 
a continuity argument with  Lemma \ref{LEM:Kconti}
yields \eqref{localbound3}
for $T \in (0, T_0]$.
By repeating the argument above, 
one can prove local existence on $[0, T_0]$
for  $T_0 = T_0(\|u_0\|_{H^s}) > 0$.

\end{remark}

\subsection{On the growth of Sobolev norms}

In this subsection, we study the growth of the $H^s$-norm of a solution 
to \eqref{4NLS1}, $s \in (-\frac{9}{20}, 0)$, constructed in Theorem \ref{THM:1}.

Fix  $s_0 \in (-\frac9{20},0)$.
The following bound follows from iterating Proposition \ref{PROP:local}.

\begin{lemma}
\label{LEM:GW1}
Let $s_0 \le s < 0$
and  $0< \eps \le \eps_0$, where $\eps_0$ is as in Proposition \ref{PROP:local}. 
Let $u$ be a solution to \eqref{4NLS1} with $u|_{t = 0} = u_0\in H^s(\T)$ 
such that
\begin{align*}
\|u_0\|_{H_M^{s_0}} \le \eps
\end{align*}

\noi
for some dyadic $M\geq 1$.
Then, the following bound holds:
\begin{align}
\label{GW2}
\sup_{t \in [0, T]} \| u(t) \|_{H^s_M} \les 2^{T} \|u_0\|_{H^s_M},
\end{align}

\noi
 for all $ 0 < T\le T_0$, 
where 
\[ T_0\sim \log_2 \Big(\frac{\eps_0}{\eps}\Big). \]

\end{lemma}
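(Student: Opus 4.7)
The plan is to iterate Proposition \ref{PROP:local} on unit time intervals $[n, n+1]$, monitoring how quickly the $H^{s_0}_M$-smallness required by the hypothesis degrades, while bootstrapping the $H^s_M$-norm by a persistence-of-regularity argument.

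The first step is a mixed-regularity strengthening of Proposition \ref{PROP:local}: if $\|u_0\|_{H_M^{s_0}} \le \eps_0$, then for every $s \in [s_0, 0)$, the corresponding solution $u$ of \eqref{4NLS1} obeys $\sup_{t \in [0,1]} \|u(t)\|_{H^s_M} \le 2\|u_0\|_{H^s_M}$. The proof is the continuity argument behind Proposition \ref{PROP:local} applied at regularity $s$, but with the bootstrap parameter being the (small) $F_M^{s_0,\al}(T)$-norm rather than the $F_M^{s,\al}(T)$-norm. The ingredients needed are the mixed trilinear estimate
\[
\|\Nf(u)\|_{N_M^{s,\al}(T)} \les T^\theta \|u\|_{F_M^{s,\al}(T)} \|u\|_{F_M^{s_0,\al}(T)}^2
\]
together with the analogous mixed quartic energy bound with two factors carrying the $F_M^{s,\al}(T)$-norm and two factors carrying the $F_M^{s_0,\al}(T)$-norm. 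Both follow from the $s_0$-level estimates of Sections \ref{SEC:trilinear}--\ref{SEC:energy}: since $n_4 = n_1 - n_2 + n_3$ forces $k_4 \le k_1^* + O(1)$, one may transfer the extra weight $2^{(s-s_0)k_4}$ onto the input factor of largest dyadic size at the cost of a harmless $O(1)$ factor, using $s - s_0 \ge 0$ so that $2^{(s-s_0)k_4} \le C\,2^{(s-s_0)k_{i_1}}$ with $k_{i_1}= k_1^*$.

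The second step is the iteration. Set $\eps_n := \|u(n)\|_{H_M^{s_0}}$. The strengthened proposition applied to the time-translate $u(n + \cdot)$ on $[0,1]$ is valid whenever $\eps_n \le \eps_0$ and yields simultaneously $\eps_{n+1} \le 2\eps_n$ and $\|u(n+1)\|_{H^s_M} \le 2 \|u(n)\|_{H^s_M}$. Inductively, $\eps_n \le 2^n \eps$, so the iteration may be continued while $2^n \eps \le \eps_0$, i.e.\ for $n \le \log_2(\eps_0/\eps) \sim T_0$. On the same range, $\|u(n)\|_{H^s_M} \le 2^n \|u_0\|_{H^s_M}$. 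For any $T \in (0, T_0]$, decomposing $[0,T] = [0, [T]]\cup [[T], T]$ and applying the unit-time bound on each subinterval yields \eqref{GW2}.

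The main obstacle is the persistence-of-regularity step above, which requires verifying that the proofs of Proposition \ref{PROP:3lin} and Proposition \ref{PROP:energy} accommodate the asymmetric weight redistribution. This amounts to walking through each frequency-interaction case in the proofs of those propositions and checking that moving the weight $2^{(s-s_0)k_4}$ onto a high-frequency input factor preserves the dyadic summability established there; no new analytic input is required since both $s, s_0$ lie in $(-\tfrac{9}{20}, 0)$, which is the range where those estimates were already proved.
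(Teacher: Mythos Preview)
Your strategy is correct, but the paper takes a markedly simpler route that avoids re-opening the proofs of Propositions~\ref{PROP:3lin} and~\ref{PROP:energy} altogether. The paper first handles the case $s=s_0$ exactly as you do, by iterating the unit-interval bound~\eqref{localbound} from Proposition~\ref{PROP:local} while the $H^{s_0}_M$-smallness persists. For general $s\in(s_0,0)$, rather than proving a mixed-regularity persistence estimate, the paper establishes the norm equivalence
\[
\|f\|_{H^s_M}^2 \sim \sum_{K\ge M, \ K\text{ dyadic}} K^{-2s_0+2s}\,\|f\|_{H^{s_0}_K}^2,
\]
and then applies the already-proved $s=s_0$ case at each dyadic level $K\ge M$ (using that $\|u_0\|_{H^{s_0}_K}\le\|u_0\|_{H^{s_0}_M}\le\eps$ by monotonicity in $K$ when $s_0<0$). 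Summing in $K$ gives~\eqref{GW2} immediately.

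Your approach buys a direct persistence-of-regularity statement at the level of the nonlinear estimates, which is conceptually natural and reusable; however, it forces you to track the choice of $\al$ (you must take $\al=\al(s_0)$ throughout and check that the case analysis in Sections~\ref{SEC:trilinear}--\ref{SEC:energy} survives with this larger $\al$ when the output weight is $2^{sk_4}$), and to adapt the energy symbol $a_{k_0}$, which is $s$-dependent, to the mixed setting. The paper's decomposition sidesteps all of this: nothing beyond the $s_0$-level black box and a short algebraic computation is needed.
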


\begin{proof}
When  $s = s_0$, 
the estimate \eqref{GW2}  follows from the proof of Proposition \ref{PROP:local}, 
namely iterating \eqref{localbound} $[T] + 1$ times.
For general $s \in (s_0, 0)$, 
we exploit the following equivalence
\begin{align}
\| f \|_{H_M^s}^2 \sim \sum_{K\ge M} K^{-2s_0 + 2s} \|f\|^2_{H^{s_0}_K}
\label{GW3}
\end{align}

\noi
for any $f \in H^s(\T)$ and any dyadic $K\geq M \geq 1$.
We first assume \eqref{GW3} and prove \eqref{GW2}.
By \eqref{GW3}, \eqref{GW2} for $s = s_0$, and the monotonicity
of the $H^s_M$-norm in $M$, we have
\begin{align*}
\sup_{t \in [0, T]} \| u(t) \|_{H^s_M}^2 
& \les \sum_{K\ge M} K^{-2s_0 + 2s} \sup_{t \in [0, T]}  \|u(t) \|^2_{H^{s_0}_K}\\
& \les   2^{2T} \sum_{K\ge M} K^{-2s_0 + 2s}   \|u_0 \|^2_{H^{s_0}_K}
\sim 2^{2T} \|u_0\|_{H^s_M}^2.
\end{align*}

\noi
This proves \eqref{GW2}.

It remains to show \eqref{GW3}.
Let us first consider the contribution from $|n|\le M$.
With $K^2 + n^2 \sim K^2$ for $K \geq M$, we have
\begin{align}
 \sum_{K\ge M} K^{-2s_0 + 2s} \|f_{\le M} \|^2_{H^{s_0}_K}
& \sim  \sum_{K\ge M} K^{  2s} \| f_{\le M}\|^2_{L^2} 
 \sim  M^{  2s} \| f_{\le M}\|^2_{L^2}
\sim \| f_{\le M}\|_{H_M^s}^2,
\label{GW3a}
\end{align}

\noi
where $f_{\le M} =  \F^{-1}\big[\ind_{|n|\le M} \ft f\big]$.
Next, we consider the contribution from $|n| > M$. 
By Fubini's theorem, we have
\begin{align}
 \sum_{K\ge M} K^{-2s_0+ 2s}  \|f_{> M} \|^2_{H^{s_0}_K}
& \sim 
 \sum_{K\ge M} K^{  2s}  \sum_{M< |n| \leq K }  |\ft f(n)|^2  \notag \\
& \hphantom{X}
+  \sum_{K\ge M} K^{-2s_0 +  2s}   \sum_{ |n| > K }  |n|^{2s_0} |\ft f(n)|^2  \notag \\
& \sim \sum_{|n| > M} \bigg(\sum_{K \ge |n|} \frac{K^{2s}}{|n|^{2s}}\bigg)
 |n|^{2s} |\ft f(n)|^2 \notag \\
& \hphantom{X}
 +  \sum_{ |n| > M }\bigg( \sum_{M \le K< |n|} \frac{K^{-2s_0 +  2s} }{|n|^{-2s_0 +  2s} }\bigg)
 |n|^{2s} |\ft f(n)|^2 \notag \\
& \sim \| f_{> M}\|_{H_M^s}^2,
\label{GW3b}
\end{align}

\noi
where $f_{> M} =  f - f_{\le M}$.
Then, \eqref{GW3} follows from \eqref{GW3a} and \eqref{GW3b}.
\end{proof}

By applying Lemma \ref{LEM:GW1}, we obtain 
the following global-in-time bound on the $H^s$-norm of solutions to \eqref{4NLS1} for $s_0 < s < 0$.

\begin{proposition}
\label{PROP:GWP3}
Fix  $s_0 \in (-\frac9{20},0)$.
Let $s\in (s_0,0)$, $B > 0$, and $u$ be a solution to \eqref{4NLS1} 
with $u|_{t = 0} = u_0\in H^s(\T)$ such that 
\[
\|u_0\|_{H^s} \le B.
\]

\noi
Then, we have
\begin{align}
\label{GW4a}
 \| u(t) \|_{H^s} \les  \eps_0^{\frac{s}{s-s_0}} \big( 2^{t} B\big)^{1- \frac{s}{s-s_0}}
\end{align}

\noi
for all $t > 0$, where $\eps_0$ is as in Proposition \ref{PROP:local}.

\end{proposition}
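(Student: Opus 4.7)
The key idea is that the parameter $M\ge 1$ in Lemma \ref{LEM:GW1} is at our disposal: taking $M$ large makes $\|u_0\|_{H^{s_0}_M}$ arbitrarily small as a negative power of $M$, which in turn extends the admissible time $T_0\sim \log_2(\eps_0/\eps)$ beyond any prescribed $t$. The price is a factor $M^{-s}$ when converting from $\|\cdot\|_{H^s_M}$ back to $\|\cdot\|_{H^s}$. The whole proof is therefore an optimization in $M$.

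First I would record two elementary pointwise comparisons valid for $s_0\le s<0$, dyadic $M\ge 1$, and $n\in\Z$:
\[
(M^2+n^2)^{s_0}\;\le\; M^{2(s_0-s)}\jb{n}^{2s}
\qquad\text{and}\qquad
\jb{n}^{2s}\;\le\; M^{-2s}(M^2+n^2)^{s}.
\]
Both are immediate by splitting into $|n|\le M$ and $|n|>M$ and using $M^2+n^2\ge M^2$, $M^2+n^2\ge \jb{n}^2$, and $(M^2+n^2)/\jb{n}^2\le M^2$, combined with the signs of $s_0-s<0$ and $s<0$. Summing against $|\hat u_0(n)|^2$ and $|\hat u(n,t)|^2$, these become
\[
\|u_0\|_{H^{s_0}_M}\;\lesssim\; M^{s_0-s}\,\|u_0\|_{H^s},
\qquad
\|u(t)\|_{H^s}\;\le\; M^{-s}\,\|u(t)\|_{H^s_M}.
\]

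Given $t>0$, I would then pick the smallest dyadic $M\ge 1$ satisfying
\[
M\;\sim\;\Big(\tfrac{2^{t}B}{\eps_0}\Big)^{\!1/(s-s_0)},
\]
so that the first comparison gives $\|u_0\|_{H^{s_0}_M}\lesssim M^{s_0-s}B\lesssim \eps_0\,2^{-t}$. Setting $\eps:=\|u_0\|_{H^{s_0}_M}$, Lemma \ref{LEM:GW1} then applies on $[0,T_0]$ with $T_0\sim\log_2(\eps_0/\eps)\gtrsim t$, yielding
\[
\|u(t)\|_{H^s_M}\;\lesssim\; 2^{t}\|u_0\|_{H^s_M}\;\le\; 2^{t}B.
\]
The second comparison, together with $M^{-s}\sim(2^{t}B/\eps_0)^{-s/(s-s_0)}$, gives
\[
\|u(t)\|_{H^s}\;\le\; M^{-s}\|u(t)\|_{H^s_M}
\;\lesssim\; M^{-s}\,2^{t}B
\;\sim\;\eps_0^{s/(s-s_0)}(2^{t}B)^{1-s/(s-s_0)},
\]
which is exactly \eqref{GW4a} since $1-s/(s-s_0)=-s_0/(s-s_0)$.

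The one technical point to handle is that Lemma \ref{LEM:GW1}, through Proposition \ref{PROP:local}, is proved by establishing an a priori $H^s_M$-bound on smooth approximants and then passing to the limit via the compactness statement underlying Theorem \ref{THM:1}. The scheme above should therefore first be run on those approximants; since spectral truncations only decrease $H^{s_0}_M$- and $H^s_M$-norms, the smallness \eqref{small1} is preserved for each approximant and the uniform bound transfers to $u$. A trivial edge case where the prescribed $M$ would be below $1$ is handled by taking $M=1$ directly. Beyond this routine bookkeeping, the only real difficulty is recognising that the correct choice of $M$ balances the two competing effects above, and this balance is forced by the explicit form of the claimed exponent $-s_0/(s-s_0)$.
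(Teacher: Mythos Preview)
Your proof is correct and follows essentially the same route as the paper: choose $M$ as a function of $t$ so that the smallness hypothesis of Lemma~\ref{LEM:GW1} is met, apply that lemma, and convert back from $H^s_M$ to $H^s$ at the cost of $M^{-s}$. The only cosmetic difference is that the paper invokes the norm equivalence \eqref{GW3} to obtain $\|u_0\|_{H^{s_0}_M}\le M^{s_0-s}B$, whereas you verify the two needed pointwise comparisons directly; both amount to the same elementary inequalities.
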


\begin{proof}

By choosing $M\gg1$ sufficiently large, it follows from \eqref{GW3} that 
\[ \|u_0 \|_{H^{s_0}_M} \le M^{- s + s_0}B \leq \eps_0.\]

\noi
Then, it follows from  Lemma \ref{LEM:GW1} that 
\begin{align*}
\sup_{t \in [0, T]} \| u(t) \|_{H^s_M} \les 2^T \|u_0\|_{H^s_M}
\end{align*}

\noi
for all  $T > 0$ such that 
\[
T \les \log_2 \bigg(\frac{M^{ s-s_0}\eps_0}{B}\bigg).
\qquad \text{Namely, } M \ges \bigg(\frac{2^T B}{\eps_0}\bigg)^\frac{1}{s-s_0}.
\]

\noi
Therefore, we obtain
\begin{align*}
\sup_{t \in [0, T]} \| u(t) \|_{H^s} 
\le M^{-s} \sup_{t \in [0, T]} \| u(t) \|_{H^s_M}  \les  \eps_0^{\frac{s}{s-s_0}} \big( 2^{T} B\big)^{1- \frac{s}{s-s_0}}
\end{align*}

\noi
for any $T >0$.  This proves \eqref{GW4a}.
\end{proof}

\begin{remark}
\rm
In  Proposition \ref{PROP:GWP3}, 
we only obtain an exponential upper bound for the growth of the $H^s$-norm.
One may upgrade this exponential bound
to a polynomial bound if one incorporates
a scaling in the argument (as in \cite{KT2}). 
We, however, do not pursue this issue
since (i) our argument with one parameter $M \geq 1$
(without a scaling parameter)
suffices to prove global existence
and (ii) a polynomial bound
is by no mean optimal.
\end{remark}

\section{Uniqueness and continuous dependence}
\label{SEC:uniq}

In Section \ref{SEC:GWP}, 
we proved local and global existence of solutions to the Wick ordered cubic 4NLS \eqref{4NLS1}.
The remaining part of this paper is devoted to the proof of Theorem \ref{THM:2}.
The main difficulty lies in proving uniqueness of solutions.
Once we prove uniqueness, continuous dependence follows immediately.
See Subsection \ref{SUBSEC:uniq}.

In Subsection \ref{SUBSEC:energy2}, 
we set up an energy estimate for the difference of two solutions
with the {\it same} initial condition.
In particular, we state a key identity,
expanding  the energy estimate into 
a sum of infinite series of multilinear expressions of arbitrarily large degrees
(Propositions \ref{PROP:main} and \ref{PROP:main2}).
See Remark \ref{REM:uniq3}.
This identity allows us to establish crucial smoothing estimates.
In Subsection \ref{SUBSEC:uniq}, 
we use this proposition to prove Theorem \ref{THM:2}, in particular uniqueness.
The proofs of Propositions \ref{PROP:main} and \ref{PROP:main2}
are  somewhat lengthy, 
involving an infinite iteration of normal form reductions.
We therefore postpone the proof of 
Propositions~\ref{PROP:main} and~\ref{PROP:main2}  to Section \ref{SEC:NF}.

\subsection{Energy estimate on the difference of two solutions}
\label{SUBSEC:energy2}
In this subsection, 
we consider an energy estimate for the difference of two solutions.
As pointed out in Remark \ref{REM:energy1}, 
there are two main sources of difficulty:
(i) the resonant contribution for the difference of solutions
does not vanish
and (ii) the symmetrization process in \eqref{E2} and \eqref{Psi} 
(for handling the non-resonant contribution) fails
for the difference of solutions; see \eqref{Diff3}.

Let us consider an energy estimate for the difference of two solutions
with the same initial condition.
Given $u_0 \in H^s(\T)$, $s > -\frac 9{20}$, 
let   $u$ and $v$ be two solutions to \eqref{4NLS1}
constructed in Section \ref{SEC:GWP}
with the same initial condition $u|_{t = 0} = v|_{t = 0} =  u_0$.
Then, we have 
$u, v \in C([-T, T]; H^s(\T) )\cap F^{s, \al}(T)$
for some $T = T(\|u_0\|_{H^s}) > 0$.
See Remark \ref{REM:LWP}.
Using the equation \eqref{4NLS1}, we have 
\begin{align}
\frac{d}{dt} \| u (t)- v(t) \|_{H^s}^2
& =   \frac{d}{dt} \sum_{n \in \Z} \langle n \rangle^{2s} |\ft u_n - \ft v_n|^2 \notag \\
&  =   2\Re \sum_{n \in \Z} \langle n \rangle^{2s} 
\frac{d}{dt} (\ft u_n - \ft v_n)\cdot  \cj{(\ft u_n - \ft v_n)}\notag\\
& = -  2\Re i \sum_{n \in \Z} \langle n \rangle^{2s} 
\big[\ft {\N(u)}_n -  \ft{\N(v)}_n\big]
 \cj{(\ft u_n - \ft v_n)} \notag \\
& \hphantom{XX}
+    2\Re i \sum_{n \in \Z} \langle n \rangle^{2s} 
\big[\ft {\RR(u)}_n - \ft {\RR(v)}_n\big]
 \cj{(\ft u_n - \ft v_n)}\notag\\
& = -  2\Re i \sum_{n \in \Z} \langle n \rangle^{2s} 
\big[\ft {\N(u)}_n -  \ft{\N(v)}_n\big]
 \cj{(\ft u_n - \ft v_n)} \notag \\
& \hphantom{XX}
- 2\Re i \sum_{n \in \Z} \jb{n}^{2s}  (|\ft u_n|^2 - |\ft v_n|^2)\cj{(\ft u_n - \ft v_n)} \ft v_n \notag\\
& =: \I + \II,
\label{Diff3}
\end{align}

 \noi
 where $\N(u)$ and $\RR(u)$ are as in \eqref{NN1} and \eqref{NN2}.

We first discuss how to handle the main difficulty (i).
The main  idea is  to perform normal form reductions infinitely many times
and
express
\[
|\ft u_n(t)|^2 - |\ft v_n(t)|^2= 
\big(|\ft u_n(t)|^2-|\ft u_n(0)|^2\big)- \big( |\ft v_n(t)|^2 - |\ft u_n(0)|^2\big) \]

\noi
in \eqref{Diff3} as 
the difference of sums of multilinear forms
of arbitrarily large degrees.

\begin{proposition}\label{PROP:main}

Let $s > -\frac{1}{3}$.
Then, there exist multilinear forms 
$\big\{\NN_0^{(j)}\big\}_{j = 2}^\infty$, 
$\big\{\RR^{(2)}\big\}_{j = 2}^\infty$, 
and 
$\big\{\NN_1^{(j)}\big\}_{j = 1}^\infty$, 
depending on a parameter $K > 0$, 
such that 
\begin{align}
 |\ft u_n(t)|^2 - |\ft u_n(0)|^2 
&  = \sum_{j=2}^\infty \NN_0^{(j)} (u)(n, t')\bigg|_0^t \notag\\
& \hphantom{XXX}+ 
 \int_0^t  \bigg[\sum_{j = 2}^\infty \RR^{(j)}( u)(n, t') 
 + \sum_{j = 1}^\infty \NN_1^{(j)} (u)(n, t')\bigg]dt
\label{main0}
\end{align}

\noi
for any solution  $u \in C(\R; H^s(\T))$ to \eqref{4NLS1}
with smooth (local-in-time) approximations.\footnote{Namely, 
given $t_0 \in \R$, 
there exists a sequence of smooth solutions $\{u_N\}_{N\in \NB}$
to \eqref{4NLS1}
and an interval $I \ni t_0$ such that 
$u_N$ tends to $u$ in $C(I; H^s(\T))$ as $N \to \infty$.
}
Here, 
$\NN_0^{(j)}$ is a  $2j$-linear form, 
while  $\RR^{(j)}$ and $\NN_1^{(j)}$ are $(2j+2)$-linear forms
(depending on $t \in \R$), satisfying the following bounds on $H^s(\T)$;
given any   $\theta \in (0, \frac{2}{3}]$,
there exist functions $C_{0, j}, C_{r, j}, C_{1, j}:\R_+ \to \R_+$, depending on $s$ and $\theta$, such that
\begin{align}
\label{main1} 
\sum_{n\in \Z} 
\Big\|\NN_0^{(j)} (f_1,f_2,\cdots,f_{2j})(n)\Big\|_{L^\infty_t(\R)} 
& \les  C_{0, j}  \prod_{i=1}^{2j} \|f_i\|_{H^s},\\
\sum_{n \in \Z} 
\Big\|\RR^{(j)}(f_1,f_2,\cdots,f_{2j})(n)\Big\|_{L^\infty_t(\R)} 
& \les C_{r, j}   \prod_{i=1}^{2j+2} \|f_i\|_{H^s}, 
\label{main2} \\
\sum_{n \in \Z}
\Big\|\NN_1^{(j)} (f_1,f_2,\cdots,f_{2j})(n)\Big\|_{L^\infty_t(\R)} 
& \les C_{1, j} \prod_{i=1}^{2j+2} \|f_i\|_{H^s},
\label{main3} 
\end{align}

\noi
for any $f_i\in H^s(\T)$ 
and $K > 0$, 
where
\begin{align*}
  C_{0, j}(K)&  = \begin{cases}
K^{\max(-\frac 12, -1- 2s)}  , & \text{if } j = 2,\\
K^{-\frac{(j-1)(1-\theta)}2} o(j^{-2}) & \text{if } j \geq 3,
\end{cases}\\
  C_{r, j}(K)&  = \begin{cases}
K^{ \max(-\frac{1}{2}, - 1-3s)} , & \text{if } j = 2,\\
K^{-\frac{(j-3)(1-\theta)}2}  o(j^{-2}), & \text{if } j \geq 3,
\end{cases}\\
  C_{1, j}(K)&  = \begin{cases}
K^{\frac 12- 2s} , & \text{if } j = 1,\\
K^{-\frac{(j-2)(1-\theta)}2}  o(j^{-2})& \text{if } j \geq 2.
\end{cases}
\end{align*}

\end{proposition}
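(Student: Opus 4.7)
The plan is to iterate normal form reductions (integration by parts in time) on the evolution of $|\ft\u_n|^2$, where $\u = S(-t)u$ is the interaction representation from \eqref{interaction}, so that $|\ft u_n|^2 = |\ft\u_n|^2$. Since the resonant contribution $i|\ft\u_n|^2\ft\u_n$ in \eqref{4NLS2}, multiplied by $\cj{\ft\u_n}$, is purely imaginary, I will begin with the identity
\[
\tfrac{d}{dt}|\ft\u_n|^2 = -2\Re i \sum_{\G(n)} e^{-i\phi(\bar n)t}\ft\u_{n_1}\cj{\ft\u_{n_2}}\ft\u_{n_3}\cj{\ft\u_n}.
\]
Integrating from $0$ to $t$ yields an initial $4$-linear oscillatory integral in $\ft\u$. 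I then split the frequency sum according to whether $|\phi(\bar n)|$ is small or large relative to a threshold measured by the parameter $K$: the low-phase part is retained as $\NN_1^{(1)}(u)$, while on the high-phase part the identity $e^{-i\phi t} = \dt(e^{-i\phi t}/(-i\phi))$ allows an integration by parts, producing a $4$-linear boundary term $\NN_0^{(2)}(u)$ (carrying a $\phi^{-1}$ weight) and a new integrand in which $\dt$ falls on one of the factors $\ft\u_{n_i}$.

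When $\dt\ft\u_{n_i}$ is replaced using \eqref{4NLS2}, it contributes both the non-resonant cubic $\textsf{N}(\u)_{n_i}$ and the resonant piece $\textsf{R}(\u)_{n_i} = |\ft\u_{n_i}|^2\ft\u_{n_i}$. The resonant substitution, which collapses the frequencies to a diagonal, produces a $6$-linear term that I label $\RR^{(2)}(u)$; the non-resonant substitution produces a $6$-linear oscillatory integral to which a further round of normal form reduction applies. Iterating, at step $j$ the remaining $(2j)$-linear non-resonant oscillatory integral is split by phase size, the near-resonant part becoming $\NN_1^{(j)}$ and the high-phase part yielding, via one IBP followed by substitution, the pair $\NN_0^{(j+1)}$ and $\RR^{(j+1)}$, together with a $(2j+2)$-linear integrand passed to step $j+1$. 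The many ways of choosing ``where the time derivative lands'' at each step are organized using the ordered bi-trees previewed in the introduction. I then show that the $(2J+2)$-linear remainder at depth $J$ tends to zero as $J\to\infty$, which yields the identity \eqref{main0}; the hypothesis of smooth approximating solutions is what makes this remainder computation legitimate at the level of $H^s$ solutions.

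For the multilinear bounds \eqref{main1}--\eqref{main3}, the key inputs are the factorization $|\phi(\bar n)| \sim |n - n_1|\,|n - n_3|\,\bigl(n_1^2 + n_2^2 + n_3^2 + n^2 + 2(n_1 + n_3)^2\bigr)$ from \eqref{Phi} and a divisor-counting argument for the level set $\{\phi = \text{const}\}$. Each integration by parts extracts a weight $|\phi|^{-1}$; on the high-phase set this converts into a power of $K^{-1}$ and a polynomial in the largest frequency that absorbs negative Sobolev weights. Pairing this gain with Cauchy--Schwarz over the output frequency and over each internal vertex of the bi-tree reduces the estimate at each node to counting solutions of the local resonance relation. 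The condition $s > -\tfrac13$ is exactly what makes the base cases close (note in particular the exponent $\max(-\tfrac{1}{2}, -1 - 3s)$ in $C_{r,2}$, which requires $s > -\tfrac{1}{3}$ to be negative), while at deeper levels the parameter $\theta \in (0, \tfrac{2}{3}]$ represents a small loss traded against a gain $K^{-(1-\theta)/2}$ per additional step; combined with an $o(j^{-2})$ bound on the number of bi-trees of depth $j$ contributing at a given node, this produces the $j$-summable decay asserted in the proposition.

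The principal obstacle will be the combinatorial bookkeeping for the bi-trees and the verification of the per-step gain $K^{-(1-\theta)/2}$ uniformly across all tree shapes. The resonant substitution partially collapses the oscillatory phase structure at the node where it occurs, so $\RR^{(j)}$ must be bounded using only the residual gains from the preceding high-phase IBP steps; matching the exponent $-(j-3)(1-\theta)/2$ (starting at $j=3$) requires careful tracking of which node carries the resonant collapse so as not to double-count phase factors. The near-resonant terms $\NN_1^{(j)}$ are the easiest, being directly controlled by the near-resonance threshold, and the passage $J \to \infty$ is justified by the $o(j^{-2})$ decay via dominated convergence, using the a priori control $\|u\|_{C([-T,T]; H^s(\T))} \les \|u_0\|_{H^s}$ that accompanies the smooth approximating solutions.
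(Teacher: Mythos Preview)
Your overall architecture is correct and matches the paper: iterate differentiation by parts on $\tfrac{d}{dt}|\ft\u_n|^2$, split at each step into a near-resonant piece $\NN_1^{(j)}$ and a high-phase piece on which one integrates by parts to produce $\NN_0^{(j+1)}$, $\RR^{(j+1)}$, and the next-generation oscillatory integral; organize the branching via ordered bi-trees; show the depth-$J$ remainder vanishes for smooth solutions and pass to rough $u$ by approximation. However, two of the technical mechanisms you describe are not the ones that actually work, and one of them is a genuine gap.

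First, the combinatorial control. The number of ordered bi-trees of the $J$th generation is $c_J = 2^{J-1}J!$, which grows factorially; there is no ``$o(j^{-2})$ bound on the number of bi-trees'' as you write. The paper absorbs this growth by building the factor $(2j+2)^3$ into the near-resonance threshold at step $j$: one defines $C_J = \{|\wt\phi_{J+1}| \lesssim (2J+2)^3 |\wt\phi_J|^{1-\theta}\} \cup \{|\wt\phi_{J+1}| \lesssim (2J+2)^3 |\phi_1|^{1-\theta}\}$, so that on the complement one has $|\wt\phi_j| \gg (2j+2)^3 K^{1-\theta}$ for all $j \le J$. The accumulated factor $\prod_{j=1}^J (2j+2)^{3/2}$ in the denominator of the estimate then beats $c_J$, and \emph{that} is what produces the $o(j^{-2})$ coefficients. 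Without this device your scheme would diverge.

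Second, the multilinear estimates \eqref{main1}--\eqref{main3} do \emph{not} rest on divisor counting for the level sets of $\phi$. The paper uses instead the elementary bound $(n_{\max}^{(j)})^{-6s}/|\phi_j| \lesssim 1/|\mu_j|^{1+}$ for $s > -\tfrac13$ (from $|\phi_j| \sim (n_{\max}^{(j)})^2|\mu_j|$), together with a telescoping inequality $\prod_j |\wt\phi_j|^2 \gg \prod_j (2j+2)^3 K^{1-\theta}|\phi_j|$, and then sums $\sum 1/|\mu_j|^{1+}$ over the two free variables at each generation. Divisor counting appears only in the proof that the remainder $\NN_2^{(J+1)}$ tends to zero (and there it requires $\u \in H^{1/6}$, i.e.\ smoothness, which is why the approximation hypothesis is needed). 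Your description of the ``near-resonant terms $\NN_1^{(j)}$'' as the easiest is also inverted: estimating $\NN_1^{(j)}$ requires exploiting that on $C_{j-1}$ the new phase $\phi_j$ is comparable to $\wt\phi_{j-1}$, so that one can borrow one factor of $|\phi_j|$ from the accumulated denominator.
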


It follows from the proof presented in Section \ref{SEC:NF}
that  the decay in $j$ 
is much faster than $j^{-2}$
but it suffices for our purpose in taking double difference
in \eqref{Y11a}.

Proposition \ref{PROP:main}
exhibits a smoothing property
analogous to Takaoka-Tsutsumi \cite{TT} in the context of the modified KdV on $\T$.
In \cite{TT}, Takaoka-Tsutsumi performed a normal form reduction
(= integration by parts) once. 
See also Nakanishi-Takaoka-Tsutsumi \cite{NTT}
and Molinet-Pilod-Vento \cite{MPV}, 
where the authors applied 
 normal form reductions twice in obtaining
effective energy estimates for the modified KdV on $\T$.
In order to maximize the smoothing effect, 
however, we instead perform normal form reductions infinitely many times
and re-express 
$ |\ft u_n(t)|^2 - |\ft u_n(0)|^2 $
as a sum of infinite series of multilinear forms
of arbitrarily large degrees.

Next, we turn our attention to the non-resonant part\,$\I$ in \eqref{Diff3}.
In this case, we can not apply the symmetrization argument as in Section \ref{SEC:energy}.
A straightforward energy estimate in terms of the $F^{s,\al}(T)$-norm
without symmetrization works only for $s > -\frac 3{10}$.
See Remarks \ref{REM:uniq} and \ref{REM:uniq2}.
In the following, we apply an infinite iteration
of normal form reductions 
to estimate the non-resonant part\,$\I$ in \eqref{Diff3}
and express $\I$ as a sum of infinite series consisting 
of multilinear terms in $u$ and $v$.
The following proposition 
follows as a corollary to Proposition \ref{PROP:main}.
See Subsection \ref{SUBSEC:8.6} for the proof.

\begin{proposition}\label{PROP:main2}

Let $s > -\frac{1}{3}$.
Then, there exists $T = T(\|u_0\|_{H^s}) > 0$ such that 
\begin{align*}
\bigg|\int_0^t \I(t') dt'\bigg|  & \leq \frac{1}{4} \|u-v\|_{C_TH^s}^2
\end{align*}

\noi
for any $t \in [-T, T]$ and any  two solutions\footnote{As in Proposition \ref{PROP:main}, 
it suffices to assume that 
$u, v \in C([-T, T]; H^s(\T) )$
are two solutions with smooth 
 (local-in-time) approximations.} 
$u, v \in C([-T, T]; H^s(\T) ) \cap F^{s, \al}(T)$  to \eqref{4NLS1}
constructed in Section \ref{SEC:GWP}
with  $u|_{t = 0} = v|_{t = 0} =  u_0 \in H^s(\T)$.

\end{proposition}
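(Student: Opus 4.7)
The plan is to mimic the infinite normal form reduction scheme of Proposition \ref{PROP:main} on each of the four quadrilinear pairings appearing in the decomposition
\begin{align*}
\I(t) = \I_{uu}(t) - \I_{uv}(t) - \I_{vu}(t) + \I_{vv}(t),
\qquad
\I_{ab}(t) := -2\Re i \sum_{n \in \Z}\jb{n}^{2s} \ft{\N(a)}_n \cj{\ft b_n}(t),
\end{align*}
and then to telescope the resulting expansions to extract two factors of $w := u - v$ from each summand. The diagonal pieces $\I_{uu}$ and $\I_{vv}$ are precisely the non-resonant contributions to $\dt \|u\|_{H^s}^2$ and $\dt \|v\|_{H^s}^2$ (the resonant contribution in \eqref{4NLS2} drops out after taking $\Re i$), so multiplying \eqref{main0} by $\jb{n}^{2s}$ and summing over $n$ yields the desired expansions for $\int_0^t \I_{uu}$ and $\int_0^t \I_{vv}$ directly.

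For the cross pairings $\I_{uv}$ and $\I_{vu}$, I would carry out the identical integration-by-parts-in-time iteration used in Section \ref{SEC:NF}: passing to the interaction representation, the phase $e^{-i\phi(\bar n) t}$ is non-vanishing on $\G(n)$, so one writes $e^{-i\phi t} = (-i\phi)^{-1} \dt e^{-i\phi t}$ and integrates by parts, obtaining boundary terms plus new multilinear terms whenever the time derivative falls on a copy of $\ft\u$ or $\ft\v$, at which point one substitutes \eqref{4NLS2}. The ordered bi-tree combinatorics and phase estimates depend only on $\phi(\bar n)$ and not on whether a leaf is populated by $u$ or $v$, so the bounds \eqref{main1}--\eqref{main3} transfer verbatim with each $\|f_i\|_{H^s}$ replaced by $\|u(t')\|_{H^s}$ or $\|v(t')\|_{H^s}$, both of which are uniformly controlled by $\|u_0\|_{H^s}$ on $[-T, T]$ by Theorem \ref{THM:1}.

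Next, I would form the alternating combination $\I_{uu}-\I_{uv}-\I_{vu}+\I_{vv}$ and observe that, at every level $j$ of the expansion, the four copies of each multilinear form $\mathcal{M}^{(j)}$ (with some inputs drawn from $u$ and others from $v$) telescope: iteratively writing $u = v + w$ in every slot, each resulting term carries at least two explicit factors of $w$. Factoring these out via Cauchy--Schwarz on their attached frequencies and applying \eqref{main1}--\eqref{main3} (valid since $\jb{n}^{2s} \le 1$ for $s < 0$) to the remaining multilinear structure gives
\begin{align*}
\bigg|\int_0^t \I(t')\, dt'\bigg| \le C\big(\|u_0\|_{H^s}, K, T\big)\, \|w\|_{C_T H^s}^2.
\end{align*}
The boundary contributions at $t' = 0$ vanish because $w(0) = 0$; the $t' = t$ boundary terms are controlled by $\sum_{j \ge 2} C_{0, j}(K)$, which is $o_K(1)$ as $K \to \infty$; and the interior $\RR^{(j)}$- and $\NN_1^{(j)}$-contributions acquire a prefactor of $T$. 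Choosing $K = K(\|u_0\|_{H^s})$ sufficiently large and then $T = T(\|u_0\|_{H^s}, K)$ sufficiently small reduces $C$ to $\tfrac{1}{4}$.

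The main obstacle is controlling the combinatorial inflation from the telescoping: splitting $u = v + w$ in each of the $2j+2$ slots of a degree-$(2j+2)$ multilinear form produces $O(j)$ difference-terms, so one must verify that the $o(j^{-2})$ decay in $C_{0,j}, C_{r,j}, C_{1,j}$ comfortably absorbs this loss, which it does since $j \cdot o(j^{-2}) = o(j^{-1})$ remains summable. A secondary bookkeeping point is that the cross-term reductions of $\I_{uv}$ and $\I_{vu}$ must be performed in lock-step with those of the diagonal pieces so that the four infinite expansions align index-by-index and can be combined into a single telescoped series; this is a notational extension of Section \ref{SEC:NF} but requires no new analytic input.
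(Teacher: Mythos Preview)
Your proposal is correct and follows essentially the same approach as the paper's proof in Subsection \ref{SUBSEC:8.6}: decompose $\I$ into the four pairings $\I_{uu},\I_{uv},\I_{vu},\I_{vv}$, run the identical bi-tree normal form iteration on each (with the cross terms populated by $\u$ on $\Pi_1(\TT)$ and $\v$ on $\Pi_2(\TT)$), and then telescope the double difference to extract two factors of $w$. One minor correction: the double telescoping (once across the $\Pi_1$-slots to pass from $u$ to $v$, once across the $\Pi_2$-slots) of a degree-$(2j+2)$ form produces $O(j^2)$ terms, not $O(j)$; the paper notes this explicitly in the footnote following \eqref{Y11a}, but the geometric decay $K^{-(j-1)(1-\theta)/2}$ in the constants (which is far stronger than the stated $o(j^{-2})$) absorbs this loss without difficulty, so your summability check goes through unchanged.
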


We postpone the proof of Propositions \ref{PROP:main}
and \ref{PROP:main2}
to Section \ref{SEC:NF}.
In the next subsection, we present the proof of Theorem \ref{THM:2},
assuming Propositions \ref{PROP:main} and \ref{PROP:main2}.

\subsection{Uniqueness and continuous dependence}\label{SUBSEC:uniq}

In this subsection, we use Propositions~\ref{PROP:main} and ~\ref{PROP:main2} to prove Theorem \ref{THM:2}.
Given  $s > -\frac{1}{3}$, 
let   $u, v \in   C([-T,T]; H^s(\T)) \cap F^{s,\alpha}(T)$ be two solutions to \eqref{4NLS1}
constructed in Section \ref{SEC:GWP}
with the same initial condition $u|_{t = 0} = v|_{t = 0} =  u_0 \in H^s(\T)$, 
satisfying 
\begin{align*}
 \|u\|_{C_T H^s}, \|v\|_{C_T H^s} \leq r
\end{align*}

\noi
for some $r > 0$.
Then, it follows from 
Proposition \ref{PROP:main}
and the multilinearity of 
$\NN_0^{(j)}$, 
 $\RR^{(j)}$,
 and $\NN_1^{(j)}$  that 
\begin{align*}
\sup_{n\in \Z}
 \big\|| & \ft u_n|^2 - |\ft v_n|^2\big\|_{L^\infty_T}
\leq \sum_{n\in \Z}
 \big\||\ft u_n|^2 - |\ft v_n|^2\big\|_{L^\infty_T}
\notag\\
&  = \sum_{n\in \Z}\Big\|\big(|\ft u_n|^2-|\ft u_n(0)|^2\big)- \big( |\ft v_n|^2 - |\ft u_n(0)|^2\big) \Big\|_{L^\infty_T} \notag\\
&  \leq \sum_{n\in \Z}\sum_{j=2}^\infty\Big\| \NN_0^{(j)} ( u)(n) -  \NN_0^{(j)}( v)(n)\Big\|_{L^\infty_T}\notag\\
& \hphantom{XXX}
+ T\sum_{n\in \Z} \sum_{j = 2}^\infty \Big\|
 \RR^{(j)} (u)(n) 
-  \RR^{(j)} ( v)(n) \Big\|_{L^\infty_T} \notag\\
& \hphantom{XXX}
+ T\sum_{n\in \Z}\sum_{j = 1}^\infty \Big\|\NN_1^{(j)} ( u)(n)
- \NN_1^{(j)} ( v)(n) \Big\|_{L^\infty_T} \notag\\
& \les
K^{\max(-\frac 12, -1- 2s)} r^{3} \|u-v\|_{C_TH^s}
+ \sum_{j = 3}^\infty 
 K^{-\frac{(j-1)(1-\theta)}2}  r^{2j-1} \|u-v\|_{C_TH^s}\notag\\
  &  \hphantom{XX}
+ 
TK^{\max(-\frac 12, -1- 3s)} r^{5} \|u-v\|_{C_TH^s}
+ T \sum_{j = 3}^\infty 
 K^{-\frac{(j-3)(1-\theta)}2}  r^{2j+1} \|u-v\|_{C_TH^s}\notag\\
 &  \hphantom{XX}
+ TK^{\frac{1}{2}-2s} r^{3} \|u-v\|_{C_TH^s}
+ T \sum_{j = 2}^\infty 
 K^{-\frac{(j-2)(1-\theta)}2}  r^{2j+1} \|u-v\|_{C_TH^s}.
\end{align*}

\noi
Then, by first choosing $K = K(r) > 0$ sufficiently large
and then choosing $T = T(K) = T(r) > 0$ sufficiently small, 
we conclude that 
\begin{align}
\sup_{n\in \Z}
 \big\|| & \ft u_n|^2 - |\ft v_n|^2\big\|_{L^\infty_T}
\leq \sum_{n\in \Z}
 \big\||\ft u_n|^2 - |\ft v_n|^2\big\|_{L^\infty_T}
\leq \frac{1}{16r}\| u - v \|_{C_TH^s}.
\label{unique2}
\end{align}

\noi
Hence, it follows from \eqref{unique2}
and  Cauchy-Schwarz inequality that 
\begin{align}
\bigg|\int_0^T \II(t) dt\bigg|
& \leq T \| \II \|_{L^\infty_T}  
= 2T\bigg\|   \sum_{n\in \Z} \jb{n}^{2s} 
(|\ft u_n|^2 - |\ft v_n|^2)\cj{(\ft u_n - \ft v_n)} \ft v_n \bigg\|_{L^\infty_T} \notag \\
& \leq \frac 14\| u - v \|_{C_TH^s}^2.
\label{energy4}
\end{align}

\noi
Therefore, by integrating \eqref{Diff3} from $0$ to $T$
with $u(0) = v(0)$
and applying Proposition \ref{PROP:main2} and \eqref{energy4}, we obtain 
\begin{align*}
 \| u - v \|_{C_T H^s}^2
& \leq \frac 12   \| u - v \|_{C_T H^s}^2.
\end{align*}

\noi
This proves local-in-time uniqueness of solutions to 
\eqref{4NLS1} in $C([-T, T]; H^s(\T))\cap F^{s, \al}(T)$
with some $T = T(\|u_0\|_{H^s}) > 0$.
In view of the global-in-time bound in Proposition \ref{PROP:GWP3},
we can iterate this argument and establish uniqueness
globally in time.
Here, uniqueness holds in 
\[
\bigcap_{t \in \R} \big\{u \in C(\R; H^s(\T));  u (\cdot - t) \in F^{s, \al}(T(t, u_0))\big\} \]

\noi
for some appropriate $ T(t, u_0)>0$.\footnote{Since we only need Propositions 
\ref{PROP:main} and \ref{PROP:main2}, 
the uniqueness holds among the solutions
in $C(\R; H^s(\T))$
with smooth approximations.
Note that in such a class, uniqueness is by no means automatic
since we do {\it not} have continuous dependence (at this point).
}

\begin{remark}\label{REM:uniq} \rm
(i) We stress that it is crucial that $u$ and $v$ have the same initial condition
in the argument above.

\smallskip

\noi
(ii) 
We can estimate the non-resonant contribution\,$\I$ in \eqref{Diff3}
in terms of the $F^{s, \al}(T)$-norm for $s > - \frac{3}{10}$.
See Remark \ref{REM:uniq2} below.
This provides uniqueness for a more restrictive range $s > -\frac{3}{10}$.

Note that an energy estimate
of form:
\begin{align*}
  \|u - v\|_{E^s(T)}^2   & \les 
    \|u(0) - v(0)\|_{H^s}^2  
    + 
T^\theta C\big( \| u\|_{F^{s, \al}(T)}, \| v\|_{F^{s, \al}(T)}\big)
\|u -  v\|_{F^{s, \al}(T)}^2
\end{align*}

\noi
for two solutions $u$ and $v$ with different initial data $u(0) \ne v(0)$
is {\it false} for $s < 0$
in view of the failure of local uniform continuity
of the solution map for \eqref{4NLS1} in negative Sobolev spaces.

\smallskip

\noi
(iii) 
By combining the proofs of Propositions~\ref{PROP:main}
and \ref{PROP:main2}, 
we can express
$ \| u (t)- v(t) \|_{H^s}^2$
as a sum of infinite series
consisting of multilinear linear terms (in $u$ and $v$)
of arbitrarily large degrees.
Moreover, 
thanks to the multilinearity of the summands
and the double difference structure of $\I$ and $\II$, 
we can rearrange the series so that 
we can extract two factors of (the Fourier coefficient of) $u-v$ in 
each of the multilinear terms.
See Remark \ref{REM:uniq3}.

\end{remark}

\smallskip
 
 Thanks to the uniqueness of solutions, 
  continuous dependence
 of the solution map for \eqref{4NLS1} on initial data in $H^s(\T)$
basically follows from repeating the argument in Section \ref{SEC:GWP}.

 \begin{lemma}\label{LEM:conti}
Given $s> - \frac1{3}$, 
let $\{u_n\}_{n \in \NB}$ and $u $ are the unique solutions 
to \eqref{4NLS1} in $C(\R; H^s(\T))$ with $u_n |_{t = 0} = u_{0, n}$
and $u|_{t = 0} = u_0$.
If we have
\begin{align*}
\lim_{n\to \infty} \|u_{0,n} -u_0\|_{H^s} = 0,
\end{align*}

\noi
then we have
\begin{align}
\lim_{n\to \infty} \|u_{n} -u\|_{C_TH^s} = 0
\label{conti2}
\end{align}

\noi
for any $T > 0$.
\end{lemma}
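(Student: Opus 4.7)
The plan is to combine three ingredients: a uniform-in-$n$ a priori bound on $\{u_n\}$ in a strong norm, a compactness argument modeled on Lemma~\ref{LEM:Kcpt}, and the uniqueness statement already established in Subsection~\ref{SUBSEC:uniq}. The structure is the classical Bona--Smith type argument: extract a convergent subsequence, identify the limit as a solution to \eqref{4NLS1} with initial data $u_0$, and then invoke uniqueness to upgrade subsequential convergence to full convergence.

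First, I would establish uniform bounds. Since $u_{0,n}\to u_0$ in $H^s(\T)$, the sequence $\{u_{0,n}\}$ is bounded in $H^s$, say by some $B>0$. Fix $T>0$ and $s_0\in(-\frac{9}{20},s)$. Using the equivalence \eqref{GW3} together with the decay property \eqref{Sob2} of the $H^{s_0}_M$-norm as $M\to\infty$, choose a dyadic $M=M(T,B,s,s_0)\gg 1$ so that
\[
\sup_n \|u_{0,n}\|_{H^{s_0}_M} \;\les\; M^{s_0-s}\,B \;\le\; 2^{-[T]-1}\eps_0,
\]
where $\eps_0$ is as in Proposition~\ref{PROP:local}. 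Iterating Proposition~\ref{PROP:local} on unit subintervals (as in the proof of Theorem~\ref{THM:1}) then yields uniform bounds
\[
\sup_n \sup_{t\in[-T,T]} \|u_n(t)\|_{H^s_M} \;\les\; 2^{T}\,\|u_{0,n}\|_{H^s_M} \;\les\; 2^{T} B,
\]
together with a uniform short-time bound $\sup_n \|u_n\|_{F^{s,\al}_M(I_j)}\les 2^{T}B$ on each of the $O(T)$ unit subintervals $I_j$ covering $[-T,T]$.

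Second, I would prove precompactness of $\{u_n\}$ in $C([-T,T];H^s(\T))$ by adapting Lemma~\ref{LEM:Kcpt}. The key is the smoothing property of the energy estimate noted in Remark~\ref{REM:smoothing}: the right-hand side of \eqref{Aenergy} controls $\|u\|_{E^s(T)}$ in terms of $\|u\|^4_{F^{s-\delta,\al}(T)}$ for some $\delta>0$. Applied to the high-frequency piece $\P_{>N}u_n$ (after commuting the projector with the equation and using that $\P_{>N}$ is a Fourier multiplier of norm tending to~$0$ in the appropriate weaker norm), this gives the tail estimate
\[
\lim_{N\to\infty}\; \sup_n \;\|\P_{>N} u_n\|_{C([-T,T];H^s)} \;=\; 0.
\]
Combined with the uniform bound in $H^s_M$ on finitely many low-frequency modes (which is precompact by Arzel\`a--Ascoli once one uses the equation to bound time derivatives), this yields precompactness of $\{u_n\}$ in $C_TH^s$.

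Third, I would pass to the limit and use uniqueness. Extract any subsequence $u_{n_k}\to w$ in $C_TH^s$. The uniform $F^{s,\al}(T)$-bounds and Proposition~\ref{PROP:3lin} (trilinear estimate) imply $\Nf(u_{n_k})\to \Nf(w)$ in $N^{s,\al}(T)$, so that $w\in C([-T,T];H^s(\T))\cap F^{s,\al}(T)$ is a distributional solution to \eqref{4NLS1} with $w|_{t=0}=u_0$ and comes equipped with smooth approximating solutions (any mollification of the $u_{n_k}$ works). By the enhanced uniqueness of Theorem~\ref{THM:2}, already proved in Subsection~\ref{SUBSEC:uniq} assuming Propositions~\ref{PROP:main} and~\ref{PROP:main2}, we must have $w\equiv u$. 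Since every subsequence of $\{u_n\}$ has a further subsequence converging in $C_TH^s$ to the same limit $u$, the whole sequence converges, proving \eqref{conti2}.

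The main obstacle is the tail estimate in the second step. Heuristically it is just ``apply the smoothing energy estimate to $\P_{>N}u_n$,'' but one needs to check that the short-time $F^{s,\al}_M$-structure interacts well with the sharp frequency projector and that the loss in regularity $\delta$ can be absorbed into a factor $N^{-\delta}$ which goes to zero. This is precisely the mechanism of Lemma~\ref{LEM:Kcpt} in \cite{GO}, and its adaptation to our dyadic-in-$M$ setting is straightforward because all of \eqref{Alinear}--\eqref{Aenergy} hold with implicit constants independent of $M\geq 1$. Once these three steps are in place, continuous dependence on $[-T,T]$ for arbitrary finite $T$ follows, completing the proof of Theorem~\ref{THM:2}.
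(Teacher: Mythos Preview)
Your proposal follows essentially the same three-step strategy as the paper: uniform a priori bounds, precompactness via the smoothing energy estimate (the mechanism of Lemma~\ref{LEM:Kcpt}), and identification of the limit via uniqueness. The paper organizes things slightly differently---it first reduces to small $T$ and then iterates using Proposition~\ref{PROP:GWP3}, whereas you set up the $M$-parameter iteration on $[-T,T]$ directly---but the substance is the same.

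There is one step in your third part that is glossed over and would not work as written. You say that uniform $F^{s,\al}(T)$-bounds together with Proposition~\ref{PROP:3lin} give $\Nf(u_{n_k})\to\Nf(w)$ in $N^{s,\al}(T)$. But the trilinear estimate requires \emph{convergence} of $u_{n_k}$ to $w$ in $F^{s,\al}(T)$, not just uniform bounds; weak-type compactness in $F^{s,\al}(T)$ is not enough to pass the cubic nonlinearity to the limit. The paper handles this by first observing that the uniform high-frequency tail estimate \eqref{conti5} upgrades $C_TH^s$-convergence of the subsequence to $E^s(T)$-convergence, and then uses \eqref{K1}--\eqref{K2} (possibly on a shorter interval) to deduce that $\{u_{n_k}\}$ is Cauchy in $F^{s,\al}(T)$. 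Only then does the trilinear estimate apply to the differences. You should insert this $C_TH^s\Rightarrow E^s(T)\Rightarrow F^{s,\al}(T)$ upgrade before invoking Proposition~\ref{PROP:3lin}; once that is done, your argument is complete and matches the paper's.
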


\begin{proof}
It suffices to prove \eqref{conti2} for sufficiently small $T>0$
since the general case follows from iterating local-in-time arguments
in view of the global-in-time bound in Proposition \ref{PROP:GWP3}.
Let $T = T(\|u_0\|_{H^s}) > 0$ be the local existence time
from Section \ref{SEC:GWP} for initial data of size 
$\|u_0\|_{H^s} + 1$.
Without loss of generality, we assume that 
$\sup_{n \in \Z} \| u_{0, n}\|_{H^s} \leq \|u_0\|_{H^s} + 1$.

Note that it suffices to prove that $\{ u_n\}_{n\in \NB}$
is precompact in $C([-T, T]; H^s(\T))\cap F^{s, \al}(T)$.
This implies that any subsequence of $\{ u_n\}_{n\in \NB}$
has a convergent subsubsequence.
In view of convergence to $u_0$ at time 0
and the uniqueness of solutions, 
such a convergent subsubsequence must converge to $u$
since it converges to $u_0$ at time 0.
Therefore, the entire sequence 
$\{ u_n\}_{n\in \NB}$ converges to $u$ in $C([-T, T]; H^s(\T))$, 
yielding  \eqref{conti2}.

Since $ u_{0, n}$ converges to $u_0$ in $H^s(\T)$, 
we see that $\{ u_{0, n}\}_{n\in \NB} \cup \{u_0\}$
is compact in $H^s(\T)$.  Then, by Riesz' characterization of compactness, 
given $\eps > 0$, there exists $N \in \NB$ such that 
\begin{align}
\| \P_{> N} u_{0, n}\|_{H^s} < \eps
\qquad \text{and}\qquad 
\| \P_{> N} u_{0}\|_{H^s} < \eps
\label{conti3}
\end{align}

\noi
for all $n \in \NB$.
Then, by exploiting the smoothing property of the energy estimate
\eqref{R4M1} in Proposition \ref{PROP:energy}
(see Remark \ref{REM:smoothing}) 
as  in Lemma 8.2 in \cite{GO}, 
we claim that, 
given  $\eps>0$,  there exists $ N_0 \in \NB$ such that
\begin{equation}
\|\P_{> N}u_n\|_{C_TH^s}<\eps
\label{conti4}
\end{equation}

\noi
for all $N \geq N_0$, uniformly in $n \in \NB$.
In view of 
Remark \ref{REM:smoothing}, 
it follows  from (the proof of) Proposition \ref{PROP:energy}
with the a priori bound:\footnote{The a priori bound \eqref{unique3}
follows from Lemma \ref{LEM:embed1}
and \eqref{localbound3}.}
\begin{equation}
\| u \|_{L^\infty([-T, T]; H^s)} \les \|u\|_{F^{s, \al}(T)}
\le  2\|u_0\|_{H^s} 
\label{unique3}
\end{equation}

\noi
that 
there exists small $\dl > 0$ such that 
\begin{align*}
\big|\| \P_{>N}  u_n\|^2_{E^s(T)}  -\|\P_{>N}u_{0, n}\|_{H^s}^2\big|
 & \les
T^\theta \|\P_{>cN}u_n\|_{F^{s-\dl,\al}(T)}^2\|u_n\|_{F^{s-\dl,\al}(T)}^2
\notag \\
& 
\les C(\|u_0\|_{H^s}) N^{-2\dl}
\longrightarrow 0, 
\end{align*}

\noi
as $N\to \infty$, uniformly in $n\in \mathbb N$. 
Hence, from \eqref{conti3}, 
there exists $N_0 \in \NB$  such that 
\begin{align}
\| \P_{>N} u_n\|_{C_T H^s}^2
& \leq \|\P_{>N} u_n \|_{E^s(T)}^2 
\les \| \P_{>N} u_{0, n}\|_{ H^s}^2
+ C(\|u_0\|_{H^s}) N^{-2\dl}
\les \eps
\label{conti5}
\end{align}

\noi
for all $N \geq N_0$, uniformly in $n \in \mathbb{N}$.
This proves \eqref{conti4}.

Fix $\eps >0$. By \eqref{conti4}, there exists $N_0>0$ such that
$\|\P_{> N_0}u_n\|_{C_TH^s}<\frac{\eps}{3}$ for all $n\in \mathbb N$.
Arguing as in the proof of Lemma 8.2 in \cite{GO}
with Ascoli-Arzel\`a compactness theorem,
we conclude that 
$\{\P_{\leq {N_0}}u_n\}_{n \in \mathbb{N}}$  is precompact in $C([-T, T]; H^s(\T))$.
Hence, there exists a finite cover by balls
of radius $\frac{\eps}{3}$
(in $C_TH^s$)
centered at $\{\P_{\leq {N_0}}u_{n_k}\}_{k = 1}^K$.
Then, the balls of radius $\eps$ (in $C_TH^s$)
centered at $\{u_{n_k}\}_{k = 1}^K$
covers $\{u_{n}\}_{n\in \mathbb N}$.
This proves the precompactness of 
 $\{ u_n\}_{n\in \NB}$
 in $C([-T, T]; H^s(\T))$.

Let us  extract a subsequence,
still denoted by
 $\{u_n\}_{n \in \mathbb{N}}$,  converging to some $u$ in $C([-T, T]; H^s(\T))$.
In view of the uniform tail estimate \eqref{conti5},
this subsequence  
also converges in $E^s(T)$.
Then, 
by  making $T$ smaller, if necessary, 
it follows from \eqref{K1} and \eqref{K2} that 
\begin{align*}
\|u_n-u_m\|_{F^{s,\alpha}(T)}\les \|u_n-u_m\|_{E^s(T)}.
\end{align*}

\noi
Hence, $\{u_n\}$ converges to $u$ in $F^{s,\alpha}(T)$.
\end{proof}

\section{Normal form reductions}
\label{SEC:NF}

It remains to prove 
  Propositions \ref{PROP:main}
and \ref{PROP:main2}.
In this section, we perform an infinite iteration of  normal form reductions
and present the proofs of these propositions 
in Subsections~\ref{SUBSEC:8.5} and \ref{SUBSEC:8.6}.

Let $u$ be a smooth global solution to the Wick ordered cubic 4NLS \eqref{4NLS1}
and $\u(t) = S(-t) u(t) $ be its interaction representation defined in \eqref{interaction}.
Then, by the fundamental theorem of calculus with \eqref{4NLS2}, 
we can write the growth of the energy quantity\footnote{The quantity  $|\ft \u_n(t)|^2$ is often referred to as an  action.}
$ |\ft u_n(t)|^2$ as 
\begin{align}
 |\ft u_n(t)|^2 - |\ft u_n(0)|^2 
&  =  |\ft \u_n(t)|^2 - |\ft \u_n(0)|^2 \notag \\
& = -2\Re i \bigg( \int_0^t \sum_{\G(n)} e^{-i \phi(\bar n) t'} \ft \u_{n_1}\cj{\ft \u_{n_2}}\ft \u_{n_3} \cj{\ft \u_n}(t') dt' \bigg)
\notag\\
\intertext{Integrating by parts in time,} 
& = 2\Re  \bigg( \sum_{\G(n)} \frac{e^{-i \phi(\bar n) t}}{\phi(\bar n)} \ft \u_{n_1}\cj{\ft \u_{n_2}}\ft \u_{n_3} \cj{\ft \u_n}
\bigg) \bigg|_0^t
\notag\\
& \hphantom{X} 
-2\Re  \bigg( \int_0^t \sum_{\G(n)} 
\frac{e^{-i \phi(\bar n) t}}{\phi(\bar n)}
\dt( \ft \u_{n_1}\cj{\ft \u_{n_2}}\ft \u_{n_3} \cj{\ft \u_n})(t') dt' \bigg).
\label{XX1}
\end{align}

\noi
In view of the factorization \eqref{Phi}, 
we see that the gain of $\phi(\bar n)$ in the denominators corresponds
to the gain of derivatives.
The price to pay here is that the second term on the right-hand side
of \eqref{XX1} is now 6-linear.
In order to handle the last term in \eqref{XX1}, we need to apply an integration by parts again, 
yielding 8-linear terms.
In fact, we iterate this procedure infinitely many times in the following.
When we apply integration by parts\footnote{In the following, 
we perform integration by parts without integration symbols, 
which we refer to as differentiation by parts, following \cite{BIT}.
Moreover, we perform integration by parts only in the case the phase factor is
``sufficiently large''.} in an iterative manner, 
 the time derivative may fall on any of the factors, 
 generating higher order nonlinear terms.
We need to keep track of all possible ways in which the time derivatives fall
and sum over the contributions from all possible choices.
This can be a combinatorially challenging task.
In order to handle multilinear terms of increasing complexity appearing
in the infinite iteration of normal form reductions, 
we introduce the notion of ordered bi-trees in the following.

\subsection{Ordered bi-trees}

In \cite{GKO}, the first author 
implemented
an infinite iteration of normal form reductions
to study the cubic NLS on $\T$,
where differentiation by parts was applied to the evolution equation satisfied by 
the interaction representation.
In \cite{GKO}, (ternary) trees and ordered trees played 
an important role 
for indexing such terms and frequencies arising in the general steps of  normal form reductions.

In the following, we instead implement an infinite iteration scheme
of normal form reductions applied to the energy quantity\footnote{More precisely, 
to the evolution equation satisfied by the energy quantity.}
$|\ft \u_n(t)|^2$.
 In particular,  we need tree-like structures that {\it grow in two directions}.
For this purpose,  we introduce the notion of  bi-trees and ordered bi-trees
in the following.
Once we replace trees and ordered trees
by bi-trees and ordered bi-trees,
other related notions can be defined
in a similar manner as in \cite{GKO}
with certain differences to be noted.

\begin{definition} \label{DEF:tree2} \rm
(i) Given a partially ordered set $\TT$ with partial ordering $\leq$, 
we say that $b \in \TT$ 
with $b \leq a$ and $b \ne a$
is a child of $a \in \TT$,
if  $b\leq c \leq a$ implies
either $c = a$ or $c = b$.
If the latter condition holds, we also say that $a$ is the parent of $b$.

\smallskip

\noi
(ii) A tree $\TT $ is a finite partially ordered set satisfying
the following properties:
\begin{enumerate}

\item[(a)] Let $a_1, a_2, a_3, a_4 \in \TT$.
If $a_4 \leq a_2 \leq a_1$ and  
$a_4 \leq a_3 \leq a_1$, then we have $a_2\leq a_3$ or $a_3 \leq a_2$,

\item[(b)]
A node $a\in \TT$ is called terminal, if it has no child.
A non-terminal node $a\in \TT$ is a node 
with  exactly three ordered children denoted by $a_1, a_2$, and $a_3$.

\item[(c)] There exists a maximal element $r \in \TT$ (called the root node) such that $a \leq r$ for all $a \in \TT$.

\item[(d)] $\TT$ consists of the disjoint union of $\TT^0$ and $\TT^\infty$,
where $\TT^0$ and $\TT^\infty$
denote  the collections of non-terminal nodes and terminal nodes, respectively.
\end{enumerate}

\smallskip

\noi
(iii) A {\it bi-tree} $\TT = \TT_1 \cup \TT_2$ is 
a union of two trees $\TT_1$ and $\TT_2$,
where the root nodes $r_j$ of $\TT_j$, $j = 1, 2$,  are joined by an edge.
A bi-tree
$\TT$ consists of the disjoint union of $\TT^0$ and $\TT^\infty$,
where $\TT^0$ and $\TT^\infty$
denote  the collections of non-terminal nodes and terminal nodes, respectively.
By convention, we assume that the root node $r_1$ of the first tree $\TT_1$ is non-terminal,
while the root node $r_2$ of the second tree $\TT_2$ may be terminal.

\smallskip

\noi
(iv) Given a bi-tree $\TT = \TT_1 \cup \TT_2$, 
we define a projection $\Pi_j$, $j = 1, 2$, onto  a tree
by setting 
\[\Pi_j(\TT) = \TT_j.\]

\noi
In Figure \ref{FIG:1}, 
$\Pi_1(\TT)$ corresponds to the tree on the left under the root node $r_1$, 
while 
$\Pi_2(\TT)$ corresponds to the tree on the right under the root node $r_2$.

\end{definition}

\smallskip

Note that the number $|\TT|$ of nodes in a bi-tree $\TT$ is $3j+2$ for some $j \in \mathbb{N}$,
where $|\TT^0| = j$ and $|\TT^\infty| = 2j + 2$.
Let us denote  the collection of trees of the $j$th generation 
(namely,  with $j$ parental nodes) by $BT(j)$, i.e.
\begin{equation*}
BT(j) := \{ \TT : \TT \text{ is a bi-tree with } |\TT| = 3j+2 \}.
\end{equation*}

\begin{figure}[h]
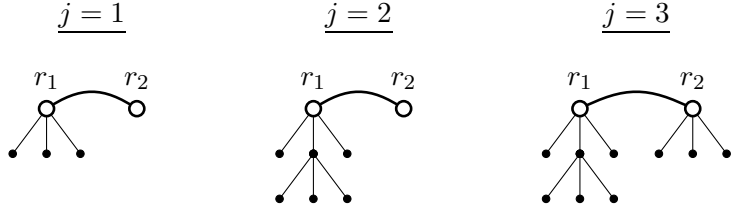

\[ \<T1>  
\qquad\qquad
\<T2> 
\qquad  \qquad
\<T3>\]

\caption{Examples of bi-trees of the $j$th generations, $j = 1, 2, 3$}
\label{FIG:1}
\end{figure}

\vspace{3mm}

Next, we introduce the  notion of ordered bi-trees,
for which we  keep track of how a bi-tree ``grew''
into a given shape.

\begin{definition} \label{DEF:tree3} \rm
 We say that a sequence $\{ \TT_j\}_{j = 1}^J$ is a chronicle of $J$ generations, 
if 
\begin{enumerate}
\item[(a)] $\TT_j \in BT(j)$ for each $j = 1, \dots, J$,

\smallskip

\item[(b)]  $\TT_{j+1}$ is obtained by changing one of the terminal
nodes in $\TT_j$ into a non-terminal node (with three children), $j = 1, \dots, J - 1$.
\end{enumerate}

\noi
Given a chronicle $\{ \TT_j\}_{j = 1}^J$ of $J$ generations,  
we refer to $\TT_J$ as an {\it ordered bi-tree} of the $J$th generation.
We denote the collection of the ordered trees of the $J$th generation
by $\mathfrak{BT}(J)$.
Note that the cardinality of $\mathfrak{BT}(J)$ is given by 
$ |\mathfrak{BT}(1)| = 1$ and 
\begin{equation} 
\label{cj1}
 |\mathfrak{BT}(J)| = 4\cdot 6 \cdot 8 \cdot \cdots \cdot 2J 
 = 2^{J-1}   \cdot J!=: c_J,
 \quad J \geq 2.
 \end{equation}

\end{definition}

\smallskip

We stress that the notion of ordered bi-trees comes with associated chronicles.
For example, 
given two ordered bi-trees $\TT_J$ and $\wt{\TT}_J$
of the $J$th generation, 
it may happen that $\TT_J = \wt{\TT}_J$ as bi-trees (namely as planar graphs) 
according to Definition \ref{DEF:tree2},
while $\TT_J \ne \wt{\TT}_J$ as ordered bi-trees according to Definition \ref{DEF:tree3}.
In the following, when we refer to an ordered bi-tree $\TT_J$ of the $J$th generation, 
it is understood that there is an underlying chronicle $\{ \TT_j\}_{j = 1}^J$.

\smallskip

Given a bi-tree $\TT$, 
we associate each terminal node $a \in \TT^\infty$ with the Fourier coefficient (or its complex conjugate) of the interaction representation 
$\u$ and sum over all possible frequency assignments.
In order to do this, we introduce the index function
assigning 
frequencies to {\it all} the nodes in $\TT$ in a consistent manner.

\begin{definition} \label{DEF:tree4} \rm
(i) Given  a bi-tree $\TT = \TT_1\cup \TT_2$, 
we define an index function ${\bf n}: \TT \to \mathbb{Z}$ such that
\begin{itemize}

\item[(a)] $n_{r_1} = n_{r_2}$, where $r_j$ is the root node of the tree $\TT_j$, $j = 1, 2$,

\item[(b)] $n_a = n_{a_1} - n_{a_2} + n_{a_3}$ for $a \in \TT^0$,
where $a_1, a_2$, and $a_3$ denote the children of $a$,

\item[(c)] $\{n_a, n_{a_2}\} \cap \{n_{a_1}, n_{a_3}\} = \emptyset$ for $a \in \TT^0$, 

\end{itemize}

\noi
where  we identified ${\bf n}: \TT \to \mathbb{Z}$ 
with $\{n_a \}_{a\in \TT} \in \mathbb{Z}^\TT$. 
We use 
$\mathfrak{N}(\TT) \subset \mathbb{Z}^\TT$ to denote the collection of such index functions ${\bf n}$
on $\TT$.

\smallskip

\noi
(ii) Given a tree $\TT$, we also define 
 an index function ${\bf n}: \TT \to \mathbb{Z}$ 
 by omitting the condition (a)
 and denote by 
 $\mathfrak{N}(\TT) \subset \mathbb{Z}^\TT$  the collection of index functions ${\bf n}$
 on $\TT$, when there is no confusion.

\end{definition}

\begin{remark} \label{REM:terminal}
\rm 
(i) In view of the consistency condition, 
we can refer to $n_{r_1} = n_{r_2}$
as the frequency  at the root node without ambiguity.
We shall  denote it by $n_r$.

\smallskip

\noi
(ii) 
Just like  index functions  for (ordered) trees considered in \cite{GKO}, 
an index function ${\bf n} = \{n_a\}_{a\in\TT}$ for a bi-tree $\TT$ is completely determined
once we specify the values $n_a \in \Z$ for the terminal nodes $a \in \TT^\infty$.
An index function $\bn$
for a bi-tree $\TT = \TT_1 \cup \TT_2$
is basically a pair $(\bn_1, \bn_2)$
of index functions $\bn_j$ for the trees $\TT_j$, $j = 1, 2$, (omitting
the non-resonance condition in \cite[Definition 3.5 (iii)]{GKO}),
satisfying the consistency condition (a): $n_{r_1} = n_{r_2}$.

\smallskip

\noi
(iii) Given a bi-tree $\TT \in \mathfrak{BT}(J)$, consider
 the summation of all possible frequency assignments
 $\{ \bn \in \mathfrak{N}(\TT): n_r = n\}$.
While $|\TT^\infty| = 2J + 2$, 
there are $2J$ free variables in this summation.
Namely, the condition $n_r = n$ reduces two summation variables.
It is easy to see this by separately considering
the cases $\Pi_2(\TT) = \{r_2\}$
and $\Pi_2(\TT) \ne \{r_2\}$.
\end{remark}

\medskip

Given an ordered bi-tree 
$\TT_J$ of the $J$th generation with a chronicle $\{ \TT_j\}_{j = 1}^J$ 
and associated index functions ${\bf n} \in \mathfrak{N}(\TT_J)$,
we would like to keep track of the  ``generations'' of frequencies
as in \cite{GKO}.
In the following,  we use superscripts to denote such generations of frequencies.

Fix ${\bf n} \in \mathfrak{N}(\TT_J)$.
Consider $\TT_1$ of the first generation.
Its nodes consist of the two root nodes $r_1$, $r_2$, 
and the children $r_{11}, r_{12}, $ and $r_{13}$ of the first root node $r_1$. 
See Figure \ref{FIG:1}.
We define the first generation of frequencies by
\[\big(n^{(1)}, n^{(1)}_1, n^{(1)}_2, n^{(1)}_3\big) :=(n_{r_1}, n_{r_{11}}, n_{r_{12}}, n_{r_{13}}).\]

\noi
From Definition \ref{DEF:tree4}, we have
\begin{equation*}
n^{(1)}  = n_{r_2}, \quad  n^{(1)} = n^{(1)}_1 - n^{(1)}_2 + n^{(1)}_3, \quad n^{(1)}_2\ne n^{(1)}_1, n^{(1)}_3.
\end{equation*}

Next, we construct  an ordered bi-tree $\TT_2$ of the second generation 
from $\TT_1$ by
changing one of its terminal nodes $a \in \TT^\infty_1 = \{ r_2, r_{11}, r_{12}, r_{13}\}$ 
into a non-terminal node.
Then, we define
the second generation of frequencies by setting
\[\big(n^{(2)}, n^{(2)}_1, n^{(2)}_2, n^{(2)}_3\big) :=(n_a, n_{a_1}, n_{a_2}, n_{a_3}).\]

\noi
Note that  we have $n^{(2)} = n^{(1)}$ or $n_k^{(1)}$ for some $k \in \{1, 2, 3\}$, 
\begin{equation*}
 n^{(2)} = n^{(2)}_1 - n^{(2)}_2 + n^{(2)}_3, \quad n^{(2)}_2\ne n^{(2)}_1, n^{(2)}_3,
\end{equation*}

\noi
where the last identities follow from Definition \ref{DEF:tree4}.
This extension of $\TT_1 \in \mathfrak{BT}(1)$ to $\TT_2\in \mathfrak{BT}(2)$
corresponds to introducing a new set of frequencies
after the first differentiation by parts,
where  the time derivative falls on each of $\ft \u_n$ and $\ft \u_{n_j}$, $j = 1, 2, 3$.\footnote{The complex conjugate signs on  $\ft \u_n$ and $\ft \u_{n_j}$ do not play any significant role.
Hereafter,  we drop the complex conjugate sign.
We also assume that all the Fourier coefficients of $\u$ are non-negative.}

In general, we construct  an ordered bi-tree $\TT_j$ 
of the $j$th generation from $\TT_{j-1}$ by
changing one of its terminal nodes $a  \in \TT^\infty_{j-1}$
into a non-terminal node.
Then, we define
the $j$th generation of frequencies by
\[\big(n^{(j)}, n^{(j)}_1, n^{(j)}_2, n^{(j)}_3\big) :=(n_a, n_{a_1}, n_{a_2}, n_{a_3}).\]

\noi
As before, it follows from Definition \ref{DEF:tree4} that 
\begin{equation*}
 n^{(j)} = n^{(j)}_1 - n^{(j)}_2 + n^{(j)}_3, \quad n^{(j)}_2\ne n^{(j)}_1, n^{(j)}_3.
\end{equation*}

\noi
Given an ordered bi-tree $\TT$, we denote by $B_j = B_j(\TT)$
the set of all possible frequencies in the $j$th generation.
Figure \ref{FIG:2} below shows an example of a bi-tree $\TT \in \mathfrak{BT}(3)$
ornamented by an index function $\bn \in \mathfrak{N}(\TT)$.
\begin{figure}[h]
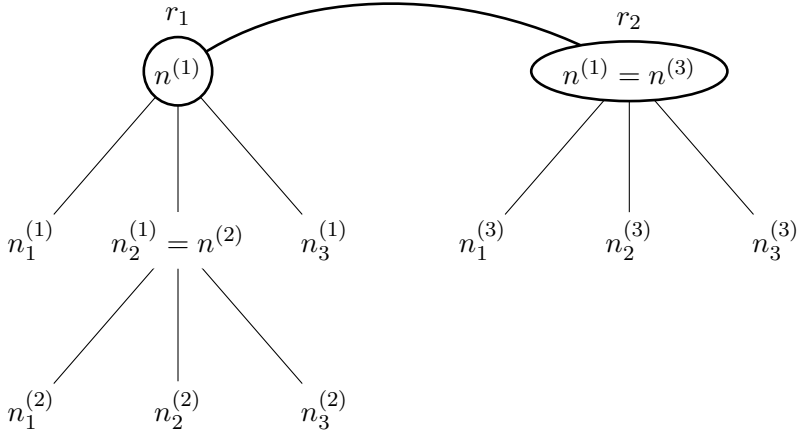

\[ \<T4>  \]

\caption{An example of a bi-tree $\TT \in \mathfrak{BT}(3)$.
Here, we have ornamented the nodes with the values of 
an index function ${\bf n} = \{n_a\}_{a\in\TT}
\in \mathfrak{N}(\TT)$,
specifying the generations of frequencies as discussed above.
}
\label{FIG:2}
\end{figure}

We denote by  $\phi_j$  the corresponding phase function introduced at the $j$th generation:
\begin{align}
\phi_j & =  \phi_j \big( n^{(j)},  n^{(j)}_1, n^{(j)}_2, n^{(j)}_3\big)
:=   \big(n_1^{(j)}\big)^4 - \big(n_2^{(j)}\big)^4 + \big(n_3^{(j)}\big)^4
- \big(n^{(j)}\big)^4.
\label{phi}
\end{align}

\noi
Then, by \eqref{Phi}, we have 
\begin{align*}
|\phi_j| \sim (n^{(j)}_\text{max})^2
\cdot| \big(n^{(j)} - n_1^{(j)}\big) \big(n^{(j)} - n_3^{(j)}\big)|,
\end{align*}

\noi
where $n^{(j)}_\text{max}: = \max\big(|n^{(j)}|, 
|n_1^{(j)}|, |n_2^{(j)}|, |n_3^{(j)}| \big)$.
Lastly, we denote by $\mu_j$
 the  phase function  (at the $j$th generation) corresponding to the usual cubic NLS 
 with the second order dispersion:
\begin{align*}
\mu_j & =  \mu_j \big( n^{(j)},  n^{(j)}_1, n^{(j)}_2, n^{(j)}_3\big)
:=   \big(n_1^{(j)}\big)^2 - \big(n_2^{(j)}\big)^2 + \big(n_3^{(j)}\big)^2
- \big(n^{(j)}\big)^2\notag \\
& =-2 \big(n^{(j)} - n_1^{(j)}\big) \big(n^{(j)} - n_3^{(j)}\big).
\end{align*}

\noi
Note that we have
\begin{align}
|\phi_j | \sim (n_{\max}^{(j)})^2 \cdot |\mu_j| 
\ges |\mu_j|^2.
\label{fphi}
\end{align}

\subsection{First few steps of normal form reductions}\label{SUBSEC:8.2}
We first implement
a formal infinite iteration scheme of normal form reductions
for smooth functions
without justifying switching of limits and summations.
As before, 
let $u$ be a smooth global solution to  \eqref{4NLS1}
and $\u(t) = S(-t) u(t) $ be its interaction representation.
For simplicity of notations, we 
simply set $\u_n = \ft \u_n$ in the following.
We may also drop the minus signs and the complex number~$i$.
In the following, we establish various multilinear estimates.
Our argument has a common feature with \cite{GKO}
in that Cauchy-Schwarz inequality plays an important role.
On the other hand, 
while the divisor counting argument played a crucial role in \cite{GKO}, 
 we do {\it not} use the divisor counting argument
in maximizing a gain of derivative.\footnote{We, however, 
use the divisor counting argument to show that the error term converges to 0,
where we do not need to show any gain of derivatives.
See Subsection~\ref{SUBSEC:error}.}

Given   $s >  - \frac13$, 
fix $K  = K(s) > 0$ (to be chosen later).\footnote{As we see in Subsection \ref{SUBSEC:8.6},
the constant $K$ will also depend other constants.}
Using the notations introduced in the previous subsection, for fixed $n$, 
we have\footnote{Due to the presence of 
$e^{-i \phi_1 t}$, 
the multilinear form  $\NN^{(1)}(\u )_n$
is  non-autonomous in $t$. 
Hence,  strictly speaking, 
we should denote it by   $\NN^{(1)}(t)(\u(t))_n$.
In the following, however, we estimate these 
 multilinear forms, uniformly in  $t \in \R$,
 and thus 
we simply suppress such $t$-dependence
when there is no confusion.
The same comment applies to other multilinear forms.
}
\begin{align}
  \frac{d}{dt} |\u_n(t) |^2
& = -2 \Re i 
 \sum_{\G(n)}
e^{ - i \phi(\bar n) t}  \u_{n_1} \cj{\u_{n_2}} \u_{n_3} \cj{\u_n}\notag\\
& = 
- 2 \Re i 
\sum_{\TT_1 \in \mathfrak{BT}(1)}
\sum_{\substack{{\bf n} \in \mathfrak{N}(\TT_1)\\ n_r = n} }
e^{  - i  \phi_1 t }  \prod_{a \in \TT^\infty_1} \u_{n_{a}}
= : \NN^{(1)}(\u )_n,
\label{1-g}
\end{align}

\noi
where $\G(n)$ is as in \eqref{Gam1}.
We divide the frequency space 
into $|\phi_1| \leq K $ and $|\phi_1|> K$.
Namely, define $A_K$ by 
\begin{align*}
A_K:= \big\{ 
{\bf n} \in \mathfrak{N}(\TT_1) :\, 
& |\phi_1(\bn)|  \leq K, n_r = n  \big\}
\end{align*}

\noi
and write
\begin{equation*}
\NN^{(1)} = \NN_{1}^{(1)} + \NN^{(1)}_2, 
\end{equation*}

\noi
where $\NN_{1}^{(1)}$ is the restriction of $\NN^{(1)}$
onto $A_K$
and 
$ \NN^{(1)}_2 = \NN^{(1)} - \NN_{1}^{(1)}$.  
Thanks to the restriction $|\phi_1|\leq K$, 
we can  estimate the nearly resonant part $\NN_{1}^{(1)}$ as follows.

\begin{lemma}\label{LEM:N^1_1}
Let $\NN^{(1)}_1$ be as above.
Then, for any $s\ \le 0$, 
we have 
\begin{align} 
\label{N^1-1}
\sum_{n\in \Z}| \NN^{(1)}_1(\u)_n| & 
\les K^{\frac 12 -2s} \|\u\|_{H^s}^4. 
\end{align}

\end{lemma}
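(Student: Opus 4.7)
The plan is to expand $\NN^{(1)}_1(\u)_n$ using the unique bi-tree in $\mathfrak{BT}(1)$, pass to absolute values (so that the oscillating phase $e^{-i\phi_1 t}$ and the conjugations become irrelevant), and exploit the constraint $|\phi_1|\le K$ through the factorization $|\phi_1|\sim|\mu_1|(n_1^*)^2$, which follows from \eqref{Phi} once one notes that $P:=n_1^2+n_2^2+n_3^2+n^2+2(n_1+n_3)^2\sim (n_1^*)^2$, together with $\mu_1=-2(n-n_1)(n-n_3)$. Writing $v_m:=|\widehat\u_m|$, this reduces the bound to
\begin{align*}
\sum_{n\in\Z}|\NN^{(1)}_1(\u)_n|\les\sum_{\substack{n=n_1-n_2+n_3\\ n_1,n_3\ne n\\ |\phi_1|\le K}}v_{n_1}v_{n_2}v_{n_3}v_n.
\end{align*}

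Next, we perform the change of variables $a:=n-n_1$, $c:=n-n_3\in\Z\setminus\{0\}$, so $n_2=n-a-c$ and the constraint $|\phi_1|\le K$ becomes $|ac|(n_1^*)^2\les K$. In particular $|ac|\les K^{1/2}$ (since $(n_1^*)^2\ge 1$) and, for each admissible pair $(a,c)$, \emph{all four} frequencies $n,n_1,n_2,n_3$ satisfy $|\,\cdot\,|\le n_1^*\les (K/|ac|)^{1/2}$. This double role of the constraint---confining the frequencies \emph{and} limiting the number of $(a,c)$---is the key leverage.

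Setting $w_m:=\langle m\rangle^s v_m$ so that $\|w\|_{\ell^2}=\|\u\|_{H^s}$, the frequency localization together with $s\le 0$ yields the pointwise bound $\prod_{i=1}^4\langle n_i\rangle^{-s}\les (K/|ac|)^{-2s}$. Two applications of Cauchy-Schwarz in $n$ then give
\begin{align*}
\sum_n w_{n-a}w_{n-a-c}w_{n-c}w_n
\le\Bigl(\sum_n w_{n-a}^2 w_n^2\Bigr)^{1/2}\Bigl(\sum_n w_{n-a-c}^2 w_{n-c}^2\Bigr)^{1/2}\le\|w\|_{\ell^\infty}^2\|w\|_{\ell^2}^2\le\|w\|_{\ell^2}^4.
\end{align*}
Summing over $(a,c)$ using $|ac|^{2s}\le 1$ (valid since $s\le 0$, $|ac|\ge 1$) and the divisor bound $\#\{(a,c)\in(\Z\setminus\{0\})^2:1\le|ac|\le M\}\les M\log M\les M^{1+}$ produces
\begin{align*}
\sum_{n\in\Z}|\NN^{(1)}_1(\u)_n|\les K^{-2s}\cdot K^{1/2+}\|\u\|_{H^s}^4\les K^{1/2-2s}\|\u\|_{H^s}^4,
\end{align*}
where for $s<0$ the small excess $K^{+}$ (really a $\log K$) is absorbed using the $|s|$-worth of slack in the exponent.

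The main obstacle is the initial bookkeeping: recognizing that the substitution $(a,c)=(n-n_1,n-n_3)$ simultaneously linearizes the resonance condition ($\mu_1=-2ac$) and captures the frequency localization ($n_1^*\les(K/|ac|)^{1/2}$). Once this is in place, everything reduces to Cauchy-Schwarz plus a standard divisor-type count, and the same substitution will serve as the template for the higher-generation estimates later in the normal form reductions.
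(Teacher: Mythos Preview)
Your approach is correct and shares the paper's key inputs---the factorization $|\phi_1|\sim|\mu_1|(n_1^*)^2$ plus Cauchy--Schwarz---but organizes the estimate differently. The paper applies Cauchy--Schwarz in $n$ to extract one factor $\|\u\|_{H^s}$, then a second Cauchy--Schwarz on the $\G(n)$-sum with weight $(n_{\max}^2|\mu_1|)^{-1}$; since $n_{\max}^2\ges|\mu_1|$, this weight is dominated by $|\mu_1|^{-2}$, which sums over $(n_1,n_3)$ to $O(1)$ with no logarithm, giving exactly $K^{1/2-2s}$ for every $s\le 0$. Your route---fixing $(a,c)$, applying a flat Cauchy--Schwarz in $n$, then counting $\#\{|ac|\les K^{1/2}\}$---is more elementary, but the divisor count produces a $\log K$. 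Two small remarks: (i) the parenthetical ``since $(n_1^*)^2\ge 1$'' only yields $|ac|\les K$; the sharper $|ac|\les K^{1/2}$ genuinely needs $(n_1^*)^2\ges|ac|$, which does follow from your subsequent observation that all four frequencies are $\les(K/|ac|)^{1/2}$. (ii) For $s<0$ your hint is right: retaining the factor $|ac|^{2s}$ rather than bounding it by $1$ gives $K^{-2s}\sum_{|ac|\les K^{1/2}}|ac|^{2s}\les K^{1/2-s+}$, strictly better than needed; at $s=0$, however, the $\log K$ survives and the stated exponent $K^{1/2}$ is not quite recovered. Since the lemma is only applied for $s\in(-\tfrac13,0)$ and the paper itself notes the power of $K$ is not sharp, this endpoint loss is harmless.
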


\begin{remark}\rm
In Lemma \ref{LEM:N^1_1}, 
we established an $\l^1_n$-bound on 
$\big\{\NN^{(1)}_1(\u)_n\big\}_{n \in \Z}$.
In this and the next subsections, we estimate various multilinear
terms in the $\l^1_n$-norm.
We point out that,
in  proving
uniqueness of solutions to \eqref{4NLS1} with the same initial condition,
 it suffices to 
estimate these multilinear terms only in the much weaker $\l^\infty_n$-norm.
Unfortunately, we do not know how to convert 
this gain in summability  to a gain in differentiability
to go below $-\frac 13$.
See Lemma \ref{LEM:N^2_1}.

\end{remark}

\begin{proof}

For notational simplicity, we drop the superscript $(1)$
in the frequencies $n^{(1)} = n_r$ and $n^{(1)}_j$.
In view of \eqref{Phi}, 
the condition $0 < |\phi_1| \leq K$ implies
that 
\begin{align}\label{K1a}
\jb{n}^{-s} \jb{n_1}^{-s}  \jb{n_2}^{-s}  \jb{n_3}^{-s} \les n_{\max}^{-4s}  \leq K^{-2s}
\end{align}

\noi
on $\G(n)$, provided that  $s\leq0$.
Then, 
by crudely estimating the contribution
with Cauchy-Schwarz inequality,  \eqref{K1a}, and $| \mathfrak{BT}(1)| = 1$, 
we have
\begin{align*}
\sum_{n\in \Z} | \NN^{(1)}_1 & (\u)_n|
 \les
\sum_{n\in \Z} 
\sum_{\TT_1 \in \mathfrak{BT}(1)} 
\sum_{|\phi|\leq K }
\sum_{\substack{{\bf n} \in \mathfrak{N}(\TT_1)\\n_r = n\\ \phi_1 = \phi}} 
\prod_{a \in \TT^\infty_1} |\u_{n_{a}}|
 \\
& \les \|\u\|_{H^s} \Bigg\{ \sup_{n\in \Z} 
\bigg( \sum_{\G(n)} 
\frac{K^{1 -4s}}{n_{\max}^{2}|(n-n_1)(n-n_3)|} \bigg)
\cdot 
\bigg(\sum_{n\in \Z} \sum_{\G(n)} 
\prod_{i= 1}^3 \jb{n_i}^{2s} |\u_{n_i}|^2 
\bigg)
\Bigg\}^\frac{1}{2}\\
& \lesssim K^{\frac 12  -2s } \|\u\|_{H^s}^4. 
\end{align*}

\noi
This proves \eqref{N^1-1}.	
Note that the power of $K$ is by no means sharp.
\end{proof}

Next, we consider the  non-resonant term $ \NN^{(1)}_2(\u).$
It turns out that there is no effective estimate for $ \NN^{(1)}_2(\u)$ 
and thus we perform a normal form reduction.
In the following, we restrict our discussion 
to 
\begin{align}
|\phi_1|> K,
\label{mu1}
\end{align}

\noi
namely, the set of frequencies are restricted onto $A_K^c$.
When it is clear from the context, however, 
we suppress such restriction
for notational simplicity.
Differentiating by parts, 
i.e.~integrating by parts without an integral sign,
we obtain
\begin{align}
 \NN^{(1)}_2(\u)_n
& = 
2 \Re \dt \bigg[
\sum_{\TT_1 \in \mathfrak{BT}(1)}
\sum_{\substack{ \n \in \mathfrak{N}(\TT_1)\\n_r = n }}
\frac{  e^{- i  \phi_1 t } }{\phi_1}
\prod_{a \in \TT_1^\infty} \u_{n_{a}}
\bigg]\notag \\
& \hphantom{X}
- 2\Re 
\sum_{\TT_1 \in \mathfrak{BT}(1)}
\sum_{\substack{ \n \in \mathfrak{N}(\TT_1)\\n_r = n }}
\frac{ e^{- i  \phi_1 t } }{\phi_1}
\dt\bigg(\prod_{a \in \TT^\infty_1} \u_{n_{a}}\bigg)
\notag\\
& = 2 \Re \dt \bigg[
\sum_{\TT_1 \in \mathfrak{BT}(1)}
\sum_{\substack{ \n \in \mathfrak{N}(\TT_1)\\n_r = n }}
\frac{ e^{- i  \phi_1 t } }{\phi_1}
\prod_{a \in \TT^\infty_1} \u_{n_{a}}
\bigg]\notag \\
& \hphantom{X}
- 2  \Re 
\sum_{\TT_1 \in \mathfrak{BT}(1)}
\sum_{b\in \TT_1^\infty}
\sum_{\substack{ \n \in \mathfrak{N}(\TT_1)\\n_r = n }}
\frac{ e^{- i  \phi_1 t } }{\phi_1}
\textsf{R}(\u)_{n_b}
\prod_{a \in \TT^\infty_1\setminus\{b\}} \u_{n_{a}}\notag \\
& \hphantom{X}
- 2\Re 
\sum_{\TT_2 \in \mathfrak{BT}(2)}
\sum_{\substack{ \n \in \mathfrak{N}(\TT_2)\\n_r = n }}
\frac{ e^{- i  ( \phi_1+ \phi_2) t } }{\phi_1}
\prod_{a \in \TT^\infty_2} \u_{n_{a}}
 \notag\\
& =: \dt \NN_0^{(2)}(\u)_n +  \RR^{(2)}(\u)_n + \NN^{(2)}(\u)_n.
\label{2-g}
\end{align}

\noi
In the second equality, we
applied the product rule and 
used the equation \eqref{4NLS2}
to replace  $\dt \u_{n_b}$ by 
the resonant part $\textsf{R}(\u)_{n_b}$ and the non-resonant part $\textsf{N}(\u)_{n_b}$.
Note that substituting the non-resonant part $\textsf{N}(\u)_{n_b}$
amounts to 
extending the tree $\TT_1\in \mathfrak{BT}(1)$
(and ${\bf n }\in \mathfrak{N}(\TT_1)$)
to $\TT_2 \in \mathfrak{BT}(2)$
(and to  ${\bf n }\in \mathfrak{N}(\TT_2)$, respectively)
by replacing  the terminal node $b \in \TT^\infty_1$
into a non-terminal node with three children $b_1, b_2,$ and $b_3$.

\begin{remark} \label{REM:2Re} \rm

Strictly speaking, the  phase factor appearing in $\NN^{(2)}(\u)$
may be $\phi_1 - \phi_2$
when the time derivative falls on the terms with the complex conjugate.
In the following, however, we simply write it as $\phi_1 + \phi_2$ since
it does not make any difference for our analysis.
Also, we often replace $\pm 1$ and $\pm i$ by $1$
for simplicity when they do not play an important  role.
Lastly, for notational simplicity, 
we drop  twice the real part symbol ``$2 \Re$''
on  multilinear forms,
but it is understood that all the multilinear forms
appear with 
twice  the real part symbol.

\end{remark}

We first estimate  the boundary term $\NN_0^{(2)}$.

\begin{lemma}
\label{LEM:N^2_0}
Let $\NN^{(2)}_0$ be as in \eqref{2-g}.
Then, for  $s \geq  -\frac12$, we have
\begin{align} \label{N^2-0}
\sum_{n\in \Z} | \NN^{(2)}_0(\u)_n| 
& \les   K^{\max(-\frac 12, -1- 2s)} \|\u\|_{H^s}^4.
\end{align}

\end{lemma}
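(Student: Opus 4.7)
The plan is to adapt the Cauchy–Schwarz scheme used in the proof of Lemma \ref{LEM:N^1_1}, now exploiting the extra factor $1/|\phi_1|$ together with the non-resonance condition $|\phi_1|>K$ to extract a negative power of $K$. Starting from the explicit expression for $\NN_0^{(2)}$ in \eqref{2-g}, bounded in absolute value, I would apply Cauchy–Schwarz first in $n$ (peeling off $|\u_n|\le \jb{n}^{-s}a_n$ with $a_n=\jb{n}^s|\u_n|$) and then internally on the $\G(n)$–sum to separate the weight from the trilinear product of $\u$–factors. This yields the bound
\begin{align*}
\sum_{n\in\Z}|\NN_0^{(2)}(\u)_n|
\les \|\u\|_{H^s}\, \Bigl\{ \sup_{n\in\Z} W_n \cdot \sum_{n}\sum_{\G(n)}\prod_{i=1}^{3}\jb{n_i}^{2s}|\u_{n_i}|^2\Bigr\}^{1/2}
\les \|\u\|_{H^s}^{4} \bigl(\sup_{n\in\Z} W_n\bigr)^{1/2},
\end{align*}
where
\begin{align*}
W_n := \sum_{\substack{\G(n)\\ |\phi_1|>K}} \frac{\jb{n}^{-2s}\jb{n_1}^{-2s}\jb{n_2}^{-2s}\jb{n_3}^{-2s}}{|\phi_1|^{2}}.
\end{align*}
Thus the task reduces to proving $\sup_n W_n \les K^{2\max(-1/2,\,-1-2s)}$.

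For the weight, I would combine two ingredients: the factorization \eqref{Phi} giving $|\phi_1|\sim n_{\max}^{2}|(n-n_1)(n-n_3)|$, and the restriction $|\phi_1|>K$. Setting $p=n-n_1\neq 0$ and $q=n-n_3\neq 0$, for $\alpha\in[0,2]$ I would split
\begin{align*}
\frac{1}{|\phi_1|^{2}} \;=\; \frac{1}{|\phi_1|^{\alpha}}\cdot \frac{1}{|\phi_1|^{2-\alpha}} \;\les\; \frac{K^{-\alpha}}{n_{\max}^{2(2-\alpha)}|pq|^{2-\alpha}}.
\end{align*}
Using $s\le 0$ so that the weights satisfy $\jb{n}^{-2s}\jb{n_1}^{-2s}\jb{n_2}^{-2s}\jb{n_3}^{-2s} \les n_{\max}^{-8s}$, this delivers
\begin{align*}
W_n \;\les\; K^{-\alpha} \sum_{p,q\neq 0} \frac{n_{\max}^{\,2\alpha-4-8s}}{|pq|^{2-\alpha}}.
\end{align*}

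The final step is to optimize $\alpha$ under the two constraints: (a) $2\alpha-4-8s\le 0$, i.e.\ $\alpha\le 2+4s$, so that the $n_{\max}$–exponent is non-positive and that factor is bounded uniformly in $n$; and (b) $2-\alpha>1$, i.e.\ $\alpha<1$, so that $\sum_{p,q\neq 0}|pq|^{-(2-\alpha)}$ converges. Taking $\alpha=\min(2+4s,\,1)-\eps$ gives $\alpha\approx 1$ when $s\ge -\tfrac14$ and $\alpha=2+4s$ when $-\tfrac12\le s\le -\tfrac14$; in the latter case the $n_{\max}$–exponent is exactly $0$ and the $|pq|$–sum becomes $\sum |pq|^{4s}$, which converges precisely because $4s<-1$. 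After taking a square root, the two regimes yield $K^{-1/2}$ and $K^{-1-2s}$ respectively, matching $K^{\max(-1/2,\,-1-2s)}$.

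The chief obstacle is the simultaneous constraint $\alpha\le 2+4s$ and $\alpha<1$ which, as $s\downarrow -\tfrac12$, forces $\alpha\downarrow 0$ and destroys the gain; controlling the borderline contributions (and verifying uniformity of the sum $\sup_n \sum_{p,q\neq 0} n_{\max}^{-2}|pq|^{-1}$ over $n$ that is implicit when $\alpha$ is at its extremal value) requires some bookkeeping but involves no new ideas beyond those already present in the treatment of Lemma \ref{LEM:N^1_1}. I would write the case split $s\ge -\tfrac14$ versus $s<-\tfrac14$ explicitly, verify in each case that the $(p,q)$–sum is bounded uniformly in $n$, and conclude.
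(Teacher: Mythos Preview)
Your proposal is correct and follows essentially the same route as the paper: apply Cauchy--Schwarz exactly as in Lemma~\ref{LEM:N^1_1} to reduce to bounding $\sup_n W_n$, then use the factorization $|\phi_1|\sim n_{\max}^2|pq|$ together with $|\phi_1|>K$ to extract the required power of $K$. The paper states the weight bound \eqref{N^2-0a} in one line; your $\alpha$--interpolation is simply a more explicit way of unpacking that step. One small point: in the regime $s>-\tfrac14$ you may take $\alpha=1$ exactly (no $\eps$ loss) by using $n_{\max}\gtrsim|pq|^{1/2}$, which converts $\sum_{p,q}n_{\max}^{-2-8s}|pq|^{-1}$ into $\sum_{p,q}|pq|^{-2-4s}<\infty$; this recovers the sharp $K^{-1/2}$ rather than $K^{-1/2+\eps}$.
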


\begin{proof}

As in the proof of Lemma \ref{LEM:N^1_1},   we drop the superscript $(1)$.
From \eqref{Phi}, we have
\begin{align}
\sup_{n\in \Z}\sum_{\substack{\G(n)\\|\phi_1|> K}}
 \frac{n_{\max}^{-8s}}{|\phi_1|^2}
\les\sup_{n\in \Z}\sum_{\substack{\G(n)\\|\phi_1|> K}}
\frac{1}{|(n-n_1)(n-n_3)|^2 {n_{\max}^{4+8s}}}
\les K^{ \max(-1, -2 - 4s)}
\label{N^2-0a}
\end{align}

\noi
for  $ s\geq - \frac 12$. 
Then, 
by Cauchy-Schwarz inequality with \eqref{N^2-0a} and  $| \mathfrak{BT}(1)| = 1$, we have
\begin{align*}
\sum_{n\in \Z} | \NN^{(2)}_0(\u)_n|
& \les 
\sum_{\TT_1 \in \mathfrak{BT}(1)}
\sum_{n\in \Z}
\sum_{\substack{{\bf n} \in \mathfrak{N}(\TT_1)\\n_r = n \\|\phi_1| >K} }
\frac{n_{\max}^{-4s}}{|\phi_1|}\prod_{a \in \TT^\infty_1} \jb{n_a}^s \u_{n_{a}}
 \\
& \leq  \| \u\|_{H^s} \Bigg\{ 
\bigg( \sup_{n\in \Z}\sum_{\substack{\G(n)\\|\phi_1|> K}}
 \frac{n_{\max}^{-8s}}{|\phi_1|^2}\bigg)
\cdot \bigg(\sum_{n\in \Z} \sum_{\G(n)}  \prod_{i =1}^3 \jb{n_i}^{2s}|\u_{n_i}|^2 \bigg)
\Bigg\}^\frac{1}{2}\\
&  \lesssim K^{  \max(-\frac 12, -1- 2s)} \|\u\|_{H^s}^4.
\end{align*}

\noi
This proves \eqref{N^2-0}.
\end{proof}

The following estimate on $\RR^{(2)}$ is an immediate corollary
to Lemma \ref{LEM:N^2_0}.

\begin{lemma}
\label{LEM:N^2_r}
Let $\RR^{(2)}$ be as in \eqref{2-g}.
Then, for  $s \ge - \frac13$, we have
\begin{align*} 
\sum_{n\in \Z} | \RR^{(2)}(\u)_n| 
& \les   K^{ \max(-\frac{1}{2}, - 1-3s)} \|\u\|_{H^s}^6.
\end{align*}

\end{lemma}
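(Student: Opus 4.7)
The plan is to reduce the estimate on $\RR^{(2)}$ to the 4-linear estimate already obtained in Lemma \ref{LEM:N^2_0}, by peeling off two extra Fourier coefficients at a single node. Recall from \eqref{2-g} that
\[
\RR^{(2)}(\u)_n = -2\Re \sum_{\TT_1 \in \mathfrak{BT}(1)} \sum_{b\in \TT_1^\infty} \sum_{\substack{\n \in \mathfrak{N}(\TT_1)\\ n_r = n,\, |\phi_1|>K}} \frac{e^{-i\phi_1 t}}{\phi_1}\, \textsf{R}(\u)_{n_b} \prod_{a \in \TT_1^\infty\setminus\{b\}} \u_{n_a},
\]
and that $\textsf{R}(\u)_{n_b}=|\u_{n_b}|^2\u_{n_b}$ from \eqref{4NLS2}, so $\RR^{(2)}$ is simply $\NN_0^{(2)}$ with the factor $\u_{n_b}$ replaced by $|\u_{n_b}|^2\u_{n_b}$ at one of the four terminal nodes $b\in\TT_1^\infty=\{r_2, r_{11}, r_{12}, r_{13}\}$.

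The key observation is that, since $s\leq 0$ and $\jb{n_b}\leq n_{\max}^{(1)}$, we have the pointwise bound
\[
|\u_{n_b}|^2 \leq \jb{n_b}^{-2s}\big(\jb{n_b}^s|\u_{n_b}|\big)^2 \leq \jb{n_b}^{-2s}\,\|\u\|_{H^s}^2 \leq \bigl(n_{\max}^{(1)}\bigr)^{-2s}\|\u\|_{H^s}^2.
\]
Factoring out $\|\u\|_{H^s}^2$ and summing the four choices of $b\in\TT_1^\infty$ (which only produces an $O(1)$ combinatorial loss since $|\mathfrak{BT}(1)|=1$), it suffices to show
\[
\sum_{n\in\Z}\sum_{\substack{\G(n)\\ |\phi_1|>K}} \frac{\bigl(n_{\max}^{(1)}\bigr)^{-4s}\,n_{\max}^{-2s}}{|\phi_1|} \prod_{i=1}^{3}\jb{n_i}^s|\u_{n_i}|\cdot \jb{n}^s|\u_n|\;\les\; K^{\max(-\tfrac12,-1-3s)}\|\u\|_{H^s}^{4},
\]
where I have already rewritten the four $\u$ factors in $H^s$-normalized form. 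Applying Cauchy--Schwarz in $(n,n_1,n_2,n_3)$ exactly as in the proof of Lemma \ref{LEM:N^2_0} (with the last factor $\jb{n}^s|\u_n|$ pulled out as $\|\u\|_{H^s}$), the whole estimate reduces to establishing the sup-sum bound
\[
\sup_{n\in\Z}\sum_{\substack{\G(n)\\ |\phi_1|>K}} \frac{\bigl(n_{\max}^{(1)}\bigr)^{-12s}}{|\phi_1|^{2}} \;\les\; K^{\max(-1,-2-6s)}.
\]

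The remaining sum is then handled by the factorization $|\phi_1|\sim (n_{\max}^{(1)})^{2}\,|(n-n_1)(n-n_3)|$ from \eqref{Phi}, exactly as in \eqref{N^2-0a}, which turns the sum into
\[
\sup_{n}\sum_{\substack{\G(n)\\|\phi_1|>K}} \frac{1}{\bigl(n_{\max}^{(1)}\bigr)^{4+12s}\,|(n-n_1)(n-n_3)|^{2}}.
\]
The two easy factors $|n-n_i|^{-2}$, $i=1,3$, are summable in $n_1,n_3$; the restriction $|\phi_1|>K$ then trades the remaining weight for a $K$-power, giving $K^{-1}$ when $4+12s\geq 2$ (i.e.\ $s\geq -\tfrac16$) and $K^{-2-6s}$ when $-\tfrac13<s<-\tfrac16$, which is precisely $K^{\max(-1,-2-6s)}$. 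Taking the square root (from Cauchy--Schwarz) yields $K^{\max(-\tfrac12,-1-3s)}$ and the extra $\|\u\|_{H^s}^{2}$ pulled out at the start upgrades the power of $\|\u\|_{H^s}$ from $4$ to $6$, completing the proof. The role of the threshold $s>-\tfrac13$ is exactly to guarantee $4+12s>0$, so there is no obstacle beyond bookkeeping; this is the sense in which Lemma \ref{LEM:N^2_r} is an immediate corollary of Lemma \ref{LEM:N^2_0}.
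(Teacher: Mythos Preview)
Your argument is correct and follows essentially the same route as the paper: both reduce to the sup-sum bound
\[
\sup_{n\in\Z}\sum_{\substack{\G(n)\\|\phi_1|>K}}\frac{n_{\max}^{-12s}}{|\phi_1|^{2}}\les K^{\max(-1,-2-6s)},
\]
which is exactly the displayed estimate in the paper's proof. The only cosmetic difference is how the two extra factors of $\u_{n_b}$ are disposed of: the paper invokes $\l^2\subset\l^6$ (so that $\|\jb{n}^{s}\u_n\|_{\l^6}^3\le\|\u\|_{H^s}^3$), whereas you use the cruder $\l^2\subset\l^\infty$ bound $|\u_{n_b}|^2\le\jb{n_b}^{-2s}\|\u\|_{H^s}^2$ to peel them off before Cauchy--Schwarz. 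Both embeddings are trivial and lead to the same weight $n_{\max}^{-12s}$ in the sup-sum, so there is no substantive difference.
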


\begin{proof}
This lemma follows from  the proof of Lemma \ref{LEM:N^2_0}
and $\l^2 \subset \l^6$
once we observe that 
\begin{align*}
\sum_{\substack{\G(n)\\|\phi_1|> K}}
 \frac{n_{\max}^{-12s}}{|\phi_1|^2} 
 \les \sum_{\substack{\G(n)\\|\phi_1|> K}}
 \frac{1}{|\mu_1|^{2} \jb{n_{\max}}^{4+12s}}
 \les K^{ \max(-1, -2 - 6s)}, 
\end{align*}

\noi
provided that  $s \ge -\frac13$.
\end{proof}

As in the first step of the normal form reductions, 
we can not estimate 
$\NN^{(2)}$ as it is.
By dividing the frequency space into 
\begin{equation} \label{C1}
C_1 = \big\{ |\phi_1 + \phi_2| \lesssim 6^3 |\phi_1|^{1-\theta}\big\} 
\end{equation}

\noi
for some $\theta \in (0, 1)$ (to be chosen later)
and its complement $C_1^c$,\footnote{Clearly, the number $6^3$ in \eqref{C1} 
does not make any difference at this point.
However, we insert it to match with \eqref{CJ}.
See also  \eqref{C2}.}
split $\NN^{(2)}$ as 
\begin{equation} \label{N^2_1}
\NN^{(2)} = \NN^{(2)}_1 + \NN_2^{(2)},
\end{equation}

\noi
where $\NN^{(2)}_1$ is the restriction of $\NN^{(2)}$
onto $C_1$
and
$\NN_2^{(2)} := \NN^{(2)} - \NN^{(2)}_1$.
Thanks to the frequency restriction, 
we can estimate the first term $\NN^{(2)}_1$ as follows.

\begin{lemma}\label{LEM:N^2_1}
Let $\NN^{(2)}_1$ be as in \eqref{N^2_1}.
Then,
for  $s > - \frac{1}3$, 
 we have
\begin{align*}
\sum_{n\in \Z} | \NN^{(2)}_1(\u)_n| 
\les 
 \|\u\|_{H^s}^6.
\end{align*}
\end{lemma}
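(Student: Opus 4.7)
Since $|\mathfrak{BT}(2)| = 4$ is finite, it suffices to treat one fixed ordered bi-tree $\TT_2$; by symmetry we can assume the second generation is obtained by expanding a specific terminal $b\in\TT_1^\infty$, so that the six terminals of $\TT_2$ split naturally into three first-generation ``spectators'' $T_{\text{spec}} = \TT_1^\infty\setminus\{b\}$ and three second-generation ``descendants'' $T_{\text{new}} = \{b_1,b_2,b_3\}$. Writing $\u_{n_a} = \jb{n_a}^{-s} f(n_a)$ with $f(n)=\jb{n}^s|\u_n|\in\ell^2$ and $\|f\|_{\ell^2}=\|\u\|_{H^s}$, the task reduces to bounding
\begin{equation*}
\sum_n \sum_{\bn\in\mathfrak{N}(\TT_2),\, n_r=n}\frac{\prod_a \jb{n_a}^{-s}}{|\phi_1|}\,\ind_{C_1\cap\{|\phi_1|>K\}}\prod_a f(n_a) \lesssim \|f\|_{\ell^2}^6.
\end{equation*}

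The strategy is Cauchy--Schwarz combined with a counting estimate that exploits the near-resonance $C_1$. First I would apply Cauchy--Schwarz in the free second-generation variables $\bn^{(2)}$ (with $\bn^{(1)}$ and $n$ held fixed), splitting the six $f$-factors between $T_{\text{spec}}$ and $T_{\text{new}}$. The sum $\sum_{\bn^{(2)}} \prod_{a\in T_{\text{new}}} f(n_a)^2$ is controlled by $\|f\|_{\ell^2}^3$ via Young's inequality on $\Z$, since the momentum constraint $n^{(2)} = n_1^{(2)} - n_2^{(2)}+n_3^{(2)}$ encodes a convolution structure. The remaining factor $\sum_{\bn^{(2)}}\prod_{a\in T_{\text{new}}}\jb{n_a}^{-2s}\ind_{C_1}$ is where the restriction $C_1$ is decisive: on $C_1\cap\{|\phi_1|>K\}$ (for $K$ sufficiently large, so that the triangle inequality gives $|\phi_2|\geq|\phi_1|-|\phi_1|^{1-\theta}\sim|\phi_1|$), we have $|\phi_2|\sim|\phi_1|$, and the factorization \eqref{Phi} with \eqref{fphi} forces $n_{\max}^{(2)}\lesssim|\phi_1|^{1/2}$ (away from the degeneracy $\mu_2=0$, which I treat separately using $|\phi_2|\gtrsim|\mu_2|^2$). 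Hence $\prod_{a\in T_{\text{new}}}\jb{n_a}^{-2s}\lesssim|\phi_1|^{-3s}$, and a divisor-type count for $|\phi_1+\phi_2|\lesssim|\phi_1|^{1-\theta}$ contributes a factor $|\phi_1|^{1-\theta+\eps}$, yielding overall $|\phi_1|^{1-\theta-3s+\eps}$.

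Substituting back, one is left with a weighted three-linear sum over first-generation variables: roughly,
\begin{equation*}
\sum_n \sum_{\bn^{(1)}}\frac{\prod_{a\in T_{\text{spec}}\cup\{r\}}\jb{n_a}^{-s}}{|\phi_1|^{1 - (1-\theta-3s+\eps)/2}}\prod_{a\in T_{\text{spec}}}f(n_a),
\end{equation*}
which by one further Cauchy--Schwarz and the factorization $|\phi_1|\sim(n^{(1)}_{\max})^2|\mu_1|$ is summable in the style of Lemma \ref{LEM:N^2_0}, producing $\|f\|_{\ell^2}^3$, provided $s>-\frac13$ and $\theta>0$ is chosen sufficiently small. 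The main obstacle is the second-generation counting, because the bound $n_{\max}^{(2)}\lesssim|\phi_1|^{1/2}$ degrades when $\mu_2$ is small; the remedy is to split into subcases on the size of $|\mu_2|$ relative to $(n_{\max}^{(2)})^{-2}|\phi_1|$ and to absorb the residual logarithmic divisor losses into $\eps$, so that the final convergence is governed by the cubic counting in $\bn^{(1)}$. The threshold $s=-\frac13$ arises exactly from balancing the $|\phi_1|^{-3s}$ gain in the second-generation count against the $|\phi_1|^{-2}$ weight from $1/|\phi_1|^2$, and it matches the regularity bound announced in Theorem \ref{THM:2}.
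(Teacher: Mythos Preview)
Your approach has a genuine gap: the sequential Cauchy--Schwarz combined with the divisor count on $C_1$ does not close down to $s>-\tfrac13$. After your second-generation step you are left (in the counting factor, after the next Cauchy--Schwarz squares everything) with
\[
\sup_{n}\sum_{\bn^{(1)}:\,n_r=n}\frac{(n^{(1)}_{\max})^{-6s}}{|\phi_1|^{\,1+\theta+3s-\eps}}.
\]
Writing $|\phi_1|\sim (n^{(1)}_{\max})^2|\mu_1|$, the binding constraint for convergence of the $\mu_1$-sum is $1+\theta+3s-\eps>1$, i.e.\ $s>-\theta/3$. Since the overall normal-form scheme requires $\theta\le 2/3$ (see the proof of Lemma~\ref{LEM:N^J+1_r}), your argument can reach only $s>-2/9$, not $s>-1/3$. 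Note also that you say ``$\theta$ sufficiently small,'' but smaller $\theta$ makes your divisor count $|\phi_1|^{1-\theta+\eps}$ \emph{worse}; your arithmetic would need $\theta$ close to $1$, which is not permitted.

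The missing idea is simple and avoids divisor counting entirely. You already observe $|\phi_2|\sim|\phi_1|$ on $C_1$; the paper uses this not to bound $n_{\max}^{(2)}$ but to \emph{redistribute the weight}: in a single Cauchy--Schwarz over all of $\bn$ (after pulling out the factor at $r_2$), the counting factor is
\[
\sup_n\sum_{\substack{\bn\in\mathfrak N(\TT_2)\\ n_r=n}}\frac{(n^{(1)}_{\max})^{-6s}(n^{(2)}_{\max})^{-6s}}{|\phi_1|^2}
\;\sim\;
\sup_n\sum_{\bn}\frac{(n^{(1)}_{\max})^{-6s}}{|\phi_1|}\cdot\frac{(n^{(2)}_{\max})^{-6s}}{|\phi_2|}
\;\lesssim\;
\sup_n\sum_{\bn}\frac{1}{|\mu_1|^{1+}|\mu_2|^{1+}}\;\lesssim\;1,
\]
using $(n^{(j)}_{\max})^{-6s}/|\phi_j|\lesssim |\mu_j|^{-1-}$ exactly when $s>-\tfrac13$. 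The two generations then decouple and each $\mu_j$-sum converges; no counting of the $C_1$ window is needed, and the threshold $-\tfrac13$ falls out directly.
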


Before presenting the proof of Lemma \ref{LEM:N^2_1}, 
let us briefly describe 
how to handle 
the second term of $\NN^{(2)}_2$.
On the support of $\NN^{(2)}_2$, we have 
\begin{equation} \label{mu2}
|\phi_1 + \phi_2|  \gg 6^3 |\phi_1|^{1-\theta} > 6^3 K^{1-\theta}.
\end{equation}

\noi
Namely, the phase function $\phi_1 + \phi_2$ is ``large'' in this case
and hence we can exploit this fast oscillation
by applying the second step of the normal form reduction:
\begin{align} \label{N^3}
\NN^{(2)}_2 (\u)_n
& = 
 \dt \bigg[\sum_{\TT_2 \in \mathfrak{BT}(2)}
\sum_{\substack{\n \in \mathfrak{N}(\TT_2)\\n_r = n}}
\frac{ e^{- i( \phi_1 + \phi_2 )t } }{\phi_1(\phi_1+\phi_2)}
\prod_{a \in \TT^\infty_2} \u_{n_{a}} \bigg]\notag \\
& \hphantom{X} -  
\sum_{\TT_2 \in \mathfrak{BT}(2)}
\sum_{\substack{\n \in \mathfrak{N}(\TT_2)\\n_r = n}}
\frac{ e^{- i( \phi_1 + \phi_2)t } }{\phi_1(\phi_1+\phi_2)}
\, 
\dt \bigg(\prod_{a \in \TT^\infty_2 } \u_{n_{a}}\bigg) \notag \\
& = \dt \bigg[\sum_{\TT_2 \in \mathfrak{BT}(2)}
\sum_{\substack{\n \in \mathfrak{N}(\TT_2)\\n_r = n}}
\frac{ e^{- i( \phi_1 + \phi_2 )t } }{\phi_1(\phi_1+\phi_2)}
\prod_{a \in \TT^\infty_2} \u_{n_{a}} \bigg]\notag \\
& \hphantom{X} -  
\sum_{\TT_2 \in \mathfrak{BT}(2)}
\sum_{b \in\TT^\infty_2} 
\sum_{\substack{\n \in \mathfrak{N}(\TT_2)\\n_r = n}}
\frac{ e^{- i( \phi_1 + \phi_2)t } }{\phi_1(\phi_1+\phi_2)}
\, \textsf{R}(\u)_{n_b}
\prod_{a \in \TT^\infty_2 \setminus \{b\}} \u_{n_{a}} \notag \\
& \hphantom{X} 
- \sum_{\TT_3 \in \mathfrak{BT}(3)}
\sum_{\substack{\n \in \mathfrak{N}(\TT_3)\\n_r = n}}
\frac{ e^{- i( \phi_1 + \phi_2 +\phi_3)t } }{\phi_1(\phi_1+\phi_2)}
\,\prod_{a \in \TT^\infty_3} \u_{n_{a}} \notag\\
& =: \dt \NN^{(3)}_0(\u)_n + \RR^{(3)}(\u)_n + \NN^{(3)}(\u)_n.
\end{align}

\noi
The first two terms  $\NN^{(3)}_0$ and $ \RR^{(3)}$ on the right-hand side 
can be estimated in a straightforward manner
with \eqref{mu1} and \eqref{mu2}.
See Lemmas \ref{LEM:N^J+1_0} and \ref{LEM:N^J+1_r} below.
As for the last term $\NN^{(3)}$,  
we split it as 
\begin{equation*}
\NN^{(3)} = \NN^{(3)}_1 + \NN^{(3)}_2,
\end{equation*}

\noi
where $\NN^{(3)}_1$ and  $\NN^{(3)}_2$
are  the restrictions
onto 
\begin{equation} \label{C2}
C_2 = \big\{ |\phi_1 + \phi_2 + \phi_3| \lesssim 8^3|\phi_1 + \phi_2|^{1-\theta}\big\} 
\cup \big\{ |\phi_1 + \phi_2 + \phi_3| \lesssim 8^3 |\phi_1|^{1-\theta}\big\}
\end{equation}

\noi
and its complement $C_2^c$, respectively.
By exploiting the frequency restriction,
we can  estimate the first term  $\NN^{(3)}_1$ (see Lemma \ref{LEM:N^J+1_1} below).
As for the second term $\NN^{(3)}_2$, 
we apply the third step of the normal form reductions.
In this way, we iterate  normal form reductions
in an indefinite manner.

We conclude this subsection by presenting the 
proof of Lemma \ref{LEM:N^2_1}.

\begin{proof}[Proof of Lemma \ref{LEM:N^2_1}]
Note that 
we have $|\phi_2|\sim |\phi_1|$ thanks to \eqref{C1}.
Then, with \eqref{fphi}
and $|\mu_j| \les (n^{(j)}_{\max})^2$, $j = 1, 2$,  we have 
\begin{align} 
\sup_{n \in \Z}
  \sum_{\substack{{\bf n} \in \mathfrak{N}(\TT_2)\\ n_r = n\\|\phi_1| \sim |\phi_2| > K}} 
\frac{(n^{(1)}_{\max})^{-6s} (n^{(2)}_{\max})^{-6s}}{|\phi_1|^2} 
&  \lesssim 
\sup_{n \in \Z}  \sum_{\substack{{\bf n} \in \mathfrak{N}(\TT_2)\\  n_r = n\\|\phi_1|>K}} 
\frac{1} {|\mu_1 \mu_2| (n^{(1)}_{\max} n^{(2)}_{\max} )^{2+6s }} \notag  \\
& \les  \sup_{n \in \Z} \sum_{\substack{{\bf n} \in \mathfrak{N}(\TT_2)\\  n_r = n}}
\frac{1} {|\mu_1 \mu_2|^{1+ }}
 \les 1,
\label{TT3}
\end{align}

\noi
provided that  $s > - \frac{1}3$.
In the last step, we first summed  over $n_1^{(2)}$ and $n_3^{(2)}$ 
for  fixed $n^{(2)}$ and then summed over 
 $n_1^{(1)}$ and $n_3^{(1)}$ 
 for fixed $n$.

\smallskip

\noi
$\bullet$ {\bf Case 1:}
We first consider the case  $\Pi_2(\TT_2) = \{r_2\}$.
Namely, the second root node $r_2$ is a terminal node.
By Cauchy-Schwarz inequality with \eqref{TT3}, we have
\begin{align*}
\sum_{n\in \Z} | \NN^{(2)}_1(\u)_n | 
& \lesssim 
\sum_{n\in \Z}
\sum_{\substack{\TT_2 \in \mathfrak{BT}(2)\\ \Pi_2(\TT_2) = \{r_2\}}}
 \sum_{\substack{{\bf n} \in \mathfrak{N}(\TT_2)\\n_r = n\\|\phi_1|>K}} 
\frac{1}{|\phi_1|} \prod_{a \in \TT^\infty_2} \u_{n_a} 
\notag  \\
& \les
\| \u\|_{H^s} \Bigg\{ \sum_{n\in \Z} \Bigg(
\sum_{\substack{\TT_2 \in \mathfrak{BT}(2)\\ \Pi_2(\TT_2) = \{r_2\}}}
 \sum_{\substack{{\bf n} \in \mathfrak{N}(\TT_2)\\ n_r = n\\|\phi_1|>K}} 
\frac{\jb{n}^{-s}}{|\phi_1|} \prod_{a \in \TT^\infty_2 \setminus \{r_2\} } \u_{n_a} \Bigg)^2 \Bigg\}^{\frac12} 
\notag  \\
& \lesssim \|\u\|_{H^s} 
\sup_{\substack{\TT_2 \in \mathfrak{BT}(2)\\ \Pi_2(\TT_2) = \{r_2\}}}
\bigg( \sup_{n\in \Z} \sum_{\substack{{\bf n} \in \mathfrak{N}(\TT_2)\\ n_r = n\\|\phi_1|>K}} 
\frac{(n^{(1)}_{\max})^{-6s} (n^{(2)}_{\max})^{-6s}}{|\phi_1|^2} \bigg)^\frac{1}{2} \notag \\
& \qquad  \qquad \times 
 \bigg(  
 \sum_{n\in \Z}
 \sum_{\substack{{\bf n} \in \mathfrak{N}(\TT_2)\\n_r = n}} 
\prod_{a \in \TT^\infty_2\setminus \{r_2\}} \jb{n_a}^{2s}|\u_{n_a}|^2
\bigg)^\frac{1}{2}\notag  \\
& \les \|\u\|_{H^s}^6.
\end{align*}

\noi
In the last step, 
we used the observations in Remark \ref{REM:terminal}.

\smallskip

\noi
$\bullet$ {\bf Case 2:}
Next, we  consider the case  $\Pi_2(\TT_2) \ne \{r_2\}$.
In this case, we need to modify the argument above since
the frequency $n_r = n$ does not correspond to a terminal node.
Noting that $\TT_2^\infty = \Pi_1(\TT_2)^\infty \cup \Pi_2(\TT_2)^\infty$
and hence 
\begin{align*}
\sum_{\substack{{\bf n} \in \mathfrak{N}(\TT_2)\\{  n}_r = n}} 
\prod_{a \in \TT^\infty_2} |\u_{n_a}|^2
= \prod_{j = 1}^2 \bigg(\sum_{\substack{{\bf n} \in \mathfrak{N}(\Pi_j(\TT_2))\\{  n}_{r_j} = n}} 
\prod_{a_j \in \Pi_j(\TT_2)^\infty} |\u_{n_{a_j}}|^2
\bigg), 
\end{align*}

\noi
we have
\begin{align*}
\sum_{n\in \Z} |  & \NN^{(2)}_1(\u)_n|
 \lesssim 
\sum_{n\in \Z}
\sum_{\substack{\TT_2 \in \mathfrak{BT}(2)\\ \Pi_2(\TT_2) \ne \{r_2\}}}
\sum_{\substack{{\bf n} \in \mathfrak{N}(\TT_2)\\ n_r = n\\|\phi_1|>K}} 
\frac{1}{|\phi_1|} \prod_{a \in \TT^\infty_2} \u_{n_a} 
\notag  \\
& \lesssim 
\sup_{\substack{\TT_2 \in \mathfrak{BT}(2)\\ \Pi_2(\TT_2) \ne \{r_2\}}}
\sum_{n \in \Z}
\bigg(  \sum_{\substack{{\bf n} \in \mathfrak{N}(\TT_2)\\ n_r = n\\|\phi_1|>K}} 
\frac{(n^{(1)}_{\max})^{-6s} (n^{(2)}_{\max})^{-6s}}{|\phi_1|^2}\, \bigg)^\frac{1}{2}
 \bigg( \sum_{\substack{{\bf n} \in \mathfrak{N}(\TT_2)\\ n_r = n}}
  \prod_{a \in \TT^\infty_2} \jb{n_a}^{2s}|\u_{n_a}|^2
\bigg)^\frac{1}{2}\notag  \\
& \lesssim 
\sup_{\substack{\TT_2 \in \mathfrak{BT}(2)\\ \Pi_2(\TT_2) \ne \{r_2\}}}
\sum_{n \in \Z}
 \bigg( \sum_{\substack{{\bf n} \in \mathfrak{N}(\TT_2)\\n_r = n}}
  \prod_{a \in \TT^\infty_2} \jb{n_a}^{2s}|\u_{n_a}|^2
\bigg)^\frac{1}{2}\notag  \\
& \les 
\sup_{\substack{\TT_2 \in \mathfrak{BT}(2)\\ \Pi_2(\TT_2) \ne \{r_2\}}}
 \sum_{n\in \Z} \, \prod_{j = 1}^2  \bigg( 
 \sum_{\substack{{\bf n} \in \mathfrak{N}(\Pi_j(\TT_2))\\ n_{r_j} = n}} 
\prod_{a_j \in \Pi_j(\TT_2)^\infty} \jb{n_{a_j}}^{2s}|\u_{n_{a_j}}|^2 \bigg) ^\frac{1}{2}\notag \\
& \les 
\sup_{\substack{\TT_2 \in \mathfrak{BT}(2)\\ \Pi_2(\TT_2) \ne \{r_2\}}}
 \prod_{j = 1}^2   \bigg( 
\sum_{n\in \Z}  \sum_{\substack{{\bf n} \in \mathfrak{N}(\Pi_j(\TT_2))\\n_{r_j} = n}} 
\prod_{a_j \in \Pi_j(\TT_2)^\infty} \jb{n_{a_j}}^{2s}|\u_{n_{a_j}}|^2 \bigg) ^\frac{1}{2}\notag \\
& \les 
  \|\u\|_{H^s}^6.
\end{align*}

\noi
This completes the proof of Lemma \ref{LEM:N^2_1}.
\end{proof}

\begin{remark}\label{REM:Yuzhao}
\rm
The above computation in Cases 1 and 2 in particular shows that,
given $\TT_j  \in \mathfrak{BT}(j)$, $j \in \NB$, 
we have
\begin{align*}
 \sum_{n\in \Z} \Big( \sum_{\substack{{\bf n} \in \mathfrak{N}(\TT_j)\\n_r = n } }
\prod_{a \in \TT^\infty_j} |\u_{n_{a}}|^2 \Big)^{\frac12} \le \|\u_n\|_{\l^2_n}^{2j+2}.
\end{align*}
\end{remark}

\subsection{General step: $J$\,th generation}
\label{SUBSEC:8.3}

After the $J$\,th step, we have
\begin{align} \label{N^J+1}
\NN^{(J)}_2 (\u)_n
& = \dt \bigg[ 
\sum_{\TT_J \in \mathfrak{BT}(J)}
\sum_{\substack{{\bf n} \in \mathfrak{N}(\TT_J)\\n_r = n}}
\frac{ e^{- i \wt{\phi}_Jt } }{\prod_{j = 1}^J \wt{\phi}_j}
\, \prod_{a \in \TT^\infty_J} \u_{n_{a}}
\bigg]\notag \\
& \hphantom{X} 
- \sum_{\TT_{J} \in \mathfrak{BT}(J)}
\sum_{b \in\TT^\infty_J} 
\sum_{\substack{{\bf n} \in \mathfrak{N}(\TT_J)\\n_r = n}}
\frac{ e^{- i \wt{\phi}_Jt } }{\prod_{j = 1}^J \wt{\phi}_j}
\, \textsf{R}(\u)_{n_b}
\prod_{a \in \TT^\infty_J \setminus \{b\}} \u_{n_{a}}  \notag \\
& \hphantom{X} 
- \sum_{\TT_{J+1} \in \mathfrak{BT}(J+1)}
\sum_{\substack{{\bf n} \in \mathfrak{N}(\TT_{J+1})\\n_r = n}}
\frac{ e^{- i \wt{\phi}_{J+1}t } }{\prod_{j = 1}^J \wt{\phi}_j}
\,\prod_{a \in \TT^\infty_{J+1}} \u_{n_{a}} \notag\\
& =: \dt \NN^{(J+1)}_0 (\u)_n+ \RR^{(J+1)}(\u)_n + \NN^{(J+1)}(\u)_n,
\end{align}

\noi
where $\wt{\phi}_J$ is defined by 
\begin{align}
 \wt{\phi}_J := \sum_{j = 1}^J \phi_j.
\label{XX2}
\end{align}

\noi
Recall that $|\phi_1|>K$ and 
\begin{equation} \label{muj}
|\wt{\phi}_j|  \gg 
(2j+2)^{3}\max ( |\wt{\phi}_{j-1}|^{1-\theta},
|\phi_1|^{1-\theta}) >
(2j+2)^{3}K^{1-\theta}, 
\end{equation}

\noi
for $j = 2, \dots, J.$
One of the main tasks
in estimating the multilinear forms in \eqref{N^J+1}
is to control  the rapidly growing
 cardinality
$c_J = |\mathfrak{BT}(J)| $  defined in \eqref{cj1}.
As in \cite{GKO}, 
we control $c_J$ by the growing
constant $(2j+2)^3$ appearing in \eqref{muj}.

First, we estimate $\NN^{(J+1)}_0$ and  $\RR^{(J+1)}$.

\begin{lemma}\label{LEM:N^J+1_0}
Let $\NN^{(J+1)}_0$ be as in \eqref{N^J+1}.
Then, for  $s>-\frac 13$, 
we have\footnote{The implicit constant is  independent of $J$.
The same comment applies to Lemmas \ref{LEM:N^J+1_r} and \ref{LEM:N^J+1_1} below.}
\begin{align*} 
\sum_{n\in \Z}| \NN^{(J+1)}_0(\u)_n|  \lesssim 
K^{-\frac{J(1-\theta)}2 } 
 \|\u\|_{H^s}^{2J+2}. 
\end{align*}
\end{lemma}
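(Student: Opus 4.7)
The plan is to bound the multilinear form $\NN^{(J+1)}_0$ by combining Cauchy--Schwarz in the frequency variables with the cumulative phase lower bound \eqref{muj} from the normal form reductions. The decay factors $(2j+2)^3$ appearing in \eqref{muj} are crucially used to absorb the rapidly growing tree cardinality $c_J = 2^{J-1} J!$ from \eqref{cj1}, while the powers of $K^{1-\theta}$ furnish the claimed $K$-decay. Starting from \eqref{N^J+1},
\begin{align*}
\sum_n |\NN_0^{(J+1)}(\u)_n|
\leq \sum_{\TT_J \in \mathfrak{BT}(J)} \sum_{n\in\Z} \sum_{\substack{\bn \in \mathfrak{N}(\TT_J)\\ n_r = n}} \frac{1}{\prod_{j=1}^J |\wt\phi_j|} \prod_{a \in \TT_J^\infty} |\u_{n_a}|,
\end{align*}
I would factor $\prod_{a \in \TT_J^\infty}|\u_{n_a}| = \prod_a \jb{n_a}^{-s} \cdot \jb{n_a}^s |\u_{n_a}|$ and apply Cauchy--Schwarz in $\bn$, splitting the analysis into the two cases $\Pi_2(\TT_J) = \{r_2\}$ and $\Pi_2(\TT_J) \neq \{r_2\}$ exactly as in Cases 1 and 2 of the proof of Lemma \ref{LEM:N^2_1}. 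In either case the ``signal'' factor $\sum_n \sum_\bn \prod_a \jb{n_a}^{2s}|\u_{n_a}|^2$ is controlled by $\|\u\|_{H^s}^{2J+2}$ in view of Remark \ref{REM:Yuzhao}.

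The core of the argument is the ``weight'' estimate
\begin{align*}
\sup_{\TT_J} \sup_n \sum_{\substack{\bn \in \mathfrak N(\TT_J)\\n_r = n}} \frac{\prod_{a \in \TT_J^\infty} \jb{n_a}^{-2s}}{\prod_{j=1}^J |\wt\phi_j|^2} \lesssim K^{-2(J-1)(1-\theta)} \prod_{j=2}^J (2j+2)^{-6},
\end{align*}
which I would prove by a generational induction. For each $j \geq 2$, decompose $|\wt\phi_j|^{-2}$ into a ``decay factor'' $\lesssim [(2j+2)^3 K^{1-\theta}]^{-2+\eta}$ (supplied by \eqref{muj}) and a residual factor to be consumed by the summation over the free variables $(n_1^{(j)}, n_3^{(j)})$ introduced at generation $j$. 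Since $\phi_j = \wt\phi_j - \wt\phi_{j-1}$, and since \eqref{muj} forces $|\wt\phi_j| \gg (2j+2)^3 |\wt\phi_{j-1}|^{1-\theta}$ so that $|\wt\phi_j|$ dominates $|\wt\phi_{j-1}|$ in the relevant regime, one has $|\wt\phi_j| \sim |\phi_j| \sim (n^{(j)}_{\max})^2 |\mu_j|$ for the leading contribution. The standard divisor bound $\sum_{n_1^{(j)}, n_3^{(j)}} |\mu_j|^{-1-\eps} \lesssim 1$ combined with the hypothesis $s > -\frac{1}{3}$ (as in the proof of \eqref{TT3}) then renders the residual sum bounded, carried out iteratively from the outermost generation inward.

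Finally, summing over $\TT_J \in \mathfrak{BT}(J)$, the cardinality $c_J = 2^{J-1} J!$ is overwhelmed by $\prod_{j=2}^J (2j+2)^3 = (2^{J-1}(J+1)!)^3$ with room to spare, yielding a bound summable (indeed rapidly decaying) in $J$. Combined with the square root from Cauchy--Schwarz, this gives the decay $K^{-(J-1)(1-\theta)}$, which is stronger than the claimed $K^{-J(1-\theta)/2}$ for all $J \geq 2$; the case $J = 1$ is subsumed by Lemma \ref{LEM:N^2_0}. The main obstacle, which will require the most delicate bookkeeping, is the inductive weight estimate across generations --- specifically, justifying the effective replacement $|\wt\phi_j| \sim |\phi_j|$ in the summability step over the higher-generation free variables, relying on the rapid inflation of $|\wt\phi_j|$ forced by \eqref{muj} together with the factorization \eqref{phi} of the quartic phase. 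The rest of the argument is a routine, if notationally heavy, adaptation of the $J = 2$ case treated in Lemma \ref{LEM:N^2_1}.
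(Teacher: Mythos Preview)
Your overall architecture is correct and matches the paper: Cauchy--Schwarz in $\bn$, the case split on whether $r_2$ is terminal, the appeal to Remark~\ref{REM:Yuzhao} for the signal factor, and the absorption of $c_J$ by the cumulative $(2j+2)^3$ factors. The gap is exactly where you flag it, but your proposed resolution is wrong.

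You write that \eqref{muj} forces $|\wt\phi_j|$ to dominate $|\wt\phi_{j-1}|$, and hence $|\wt\phi_j| \sim |\phi_j|$. This is false: \eqref{muj} only gives $|\wt\phi_j| \gg (2j+2)^3 |\wt\phi_{j-1}|^{1-\theta}$, and since $\theta > 0$ this is perfectly compatible with $|\wt\phi_j| \ll |\wt\phi_{j-1}|$. Concretely, if $|\wt\phi_{j-1}| = M$ is very large, \eqref{muj} is satisfied once $|\wt\phi_j| \gg M^{1-\theta}$, which allows $|\wt\phi_j| \sim M^{1-\theta/2}$, say; then $|\phi_j| = |\wt\phi_j - \wt\phi_{j-1}| \sim M \gg |\wt\phi_j|$. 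So there is no ``leading regime'' in which $|\wt\phi_j| \sim |\phi_j|$ holds generically, and your generational induction as written does not close.

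The paper avoids this entirely by never trying to compare $|\wt\phi_j|$ with $|\phi_j|$ individually. Instead, from $|\phi_j| \les \max(|\wt\phi_{j-1}|,|\wt\phi_j|)$ and the lower bound on the \emph{other} factor from \eqref{muj}, one gets
\[
(2j)^3 K^{1-\theta}\,|\phi_j| \ \ll\ |\wt\phi_{j-1}|\,|\wt\phi_j|.
\]
Taking the product over $j=2,\dots,J$ and handling the endpoints with \eqref{muj} once more yields the telescoped inequality
\[
\prod_{j=1}^J \big((2j+2)^3 K^{1-\theta}|\phi_j|\big) \ \ll\ \prod_{j=1}^J |\wt\phi_j|^2,
\]
which is exactly the replacement of $\prod|\wt\phi_j|^{-2}$ by $\big(\prod (2j+2)^3 K^{1-\theta}\big)^{-1}\prod|\phi_j|^{-1}$ you need. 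From there, the bound $(n^{(j)}_{\max})^{-6s}|\phi_j|^{-1} \les |\mu_j|^{-1-}$ for $s>-\tfrac13$ and the summability $\sum |\mu_j|^{-1-} < \infty$ give the weight estimate with decay $K^{-J(1-\theta)}$ (not $K^{-2(J-1)(1-\theta)}$), hence $K^{-J(1-\theta)/2}$ after the square root. Your Cauchy--Schwarz framework is then exactly right; only the product inequality above needs to replace your $|\wt\phi_j|\sim|\phi_j|$ step.
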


\begin{proof}

From \eqref{XX2}, we have 
\begin{align*} 
|\phi_j| \lesssim \max(|\wt{\phi}_{j-1}|, |\wt{\phi}_j|).
\end{align*}

\noi
Then, in view of \eqref{muj}, we have 
\begin{align} 
\label{mujj2}
(2j)^3 K^{1-\theta} |\phi_j| \ll |\wt{\phi}_{j-1}| |\wt{\phi}_j|.
\end{align}

\noi
Hence,  with \eqref{muj} once again,  we have
\begin{align} 
\label{mujj3}
 \prod_{j=1}^{J}\Big( (2j+2)^3 K^{1-\theta} |\phi_j|\Big) 
& \ll 
|\phi_1| |\wt \phi_J|
 \prod_{j=2}^{J}\Big( (2j)^3 K^{1-\theta} |\phi_j|\Big) 
  \ll 
 \prod_{j=1}^{J} |\wt\phi_j|^2.
\end{align}

We only discuss the case  $\Pi_2(\TT_J) = \{r_2\}$
since the modification is straightforward if $\Pi_2(\TT_J) \ne \{r_2\}$.
As in \eqref{TT3}, we have 
\begin{align}
\label{mujj31}
\frac{(n^{(j)}_{\max})^{-6s}}{|\phi_j|} 
\sim \frac{(n^{(j)}_{\max})^{-6s}}{|\mu_j| (n^{(j)}_{\max})^{2}} \les \frac{1}{|\mu_j|^{1+} }  
\end{align}

\noi
for $s > -\frac13$.
Then, 
by \eqref{mujj3} and \eqref{mujj31}, we have
\begin{align}
\label{mujj4}
 \sup_{n\in \Z}
\sum_{\substack{{\bf n} \in \mathfrak{N}(\TT_J)\\ n_r = n\\|\phi_1| >K 
\\ |\wt{\phi}_j|\gg(2j+2)^3K^{1-\theta}\\ j = 2, \dots, J} }
\prod_{j = 1}^J \frac{(n^{(j)}_{\max})^{-6s}}{|\wt{\phi}_j|^2} 
& \ll \frac{K^{-J(1-\theta) } }{\prod_{j = 1}^J(2j+2)^{3}}
\cdot  \sup_{n\in \Z}
\sum_{\substack{{\bf n} \in \mathfrak{N}(\TT_J)\\ n_r = n\\ \phi_j \neq 0\\ j = 1, \dots, J} } 
\prod_{j = 1}^J \frac{(n^{(j)}_{\max})^{-6s}}{|\phi_j|} \notag \\
& \les \frac{K^{- J(1-\theta) } }{\prod_{j = 1}^J(2j+2)^{3}}
\cdot  \sup_{n\in \Z}
\sum_{\substack{{\bf n} \in \mathfrak{N}(\TT_J)\\ n_r = n\\ \mu_j \neq 0\\ j = 1, \dots, J} } 
\prod_{j = 1}^J \frac{1}{|\mu_j|^{1+}} \notag \\
& \le \frac{C^J K^{- J(1-\theta) } }{\prod_{j = 1}^J(2j+2)^{3}}.
\end{align}

\noi
By Cauchy-Schwarz inequality with 
\eqref{mu1}, \eqref{muj}, and \eqref{mujj4}, 
 we have
\begin{align}
\sum_{n\in \Z} | \NN^{(J+1)}_0(\u)_n|
& \lesssim 
\|\u\|_{H^s}
\sum_{\substack{\TT_J \in \mathfrak{BT}(J)\\\Pi_2(\TT_J) = \{r_2\}}}
\Bigg\{ 
\bigg( 
\sup_{n\in \Z}
\sum_{\substack{{\bf n} \in \mathfrak{N}(\TT_J)\\n_r = n\\|\phi_1| >K 
\\ |\wt{\phi}_j|\gg(2j+2)^3K^{1-\theta}\\ j = 2, \dots, J} }
\prod_{j = 1}^J \frac{(n^{(j)}_{\max})^{-6s}}{|\wt{\phi}_j|^2}
\bigg) \notag \\
& \hphantom{XXXXXXXXXX} \times 
\bigg( 
\sum_{n\in\Z}
\sum_{\substack{{\bf n} \in \mathfrak{N}(\TT_J)\\n_r = n}} 
\prod_{a \in \TT^\infty_J \setminus \{r_2\}} \jb{n_{a}}^{2s}|\u_{n_a}|^2
\bigg)\Bigg\}^\frac{1}{2}  \notag \\
& \lesssim \frac{c_J\cdot C^{\frac{J}{2}}}{\prod_{j = 1}^J(2j+2)^{\frac32}}
K^{-\frac{J(1-\theta)}2 } 
\|\u\|_{H^s}^{2J+2} \lesssim 
K^{-\frac{J(1-\theta)}2 } 
\|\u\|_{H^s}^{2J+2}. 
\label{N^J+1_0-3}
\end{align}

\noi
This completes the proof of Lemma \ref{LEM:N^J+1_0}.
\end{proof}

\begin{lemma}\label{LEM:N^J+1_r}
Let $\RR^{(J+1)}$ be as in \eqref{N^J+1}.
Then, for  $s\geq \max\big(- \frac{1}{3}+,  -\frac{3-2\theta}{5}\big)$, 
we have
\begin{align} 
\sum_{n\in \Z}| \RR^{(J+1)}(\u)_n|
 \lesssim 
K^{-\frac{{(J-2)}(1-\theta)}2 }  \|\u\|_{H^s}^{2J+4}. 
\label{N^J+1_r}
\end{align}

\noi
In particular, if $\theta \in (0, \frac{2}{3}]$, 
then \eqref{N^J+1_r} holds for $s > -\frac 13$.

\end{lemma}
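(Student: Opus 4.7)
The plan is to adapt the proof of Lemma \ref{LEM:N^J+1_0}, with extra care to accommodate the two additional copies of $|\u_{n_b}|$ introduced by the resonant factor $\textsf{R}(\u)_{n_b} = |\u_{n_b}|^2 \u_{n_b}$. The first step is to bound $|\u_{n_b}|^2 \le \jb{n_b}^{-2s}\|\u\|_{H^s}^2$, which extracts a prefactor $\|\u\|_{H^s}^2$ at the cost of inserting the weight $\jb{n_b}^{-2s}$ into the frequency sum. The resulting expression looks like $\NN_0^{(J+1)}$ (which is $(2J+2)$-linear) with this extra weight, so that a bound of the correct shape $\|\u\|_{H^s}^{2J+4}\cdot W^{1/2}$ will emerge from applying the Cauchy--Schwarz scheme of Lemma \ref{LEM:N^J+1_0}.

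I would then repeat the Cauchy--Schwarz procedure from the proof of Lemma \ref{LEM:N^J+1_0}, separately in the two cases $\Pi_2(\TT_J) = \{r_2\}$ and $\Pi_2(\TT_J) \ne \{r_2\}$, which leads to bounding
\[
W = \sum_{\TT_J \in \mathfrak{BT}(J)}\sum_{b \in \TT_J^\infty}\sup_{n \in \Z}\sum_{\substack{\bn \in \mathfrak{N}(\TT_J)\\ n_r = n}}\frac{\jb{n_b}^{-4s}\prod_{a}\jb{n_a}^{-2s}}{\prod_{j=1}^J |\wt{\phi}_j|^2}.
\]
Since $b$ is a terminal node of $\TT_J$ introduced at some generation $j^* \in \{1, \ldots, J\}$ (with $b = r_2$ treated as $j^* = 1$), we have $\jb{n_b}\lesssim n^{(j^*)}_{\max}$, so $\jb{n_b}^{-4s}\lesssim (n^{(j^*)}_{\max})^{-4s}$. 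Combining with the bound $\prod_a\jb{n_a}^{-2s}\lesssim\prod_j(n^{(j)}_{\max})^{-6s}$ as in Lemma \ref{LEM:N^J+1_0}, the weight becomes $(n^{(j^*)}_{\max})^{-10s}\prod_{j\ne j^*}(n^{(j)}_{\max})^{-6s}$.

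The crucial step is to apply the budget $\prod_j|\wt{\phi}_j|^2 \gg K^{J(1-\theta)}\prod_j(2j+2)^3|\phi_j|$ from \eqref{mujj3}, but sacrificing one factor of $K^{1-\theta}$ to absorb the extra weight $(n^{(j^*)}_{\max})^{-4s}$: via the pairing inequality \eqref{mujj2}, one may replace one factor of $|\wt{\phi}_{j^*}|\cdot|\wt{\phi}_{j^*-1}|$ by its weaker cousin involving $|\phi_{j^*}|^{1-\eta}$ (for a small $\eta=\eta(\theta)>0$) and balance the extra $(n^{(j^*)}_{\max})^{-4s}$ against the surplus derivatives obtained from $|\phi_{j^*}|\sim(n^{(j^*)}_{\max})^2|\mu_{j^*}|$. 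For $j\ne j^*$, the usual estimate $(n^{(j)}_{\max})^{-6s}/|\phi_j|\lesssim 1/|\mu_j|^{1+}$ (valid for $s>-\frac 13$) yields summability. The main obstacle is the $j=j^*$ contribution: once the extra weight is absorbed, the single-generation sum $\sum_{\bn_{j^*}}(n^{(j^*)}_{\max})^{-10s-2}/|\mu_{j^*}|^{1+}$ requires the strengthened condition $s>-\frac{3-2\theta}{5}$, which for $\theta\in(0,\tfrac 23]$ is no stronger than $s>-\tfrac 13$, matching the claim.

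Finally, taking the square root of $W$ produces two factors of $K^{-(1-\theta)/2}$ loss relative to Lemma \ref{LEM:N^J+1_0}, accounting for the exponent $(J-2)$ in place of $J$. Summation over $\TT_J\in\mathfrak{BT}(J)$ and over $b\in\TT_J^\infty$ introduces a combinatorial factor $c_J\cdot(2J+2)=2^{J-1}J!\,(2J+2)$, which is absorbed by the factorial decay $1/\prod_j(2j+2)^{3/2}$ from the pairing argument exactly as in \eqref{N^J+1_0-3}, giving the desired bound $K^{-(J-2)(1-\theta)/2}\|\u\|_{H^s}^{2J+4}$ with a $J$-independent implicit constant.
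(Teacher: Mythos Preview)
Your overall strategy matches the paper's: extract two powers of $\|\u\|_{H^s}$ from $|\u_{n_b}|^2$ at the cost of an extra weight $\jb{n_b}^{-4s}\lesssim(n^{(j^*)}_{\max})^{-4s}$, then run the Cauchy--Schwarz scheme of Lemma~\ref{LEM:N^J+1_0}. The gap is in your ``crucial step'': the absorption of the extra weight is not carried out, and what you write does not produce the stated threshold on $s$.

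The paper does \emph{not} ``sacrifice one factor of $K^{1-\theta}$'' nor ``replace $|\wt\phi_{j^*}|\cdot|\wt\phi_{j^*-1}|$ by $|\phi_{j^*}|^{1-\eta}$''. It performs a dichotomy at the last generation. In Case~1 ($|\wt\phi_J|\gtrsim|\phi_J|$) one has $(n^{(J)}_{\max})^{-4s}\lesssim|\phi_J|\lesssim|\wt\phi_J|$, and this is absorbed directly by the reserved factor $|\wt\phi_J|$ coming out of the second inequality in \eqref{mujj3}, at the price of a single power of $K^{1-\theta}$. In Case~2 ($|\wt\phi_J|\ll|\phi_J|$, hence $|\wt\phi_{J-1}|\sim|\phi_J|$) one combines $|\wt\phi_{J-1}|\sim|\phi_J|$ with $|\wt\phi_J|^2\gg|\phi_J|^{2(1-\theta)}$ (from \eqref{muj}) to obtain $|\phi_J|^{3-2\theta}$ in the denominator at generation $J$; it is the summability of $(n^{(J)}_{\max})^{-10s}/|\phi_J|^{3-2\theta}$ that forces $s\ge-\tfrac{3-2\theta}{5}$. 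By contrast, your single-generation expression $(n^{(j^*)}_{\max})^{-10s-2}/|\mu_{j^*}|^{1+}$ corresponds to only \emph{one} power of $|\phi_{j^*}|$ in the denominator, and along the near-resonant direction $|n^{(j^*)}-n^{(j^*)}_3|=O(1)$ this would force the much stronger condition $s>-\tfrac15$, not $-\tfrac{3-2\theta}{5}$.

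Your power-counting is also inconsistent: ``sacrificing one factor of $K^{1-\theta}$'' in the weight sum yields $K^{-(J-1)(1-\theta)/2}$ after the square root, not $K^{-(J-2)(1-\theta)/2}$; the exponent $J-2$ arises precisely from Case~2, where \emph{two} powers are spent (one from dropping \eqref{mujj2} at $j=J$ and one from trading $|\wt\phi_J|$ for $|\phi_J|^{1-\theta}$). You should carry out the Case~1/Case~2 split explicitly and track the resulting power of $|\phi_{j^*}|$ in each case.
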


\begin{proof}
Just like Lemma \ref{LEM:N^2_r} on $ \RR^{(2)}$, 
this lemma  follows from a modification of the proof of Lemma  \ref{LEM:N^J+1_0}.

We first consider the case $|\wt \phi_J| \ges |\phi_J|$.
Noting that  $ (n^{(J)}_{\max})^{-4s} \les |\wt \phi_{J}|$ for $s\geq -\frac12$, 
it follows from the second inequality in  \eqref{mujj3} and \eqref{mujj31}, we have
\begin{align}
\label{mujj41}
 \sup_{n\in \Z}\sum_{\substack{{\bf n} \in \mathfrak{N}(\TT_J)\\n_r = n\\|\phi_1| >K 
\\ |\wt{\phi}_j|\gg(2j+2)^3K^{1-\theta}\\ j = 2, \dots, J} }
& (n_{\max}^{(J)})^{-4s} 
 \prod_{j = 1}^J \frac{(n^{(j)}_{\max})^{-6s}}{|\wt{\phi}_j|^2} \notag\\
& \ll \frac{K^{-(J-1)(1-\theta) } }{\prod_{j = 2}^J(2j+2)^{3}}
\cdot  \sup_{n\in \Z}
\sum_{\substack{{\bf n} \in \mathfrak{N}(\TT_J)\\n_r = n\\ \phi_j \neq 0
\\ j = 1, \dots, J} }  \frac{(n^{(J)}_{\max})^{-4s}}{|\wt \phi_{J}|}
\prod_{j = 1}^{J} \frac{(n^{(j)}_{\max})^{-6s}}{|\phi_j|} \notag \\
& \les \frac{C^J K^{- (J-1)(1-\theta) } }{\prod_{j = 2}^J(2j+2)^{3}}, 
\end{align}

\noi
provided that  $s > -\frac 13$.
Then,  proceeding as in 
\eqref{N^J+1_0-3}
with \eqref{mujj41}, 
we obtain 
\eqref{N^J+1_r} in this case.

Next, consider the case 
 $|\wt \phi_J| \ll |\phi_J|$. 
In this case, we have  
 $|\phi_J| \sim |\wt \phi_{J-1}|$. 
 Proceeding as in  \eqref{mujj3} with \eqref{mujj2}, we have
\begin{align*} 
|\phi_1| |\wt \phi_{J-1}| |\wt \phi_J|^2 \prod_{j=2}^{J-1} \Big((2j)^3 K^{1-\theta} |\phi_j| \Big)
  \ll 
 \prod_{j=1}^{J} |\wt\phi_j|^2.
\end{align*}

\noi
From \eqref{muj}, we have 
$|\wt \phi_J| \gg (2J+2)^3 |\wt \phi_{J-1}|^{1-\theta} \sim (2J+2)^3 |\phi_J|^{1-\theta}$.
This gives 
\begin{align*} 
K^{(J-2)(1-\theta)} | \phi_{J}|^{2-2\theta} \prod_{j=1}^{J} \Big((2j+2)^3  |\phi_j| \Big)
  \ll 
 \prod_{j=1}^{J} |\wt\phi_j|^2.
\end{align*}

\noi
Hence,  we obtain
\begin{align}
\label{mujj44}
 \sup_{n\in \Z}
\sum_{\substack{{\bf n} \in \mathfrak{N}(\TT_J)\\n_r = n\\|\phi_1| >K 
\\ |\wt{\phi}_j|\gg(2j+2)^3K^{1-\theta}\\ j = 2, \dots, J} }
& (n_{\max}^{(J)})^{-4s}
\prod_{j = 1}^J \frac{(n^{(j)}_{\max})^{-6s}}{|\wt{\phi}_j|^2} \notag\\
& \ll \frac{K^{-(J-2)(1-\theta) } }{\prod_{j = 1}^{J} (2j+2)^{3}}
\cdot  \sup_{n\in \Z}
\sum_{\substack{{\bf n} \in \mathfrak{N}(\TT_J)\\ n_r = n\\\phi_j \neq 0 \\
j = 1, \dots, J} }  \frac{(n^{(J)}_{\max})^{-10s}}{| \phi_{J}|^{3-2\theta}}
\prod_{j = 1}^{J-1} \frac{(n^{(j)}_{\max})^{-6s}}{|\phi_j|} \notag \\
& \les \frac{C^J K^{- (J-2)(1-\theta) } }{\prod_{j = 1}^{J} (2j+2)^{3}},
\end{align}

\noi
provided that  $s > -\frac 13$ and $s \geq -\frac{3-2\theta}{5}$.
Then,  proceeding as in 
\eqref{N^J+1_0-3}
with \eqref{mujj44}, 
we obtain 
\eqref{N^J+1_r} in this case.
\end{proof}

Finally, we consider  $\NN^{(J+1)}$.
As before, we write
\begin{equation} \label{N^J+1_1}
\NN^{(J+1)} = \NN^{(J+1)}_1 + \NN^{(J+1)}_2,
\end{equation}

\noi
where $\NN^{(J+1)}_1$ is the restriction of $\NN^{(J+1)}$
onto 
\begin{equation} \label{CJ}
C_J = \big\{ |\wt{\phi}_{J+1}| \lesssim (2J+2)^{3} |\wt{\phi}_J|^{1-\theta}\big\} 
\cup \big\{ |\wt{\phi}_{J+1}| \lesssim (2J+2)^{3} |\phi_1|^{1-\theta}\big\} 
\end{equation}

\noi
and
$\NN^{(J+1)}_2 := \NN^{(J+1)} - \NN^{(J+1)}_1$.
In the following lemma, we estimate the first term $\NN^{(J+1)}_1$.
Then,  we apply a normal form reduction 
once again to the second term $\NN^{(J+1)}_2$ 
as in \eqref{N^J+1}
and repeat this process indefinitely.
In the next subsection, we 
also show that this error term 
$\NN^{(J+1)}_2$ tends to 0 in the $\l^1_n$-nom as $J \to \infty$.

\begin{lemma}\label{LEM:N^J+1_1}
Let $\NN^{(J+1)}_1$ be as in \eqref{N^J+1_1}.
Then, for $ s> -\frac{1}{3}$, 
we have
\begin{align} 
\sum_{n\in \Z} | \NN^{(J+1)}_1(\u)_n| 
\lesssim 
K^{-\frac{(J-1)(1-\theta)}2 } \|\u\|_{H^s}^{2J+4}.
\label{N^J+1_1a}
\end{align}
\end{lemma}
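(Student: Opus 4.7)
The plan is to adapt the proof of Lemma~\ref{LEM:N^2_1} (the $J=1$ case) together with the phase-function bookkeeping from the proof of Lemma~\ref{LEM:N^J+1_0}. After splitting by Cauchy--Schwarz according to whether $\Pi_2(\TT_{J+1}) = \{r_2\}$ or $\Pi_2(\TT_{J+1}) \ne \{r_2\}$ (exactly as in the two cases of Lemma~\ref{LEM:N^2_1}) and separating out $\|\u\|_{H^s}^{2J+4}$ via Remark~\ref{REM:Yuzhao}, the proof of \eqref{N^J+1_1a} reduces to establishing the sup-sum bound
\[
\sup_{n \in \Z}\sum_{\substack{\bn \in \mathfrak{N}(\TT_{J+1}),\ n_r = n \\ |\phi_1| > K,\ |\wt\phi_j| \gg (2j+2)^3 K^{1-\theta},\ j = 2,\dots,J \\ \bn \in C_J}} \frac{\prod_{j=1}^{J+1} (n_{\max}^{(j)})^{-6s}}{\prod_{j=1}^J |\wt\phi_j|^2} \lesssim \frac{C^{J+1} K^{-(J-1)(1-\theta)}}{\prod_{j=2}^J (2j)^3}
\]
uniformly over $\TT_{J+1} \in \mathfrak{BT}(J+1)$. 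The principal difficulty is that this sum runs over $J+1$ generations of frequencies whereas the denominator contains only $J$ phase factors.

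The key idea is to use the defining constraint of $C_J$ in \eqref{CJ} to recover a phase factor for the $(J+1)$st generation. On $C_J$, combining $|\wt\phi_{J+1}| \lesssim (2J+2)^3 \max(|\wt\phi_J|^{1-\theta}, |\phi_1|^{1-\theta})$ with the lower bound \eqref{muj} for $|\wt\phi_J|$ yields $|\wt\phi_{J+1}| \ll |\wt\phi_J|$, whence
\[
|\phi_{J+1}| = |\wt\phi_{J+1} - \wt\phi_J| \sim |\wt\phi_J|,
\]
which is the direct higher-generation analogue of the relation $|\phi_2| \sim |\phi_1|$ used in Lemma~\ref{LEM:N^2_1}. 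This allows the rewriting
\[
\frac{1}{\prod_{j=1}^J |\wt\phi_j|^2} \sim \frac{1}{\prod_{j=1}^{J-1} |\wt\phi_j|^2 \cdot |\wt\phi_J| \cdot |\phi_{J+1}|},
\]
effectively trading one factor of $|\wt\phi_J|$ for a factor of $|\phi_{J+1}|$ in the denominator, so that each of the $J+1$ frequency generations is paired with its own phase.

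Next I would apply \eqref{mujj2}, namely $(2j)^3 K^{1-\theta}|\phi_j| \ll |\wt\phi_{j-1}||\wt\phi_j|$, iteratively for $j = 2, \dots, J$, and combine with $|\phi_1| = |\wt\phi_1|$, to obtain
\[
K^{(J-1)(1-\theta)} \prod_{j=2}^J (2j)^3 \cdot \prod_{j=1}^J |\phi_j| \ll \prod_{j=1}^{J-1} |\wt\phi_j|^2 \cdot |\wt\phi_J|.
\]
Plugging this back and invoking \eqref{mujj31}, which for $s > -\frac{1}{3}$ gives $(n_{\max}^{(j)})^{-6s}/|\phi_j| \lesssim |\mu_j|^{-1-}$, the sup-sum is dominated by
\[
\frac{K^{-(J-1)(1-\theta)}}{\prod_{j=2}^J (2j)^3} \cdot \sup_{n\in\Z}\sum_{\bn}\prod_{j=1}^{J+1} \frac{1}{|\mu_j|^{1+}} \lesssim \frac{C^{J+1} K^{-(J-1)(1-\theta)}}{\prod_{j=2}^J (2j)^3},
\]
where the remaining $(J+1)$-fold sum is handled one generation at a time by summing the two free variables in each $\bn^{(j)}$.

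Taking square roots in Cauchy--Schwarz and summing over the $c_{J+1} = 2^{J}(J+1)!$ ordered bi-trees in $\mathfrak{BT}(J+1)$ then produces an overall constant of the form $c_{J+1} C^{(J+1)/2}/\prod_{j=2}^J (2j)^{3/2}$, which remains uniformly bounded in $J$ because $\prod_{j=2}^J (2j)^{3/2} \sim 2^{3J/2}(J!)^{3/2}$ grows factorially and beats both $c_{J+1}$ and $C^{(J+1)/2}$. The main technical obstacle will be the uniform-in-$J$ verification that $|\wt\phi_{J+1}| \ll |\wt\phi_J|$ on $C_J$: in the subcase $|\wt\phi_{J+1}| \lesssim (2J+2)^3|\phi_1|^{1-\theta}$ this follows directly from \eqref{muj}, but in the subcase $|\wt\phi_{J+1}| \lesssim (2J+2)^3|\wt\phi_J|^{1-\theta}$ one must carefully exploit the matching $(2j+2)^3$ factors built into the definitions \eqref{CJ} and \eqref{muj}, in the spirit of the combinatorial bookkeeping of \cite{GKO}. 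A secondary subtlety is ensuring that applying \eqref{mujj2} exactly $J-1$ times (not $J$) yields precisely the power $K^{(J-1)(1-\theta)}$ that matches the target; this is the reason why the rewriting trading $|\wt\phi_J|$ for $|\phi_{J+1}|$ is essential.
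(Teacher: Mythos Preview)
Your approach is essentially the same as the paper's, and the overall structure (Cauchy--Schwarz split, trading $|\wt\phi_J|$ for $|\phi_{J+1}|$, then iterating \eqref{mujj2} to extract $K^{-(J-1)(1-\theta)}$ and the factorial denominator) is correct.

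There is one point to correct. The equivalence $|\phi_{J+1}| \sim |\wt\phi_J|$ that you derive from $|\wt\phi_{J+1}| \ll |\wt\phi_J|$ is \emph{not} uniform in $J$: in the subcase $|\wt\phi_{J+1}| \lesssim (2J+2)^3 |\wt\phi_J|^{1-\theta}$, the conclusion $|\wt\phi_{J+1}| \ll |\wt\phi_J|$ would require $(2J+2)^3 \ll |\wt\phi_J|^\theta$, which from \eqref{muj} only gives $(2J+2)^3 \lesssim K^\theta$ --- a $J$-dependent constraint. So the ``matching factors'' you hope to exploit do not actually close this gap. Fortunately you do not need the equivalence, only the upper bound, and the cruder estimate
\[
|\phi_{J+1}| \leq |\wt\phi_{J+1}| + |\wt\phi_J| \lesssim (2J+2)^3 |\wt\phi_J|^{1-\theta} + |\wt\phi_J| \lesssim J^3 |\wt\phi_J|
\]
holds trivially on $C_J$ (using $|\wt\phi_J| \geq 1$ and \eqref{muj} for the other subcase). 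This is precisely what the paper uses. The resulting extra $J^3$ factor is harmless: it cancels one factor from $\prod_{j=2}^J (2j)^3$, leaving $\prod_{j=2}^{J-1}(2j+2)^3$ in the denominator, and the ratio $c_{J+1} C^{(J+1)/2}/\prod_{j=2}^{J-1}(2j+2)^{3/2}$ is still uniformly bounded in $J$ by the same Stirling-type count.
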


\begin{proof}
We proceed with \eqref{mujj2} as in the proofs of Lemmas 
\ref{LEM:N^J+1_0} and \ref{LEM:N^J+1_r}.
It follows from the restriction $|\wt{\phi}_{J+1}| = |\wt{\phi}_J+\phi_{J+1}| \lesssim (2J+2)^3|\wt{\phi}_J|^{1-\theta}$
that 
 $|\phi_{J+1}| \les J^3|\wt{\phi}_J|$.
Then from \eqref{mujj2}, 
we have
\begin{align*} 
|\phi_1|  |\phi_{J+1}| \prod_{j=2}^{J} (2j+2)^3 K^{1-\theta} |\phi_j| 
\ll J^3 
 \prod_{j=1}^{J} |\wt\phi_j|^2.
\end{align*}

\noi
Proceeding 
as in \eqref{mujj4}, we have
\begin{align}
\label{mujj6}
  \sup_{n\in \Z}
\sum_{\substack{{\bf n} \in \mathfrak{N}(\TT_{J+1})\\ n_r = n\\|\phi_1| >K 
\\ |\wt{\phi}_j|\gg(2j+2)^3 K^{1-\theta}\\ j = 2, \dots, J} }
& 
(n^{(J+1)}_{\max})^{-6s} 
\prod_{j = 1}^J \frac{(n^{(j)}_{\max})^{-6s}}{|\wt{\phi}_j|^2} \notag\\
& \ll \frac{ K^{-(J-1)(1-\theta) } }{\prod_{j = 2}^{J-1} (2j+2)^{3}}
\cdot  \sup_{n\in \Z}
\sum_{\substack{{\bf n} \in \mathfrak{N}(\TT_{J+1})\\ n_r = n\\|\phi_j| \neq 0 
\\ j = 1, \dots, J+1} } 
\prod_{j = 1}^{J+1} \frac{(n^{(j)}_{\max})^{-6s}}{|\phi_j|} \notag \\
& \les  \frac{C^{J+1} K^{-(J-1)(1-\theta) }}{\prod_{j = 2}^{J-1} (2j+2)^{3}},
\end{align}

\noi
provided that $s > -\frac 13$.
Then,  \eqref{N^J+1_1a}
follows from the Cauchy-Schwarz argument with \eqref{mujj6} 
once we note that 
\[
 \frac{c_{J+1}C^{\frac{J+1}{2}}}{\prod_{j = 2}^{J-1}(2j+2)^{\frac 32}}
 \les 1,
\]
uniformly in $J$.
\end{proof}

\subsection{On the error term}
\label{SUBSEC:error}

In this subsection, we prove that the error term $\NN_2^{(J+1)}(\u)$
tends to 0 
as $J \to \infty$ under some regularity assumption on $\u$.

From \eqref{N^J+1}, we have
\begin{align} \label{X1}
 \NN_2^{(J+1)}(\u)_n
&  =- \sum_{\TT_{J+1} \in \mathfrak{BT}(J+1)}
\sum_{\substack{{\bf n} \in \mathfrak{N}(\TT_{J+1})\\n_r = n}}
\frac{ e^{- i \wt{\phi}_{J+1}t } }{\prod_{j = 1}^J \wt{\phi}_j}
\,\prod_{a \in \TT^\infty_{J+1}} \u_{n_{a}} \notag\\
& = - \sum_{\TT_{J} \in \mathfrak{BT}(J)}
\sum_{b \in\TT^\infty_J} 
\sum_{\substack{{\bf n} \in \mathfrak{N}(\TT_J)\\n_r = n}}
\frac{ e^{- i \wt{\phi}_Jt } }{\prod_{j = 1}^J \wt{\phi}_j}
\, \textsf{N}(\u)_{n_b}
\prod_{a \in \TT^\infty_J \setminus \{b\}} \u_{n_{a}}, 
\end{align}

\noi
where it is understood that the summations in \eqref{X1}
are restricted to frequencies satisfying \eqref{mu1} and \eqref{muj}.\footnote{In fact, 
$ \NN_2^{(J+1)}$ is also restricted to $C_J^c$
but we do not need to use this fact. 
Namely, our argument also shows that 
$ \NN^{(J+1)}(\u) \to 0$ as $J \to \infty$.}

\begin{lemma}\label{LEM:N^J+1_2}
Let $\NN^{(J+1)}_2$ be as in \eqref{X1}.
Then, given $\u \in H^\frac{1}{6}(\T)$, 
we have
\begin{align} 
\sum_{n \in \Z} | \NN^{(J+1)}_2(\u)_n|  \longrightarrow 0, 
\label{X1a}
\end{align}

\noi
as $J \to \infty$.
\end{lemma}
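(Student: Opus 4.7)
The plan is to estimate $\sum_n |\NN_2^{(J+1)}(\u)_n|$ by adapting the Cauchy--Schwarz strategy from the proof of Lemma \ref{LEM:N^J+1_0}, while carefully tracking how the $J$-fold denominator $\prod_{j=1}^J \wt\phi_j$ controls both the combinatorial explosion $c_{J+1} = 2^J (J+1)!$ of bi-trees and the $(2J+4)$-linear product of Fourier coefficients of $\u$. Splitting again into Case 1 ($\Pi_2(\TT_{J+1}) = \{r_2\}$), in which $r_2$ is a terminal node and the factor $\u_{n_r}$ can be pulled out to yield $\|\u\|_{H^{1/6}}$, and Case 2 ($\Pi_2(\TT_{J+1}) \neq \{r_2\}$), in which the factorization of $\mathfrak{N}(\TT_{J+1})$ through the two root sub-trees (as in the second case of the proof of Lemma \ref{LEM:N^2_1} and Remark \ref{REM:Yuzhao}) allows one to distribute the $\ell^2_n$-sum over the two components, one reduces the problem to a multilinear weighted sum estimate.

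The central inequality is the consequence \eqref{mujj3} of the frequency restrictions \eqref{mu1} and \eqref{muj}, which gives
\[
\frac{1}{\prod_{j=1}^J |\wt\phi_j|^2} \ll \frac{K^{-J(1-\theta)}}{\prod_{j=1}^J (2j+2)^3 |\phi_j|}.
\]
Combined with \eqref{fphi} in the form $|\phi_j|^{-1} \les |\mu_j|^{-1-}(n_{\max}^{(j)})^{0-}$ and the divisor-counting estimate $\sum_{\mu_j \neq 0} |\mu_j|^{-1-} \les C$ uniformly in the free variables at each generation (valid with substantial room since $s = \frac{1}{6} > -\frac{1}{3}$; note that we may now use divisor counting freely because no gain of derivative is required), the weighted frequency sum over $\mathfrak{N}(\TT_{J+1})$ is bounded by $C^{J+1} K^{-J(1-\theta)}/\prod_{j=1}^J (2j+2)^3$ for some absolute $C$. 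The remaining $\ell^2$-factor is controlled by $\|\u\|_{H^{1/6}}^{2J+4}$ (here the positive regularity $\frac{1}{6}$ is more than enough to absorb the $2J+4$ Fourier-coefficient factors, since $H^{1/6}(\T) \subset L^3(\T)$ dominates the cubic nonlinearity appearing through $\textsf{N}(\u)_{n_b}$).

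Putting these ingredients together and applying Cauchy--Schwarz in $n$ and over $\bn \in \mathfrak{N}(\TT_{J+1})$, we obtain
\[
\sum_{n\in \Z} |\NN_2^{(J+1)}(\u)_n|
\les
\frac{c_{J+1} \cdot C^{J+1}}{\prod_{j=1}^J (2j+2)^{3/2}} \, K^{-\frac{J(1-\theta)}{2}} \, \|\u\|_{H^{1/6}}^{2J+4}.
\]
Since $c_{J+1}/\prod_{j=1}^J (2j+2)^{3/2} \sim 2^J (J+1)!/((J+1)!)^{3/2} = 2^J/\sqrt{(J+1)!}$, the factorial denominator dominates any fixed exponential growth in $J$, and this bound tends to $0$ as $J \to \infty$, uniformly in the (fixed) values of $K$ and $\|\u\|_{H^{1/6}}$, giving \eqref{X1a}.

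The principal obstacle is combinatorial bookkeeping rather than analytic novelty: one must ensure that the Cauchy--Schwarz step in Case 2 is organized so that the two halves of the bi-tree contribute independent $\ell^2$ sums (avoiding double-counting the constraint $n_{r_1} = n_{r_2}$), and that the $(2j+2)^3$-bonus in the denominator at each generation is fully available despite now carrying $J+1$ rather than $J$ generations of frequencies (only $J$ of which contribute phase factors in the denominator, the $(J+1)$-st being an ``uncontrolled'' cubic tail from $\textsf{N}(\u)_{n_b}$). As in Lemma \ref{LEM:N^J+1_0}, this is handled by noting that the bound \eqref{mujj3} only invokes the first $J$ phases; the extra $2J+4$ vs.\ $2J+2$ Fourier factors are absorbed using that $\|\u\|_{H^{1/6}}^{2J+4} = \|\u\|_{H^{1/6}}^2 \cdot \|\u\|_{H^{1/6}}^{2J+2}$, with the factorial gain $1/\sqrt{(J+1)!}$ still winning.
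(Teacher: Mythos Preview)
Your proposal has a genuine gap at the Cauchy--Schwarz step. When you split over $\bn \in \mathfrak{N}(\TT_{J+1})$, the ``weighted frequency sum'' factor you need (the analogue of \eqref{mujj4}) is
\[
\sup_{n} \sum_{\substack{\bn \in \mathfrak{N}(\TT_{J+1})\\ n_r = n}} \frac{W(\bn)}{\prod_{j=1}^J |\wt\phi_j|^2},
\]
and this diverges for any choice of weights $W$ compatible with $H^{1/6}$: the $(J{+}1)$-st generation contributes two free summation variables $n_1^{(J+1)}, n_3^{(J+1)}$, but the denominator carries no $\phi_{J+1}$ or $\wt\phi_{J+1}$, and the weight $(n_{\max}^{(J+1)})^{-6s} = (n_{\max}^{(J+1)})^{-1}$ coming from $s=\tfrac16$ is not summable over two free variables. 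Your parenthetical observation that $H^{1/6} \subset L^3$ controls the cubic tail $\textsf{N}(\u)_{n_b}$ is exactly the right ingredient, but it must be invoked \emph{before} Cauchy--Schwarz, not on the $\ell^2$-side afterwards.

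The paper's argument uses the second representation in \eqref{X1} and first pulls out $\|\textsf{N}(\u)_{n_b}\|_{\ell^\infty_{n_b}} \les \|\u\|_{H^{1/6}}^3$ (this is \eqref{X2}, and it is precisely where the threshold $\tfrac16$ enters). This collapses the $(J{+}1)$-st generation entirely; Cauchy--Schwarz is then carried out over $\mathfrak{N}(\TT_J)$, where every generation has a phase factor in the denominator and \eqref{mujj3} applies cleanly. There is a further wrinkle you do not address: after extracting $\textsf{N}(\u)_{n_b}$, if $b = r_2$ (or if $\Pi_2(\TT_J) \ne \{r_2\}$) there is no factor $\u_{n_r}$ left to anchor the outer sum in $n$, and Remark~\ref{REM:Yuzhao} only treats the product side of the split. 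For this case the paper introduces a divisor-counting lemma (Lemma~\ref{LEM:XT1}) and reorganizes the sum around a terminal node $\alpha \in \TT_J^\infty \setminus \TT_{J-1}^\infty$ in place of $r_2$; this is the only place in Section~\ref{SEC:NF} where the divisor-counting argument is actually used.
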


\begin{proof}

The following simple estimate yields the  minimum regularity restriction
$s \geq \frac 16$, 
required for $\NN_2^{(J+1)}\to 0$ as $J \to \infty$.
By Hausdorff-Young's, H\"older's, and Sobolev's inequalities, 
we have 
\begin{align}
\| \textsf{N}(\u)_{n_b}\|_{\l^\infty_{n_b}}
= \bigg\|
\sum_{\substack{{\bf n} \in \mathfrak{N}(\TT_{1})\\n_r = n_b}}
\prod_{a \in \TT^\infty_1 } \u_{n_{a}}  
\bigg\|_{\l^\infty_{n_b}}
& \leq \big\|\F^{-1}(|\ft \u_n|)\big\|_{L^3_x}^3 \les \|\u\|_{H^{\frac16}}^3. 
\label{X2}
\end{align}

\noi
$\bullet$ {\bf Case 1:}
We first consider  the case  $\Pi_2(\TT_J) = \{r_2\}$.

Suppose that $b \ne r_2$ in \eqref{X1}.
In this case, by summing over 
 the  $2J$ variables $\{ n_a\}_{a \in \TT^\infty_J\setminus\{b, r_2\}}$
 first and then over $n \in \Z$, we have
\begin{align}
 \sum_{n \in \Z} 
\sum_{\substack{{\bf n} \in \mathfrak{N}(\TT_J)\\ n_r = n}} 
 \frac{1}{(n_{\max}^{(1)})^{1+}} 
 \prod_{a \in \TT^\infty_J\setminus\{b, r_2\}} |\u_{n_a}|^2
& \les  
 \sum_{n\in \Z} 
\frac{1}{\jb{n}^{1+}}
\sum_{\substack{{\bf n} \in \mathfrak{N}(\TT_J)\\ n_r = n}} 
\prod_{a \in \TT^\infty_J\setminus\{b, r_2\}} |\u_{n_a}|^2 \notag\\
& \les \|\u\|_{L^2}^{4J}.
\label{X3}
\end{align}

\noi
Then, by Cauchy-Schwarz inequality with \eqref{mujj3}, \eqref{X2}, \eqref{X3},
and $|\TT_J \setminus\{r_2\}| = 2J + 1$, 
we have
\begin{align}
\sum_{n \in \Z}|  & \NN^{(J+1)}_2(\u)_n|
  \lesssim  
\sum_{\substack{\TT_J \in \mathfrak{BT}(J)\\\Pi_2(\TT_J) = \{r_2\}}}
\sum_{b \in\TT^\infty_J} 
\sum_{n \in \Z}
|\u_n|
\sum_{\substack{{\bf n} \in \mathfrak{N}(\TT_J)\\ n_r = n\\|\phi_1| >K 
\\ |\wt{\phi}_j|\gg(2j+2)^3K^{1-\theta}\\ j = 2, \dots, J} }
\|\textsf{N}(\u)_{n_b}\|_{\l^\infty_{n_b}} 
\notag\\
& \hphantom{XXXXXXXX} \times 
 \frac{1}{\prod_{j = 1}^J |\wt{\phi}_j|}
\prod_{a \in \TT^\infty_J\setminus\{b, r_2\}} |\u_{n_a}|
\notag \\
& \lesssim  
J \|\u_n\|_{\l^2_{n}}\|\u\|_{H^\frac{1}{6}}^3
\notag\\
& \hphantom{XX} \times 
\sum_{\substack{\TT_J \in \mathfrak{BT}(J)\\\Pi_2(\TT_J) = \{r_2\}}}
\Bigg\{\sum_{n \in \Z} \bigg( 
\sum_{\substack{{\bf n} \in \mathfrak{N}(\TT_J)\\ n_r = n\\|\phi_1| >K 
\\ |\wt{\phi}_j|\gg(2j+2)^3K^{1-\theta}\\ j = 2, \dots, J} }
 \frac{1}{\prod_{j = 1}^J |\wt{\phi}_j|}
\prod_{a \in \TT^\infty_J\setminus\{b, r_2\}} |\u_{n_a}|
\bigg)^2\Bigg\}^\frac{1}{2}  \notag \\
& \ll
\frac{ J  c_J}{\prod_{j = 1}^J(2j+2)^{\frac{3}{2}}}
K^{-\frac{J(1-\theta)}{2}} 
\|\u\|_{L^2} \|\u\|_{H^\frac{1}{6}}^3
 \notag\\
& \hphantom{XX} \times 
\sup_{\substack{\TT_J \in \mathfrak{BT}(J)\\\Pi_2(\TT_J) = \{r_2\}}}
\Bigg\{\bigg( \sup_{n\in \Z} 
\sum_{\substack{{\bf n} \in \mathfrak{N}(\TT_J)\\ n_r = n\\
\phi_j \ne 0
\\ j = 1, \dots, J} }
(n_{\max}^{(1)})^{1+}
\prod_{j = 1}^J \frac{1}{|{\phi}_j|}\bigg)
\notag \\
& \hphantom{XX} \times 
\bigg( \sum_{n\in \Z}  \sum_{\substack{{\bf n} \in \mathfrak{N}(\TT_J)\\ n_r = n}} 
\frac{1}{(n_{\max}^{(1)})^{1+}} 
\prod_{a \in \TT^\infty_J\setminus\{b, r_2\}} |\u_{n_a}|^2
\bigg)\Bigg\}^\frac{1}{2}  \notag \\
&\lesssim 
\frac{ J   C^J  c_J}{\prod_{j = 1}^J(2j+2)^{\frac{3}{2}}}
K^{-\frac{J(1-\theta)}{2}}
 \|\u\|_{L^2}^{2J+1} \|\u\|_{H^\frac{1}{6}}^3
 \longrightarrow 0 
\label{X3a}
\end{align}

\noi
for any $K > 0$, 
as $J \to \infty$.
See Subsection \ref{SUBSEC:8.5}
for a more precise condition on $K$
required for the convergence of the series for $\NN^{(j)}_0$, $\NN^{(j)}_1$, and 
and $\RR^{(j)}$.

Next, suppose that $b = r_2$ in \eqref{X1}.
This time,  we need to work our way from the bottom.
Let us first state and prove a useful lemma
which follows from the non-resonance condition in Definition \ref{DEF:tree4} (c)
and the divisor counting argument.

\begin{lemma}\label{LEM:XT1}
Let $J \in \NB$.
Given an ordered bi-tree $\TT_J\in \mathfrak{BT}(J)$
with a chronicle $\{\TT_j\}_{j = 1}^J$ such that 
 $\Pi_2(\TT_J) = \{r_2\}$, 
fix $a\in \TT^\infty_J \setminus \TT^\infty_{J-1}$.\footnote{By convention, 
we set $\TT_0$  to be a bi-tree of the zeroth generation consisting of the two root nodes $r_1$ and $r_2$
joined by an edge.
Hence, we have $\TT_0^{\infty} = \{r_1, r_2\} $.}  
Then, for fixed $m \in \Z$
and $\nu_j \in \Z$, $j = 1, \dots, J$, we have
\begin{align}
\#\big\{ \bn \in \mathfrak{N}(\TT_J): n_a = m, \ \mu_j (\bn) = \nu_j, j = 1, \dots, J \big\}
\leq C^J \prod_{j = 1}^J|\nu_j|^{0+}.
\label{XT1}
\end{align}
		
\end{lemma}

In view of   $\Pi_2(\TT_J) = \{r_2\}$, 
 we can identify the ordered bi-tree $\TT_J$ 
with an ordinary (ternary) ordered tree (in the sense of \cite[Definition 3.3]{GKO}).
Lemma \ref{LEM:XT1} is really a property of an ordered tree.
Before proceeding to the proof of Lemma \ref{LEM:XT1}, let us 
recall the following arithmetic fact \cite{HW}.
Given $n \in \NB$, the number $d(n)$ of the divisors of $n$
satisfies
\begin{align}
d(n) \leq C_\dl n^\dl
\label{divisor}
\end{align}

\noi
for any $\dl > 0$.	

\begin{proof}[Proof of Lemma \ref{LEM:XT1}]
We first consider the case  $J = 1$.  
Let ${r_{1j}}, j = 1, 2, 3$ be the children of the first root note $r_1$.
Then, it follows from $\mu_1 = -2(n_{r_{12}} - n_{r_{11}})(n_{r_{12}} - n_{r_{13}})$
and \eqref{divisor}
that given $n_{r_{1k}} = m$ for some $k \in \{1, 2, 3\}$,
there are at most $o(|\mu_1|^{0+})$ many choices 
for $n_{r_{1j}}$, $j \ne k$ and hence for $n_r = n_{r_1}$.

When $J \geq 2$, \eqref{XT1} follows from an induction.
In obtaining the ordered bi-tree $\TT_J$,
we replaced one of the terminal nodes, say $b\in \TT^\infty_{J-1}$
into a non-terminal node.
In particular, $a\in \TT^\infty_J \setminus \TT^\infty_{J-1}$
must be  a child of $b$. 
Then, applying the argument for the $J = 1$ case, 
we see that for fixed $n_a = m \in \Z$
and $\mu_J \in \Z$, 
there are at most $o(|\mu_J|^{0+})$ many choices 
for $n_b$ (and the frequencies of the other two children of $b$).
Now that we have fixed $n_b$
(up to  $o(|\mu_J|^{0+})$ many choices),  \eqref{XT1} follows from 
the inductive hypothesis on $\TT_{J-1}$.
\end{proof}

\begin{remark}\rm
Note that Lemma \ref{LEM:XT1} also holds even if we replace any of $\mu_j$ by $\phi_j$.
\end{remark}

We continue with the proof of Lemma \ref{LEM:N^J+1_2}.
Before proceeding to the case  $b = r_2$, 
let us go over the main idea in the previous case ($ b \ne r_2$).
When $b \ne r_2$, 
we placed $\textsf{N}(\u)_{n_{b}}$ in the $\l^\infty_{n_{b}}$-norm
and by expressing the summation over $ {\bf n}  \in \mathfrak{N}(\TT_J)$ as 
\begin{align}
\sum_{{\bf n} \in \mathfrak{N}(\TT_J)}
=\sum_{n \in \Z} \sum_{\substack{{\bf n} \in \mathfrak{N}(\TT_J)\\n_r = n}} , 
\label{X4-}
\end{align}

\noi
we applied  Cauchy-Schwarz inequality (in particular, in $n_r = n$)
in the second inequality in 
\eqref{X3a}, thus creating the factor
\[\|\u_n\|_{\l^2_{n}}
\|\textsf{N}(\u)_{n_b}\|_{\l^\infty_{n_b}} \les 
 \|\u_n\|_{\l^2_{n}}\|\u\|_{H^\frac{1}{6}}^3\]

\noi
thanks to  \eqref{X2}.
This left $2J$ factors 
$\u_{n_a}$, $a \in \TT^\infty_J\setminus\{b, r_2\}$,
to which we applied \eqref{X3}.
In this argument, it was crucial that we have $b \ne r_2$
so that we have the factor $\u_n = \u_{n_{r_2}}$ for the application of Cauchy-Schwarz inequality in $n_{r_2} = n$.

When $b = r_2$, we no longer have the factor 
$\u_n = \u_{n_{r_2}}$.
Instead, the term corresponding to the frequency $n_{r_2}$ is given by 
$\textsf{N}(\u)_{n_{r_2}}$, which we place
in the $\l^\infty_{n_{r_2}}$-norm
as in the previous case.
Now, fix $\al \in \TT^\infty_J \setminus \TT^\infty_{J-1}$.  
Note that $\al \ne r_2$. 
Write
\begin{align}
\sum_{{\bf n} \in \mathfrak{N}(\TT_J)}
=\sum_{m \in \Z} \sum_{\substack{{\bf n} \in \mathfrak{N}(\TT_J)\\n_\al = m}} .
\label{X4}
\end{align}

\noi
Namely, we single out the frequency $n_\al= m$
at the terminal node $\al \in \TT^\infty_J \setminus \TT^\infty_{J-1}$.  
Compare this with \eqref{X4-} from the previous case, 
where we singled out 
 the frequency $n_{r} = n$
at the terminal node $r_2 \in \TT_J^\infty$.
In the following, 
we use $\u_m = \u_{n_\al}$
as a replacement of $\u_n = \u_{n_{r}}$ in the previous case
and apply Cauchy-Schwarz inequality in $n_\al = m$.
Also, note that, as a variant of  \eqref{X3}, we have
\begin{align}
 \sum_{m \in \Z} 
\sum_{\substack{{\bf n} \in \mathfrak{N}(\TT_J)\\ n_\al = m}} 
 \frac{1}{(n_{\max}^{(J)})^{1+}} 
 \prod_{a \in \TT^\infty_J\setminus\{r_2, \al\}} |\u_{n_a}|^2
& \les  
 \sum_{m\in \Z} 
\frac{1}{\jb{m}^{1+}}
\sum_{\substack{{\bf n} \in \mathfrak{N}(\TT_J)\\ n_\al = m}} 
\prod_{a \in \TT^\infty_J\setminus\{r_2, \al\}} |\u_{n_a}|^2 \notag\\
& \les \|\u\|_{L^2}^{4J}.
\label{X3b}
\end{align}

\noi
Indeed, \eqref{X3b} follows from 
first summing over 
 the  $2J$ variables $\{ n_a\}_{a \in \TT^\infty_J\setminus\{r_2, \al\}}$
 and then over $m \in \Z$ with 
$n_{\max}^{(J)} \geq |n_\al|$.
Then, from Cauchy-Schwarz inequality,
\eqref{mujj3}, \eqref{X2}, \eqref{X4-},  and \eqref{X4}, 
 we have
\begin{align}
\sum_{n \in \Z}|  \NN^{(J+1)}_2 & (\u)_n|
  \lesssim  
\|\textsf{N}(\u)_{n}\|_{\l^\infty_{n}} 
\sum_{\substack{\TT_J \in \mathfrak{BT}(J)\\\Pi_2(\TT_J) = \{r_2\}}}
\sum_{\substack{{\bf n} \in \mathfrak{N}(\TT_J)\\ |\phi_1| >K 
\\ |\wt{\phi}_j|\gg(2j+2)^3K^{1-\theta}\\ j = 2, \dots, J} }
 \frac{1}{\prod_{j = 1}^J |\wt{\phi}_j|}
\prod_{a \in \TT^\infty_J\setminus\{ r_2\}} |\u_{n_a}|
\notag \\
& \ll
\frac{   c_J}{\prod_{j = 1}^J(2j+2)^{\frac{3}{2}}}
K^{-\frac{J(1-\theta)}{2}}
\|\u\|_{H^\frac{1}{6}}^3
\notag \\
& \hphantom{XXXX} \times 
\sup_{\substack{\TT_J \in \mathfrak{BT}(J)\\\Pi_2(\TT_J) = \{r_2\}}}
\Bigg\{
\sum_{m\in \Z} 
\sum_{\substack{{\bf n} \in \mathfrak{N}(\TT_J)\\n_\al = m\\|\phi_1| >K 
\\ |\wt{\phi}_j|\gg(2j+2)^3K^{1-\theta}\\ j = 2, \dots, J} }
|\u_{n_\al}|^2\cdot (n_{\max}^{(J)})^{1+}
\prod_{j = 1}^J \frac{1}{|{\phi}_j|}
\Bigg\}^\frac{1}{2} 
\notag \\
& \hphantom{XXXX} \times 
\Bigg\{ \sum_{{\bf n} \in \mathfrak{N}(\TT_J)}
\frac{1}{(n_{\max}^{(J)})^{1+}} 
\prod_{a \in \TT^\infty_J\setminus\{r_2, \al\}} |\u_{n_a}|^2
\Bigg\}^\frac{1}{2}  \notag \\
\intertext{By \eqref{X3b}, \eqref{fphi},  and Lemma \ref{LEM:XT1},}
& \lesssim  
\frac{   C^J c_J}{\prod_{j = 1}^J(2j+2)^{\frac{3}{2}}}
K^{-\frac{J(1-\theta)}{2}}
 \|\u\|_{L^2}^{2J} \|\u\|_{H^\frac{1}{6}}^3
\notag \\
& \hphantom{XXXX} \times 
\sup_{\substack{\TT_J \in \mathfrak{BT}(J)\\\Pi_2(\TT_J) = \{r_2\}}}
\Bigg\{
\sum_{m\in \Z} |\u_{m}|^2
\sum_{\substack{\nu_j\in \Z\setminus\{ 0\}\\j = 1, \dots, J}}
\frac{1}{|\nu_J|^{\frac{3}{2}-}}
\prod_{j = 1}^{J-1} \frac{1}{|\nu_j|^{2-}}
\Bigg\}^\frac{1}{2} \notag \\
&\lesssim 
\frac{   C^J c_J}{\prod_{j = 1}^J(2j+2)^{\frac{3}{2}}}
K^{-\frac{J(1-\theta)}{2}}
 \|\u\|_{L^2}^{2J+1} \|\u\|_{H^\frac{1}{6}}^3
 \longrightarrow 0,
\label{X5}
\end{align}

\noi
as $J \to \infty$.

\smallskip

\noi
$\bullet$ {\bf Case 2:}
Next, we consider the case  $\Pi_2(\TT_J) \ne \{r_2\}$.
Note that we have $b \ne r_2$ by assumption.
In this case, we can proceed as in \eqref{X5} 
by replacing $r_2$ by $b$
and choosing $\al \in \TT^\infty_J \setminus (\TT^\infty_{J-1} \cup \{b\})$.
\end{proof}

\begin{remark}\rm 
(i) If we assume a higher regularity $\u \in H^\s(\T)$, $\s >\frac 12$, 
we can conclude~\eqref{X1a} simply by the algebra property of $H^\s(\T)$,
which suffices for our purpose.
See Subsection 4.4 in \cite{OSTz}.
We, however, decided to include the argument above
since this provides the sharp regularity criterion ($\s \geq \frac 16$)
for the vanishing of the error term.
Moreover, 
Lemma~\ref{LEM:XT1}
seems to be of independent interest.

\smallskip

\noi
{(ii)}
In view of the equation \eqref{4NLS2} with the cubic nonlinearity,  
we see $\s = \frac 16$ is the minimum regularity
required
for the application of the Leibniz rule
in \eqref{2-g},  \eqref{N^3}, and \eqref{N^J+1}. See~\cite{GKO}.
By a computation similar to that in this subsection,
we can also justify  the switching of the time derivatives and the summations
when $\s \geq \frac 16$
(by the dominated convergence theorem).
We  point out that it is also possible
to justify 
 the switching of the time derivatives and the summations
 as temporal distributions under a weaker assumption.
See Lemma 5.1 in \cite{GKO}.
\end{remark}

\subsection{Proof of Proposition \ref{PROP:main}}
\label{SUBSEC:8.5}

In this section, we put together all the estimates obtained in Subsection \ref{SUBSEC:8.2} - \ref{SUBSEC:error}
and prove Proposition \ref{PROP:main}.

Let $u$ be a smooth global solution
to the Wick ordered cubic 4NLS \eqref{4NLS1}
and $\u$ be its interaction representation as above.
Then, by applying the normal form reductions $J$ times, we obtain\footnote{Once again, we are replacing $\pm 1$
and  $\pm i$
by 1 for simplicity since they play no role in our analysis.}
\begin{align*} 
\dt | \u_n |^2 
=  \dt \sum_{j = 2}^{J+1} \NN^{(j)}_0(\u)_n
 + \sum_{j = 2}^{J+1} \RR^{(j)}(\u)_n
 + \sum_{j = 1}^{J+1} \NN^{(j)}_1(\u)_n 
 + \NN^{(J+1)}_{2} (\u)_n.
\end{align*}

\noi
In view of Lemma \ref{LEM:N^J+1_2}, 
by taking the limit as $J \to \infty$, we obtain
\begin{align*} 
\dt | \u_n |^2
=  \dt \sum_{j = 2}^\infty \NN^{(j)}_0(\u)_n
  + \sum_{j = 2}^\infty \RR^{(j)}(\u)_n
 + \sum_{j = 1}^\infty \NN^{(j)}_1(\u)_n.
\end{align*}

\noi
Then, integration over $[0, t]$
yields the identity \eqref{main0}
for smooth solutions.\footnote{With a slight abuse of notations, 
we are identifying
$\NN^{(j)}_0(\u)_n$ with $\NN^{(j)}_0(u)_n$, etc.
The same comment applies in the following.} 
Furthermore, the multilinear estimates \eqref{main1}, \eqref{main2}, and \eqref{main3}
follow from 
Lemmas \ref{LEM:N^1_1}, 
 \ref{LEM:N^2_0},  \ref{LEM:N^2_r}, \ref{LEM:N^2_1}
\ref{LEM:N^J+1_0}, \ref{LEM:N^J+1_r}, and \ref{LEM:N^J+1_1}
and choosing $\theta \in (0, \frac 23]$.\footnote{We fix 
an absolute constant $\theta \in (0, \frac 23]$ once and for all
and thus suppress
dependence of various constants on $\theta$ in the following.
}

In the following, 
we verify \eqref{main0} - \eqref{main3}
for rough solutions $u$ to \eqref{4NLS1}
by an approximation argument.
Fix $s \in( -\frac 13, 0)$.
Let $u$ be a (possibly non-unique) solution to \eqref{4NLS1}
in  $C([-T,T]; H^s(\T)) \cap F^{s,\alpha}(T) $, 
i.e.~with $M = 1$,
constructed in Section \ref{SEC:GWP}.
Note that we have  $T = T(\|u(0)\|_{H^s}) >0$.
See Remark \ref{REM:LWP}.

Let $u_N$ be a smooth solution to \eqref{4NLS1}
with $u_N|_{t = 0} = \P_{\le N}u(0)$.
Then, from the construction in Section \ref{SEC:GWP}, 
there exists a subsequence $\{u_{N_k}\}_{k\in\NB}$ 
such that
\begin{align}
\lim_{k \to \infty} \| u- u_{N_k}\|_{C_TH^s} = 0.
\label{conv1}
\end{align}

\noi
Moreover, 
$u_{N_k}$ and $u$ satisfy a uniform bound:
\begin{align}
\sup_{k \in \NB} \| u_{N_k}\|_{C_TH^s}, \| u\|_{C_TH^s}
\leq r \sim \|u(0)\|_{H^s}.
\label{conv1a}
\end{align}

\noi
Hence, 
it follows from \eqref{conv1} and  \eqref{conv1a} that, 
for each fixed $n \in \Z$, we have
\begin{align}
\lim_{k\to \infty}
\Big\{|\ft u_{N_k} (n, t)|^2 - |\ft  u_{N_k}  (n, 0)|^2  \Big\}
= |\ft u (n, t)|^2 - |\ft  u (n, 0)|^2,  
\label{conv12}
\end{align}

\noi uniformly in $t \in [-T, T]$.

Since $u_{N_k}$ is smooth, we have 
\begin{align}
\label{main0N}
|\ft u_{N_k} (n, t)|^2 - |\ft  u_{N_k}  (n, 0)|^2  
& = \sum_{j =2}^\infty \N_0^{(j)} (u_{N_k})(n, t') \bigg|_0^t \notag\\
& \hphantom{X}
+ \int_0^t \bigg[\sum_{j =1}^\infty \N_1^{(j)}(u_{N_k})(n, t') 
+ \sum_{j =2}^\infty \RR^{(j)} (u_{N_k})(n, t') \bigg] dt'.
\end{align}

\noi
Note that the identity
 \eqref{main0} for a rough solution $u$
 follows from 
  \eqref{conv12} and 
 \eqref{main0N} once we prove
\begin{align}
\lim_{k \to \infty} \sum_{j =2}^\infty \N_0^{(j)} (u_{N_k})(n, t) & =  \sum_{j =2}^\infty \N_0^{(j)} (u)(n, t) ,
\label{conv21}\\
\lim_{k \to \infty} \int_0^t \sum_{j =2}^\infty \RR^{(j)} (u_{N_k})(n, t') dt' & = \int_0^t  \sum_{j =2}^\infty \RR^{(j)} (u)(n, t') dt',
\label{conv22}\\
\lim_{k \to \infty} \int_0^t \sum_{j =1}^\infty \N_1^{(j)} (u_{N_k})(n, t') dt' 
& = \int_0^t  \sum_{j =1}^\infty \N_1^{(j)} (u)(n, t') dt',
\label{conv23}
\end{align}

\noi
 uniformly in $t \in [-T, T]$.
In the following, we only verify 
\eqref{conv21}, since \eqref{conv22} and \eqref{conv23} follow in an analogous manner.

From Lemmas \ref{LEM:N^2_0} and \ref{LEM:N^J+1_0} with 
 the multilinearity of 
$\N_0^{(j)}$
and \eqref{conv1a}, 
we can choose 
 $K = K(r) \gg1 $ such that 
\begin{align*}
 \bigg| \sum_{j = 2}^\infty \N_0^{(j)} & (u_{N_k})(n, t)  -  
\sum_{j =2}^\infty \N_0^{(j)} (u)(n, t) \bigg|\notag\\
& \les 
K^{\max(-\frac 12, -1- 2s)} r^{3} \|u-u_{N_k}\|_{C_TH^s}
+ \sum_{j = 3}^\infty 
 K^{-\frac{(j-1)(1-\theta)}2}  r^{2j+1} \|u-u_{N_k}\|_{C_TH^s}
 \notag\\
&  \les C(r) \|u-u_{N_k}\|_{C_TH^s}
\longrightarrow 0, 
\end{align*}

\noi
as $k \to \infty$,
 uniformly in $t \in [-T, T]$.
This proves
\eqref{conv21}.

Lastly, note that, in view of the global existence (Theorem \ref{THM:1})
and the Sobolev norm bound (Proposition \ref{PROP:GWP3}),
we can iterate the above discussion
to conclude the identity \eqref{main0} for all $t \in \R$. 
This completes the proof of Proposition \ref{PROP:main}.

\begin{remark}\label{REM:app}\rm
In the above argument, we assumed
that  $u \in C([-T,T]; H^s(\T)) \cap F^{s,\alpha}(T) $.
It is, however, sufficient to assume that 
 $u \in C([-T,T]; H^s(\T)) $ is a solution to \eqref{4NLS2}
for some $T = T(\|u_0\|_{H^s}) > 0$
with some smooth approximating solutions $\{u_n \}_{n \in \NB}$
such that  
\begin{align*}
\lim_{n \to \infty} \| u- u_{n}\|_{C_TH^s} = 0
\qquad \text{and}\qquad 
\sup_{n \in \NB} \| u_{n}\|_{C_TH^s}, \| u\|_{C_TH^s}
\les \|u(0)\|_{H^s}, 
\end{align*}
 
 \noi
 replacing \eqref{conv1} and \eqref{conv1a}.
The same comment applies to the proof of 
Proposition \ref{PROP:main2} presented in the next subsection.

\end{remark}

\subsection{Energy estimate for the non-resonant part}
\label{SUBSEC:8.6}

In this subsection, we briefly discuss
the proof of 
Proposition \ref{PROP:main2}
on the non-resonant part of the energy estimate \eqref{Diff3}
for  the difference of two solutions with the same initial condition.
 In fact, we reduce the matter
 to (a slight modification of)
 the discussion in the previous subsections.

Given $u_0 \in H^s(\T)$, $s > -\frac 1{3}$, 
let   $u$ and $v$ be two solutions to \eqref{4NLS1}
on $[-T, T]$
with the same initial condition $u|_{t = 0} = v|_{t = 0} =  u_0$, satisfying
\begin{align*}
 \| u\|_{C_TH^s},  \| v\|_{C_TH^s}
\leq r \sim \|u_0\|_{H^s}.
\end{align*}

\noi
Let $\u$ and $\v$ denotes the interaction representations of $u$ and $v$, respectively.
Then, 
from \eqref{Diff3}, we have
\begin{align}
\I  & = -  2\Re i \sum_{n\in \Z} \langle n \rangle^{2s} 
\big[\ft{\textsf{N}(\u)}_n -  \ft{\textsf{N}(\v)}_n\big]
 \cj{(\ft \u_n - \ft \v_n)} \notag \\
&  =  - 2\Re i \sum_{n\in \Z} \langle n \rangle^{2s} 
\ft{\textsf{N}(\u)}_n\cj{\ft \u_n} 
+ 2\Re i \sum_{n\in \Z} \langle n \rangle^{2s}  \ft{\textsf{N}(\u)}_n \cj{\ft \v_n} \notag\\
& \hphantom{X}+ 2\Re i \sum_{n \in \Z} \langle n \rangle^{2s} \ft{\textsf{N}(\v)}_n \cj{\ft \u_n} 
- 2\Re i \sum_{n\in \Z} \langle n \rangle^{2s}  \ft{\textsf{N}(\v)}_n \cj{\ft \v_n} \notag\\
& =:   \I_{uu} - \I_{uv}- \I_{vu}+ \I_{vv}.
\label{Y1}
\end{align}

\noi
From \eqref{1-g} with \eqref{4NLS2}, we have 
\begin{align}
\I_{uu} =  \sum_{n\in \Z} \jb{n}^{2s} \N^{(1)} (\u)_n
\qquad \text{and}\qquad 
\I_{vv} =  \sum_{n\in \Z} \jb{n}^{2s} \N^{(1)} (\v)_n.
\label{Y2}
\end{align}

\noi
By repeating the arguments in the previous subsections, we have\footnote{In Subsections \ref{SUBSEC:8.2}
and \ref{SUBSEC:8.3}, we performed the normal  form reductions
for each fixed $n\in \Z$ and the weight $\jb{n}^{2s}$ in \eqref{Y2}
does not affect the argument.}
\begin{align}
\int_0^t \I_{uu}(t') dt'
&  = \sum_{j=2}^\infty \sum_{n\in \Z}\jb{n}^{2s}\NN_0^{(j)} (\u)(n, t')\bigg|_0^t \notag\\
& \hphantom{X}
+ 
 \int_0^t  \bigg[  \sum_{j = 2}^\infty\sum_{n\in \Z} \jb{n}^{2s}\RR^{(j)}( \u)(n, t') 
+  \sum_{j = 1}^\infty \sum_{n\in \Z}\jb{n}^{2s} \NN_1^{(j)} (\u)(n, t')
 \bigg]dt'
\label{Y3}
\end{align}

\noi
and
\begin{align}
\int_0^t \I_{vv}(t') dt'
&  = \sum_{j=2}^\infty \sum_{n\in \Z}\jb{n}^{2s}\NN_0^{(j)} (\v)(n, t')\bigg|_0^t \notag\\
& \hphantom{X}+ 
 \int_0^t  \bigg[  \sum_{j = 2}^\infty \sum_{n\in \Z}\jb{n}^{2s}\RR^{(j)}( \v)(n, t') 
 + \sum_{j = 1}^\infty \sum_{n\in \Z} \jb{n}^{2s}\NN_1^{(j)} (\v)(n, t')\bigg]dt'.
\label{Y4}
\end{align}

In order to handle the cross terms $\I_{uv}$ and $\I_{vu}$, we need to introduce new notations.
Define
$\wt \N^{(1)} (\u, \v)_n$ by 
\begin{align*}
\wt \NN^{(1)}(\u, \v )_n
= - 2 \Re i 
\sum_{\TT_1 \in \mathfrak{BT}(1)}
\sum_{\substack{{\bf n} \in \mathfrak{N}(\TT_1)\\ n_r = n} }
e^{  - i  \phi_1 t } \bigg( \prod_{a \in \Pi_1(\TT_1)^\infty} \u_{n_{a}}\bigg)
\bigg(  \prod_{b \in \Pi_2(\TT_1)^\infty} \v_{n_{b}}\bigg).
\end{align*}

\noi
Namely, 
$\wt \N^{(1)} (\u, \v)_n$ is constructed from $ \N^{(1)}_n$  in \eqref{1-g}
by taking different functions $\u$ and $\v$
over the terminal nodes of the first tree $\Pi_1(\TT_1)$
and the second tree $\Pi_2(\TT_1)$,\footnote{Note that  
the second tree $\Pi_2(\TT_1)$ consists only of the (second) root node $r_2$.
We, however, use this notation in order to be consistent with the general case.
See \eqref{Y6}, \eqref{Y7}, and \eqref{Y8}. }
respectively.

\noi
Then, we have 
\begin{align*}
\I_{uv} =  \sum_{n\in \Z} \jb{n}^{2s} \wt \N^{(1)} (\u, \v)_n
\qquad \text{and}\qquad 
\I_{vu} =  \sum_{n\in \Z} \jb{n}^{2s} \wt \N^{(1)} (\v, \u)_n.
\end{align*}

\noi
We also make similar modifications to the multilinear terms introduced 
in Subsections \ref{SUBSEC:8.2} and \ref{SUBSEC:8.3}
and define $\wt  \NN^{(j)}_0 (\u, \v)_n$, $\wt  \RR^{(j)} (\u, \v)_n$, 
and $\wt  \NN^{(j)} (\u, \v)_n$ by\footnote{As mentioned
in Remark \ref{REM:2Re}, 
we are dropping unimportant $\pm$, $\pm i$, and $2\Re$.  }
\begin{align} 
\wt  \NN^{(j)}_0 (\u, \v)_n
 &:  = \sum_{\TT_{j-1} \in \mathfrak{BT}(j-1)}
\sum_{\substack{{\bf n} \in \mathfrak{N}(\TT_{j-1})\\n_r = n}}
\frac{ e^{- i \wt{\phi}_{j-1} t } }{\prod_{k = 1}^{j-1} \wt{\phi}_{k}}
  \notag\\
& \hphantom{XXXXXX}
\times  \bigg(\prod_{a \in \Pi_1(\TT_{j-1})^\infty} \u_{n_{a}}\bigg)
 \bigg(\prod_{b \in \Pi_2(\TT_{j-1})^\infty} \v_{n_{b}}\bigg), 
\label{Y6}
\\
\wt \RR^{(j)}(\u, \v)_n
& : =  \sum_{\TT_{j-1} \in \mathfrak{BT}(j-1)}
\sum_{\al \in\TT^\infty_{j-1}} 
\sum_{\substack{{\bf n} \in \mathfrak{N}(\TT_{j-1})\\n_r = n}}
\frac{ e^{- i \wt{\phi}_{j-1}t } }{\prod_{k = 1}^{j-1} \wt{\phi}_k}
\, \textsf{R}(\u)_{n_\al}
\label{Y7}
  \notag\\
& \hphantom{XXXXXX}
\times  \bigg(\prod_{a \in \Pi_1(\TT_{j-1})^\infty\setminus \{\al\}} \u_{n_{a}}\bigg)
 \bigg(\prod_{b \in \Pi_2(\TT_{j-1})^\infty\setminus \{\al\}} \v_{n_{b}}\bigg), \\
 \wt \NN^{(j)}(\u, \v)_n
&  : = 
 \sum_{\TT_{j} \in \mathfrak{BT}(j)}
\sum_{\substack{{\bf n} \in \mathfrak{N}(\TT_{j})\\n_r = n}}
\frac{ e^{- i \wt{\phi}_{j}t } }{\prod_{k = 1}^{j-1} \wt{\phi}_k}
\, \bigg(\prod_{a \in \Pi_1(\TT_{j})^\infty} \u_{n_{a}}\bigg)
 \bigg(\prod_{b \in \Pi_2(\TT_{j})^\infty} \v_{n_{b}}\bigg).
\label{Y8}
\end{align}

\noi
Compare these definitions with \eqref{N^J+1}.
Moreover, we define
 $\wt  \NN^{(j)}_1 (\u, \v)_n$ and  $\wt  \NN^{(j)}_2 (\u, \v)_n$
 as the restrictions
 of  $\wt  \NN^{(j)} (\u, \v)_n$
onto $C_{j-1}$ and $C_{j-1}^c$ (see \eqref{CJ}).
Then, from the discussion in the previous subsections, we have
\begin{align}
\int_0^t \I_{uv}(t') dt'
&  = \sum_{j=2}^\infty \sum_{n\in \Z}\jb{n}^{2s}\wt \NN_0^{(j)} (\u, \v)(n, t')\bigg|_0^t \notag\\
& \hphantom{XX}+ 
 \int_0^t  \bigg[ \sum_{j = 2}^\infty\sum_{n\in \Z}\jb{n}^{2s} \wt \RR^{(j)}( \u, \v)(n, t') 
 \notag\\
 & \hphantom{XXXXXXX}+ 
 \sum_{j = 1}^\infty\sum_{n\in \Z} \jb{n}^{2s}\wt \NN_1^{(j)} (\u, \v)(n, t')\bigg]dt'
\label{Y9}
\end{align}

\noi
and
\begin{align}
\int_0^t \I_{vu}(t') dt'
&  = \sum_{j=2}^\infty \sum_{n\in \Z}\jb{n}^{2s} \wt \NN_0^{(j)} (\v, \u)(n, t')\bigg|_0^t \notag\\
& \hphantom{XXX}+ 
 \int_0^t  \bigg[  \sum_{j = 2}^\infty\sum_{n\in \Z}\jb{n}^{2s} \wt  \RR^{(j)}( \v, \u)(n, t') 
 \notag\\
 & \hphantom{XXXXXXX}
 + \sum_{j = 1}^\infty\sum_{n\in \Z} \jb{n}^{2s} \wt \NN_1^{(j)} (\v, \u)(n, t')
 \bigg]dt'.
\label{Y10}
\end{align}

In the following, we simply drop the factor $\jb{n}^{2s}$.\footnote{By making use of
the factor $\jb{n}^{2s}$, we may extend Proposition \ref{PROP:main2}
to $s > -\frac 12$. This, however, involves modifications
of the multilinear estimates in Subsections \ref{SUBSEC:8.2}
and \ref{SUBSEC:8.3}.
In view of the regularity restriction $s > -\frac 13$ 
for the resonant part (Proposition \ref{PROP:main}), 
we simply use the multilinear estimates from 
Subsections~\ref{SUBSEC:8.2}
and~\ref{SUBSEC:8.3}
and prove Proposition \ref{PROP:main2}
for $s > -\frac 13$.
}
Then, by applying  the multilinear estimates in 
Lemmas \ref{LEM:N^1_1}, 
 \ref{LEM:N^2_0},  \ref{LEM:N^2_r}, \ref{LEM:N^2_1}
\ref{LEM:N^J+1_0}, \ref{LEM:N^J+1_r}, and \ref{LEM:N^J+1_1}
(with  $\theta \in (0, \frac 23]$), 
it follows from \eqref{Y1}, \eqref{Y3}, \eqref{Y4}, \eqref{Y9}, and \eqref{Y10} that\footnote{By writing 
the double differences of the multilinear terms of the $j$th generation in a telescoping sum, 
we obtain $O(j^2)$ many terms.  This loss of $O(j^2)$ does not cause any issue in view
of the fast decay in $j$ in the multilinear estimates
in Subsection \ref{SUBSEC:8.3}.} 
\begin{align}
\bigg| \int_0^t \I(t') dt'\bigg| 
 & =  \bigg| \int_0^t \Big\{\big( \I_{uu}(t') - \I_{uv}(t')\big) - \big(\I_{vu}(t')- \I_{vv}(t')\big)\Big\}
dt'\bigg| \label{Y11a}\\
& \les
K^{\max(-\frac 12, -1- 2s)} r^{2} \|u-v\|_{C_TH^s}^2
+ \sum_{j = 3}^\infty 
 K^{-\frac{(j-1)(1-\theta)}2}  r^{2j-2} \|u-v\|_{C_TH^s}^2\notag\\
&  \hphantom{XX}
+ 
TK^{\max(-\frac 12, -1- 3s)} r^{4} \|u-v\|_{C_TH^s}^2
+ T \sum_{j = 3}^\infty 
 K^{-\frac{(j-3)(1-\theta)}2}  r^{2j} \|u-v\|_{C_TH^s}^2\notag\\
&  \hphantom{XX}
 + TK^{\frac{1}{2}-2s} r^{2} \|u-v\|_{C_TH^s}^2
+ T \sum_{j = 2}^\infty 
 K^{-\frac{(j-2)(1-\theta)}2}  r^{2j} \|u-v\|_{C_TH^s}^2, \notag 
\end{align}

\noi
uniformly for $t \in [-T, T]$.
Note that we obtained two factors of $u - v$
thanks to the double difference structure of \eqref{Y1}.
Then, by first choosing $K = K(r) > 0$ sufficiently large
and then choosing $T = T(K) = T(r) > 0$ sufficiently small, 
we conclude that 
\begin{align*}
\bigg| \int_0^t \I(t') dt'\bigg|  & \leq \frac{1}{4} \|u-v\|_{C_TH^s}^2
\end{align*}

\noi
for  $t \in [-T, T]$.
This completes the proof of Proposition \ref{PROP:main2}.

\begin{remark}\label{REM:uniq3}\rm
Integrating \eqref{Diff3} from $0$ to $t$, 
we obtain the identity:
\begin{align}
 \| u (t)- v(t) \|_{H^s}^2
  & =   \int_0^t \Big\{( \I_{uu}(t') - \I_{uv}(t')) - (\I_{vu}(t')- \I_{vv}(t'))\Big\}
dt'\notag\\
& \hphantom{X}
+ \int_0^t    \sum_{n\in \Z} \jb{n}^{2s} 
\big(\mathfrak{S}_\infty(u)(n, t') -
\mathfrak{S}_\infty(v)(n, t')
\big)\cj{(\ft u_n - \ft v_n)} \ft v_n(t') dt', 
\label{Y11}
\end{align}

\noi
where $\mathfrak{S}_\infty(u)(n, t)  = |\ft u_n(t)|^2 - |\ft u_n(0) |^2$
as in \eqref{intromain1}.
In view of \eqref{main0}, \eqref{Y3}, \eqref{Y4},
\eqref{Y9}, and \eqref{Y10}, 
we see that both the first and second terms
on the right-hand side of \eqref{Y11}
can be expressed as
a sum of infinite series consisting of multilinear terms
of increasing degrees.
Furthermore,
thanks to the multilinearity of the summands, 
we can extract two factors of (the Fourier coefficient of) $u-v$ in both the first and second terms
on the right-hand side of \eqref{Y11}.

In this paper, we established {\it spatial} multilinear estimates
(for fixed $t$)
and showed that these multilinear terms are summable, provided $s > -\frac 13$.
This allows us to obtain the enhanced uniqueness in Theorem \ref{THM:2}.
It may be of interest to establish {\it space-time} estimates on  these multilinear terms
(arising from  the energy of the difference of two solutions in the $E^s(T)$-norm rather than the $C_TH^s$-norm), 
namely 
in terms of the $F^{s, \al}(T)$-norm as in Section~\ref{SEC:energy},
possibly allowing us to go below $s = -\frac 13$.
We point out that the argument in 
 \cite{CKSTT04} may be of use in 
 estimating multilinear terms of (arbitrarily) large degrees.

\end{remark}

\begin{remark}\label{REM:uniq2}\rm

If we proceed with an energy estimate
in the spirit of 
Proposition \ref{PROP:energy} in 
Section \ref{SEC:energy}
(but without symmetrization)
in terms of the $F^{s, \al}(T)$-norm, 
we can establish the following energy bound;
let $s \in ( -\frac{3}{10}, 0)$ and  $\al = -\frac{8s}3 + \eps$
as in \eqref{alpha}.
Then, there exists $\theta > 0$ such that 
\begin{align}
\bigg|\int_0^T
\I(t) 
dt \bigg|
\les T^\theta \Big( \| u\|_{F^{s, \al}(T)}^2+ \| v\|_{F^{s, \al}(T)}^2\Big)
\| u - v\|_{F^{s, \al}(T)}^2
\label{energy2a}
\end{align}

\noi
for any $T \in (0, 1]$.	
By combining with
the linear and nonlinear estimates
(Lemma \ref{LEM:linear} and Proposition \ref{PROP:3lin}),
the energy estimate
\eqref{energy2a} yields uniqueness for $s > -\frac{3}{10}$,
not sufficient for Theorem \ref{THM:2}.

\end{remark}

\begin{ackno}\rm
T.O. and Y.W.~were supported by the European Research Council (grant no.~637995 ``ProbDynDispEq'').
The authors would like to thank the anonymous referee
for the helpful comments which improved the presentation of the paper.

\end{ackno}

\end{document}